\def\DateTime{27/November/2011, 13:30 (Kyoto)}
\def\Version{Version 2.5}
\def\yes{\if00}
\def\no{\if01}
\def\iftenpt{\no}
\def\ifelevenpt{\no}
\def\iftwelvept{\yes}

\def\ifusepdf{\no}
\def\ifpsfont{\no}
\def\iftxfont{\no}
\def\ifpxfont{\yes}
\def\ifeulerfont{\no}
\def\ifquery{\no}



\iftenpt
\documentclass[leqno]{amsart}
\else\fi
\ifelevenpt
\documentclass[leqno,11pt]{amsart}
\else\fi
\iftwelvept
\documentclass[leqno,12pt]{amsart}
\else\fi

\usepackage{amssymb}
\usepackage{amscd}
\usepackage{verbatim}
\usepackage{mathrsfs}

\ifusepdf
\usepackage{hyperref}
\else\fi
\ifpsfont
\usepackage[T1]{fontenc}
\usepackage{times}
\else\fi
\iftxfont
\usepackage{txfonts}
\else\fi
\ifpxfont
\usepackage{pxfonts}
\else\fi
\ifeulerfont
\usepackage{euler}
\else\fi





\ifelevenpt
\setlength{\topmargin}{10pt}
\setlength{\oddsidemargin}{32pt}
\setlength{\evensidemargin}{32pt}
\setlength{\textheight}{637pt}
\setlength{\textwidth}{395pt}
\else\fi

\iftwelvept
\setlength{\topmargin}{0pt}
\setlength{\oddsidemargin}{13pt}
\setlength{\evensidemargin}{13pt}
\setlength{\textheight}{665pt}
\setlength{\textwidth}{427pt}
\else\fi

\theoremstyle{plain}
\newtheorem{Theorem}{Theorem}[section]
\newtheorem{Proposition}[Theorem]{Proposition}
\newtheorem{Lemma}[Theorem]{Lemma}

\newtheorem{Corollary}[Theorem]{Corollary}

\newtheorem{Claim}{Claim}[Theorem]

\theoremstyle{definition}
\newtheorem{Definition}[Theorem]{Definition}
\newtheorem{Remark}[Theorem]{Remark}
\newtheorem{Example}[Theorem]{Example}

\newtheorem{Question}[Theorem]{Question}
\newtheorem{FQuestion}[Theorem]{Fundamental question}


\def\rom{\textup}
\newcommand{\ZZ}{{\mathbb{Z}}}
\newcommand{\QQ}{{\mathbb{Q}}}
\newcommand{\RR}{{\mathbb{R}}}
\newcommand{\KK}{{\mathbb{K}}}
\newcommand{\CC}{{\mathbb{C}}}
\newcommand{\PP}{{\mathbb{P}}}

\newcommand{\OO}{{\mathscr{O}}}

\newcommand{\Proj}{\operatorname{Proj}}

\newcommand{\Pic}{\operatorname{Pic}}
\newcommand{\Rat}{\operatorname{Rat}}

\newcommand{\Ker}{\operatorname{Ker}}

\newcommand{\Spec}{\operatorname{Spec}}

\newcommand{\Supp}{\operatorname{Supp}}

\newcommand{\ord}{\operatorname{ord}}
\newcommand{\acherncl}{\widehat{{c}}}
\newcommand{\adeg}{\widehat{\operatorname{deg}}}

\newcommand{\Lap}{\text{\Large$\Box$}}

\newcommand{\Nef}{\operatorname{Nef}}
\newcommand{\aChow}{\widehat{\operatorname{CH}}}

\newcommand{\aNef}{\widehat{\operatorname{Nef}}}

\newcommand{\Div}{\operatorname{Div}}
\newcommand{\aDiv}{\widehat{\operatorname{Div}}}
\newcommand{\WDiv}{\operatorname{WDiv}}
\newcommand{\aWDiv}{\widehat{\operatorname{WDiv}}}
\newcommand{\vol}{\operatorname{vol}}
\newcommand{\avol}{\widehat{\operatorname{vol}}}
\newcommand{\aH}{\hat{H}^0}

\newcommand{\admCont}{C^0_{{\rm ad}}}
\newcommand{\proCur}{D_{{\rm pr}}}
\newcommand{\admCur}{B_{{\rm ad}}}
\newcommand{\Tpsh}{\operatorname{PSH}}

\newcommand{\Tqpsh}{\operatorname{QPSH}}

\newcommand{\aPDiv}{\widehat{\operatorname{PDiv}}}
\newcommand{\PDiv}{\operatorname{PDiv}}

\newcommand{\Cur}{\operatorname{Cur}}

\newcommand{\QED}{{\unskip\nobreak\hfil\penalty50\quad\null\nobreak\hfil
{$\Box$}\parfillskip0pt\finalhyphendemerits0\par\medskip}}
\newcommand{\rest}[2]{\left.{#1}\right\vert_{{#2}}}
\ifquery
\def\query#1{\setlength\marginparwidth{65pt} 
\marginpar{\raggedright\fontsize{7.81}{9} 
\selectfont\upshape\hrule\smallskip 
#1\par\smallskip\hrule}} 

\else
\def\query#1{}
\fi

\begin{document}

\title[Toward Dirichlet's unit theorem on arithmetic varieties]%
{Toward Dirichlet's unit theorem on arithmetic varieties}
\author{Atsushi Moriwaki}
\address{Department of Mathematics, Faculty of Science,
Kyoto University, Kyoto, 606-8502, Japan}
\email{moriwaki@math.kyoto-u.ac.jp}
\date{\DateTime, (\Version)}
\subjclass{Primary 14G40; Secondary 11G50}
\begin{abstract}
In this paper, we would like to propose a fundamental question about a higher dimensional analogue of Dirichlet's unit theorem.
We also give a partial answer to the question as an application of the arithmetic Hodge index theorem.
\end{abstract}


\maketitle

\setcounter{tocdepth}{2}
\tableofcontents

\section*{Introduction}
Let $X$ be a $d$-dimensional, generically smooth, normal and projective arithmetic variety.
Let $\Rat(X)^{\times}_{\RR} := \Rat(X)^{\times} \otimes_{\ZZ} \RR$ and let
\[
\widehat{(\ )}_{\RR}: \Rat(X)^{\times}_{\RR} \to \aDiv_{C^{\infty}}(X)_{\RR}
\]
be the natural extension of the homomorphism $\Rat(X)^{\times} \to \aDiv_{C^{\infty}}(X)$ given by
$\phi \mapsto \widehat{(\phi)}$.
Let $\overline{D} = (D,g)$ be an arithmetic $\RR$-Cartier divisor of $C^{0}$-type on $X$.
We say $\overline{D}$ is {\em pseudo-effective} if
$\overline{D} + \overline{A}$ is big for any big arithmetic $\RR$-Cartier divisor $\overline{A}$ of
$C^{0}$-type on $X$.
In this paper, we would like to propose a fundamental question:

\renewcommand{\theTheorem}{\!\!}
\begin{FQuestion}
Are the following conditions (1) and (2) equivalent ?
\begin{enumerate}
\renewcommand{\labelenumi}{(\arabic{enumi})}
\item
$\overline{D}$ is pseudo-effective.

\item
$\overline{D} + \widehat{(\varphi)}_{\RR}$ is effective 
for some $\varphi \in \Rat(X)^{\times}_{\RR}$.
\end{enumerate}
\end{FQuestion}
\renewcommand{\theTheorem}{\arabic{section}.\arabic{Theorem}}
\addtocounter{Theorem}{-1}

Obviously (2) implies (1). Moreover,
if $\aH(X, a \overline{D}) \not= \{ 0 \}$ for some $a \in \RR_{>0}$,
then (2) holds.
Indeed, as we can choose $\phi \in \Rat(X)^{\times}$ with
$a \overline{D} + \widehat{(\phi)} \geq 0$, we have $\phi^{1/a} \in \Rat(X)^{\times}_{\RR}$ and
$\overline{D} + \widehat{(\phi^{1/a})}_{\RR} \geq 0$.
In the geometric case, (1) does not necessarily imply (2).
For example, let $\vartheta$ be a divisor on a compact Riemann surface $M$ such that
$\deg(\vartheta) = 0$ and the class of $\vartheta$ in $\Pic(M)$ is not a torsion element.
Then it is easy to see that $\vartheta$ is pseudo-effective and there is no
element $\psi$ of $\Rat(M)^{\times} \otimes_{\ZZ} \RR$ such that
$\vartheta + (\psi)_{\RR}$ is effective (cf. Remark~\ref{rem:pseudo:alg:curve}).
In this sense,
the above question is a purely arithmetic problem.

We assume $d=1$, that is, $X = \Spec(O_K)$, where
$K$ is a number field and $O_K$ is the ring of integers in $K$.
Let $K(\CC)$ be the set of all embeddings $K$ into $\CC$ and let
$\xi$ be an element of $\RR^{K(\CC)}$ such that
$\sum_{\sigma \in K(\CC)} \xi_{\sigma} = 0$ and $\xi_{\sigma} = \xi_{\bar{\sigma}}$  for all $\sigma \in K(\CC)$.
As we will observe in Subsection~\ref{subsec:Dirichlet:unit:theorem}, 
$(0,\xi) + \widehat{(\varphi)}_{\RR} \geq 0$ for some $\varphi \in \Rat(X)^{\times}_{\RR}$ if and only if
there exist $a_1, \ldots, a_r \in \RR$ and $u_1, \ldots, u_r \in O_K^{\times}$ such that
\[
\varphi = u_1^{a_2/2} \cdots u_r^{a_r/2}\quad\text{and}\quad
(0, \xi) + \widehat{(\varphi)}_{\RR} = 0,
\]
which is nothing more than Dirichlet's unit theorem.
In this sense, the above problem is a generalization of Dirichlet's unit theorem on arithmetic varieties.
The following theorem is our partial answer to the above question.

\begin{Theorem}
If $\overline{D}$ is pseudo-effective and $D$ is numerically trivial on $X_{\QQ}$,
then there exists $\varphi \in \Rat(X)^{\times}_{\RR}$ such that $\overline{D} + \widehat{(\varphi)}_{\RR}$
is effective.
\end{Theorem}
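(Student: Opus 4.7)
The plan is to apply the arithmetic Hodge index theorem, combining the negative semi-definiteness of the intersection pairing on arithmetic divisors whose horizontal part is numerically trivial with the positivity coming from pseudo-effectivity, and then to reduce the remaining, essentially one-dimensional, problem to Dirichlet's unit theorem as reformulated in Subsection~\ref{subsec:Dirichlet:unit:theorem}.

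Concretely, I would fix an arithmetically ample Hermitian $\RR$-divisor $\overline{H}$ on $X$. Pseudo-effectivity of $\overline{D}$ means that $\overline{D} + \epsilon\overline{H}$ is big for every $\epsilon > 0$, from which a limiting argument yields
\[
\adeg(\overline{D} \cdot \overline{H}^{d-1}) \geq 0
\]
and, more generally, non-negativity of intersections of $\overline{D}$ with arithmetically nef classes. Since $D$ is numerically trivial on $X_{\QQ}$, the arithmetic Hodge index theorem provides
\[
\adeg(\overline{D}^2 \cdot \overline{H}^{d-2}) \leq 0,
\]
together with a description of the equality case. A Cauchy--Schwarz-type argument, comparing $\adeg((\overline{D} + t\overline{H}) \cdot (\overline{D} - s\overline{H}) \cdot \overline{H}^{d-2})$ for suitable $s, t \geq 0$, should force the inequality above to be an equality, placing $\overline{D}$ into the degenerate subspace of the intersection pairing.

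The equality case should then express $\overline{D}$, modulo $\widehat{(\varphi_0)}_{\RR}$ for some $\varphi_0 \in \Rat(X)^{\times}_{\RR}$, as an arithmetic $\RR$-divisor whose horizontal contribution is trivial and whose vertical and archimedean data are pulled back from $\Spec(O_K)$ for an appropriate number field $K$. At that point the theorem reduces to the case $d = 1$, which is exactly Dirichlet's unit theorem in the form observed earlier: the pulled-back pseudo-effective $\RR$-divisor on $\Spec(O_K)$ becomes effective after correction by the divisor of a suitable real linear combination of units, yielding the desired $\varphi \in \Rat(X)^{\times}_{\RR}$.

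The main obstacle, to my mind, is producing a sufficiently sharp form of the arithmetic Hodge index theorem. Standard formulations give only the inequality $\adeg(\overline{D}^2 \cdot \overline{H}^{d-2}) \leq 0$, whereas here one needs a refinement that describes the null cone of the intersection form on $\RR$-coefficient classes explicitly enough to produce a concrete $\varphi$. Additional care is required to pass between numerical triviality of $D$ on $X_{\QQ}$ and triviality of its class in $\Pic^0(X_{\QQ}) \otimes \RR$, since elements of the latter are not in general $\RR$-linear combinations of principal divisors; some approximation argument inside the Picard variety of $X_{\QQ}$ is likely to be unavoidable.
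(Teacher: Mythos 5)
Your overall strategy (Hodge index theorem plus Dirichlet's unit theorem) is the same as the paper's, but two steps in your outline conceal genuine gaps. First, the claim that pseudo-effectivity forces $\adeg(\overline{D}^2\cdot\overline{H}^{d-2})=0$ by a Cauchy--Schwarz comparison does not work as stated: a pseudo-effective class pairs non-negatively with \emph{nef} classes, but $\overline{D}$ itself is not nef, so you cannot pair $\overline{D}$ with itself and extract a sign. The paper instead argues by contradiction (Theorem~\ref{thm:pseudo:principal}): assuming $D_{\QQ}\notin\PDiv(X_{\QQ})_{\RR}$, it first normalizes $\overline{D}$ by adding an effective vertical $\RR$-divisor $E$ (Lemma~\ref{lem:vertical:trivial}, via Zariski's lemma on fibers) and replacing $g$ by a harmonic Green function $g_0$, so that the strict inequality $\adeg_{\overline{H}}((D+E,g_0)^2)<0$ holds by the equality case of the Hodge index theorem; it then invokes Lemma~\ref{lem:self:negative:not:peff}, whose proof constructs an auxiliary arithmetic $\RR$-divisor $\overline{L}$ that is genuinely nef against all effective divisors (using semi-ampleness on $X_{\QQ}$ and Lemma~\ref{lem:ample:R:div}) yet pairs strictly negatively with the normalized divisor. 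This construction, together with the equality case of the $\RR$-coefficient Hodge index theorem (which itself requires the $\pi_1$-representation argument and Lemma~\ref{lem:Z:module:tensor:R} to pass from numerical triviality to triviality in $\Pic(X_{\QQ})_{\RR}$ --- exactly the difficulty you flag at the end), is the substance you are missing.

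Second, the reduction to $\Spec(O_K)$ is not the formality your last paragraph suggests. After subtracting a principal arithmetic $\RR$-divisor you are left with $(E,\eta)$ where $E$ is a vertical $\RR$-divisor that is \emph{not} a pullback of a divisor on $\Spec(O_K)$, and $\eta$ is a continuous function on $X(\CC)$ that is \emph{not} locally constant. The paper needs two further nontrivial inputs here: Lemma~\ref{lem:pseudo:effective:cont:func}, which uses a Gromov-type estimate (Lemma~\ref{lem:comp:norm:X:U}) and Dirichlet's unit theorem itself to replace $\eta$ by the locally constant function $\lambda_\sigma=\inf_{X_\sigma}\eta$ while preserving pseudo-effectivity; and Lemma~\ref{existence:good:nef:div}, which builds a divisorially $\pi$-nef test class $H-M$ orthogonal to $E$ in order to prove $\sum_{\sigma}\lambda_\sigma\geq 0$, which is what finally lets Proposition~\ref{prop:arith:curve:case} (the $d=1$ case) produce the units. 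Without these your ``pulled-back pseudo-effective $\RR$-divisor on $\Spec(O_K)$'' does not yet exist. (There is also a preliminary reduction to $X$ regular via de Jong's alterations and the norm map, needed to run the fiberwise Zariski-lemma arguments.)
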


On an arithmetic curve, the assertion actually follows from Dirichlet's units theorem.
On a higher  dimensional arithmetic variety, 
it is a consequence of arithmetic Hodge index theorem and Dirichlet's units theorem.
Our theorem however treats only the case where $D$ is scanty.
For example, if $D$ is ample,
the problem seems to be difficult to get a solution.
For this purpose, we introduce a notion of multiplicative generators of approximately smallest sections.

Let $(\ )_{\RR} : \Rat(X)^{\times}_{\RR} \to \Div(X)_{\RR}$ be the natural extension of
the homomorphism $\Rat(X)^{\times} \to \Div(X)$ given by $\phi \mapsto (\phi)$.
Here we define $\Gamma^{\times}_{\RR}(X, D)$
to be
\[
\Gamma^{\times}_{\RR}(X, D) := \left\{ \varphi \in \Rat(X)^{\times}_{\RR} \mid D + (\varphi)_{\RR} \geq 0 \right\}.
\]
Let
$\ell : \Rat(X)^{\times} \to L^1_{loc}(X(\CC))$ be a homomorphism
given by $\varphi \mapsto \log \vert \varphi \vert$.
It extends to a linear map 
$\ell_{\RR} : \Rat(X)^{\times}_{\RR} \to L^1_{loc}(X(\CC))$.
For $\varphi \in \Rat(X)^{\times}_{\RR}$, we denote $\exp(\ell_{\RR}(\varphi))$ by $\vert \varphi \vert$.
If $\varphi \in \Gamma^{\times}_{\RR}(X, D)$, then
$\vert \varphi \vert \exp(-g/2)$ is continuous (cf. Lemma~\ref{lem:rational:tensor:real:number}), so that
we define $\Vert \varphi \Vert_{g, \sup}$ to be
\[
\Vert \varphi \Vert_{g, \sup} := \max \left\{ \left(\vert \varphi \vert \exp(-g/2)\right)(x) \mid x \in X(\CC) \right\}.
\]

Let $\varphi_1, \ldots, \varphi_l$ be elements of  $\Rat(X)^{\times}_{\RR}$.
We say $\varphi_1, \ldots, \varphi_l$ are 
{\em multiplicative generators of approximately smallest sections for $\overline{D}$} 
if, for  a given $\epsilon > 0$, there is $n_0 \in \ZZ_{>0}$ such that, for any integer $n$ with
$n \geq n_0$ and
$H^0(X, nD) \not= \{ 0 \}$, 
we can find $a_1, \ldots, a_l \in \RR$ satisfying 
$\varphi_1^{a_1} \cdots \varphi_l^{a_l} \in \Gamma_{\RR}^{\times}(X, nD)$ and
\[
\Vert \varphi_1^{a_1} \cdots \varphi_l^{a_l} \Vert_{ng,\sup} \leq e^{\epsilon n}
\min \left\{ \Vert \phi \Vert_{ng,\sup} \mid \phi \in H^0(X, nD) \setminus \{ 0 \} \right\}.
\]
If we admit the existence of multiplicative generators of approximately smallest sections,
then we can find $\varphi \in \Gamma^{\times}_{\RR}(X, D)$ such that
$\Vert \varphi \Vert_{g,\sup}$ gives rise to 
\[
\inf \left\{ \Vert \psi \Vert_{g, \sup} \mid \psi \in \Gamma^{\times}_{\RR}(X, D) \right\}
\]
(cf. Theorem~\ref{thm:inf:R}).
As a consequence, we have  the following partial answer to the above question.

\begin{Theorem}
If $\overline{D}$ is pseudo-effective, $D$ is big on the generic fiber of $X \to \Spec(\ZZ)$ and
$\overline{D}$ possesses multiplicative generators of approximately smallest sections,
then there exists $\varphi \in \Rat(X)^{\times}_{\RR}$ such that $\overline{D} + \widehat{(\varphi)}_{\RR} \geq 0$.
\end{Theorem}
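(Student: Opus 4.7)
The plan is to leverage Theorem~\ref{thm:inf:R} (which converts the multiplicative-generators hypothesis into existence of a minimizer of the sup-norm in $\Gamma^{\times}_{\RR}(X,D)$) and then force that minimizer to have sup-norm at most $1$, by promoting pseudo-effectivity into honest bigness after a perturbation in the Green-function direction.

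Reduction. Apply Theorem~\ref{thm:inf:R} to produce $\varphi\in\Gamma^{\times}_{\RR}(X,D)$ attaining
\[
\lambda:=\inf\{\Vert\psi\Vert_{g,\sup}\mid\psi\in\Gamma^{\times}_{\RR}(X,D)\}.
\]
Since $\Vert\varphi\Vert_{g,\sup}\leq 1$ is precisely the pointwise condition $g-2\log|\varphi|\geq 0$, and $\varphi\in\Gamma^{\times}_{\RR}(X,D)$ already enforces $D+(\varphi)_{\RR}\geq 0$, the desired inequality $\overline{D}+\widehat{(\varphi)}_{\RR}\geq 0$ is immediate once we show $\lambda\leq 1$.

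Key bigness claim: $\overline{D}+(0,2r)$ is big for every $r>0$. Because $D$ is big on $X_{\QQ}$, a sufficiently large Green-function shift produces a big arithmetic divisor; fix $c_{0}>0$ with $\overline{A}:=(D,\,g+2c_{0})$ big. Pseudo-effectivity of $\overline{D}$ then yields bigness of $\overline{D}+\epsilon\overline{A}$ for every $\epsilon>0$, and the scaling identity
\[
\overline{D}+\epsilon\overline{A}=(1+\epsilon)\Bigl(\overline{D}+\bigl(0,\,\tfrac{2\epsilon c_{0}}{1+\epsilon}\bigr)\Bigr),
\]
together with the choice $\epsilon=r/(c_{0}-r)$, gives the claim for $0<r<c_{0}$; the case $r\geq c_{0}$ follows directly from $\overline{D}+(0,2r)\geq\overline{A}$.

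Conclusion via multiplicative generators. Bigness of $\overline{D}+(0,2r)$ forces $\ah(X,n(\overline{D}+(0,2r)))\geq 1$ for all large $n$, so there exists $\phi\in H^{0}(X,nD)\setminus\{0\}$ with $\Vert\phi\Vert_{ng,\sup}\leq e^{nr}$. Writing $m_{n}$ for the minimum sup-norm over $H^{0}(X,nD)\setminus\{0\}$, this gives $m_{n}^{1/n}\leq e^{r}$ for all large $n$. For $\epsilon>0$, invoking the multiplicative generators $\varphi_{1},\ldots,\varphi_{l}$ at level $n$ produces $a_{1},\ldots,a_{l}\in\RR$ with $\prod_{i}\varphi_{i}^{a_{i}}\in\Gamma^{\times}_{\RR}(X,nD)$ and $\Vert\prod_{i}\varphi_{i}^{a_{i}}\Vert_{ng,\sup}\leq e^{\epsilon n}m_{n}$. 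Dividing the effectivity condition by $n$ puts $\psi_{n}:=\prod_{i}\varphi_{i}^{a_{i}/n}$ into $\Gamma^{\times}_{\RR}(X,D)$, and taking $n$-th roots of the sup-norm gives
\[
\Vert\psi_{n}\Vert_{g,\sup}=\Vert\textstyle\prod_{i}\varphi_{i}^{a_{i}}\Vert_{ng,\sup}^{1/n}\leq e^{\epsilon}m_{n}^{1/n}\leq e^{\epsilon+r},
\]
so $\lambda\leq e^{\epsilon+r}$; letting $\epsilon,r\to 0^{+}$ yields $\lambda\leq 1$. The main obstacle is the bigness claim: one has to convert a perturbation $\epsilon\overline{A}$ living throughout arithmetic divisor space into a perturbation purely in the Green-function direction $(0,2r)$, and the scaling identity above is what allows both pseudo-effectivity and generic-fiber bigness to pull their weight.
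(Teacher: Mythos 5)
Your proposal is correct and follows essentially the paper's route: Theorem~\ref{thm:inf:R} produces the sup-norm minimizer in $\Gamma^{\times}_{\RR}(X,D)$, and the infimum is forced to be $\leq 1$ by producing, for every $\epsilon>0$, small sections of multiples of $\overline{D}+(0,\epsilon)$, which is exactly the mechanism of Corollary~\ref{cor:ans:fund:question}. Your scaling identity $\overline{D}+\epsilon(D,g+2c_{0})=(1+\epsilon)\bigl(\overline{D}+(0,\tfrac{2\epsilon c_{0}}{1+\epsilon})\bigr)$ correctly supplies the bridging step (pseudo-effectivity plus bigness of $D$ on the generic fiber implies $\overline{D}+(0,\epsilon)$ is big) that the paper leaves implicit when deducing the Introduction theorem from that corollary.
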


For the proof, we need the following compactness theorem.

\begin{Theorem}
Let $\overline{H}$ be an ample arithmetic $\RR$-Cartier divisor on $X$.
Let $\Lambda$ be a finite set and let $\left\{ \overline{D}_{\lambda} \right\}_{\lambda \in \Lambda}$ be a family of 
arithmetic $\RR$-Cartier divisors 
of $C^{\infty}$-type with the following properties:
\begin{enumerate}
\renewcommand{\labelenumi}{(\roman{enumi})}
\item
$\adeg(\overline{H}^{d-1} \cdot \overline{D}_{\lambda}) = 0$ for all $\lambda \in \Lambda$.

\item
For each $\lambda \in \Lambda$, there is an $F_{\infty}$-invariant locally constant  function $\rho_{\lambda}$ on $X(\CC)$ such that
\[
c_1(\overline{D}_{\lambda}) \wedge c_1(\overline{H})^{\wedge d-2} = \rho_{\lambda} c_1(\overline{H})^{\wedge d-1}.
\]

\item
$\left\{ \overline{D}_{\lambda} \right\}_{\lambda \in \Lambda}$ is linearly independent in $\aDiv_{C^{\infty}}(X)_{\RR}$.
\end{enumerate}
Then the set
\[
\left\{ \pmb{a} \in \RR^{\Lambda} \ \left| \ \overline{D} + \sum_{\lambda \in \Lambda} \pmb{a}_{\lambda} \overline{D}_{\lambda}  \geq 0\right\}\right.
\]
is convex and compact for $\overline{D} \in \aDiv_{C^{0}}(X)_{\RR}$.
\end{Theorem}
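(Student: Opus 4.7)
The plan is to verify convexity and closedness as routine preliminaries and to concentrate on boundedness, which I expect to be the heart of the proof. Write $\overline{E}_{\pmb{a}} := \overline{D} + \sum_{\lambda \in \Lambda} \pmb{a}_{\lambda} \overline{D}_{\lambda}$ and call the set in question $\Sigma$. Convexity of $\Sigma$ is immediate since the effective cone in $\aDiv_{C^0}(X)_{\RR}$ is convex and $\pmb{a} \mapsto \overline{E}_{\pmb{a}}$ is affine. Closedness follows because this map is continuous into the finite-dimensional $\RR$-subspace of $\aDiv_{C^0}(X)_{\RR}$ spanned by $\overline{D}$ and the $\overline{D}_\lambda$, in which the effective cone is closed (effectivity of the $\RR$-divisor part and non-negativity of the Green function pass to limits).

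For boundedness, I would argue by contradiction. Suppose $\pmb{a}^{(n)} \in \Sigma$ with $\|\pmb{a}^{(n)}\| \to \infty$, set $\pmb{b}^{(n)} := \pmb{a}^{(n)}/\|\pmb{a}^{(n)}\|$, and pass to a subsequence with $\pmb{b}^{(n)} \to \pmb{b}$, $\|\pmb{b}\| = 1$. Dividing $\overline{E}_{\pmb{a}^{(n)}} \geq 0$ by $\|\pmb{a}^{(n)}\|$ and letting $n \to \infty$ kills the $\overline{D}$ contribution and yields
\[
\overline{F} := \sum_{\lambda \in \Lambda} \pmb{b}_\lambda \overline{D}_\lambda \geq 0.
\]
By linear independence (iii), $\overline{F} \neq 0$; by orthogonality (i) and linearity, $\adeg(\overline{H}^{d-1} \cdot \overline{F}) = 0$. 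The remaining task is to show that these two properties are incompatible when $\overline{H}$ is ample.

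This last step is the main obstacle. Writing $\overline{F} = (F, g_F)$, the intersection formula gives
\[
\adeg(\overline{H}^{d-1} \cdot \overline{F}) = \adeg\bigl(\rest{\overline{H}^{d-1}}{F}\bigr) + \tfrac{1}{2}\int_{X(\CC)} g_F\, c_1(\overline{H})^{\wedge d-1},
\]
and each summand is $\geq 0$: arithmetic ampleness of $\overline{H}$ yields $\adeg((\overline{H}|_{F_i})^{d-1}) > 0$ on every irreducible component (horizontal or vertical) of the effective $\RR$-divisor $F$, while $c_1(\overline{H})^{\wedge d-1}$ is a strictly positive $(d-1,d-1)$-form and $g_F \geq 0$. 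Vanishing of the sum therefore forces $F = 0$, and then, with the log singularities of $g_F$ now removed, a non-negative continuous function integrating to zero against a positive volume form must vanish identically, so $g_F \equiv 0$. Hence $\overline{F} = 0$, contradicting (iii). Condition (ii) is not used in this route; it is the compatibility hypothesis that would let one obtain the same incompatibility via the arithmetic Hodge index theorem if one preferred to argue from $\adeg(\overline{H}^{d-2} \cdot \overline{F}^{2}) \leq 0$ instead.
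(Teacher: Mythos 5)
Your proof is correct, but it takes a genuinely different route from the paper's. The paper does not argue directly in $\RR^{\Lambda}$: it first proves a universal boundedness statement (Theorem~\ref{thm:linsys:bounded:convex}) for the family consisting of all prime divisors $\Gamma$, equipped with Green functions normalized so that $\adeg(\overline{H}^{d-1}\cdot\overline{\Gamma})=0$, together with all $\eta\in C^0_0(X)$; it then uses hypothesis (ii) to write each $\overline{D}_{\lambda}$ as $\sum_{\Gamma}\alpha_{\lambda\Gamma}\overline{\Gamma}+(0,\xi_{\lambda})$ with $\xi_{\lambda}$ locally constant, and pulls compactness back through an injective linear map of $\RR^{\Lambda}$ into $\RR(X^{(1)})\oplus\Xi_X$ (Corollary~\ref{cor:linsys:bounded:convex}). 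You instead stay finite-dimensional: a normalized divergent sequence in the effectivity set produces, in the limit, a nonzero effective $\overline{F}=\sum_{\lambda}\pmb{b}_{\lambda}\overline{D}_{\lambda}$ with $\adeg(\overline{H}^{d-1}\cdot\overline{F})=0$, which is impossible since $\adeg(\overline{H}^{d-1}\cdot(\Gamma,0))>0$ for every prime divisor $\Gamma$ and $c_1(\overline{H})^{d-1}$ is a positive volume form --- exactly the positivity facts the paper itself uses inside the proof of Theorem~\ref{thm:linsys:bounded:convex}. Your observation that (ii) is never needed is accurate: in the paper, (ii) serves only to make the non-divisorial parts of the $\overline{D}_{\lambda}$ locally constant, so that the integral bounds of Theorem~\ref{thm:linsys:bounded:convex} control the finitely many parameters; your argument shows that (i) and (iii) alone suffice for the compactness, at the price of not producing the stronger infinite-family boundedness statement that the paper's detour establishes. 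Two small points to spell out in a final write-up: the limiting Green-function inequality is obtained pointwise off the union of the finitely many relevant supports and then extended by continuity to $X(\CC)\setminus\Supp(F)$, and the strict positivity $\adeg(\overline{H}^{d-1}\cdot(\Gamma,0))>0$ for horizontal as well as vertical $\Gamma$ should be justified (or cited) for ample arithmetic $\RR$-divisors in Moriwaki's sense.
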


For example, as an application of the above compactness theorem and the arithmetic Riemann-Roch theorem 
on arithmetic curves,
we can give a proof of Dirichlet's unit theorem (cf. Subsection~\ref{subsec:Dirichlet:unit:theorem}), which
indicates that the theory of arithmetic $\RR$-Cartier divisors is not an artificial material, 
but it actually provides realistic tools for arithmetic problems.
Here we would like to give the following question:

\begin{Question}
If $D$ is big on the generic fiber of $X \to \Spec(\ZZ)$, then does $\overline{D}$ have
multiplicative generators of approximately smallest sections ?
\end{Question}


\renewcommand{\thesubsubsection}{\arabic{subsubsection}}

\bigskip
\renewcommand{\theequation}{CT.\arabic{subsubsection}.\arabic{Claim}}
\subsection*{Conventions and terminology}
We basically use the same notation as in \cite{MoArZariski}.
Here we fix several conventions and the terminology of this paper.
Let $\KK$ be either $\QQ$ or $\RR$.
Moreover, in the following 3 $\sim$ 6,
$X$ is a $d$-dimensional, generically smooth, normal and projective arithmetic variety.

\subsubsection{}
\label{CT:currents}
Let $M$ be a $k$-equidimensional complex manifold.
The space of real valued continuous functions  (reps $C^{\infty}$-functions) 
on $M$ is denoted by
$C^0(M)$ (resp $C^{\infty}(M)$).
Moreover, the space of currents of bidegree $(p,q)$ is denoted by
$D^{p,q}(M)$.
Let $N^{p,q}(M)$ be the space of currents $T$ of bidegree $(p,q)$ such that
$T(\eta) = 0$ for all $d$-closed $C^{\infty}$ $(k-p, k -q)$-forms with
compact support.

\subsubsection{}
\label{CV:R:principal:div}
Let $S$ be a normal and integral noetherian scheme.
We denote the group of Cartier divisors (resp. Weil divisors) on $S$ by $\Div(S)$ (resp. $\WDiv(S)$).
We set 
\[
\Div(S)_{\KK} := \Div(S) \otimes_{\ZZ} \KK\quad\text{and}\quad
\WDiv(S)_{\KK} := \WDiv(S) \otimes_{\ZZ} \KK.
\]
An element of $\Div(S)_{\KK}$ (resp. $\WDiv(S)_{\KK}$) is called a {\em $\KK$-Cartier divisor} (resp.
{\em $\KK$-Weil divisor}) on $S$. 
We denote the group of principal divisors on $S$ by $\PDiv(S)$.
Let $\Rat(S)^{\times}_{\KK} := \Rat(S)^{\times} \otimes_{\ZZ} \KK$, that is,
\[
\Rat(S)^{\times}_{\KK} = \left\{ \phi_1^{\otimes a_1} \cdots \phi_l^{\otimes a_l} \mid
\text{$\phi_1, \ldots, \phi_l \in \Rat(S)^{\times}$ and $a_1, \ldots, a_l \in \KK$} \right\}.
\]
The homomorphism $\Rat(S)^{\times} \to \Div(S)$ given by $\phi \mapsto (\phi)$ naturally extends
to a homomorphism 
\[
(\ )_{\KK} : \Rat(S)^{\times}_{\KK} \to \Div(S)_{\KK},
\]
i.e. $(\phi_1^{\otimes a_1} \cdots \phi_l^{\otimes a_l}) = a_1 (\phi_1) + \cdots + a_l(\phi_l)$.
By abuse of notation, we sometimes denote $(\ )_{\KK}$ by $(\ )$.
We define
$\PDiv(S)_{\KK}$ to be 
\[
\PDiv(S)_{\KK} := \left\{ (\varphi)_{\KK} \mid \varphi \in \Rat(S)^{\times}_{\KK} \right\}.
\]
Note that
\[
\PDiv(S)_{\KK} := \left\langle \{ (\phi) \mid \phi \in \Rat(S)^{\times} \} \right\rangle_{\KK}
\subseteq \Div(S)_{\KK}.
\]
An element of $\PDiv(S)_{\KK}$ is called a {\em principal $\KK$-divisor} on $S$.

\subsubsection{}
\label{CV:arithmetic:connected:comp}
Let $X \overset{\pi}{\longrightarrow} \Spec(O_K) \to \Spec(\ZZ)$
be the Stein factorization of $X \to \Spec(\ZZ)$, where $K$ is a number field and $O_K$ is the ring of
integers in $K$. We denote by $K(\CC)$ the set of all embedding of $K$ into $\CC$.
For $\sigma \in K(\CC)$, we set $X_{\sigma} := X \times^{\sigma}_{\Spec(O_K)} \Spec(\CC)$,
where $\times^{\sigma}_{\Spec(O_K)}$ means the fiber product over $\Spec(O_K)$ with respect to $\sigma$.
Then $\{ X_{\sigma} \}_{\sigma \in K(\CC)}$ gives rise to the set of all connected components of $X(\CC)$.
For a locally constant function $\lambda$ on $X(\CC)$ and $\sigma \in K(\CC)$,
the value of $\lambda$ on the connected component $X_{\sigma}$ is denoted by $\lambda_{\sigma}$.
Clearly the set of all locally constant real valued functions on $X(\CC)$ can be identified with $\RR^{K(\CC)}$.
The complex conjugation map $X(\CC) \to X(\CC)$ is denoted by $F_{\infty}$.
Note that $F_{\infty}(X_{\sigma}) = X_{\bar{\sigma}}$.

\subsubsection{}
\label{CV:arithmetic:divisors:1}
Let $\mathcal{C}$ be a class of real valued continuous functions.
As examples of $\mathcal{C}$,
we can consider $C^{0}$, $C^{\infty}$, $C^0 \cap \Tpsh$ and so on,
where $C^0 \cap \Tpsh$ is the class of 
continuous plurisubharmonic functions.
A pair $\overline{D} = (D, g)$ 
is called an {\em arithmetic $\KK$-Cartier divisor
of $\mathcal{C}$-type} if
the following conditions are satisfied:
\begin{enumerate}
\renewcommand{\labelenumi}{(\alph{enumi})}
\item
$D$ is a $\KK$-Cartier divisor on $X$, that is,
$D = \sum_{i=1}^r a_i D_i$  for some $D_1, \ldots, D_r \in \Div(X)$ and
$a_1, \ldots, a_r \in \KK$.

\item
$g : X(\CC) \to \RR \cup \{\pm\infty\}$ is a locally integrable function and
$g \circ F_{\infty} = g \ (a.e.)$, where $F_{\infty} : X(\CC) \to X(\CC)$
is the complex conjugation map.

\item
For any point $x \in X(\CC)$, there are an open neighborhood $U_{x}$ of $x$ and
a function $u_x$ on $U_x$ such that $u_x$ belongs to the class $\mathcal{C}$ and
\[
g = u_x + \sum_{i=1}^r (-a_i) \log \vert f_i \vert^2\quad(a.e.)
\]
on $U_x$,
where $f_i$ is a local equation of $D_i$ over $U_x$ for each $i$.
\end{enumerate}
Let $\aDiv_{\mathcal{C}}(X)_{\KK}$ be
 the set of all arithmetic $\KK$-Cartier divisors of $\mathcal{C}$-type.
Note that 
there are natural surjective homomorphisms
\[
\aDiv_{C^0}(X) \otimes_{\ZZ} \RR \to \aDiv_{C^0}(X)_{\RR}\quad\text{and}\quad
\aDiv_{C^{\infty}}(X) \otimes_{\ZZ} \RR \to \aDiv_{C^{\infty}}(X)_{\RR}
\]
and that they are not isomorphisms respectively.
For details, see \cite{MoArZariski}.
For $\overline{D} \in \aDiv_{C^0}(X)_{\KK}$, the current
$dd^c([g]) + \delta_D$ is denoted by $c_1(\overline{D})$.
Note that $c_1(\overline{D})$ is locally equal to $dd^c([u_x])$.
If $\overline{D}$ is of $C^{\infty}$-type, then
$c_1(\overline{D})$ is represented by a $C^{\infty}$-form.
By abuse of notation, we also denote the $C^{\infty}$-form by $c_1(\overline{D})$.

\subsubsection{}
\label{CV:arithmetic:divisors:2}
The group of arithmetic principal divisors  
on $X$ is denoted by $\aPDiv(X)$.
The homomorphism $\Rat(X)^{\times} \to \aDiv_{C^{\infty}}(X)$ given by $\phi \mapsto \widehat{(\phi)}$
has the natural extension 
\[
\widehat{(\ )}_{\KK} : \Rat(X)^{\times}_{\KK} \to \aDiv_{C^{\infty}}(X)_{\KK},
\]
that is, $\widehat{(\varphi)} = a_1 \widehat{(\phi_1)} + \cdots + a_l \widehat{(\phi_l)}$ for
$\varphi = \phi_1^{\otimes a_1} \cdots \phi_l^{\otimes a_l}$.
For simplicity, $\widehat{(\ )}_{\KK}$ is occasionally denoted by $\widehat{(\ )}$.
We define $\aPDiv(X)_{\KK}$  
to be
\[
\aPDiv(X)_{\KK} :=\left\{ \widehat{(\varphi)}_{\KK} \mid \varphi \in \Rat(X)^{\times}_{\KK} \right\}.
\]
Note that
\[
\aPDiv(X)_{\KK} := \Big\langle \{ \widehat{(\phi)} \mid \phi \in \Rat(X)^{\times} \} \Big\rangle_{\KK} 
\subseteq \aDiv_{C^{\infty}}(X)_{\KK}. 
\]
An element of $\aPDiv(X)_{\KK}$  
is called
an {\em arithmetic principal $\KK$-divisor}  
on $X$.

\subsubsection{}
\label{CV:arithmetic:divisors:3}
An {\em arithmetic $\KK$-Weil divisor of $C^0$-type \rom{(}resp.  $C^{\infty}$-type\rom{)} on $X$} is 
a pair $\overline{D} = (D, g)$ 
consisting of
a $\KK$-Weil divisor $D$ on $X$ and a $D$-Green function $g$ of $C^0$-type (resp. $C^{\infty}$-type).
We denote the group of arithmetic $\KK$-Weil divisors of $C^0$-type (resp. of $C^{\infty}$-type) on $X$ by
$\aWDiv_{C^0}(X)_{\KK}$ (resp. $\aWDiv_{C^{\infty}}(X)_{\KK}$).
It is easy to see that there is a unique multi-linear form
\[
\alpha : \left(\aDiv_{C^{\infty}}(X)_{\KK}\right)^{d-1} \times \WDiv(X)_{\KK} \to \RR
\]
such that $\alpha(\overline{D}_1, \ldots, \overline{D}_{d-1}, \Gamma) =
\adeg(\rest{\overline{D}_1}{\widetilde{\Gamma}} \cdots \rest{\overline{D}_{d-1}}{\widetilde{\Gamma}})$
for $\overline{D}_1, \ldots, \overline{D}_{d-1} \in \aDiv_{C^{\infty}}(X)$ and
a prime divisor $\Gamma$ with $\Gamma \not\subseteq \Supp(D_1) \cup \cdots \cup \Supp(D_{d-1})$,
where $\widetilde{\Gamma}$ is the normalization of $\Gamma$.
We denote $\alpha(\overline{D}_1, \ldots, \overline{D}_{d-1}, D)$ by
$\adeg(\overline{D}_1 \cdots \overline{D}_{d-1} \cdot (D, 0))$.
Further, for $\overline{D}_1, \ldots, \overline{D}_{d-1} \in \aDiv_{C^{\infty}}(X)_{\KK}$
and $\overline{D} = (D, g) \in \aWDiv_{C^0}(X)_{\KK}$,
we define $\adeg(\overline{D}_1 \cdots \overline{D}_{d-1} \cdot \overline{D})$ to be
\[
\adeg(\overline{D}_1 \cdots \overline{D}_{d-1} \cdot \overline{D})
:=
\adeg(\overline{D}_1 \cdots \overline{D}_{d-1} \cdot (D,0)) + \frac{1}{2}
\int_{X(\CC)} g c_1(\overline{D}_1) \wedge \cdots \wedge c_1(\overline{D}_{d-1}).
\]

\subsubsection{}
\label{CV:vector:space:generated:by:set}
For a set $\Lambda$,
let $\RR^{\Lambda}$ be the set of all maps from $\Lambda$ to $\RR$.
The vector space generated by $\Lambda$ over $\RR$ is denoted by $\RR(\Lambda)$, that is,
\[
\RR(\Lambda) = \{ \pmb{a} \in \RR^{\Lambda} \mid \text{$\pmb{a}(\lambda) = 0$ except finitely many $\lambda \in \Lambda$} \}.
\]
For $\pmb{a} \in \RR^{\Lambda}$ and $\lambda \in \Lambda$, 
we often denote $\pmb{a}(\lambda)$ by $\pmb{a}_{\lambda}$.

\subsubsection{}
\label{CV:vol:parallelotope}
Let $V$ be a vector space over $\RR$ and let $\langle\ ,\ \rangle$ be an inner product on $V$.
For a finite subset $\{ x_1, \ldots, x_r \}$ of $V$,
we define $\vol(\{ x_1, \ldots, x_r \})$ to be the square root of the Gramian of $x_1, \ldots, x_r$ with respect to
$\langle\ ,\ \rangle$, that is,
\[
\vol(\{ x_1, \ldots, x_r \}) = \sqrt{\det \begin{pmatrix}
\langle x_1, x_1 \rangle & \langle x_1, x_2 \rangle & \cdots & \langle x_1, x_r \rangle \\
\langle x_2, x_1 \rangle & \langle x_2, x_2 \rangle & \cdots & \langle x_2, x_r \rangle \\
\cdots & \cdots & \cdots & \cdots \\
\langle x_r, x_1 \rangle & \langle x_r, x_2 \rangle & \cdots & \langle x_r, x_r \rangle
\end{pmatrix}}.
\]
For convenience, we set $\vol(\emptyset) = 1$.
Note that if $V = \RR^n$ and $\langle\ ,\ \rangle$ is the standard inner product, then
$\vol(\{ x_1, \ldots, x_r \})$ is the volume of the parallelotope given by
$\{ a_1x_1 + \cdots + a_r x_r \mid 0 \leq a_1 \leq 1, \ldots, 0 \leq a_r \leq 1 \}$.

\renewcommand{\theTheorem}{\arabic{section}.\arabic{subsection}.\arabic{Theorem}}
\renewcommand{\theClaim}{\arabic{section}.\arabic{subsection}.\arabic{Theorem}.\arabic{Claim}}
\renewcommand{\theequation}{\arabic{section}.\arabic{subsection}.\arabic{Theorem}.\arabic{Claim}}

\section{Preliminaries}
In this section, we prepare several materials for later sections.
Let us begin with elementary results on linear algebra.

\subsection{Lemmas of linear algebra}
\setcounter{Theorem}{0}

Here we would like to provide the following four lemmas of linear algebra.

\begin{Lemma}
\label{lem:Z:module:tensor:R}
Let $M$ be a $\ZZ$-module. Then we have the following:
\begin{enumerate}
\renewcommand{\labelenumi}{(\arabic{enumi})}
\item
For $x \in M \otimes_{\ZZ} \RR$, there are $x_1, \ldots, x_l \in M$ and $a_1, \ldots, a_l \in \RR$
such that $a_1, \ldots, a_l$ are linearly independent over $\QQ$ and
$x = x_1 \otimes a_1 + \cdots + x_l \otimes a_l$.

\item
Let $x_1, \ldots, x_l \in M$ and
$a_1, \ldots, a_l \in \RR$ such that $a_1, \ldots, a_l$ are linearly independent over $\QQ$.
If $x_1 \otimes a_1 + \cdots + x_l \otimes a_l = 0$ in $M \otimes_{\ZZ} \RR$,
then $x_1, \ldots, x_l$ are torsion elements in $M$.

\item
If $N$ is a submodule of $M$, then $(M \otimes_{\ZZ} \QQ) \cap (N \otimes_{\ZZ} \RR) = N \otimes_{\ZZ} \QQ$.
\end{enumerate}
\end{Lemma}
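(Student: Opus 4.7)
The plan is to handle the three parts in order, each being a clean instance of flatness of $\RR$ over $\ZZ$. For (1), I would start with any representation $x = \sum_{i=1}^n y_i \otimes b_i$ with $y_i \in M$ and $b_i \in \RR$, pick a $\QQ$-basis $a_1', \ldots, a_l'$ of the finite-dimensional $\QQ$-subspace of $\RR$ spanned by $b_1, \ldots, b_n$, and write $b_i = \sum_j c_{ij} a_j'$ with $c_{ij} \in \QQ$. Clearing a common denominator $m \in \ZZ_{>0}$ for the $c_{ij}$, the elements $a_j := a_j'/m$ remain $\QQ$-linearly independent, and then
\[
x = \sum_i y_i \otimes \sum_j (m c_{ij}) a_j = \sum_j \Big(\sum_i (m c_{ij}) y_i\Big) \otimes a_j,
\]
since $m c_{ij} \in \ZZ$ is now allowed to cross the tensor symbol. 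Setting $x_j := \sum_i (m c_{ij}) y_i \in M$ then yields the desired expression.

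For (2), I would extend $\{a_1, \ldots, a_l\}$ to a $\QQ$-basis $\{a_\nu\}_{\nu \in I}$ of $\RR$, which gives $\RR \cong \bigoplus_{\nu \in I} \QQ \cdot a_\nu$ as $\QQ$-vector spaces and hence
\[
M \otimes_{\ZZ} \RR \;\cong\; \bigoplus_{\nu \in I} (M \otimes_{\ZZ} \QQ).
\]
Under this identification, $x_i \otimes a_i$ corresponds to placing $x_i \otimes 1$ in the summand indexed by $a_i$. The vanishing hypothesis therefore forces each $x_i \otimes 1 = 0$ in $M \otimes_{\ZZ} \QQ$, which is exactly the statement that $x_i$ is a torsion element of $M$.

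For (3), I would choose a $\QQ$-vector-space splitting $\RR = \QQ \oplus V$ and use that $\RR$ (hence $V$) is a flat $\ZZ$-module, so that the natural maps $N \otimes_{\ZZ} \RR \hookrightarrow M \otimes_{\ZZ} \RR$ and $N \otimes_{\ZZ} V \hookrightarrow M \otimes_{\ZZ} V$ are injective. The splitting induces compatible direct sum decompositions
\[
M \otimes_{\ZZ} \RR = (M \otimes_{\ZZ} \QQ) \oplus (M \otimes_{\ZZ} V), \quad N \otimes_{\ZZ} \RR = (N \otimes_{\ZZ} \QQ) \oplus (N \otimes_{\ZZ} V).
\]
If $x$ lies in the intersection, then in the decomposition of $M \otimes_{\ZZ} \RR$ its $V$-component vanishes, while in the decomposition of $N \otimes_{\ZZ} \RR$ it writes as $x = y_1 + y_2$ with $y_1 \in N \otimes_{\ZZ} \QQ$ and $y_2 \in N \otimes_{\ZZ} V$; since the inclusion respects the two summands, the injectivity on the $V$-part forces $y_2 = 0$, so $x = y_1 \in N \otimes_{\ZZ} \QQ$. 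I do not foresee a genuine obstacle; the only mild subtlety is in (1), where the rational scalars $c_{ij}$ cannot be moved directly across the tensor and one must first absorb a common denominator into the basis $\{a_j'\}$.
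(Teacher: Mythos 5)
Your proposal is correct in all three parts, and part (1) coincides with the paper's own argument (basis of the $\QQ$-span, clear denominators into the basis, push the resulting integers across the tensor). Parts (2) and (3), however, follow genuinely different routes. For (2), the paper first reduces to a finitely generated module via flatness of $\RR$, kills torsion to assume $M$ free, and then uses injectivity of $\ZZ a_1 \oplus \cdots \oplus \ZZ a_l \to \RR$ together with flatness of the free module $M$ to conclude $x_i \otimes a_i = 0$ and hence $x_i = 0$; you instead extend $a_1, \ldots, a_l$ to a $\QQ$-basis of $\RR$ and use that tensor products commute with direct sums, so that $M \otimes_{\ZZ} \RR \cong \bigoplus_{\nu}(M \otimes_{\ZZ} \QQ)$ and the hypothesis gives $x_i \otimes 1 = 0$ in $M \otimes_{\ZZ} \QQ$, i.e.\ $x_i$ torsion, directly and without any reduction steps (at the mild cost of invoking a Hamel basis). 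For (3), the paper runs a diagram chase with the two exact rows $0 \to N \otimes \KK \to M \otimes \KK \to (M/N) \otimes \KK \to 0$ for $\KK = \QQ, \RR$ and injective vertical maps, identifying the intersection with $\Ker(\varrho_{\QQ})$; you instead split $\RR = \QQ \oplus V$ as $\QQ$-vector spaces and compare the induced decompositions of $M \otimes_{\ZZ} \RR$ and $N \otimes_{\ZZ} \RR$, using injectivity of $N \otimes_{\ZZ} V \to M \otimes_{\ZZ} V$ (valid, since $V$ is a direct summand of the flat module $\RR$, or simply torsion-free). The two arguments are of comparable length; yours makes the role of the splitting $\RR = \QQ \oplus V$ explicit (which is also what secretly makes the paper's vertical maps injective), while the paper's version packages it into exactness and avoids choosing a complement.
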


\begin{proof}
(1) As $x \in M \otimes_{\ZZ} \RR$, 
there are $a'_1, \ldots, a'_{r} \in \RR$ and $x'_1, \ldots, x'_{r} \in M$ such that
$x = {x'_1} \otimes {a'_1} + \cdots + {x'_{r}} \otimes {a'_{r}}$.
Let $a_1, \ldots, a_l$ be a basis of $\langle a'_1, \ldots, a'_{r} \rangle_{\QQ}$ over $\QQ$.
Then there are $c_{ij} \in \QQ$ such that $a'_i = \sum_{j=1}^l c_{ij} a_j$.
Replacing $a_j$ by $a_j/n$ ($n \in \ZZ_{>0}$) if necessarily,
we may assume that $c_{ij} \in \ZZ$.
If we set $x_j = \sum_{i=1}^r {c_{ij}} {x'_i}$, then $x_1, \ldots, x_l \in M$,
$x = x_1 \otimes {a_1} + \cdots + x_{s} \otimes {a_{s}}$ and
$a_1, \ldots, a_s$ are linearly independent over $\QQ$.

(2) We set $M' = \ZZ x_1 + \cdots + \ZZ x_l$.
Then, since $\RR$ is flat over $\ZZ$, the natural homomorphism
$M' \otimes \RR \to M \otimes \RR$ is injective, and hence we may assume that
$M$ is finitely generated.
Let $M_{tor}$ be the set of all torsion elements in $M$.
Considering $M/M_{tor}$, we may further assume that $M$ is free.
Note that the natural homomorphism
$\ZZ a_1 \oplus \cdots \oplus \ZZ a_l \to \RR$ is injective.
Thus $M \otimes_{\ZZ} (\ZZ a_1 \oplus \cdots \oplus \ZZ a_l) \to M \otimes_{\ZZ} \RR$ is also injective
because $M$ is flat over $\ZZ$. Namely,
\[
(M \otimes_{\ZZ} \ZZ a_1) \oplus \cdots \oplus (M \otimes_{\ZZ} \ZZ a_l) \to M \otimes_{\ZZ} \RR
\]
is injective. Therefore, $x_1 \otimes a_1 = \cdots = x_l \otimes a_l = 0$.
Thus $x_1 = \cdots = x_l = 0$ because the homomorphism $M \to M \otimes \RR$ given by
$x \mapsto x \otimes a_i$ is also injective for each $i$.

(3) It actually follows from \cite[Lemma~1.1.3]{MoArLin}.
For reader's convenience, we continue its proof in an elementary way.
Let us consider the following commutative diagram:
\[
\begin{CD}
0 @>>> N \otimes_{\ZZ} \QQ @>{\iota_{\QQ}}>> M \otimes_{\ZZ} \QQ @>{\varrho_{\QQ}}>> (M/N) \otimes_{\ZZ} \QQ @>>> 0 \\
@. @VV{\tau_N}V @VV{\tau_M}V @VV{\tau_{M/N}}V @. \\
0 @>>> N \otimes_{\ZZ} \RR @>{\iota_{\RR}}>> M \otimes_{\ZZ} \RR @>{\varrho_{\RR}}>> (M/N) \otimes_{\ZZ} \RR @>>>  0
\end{CD}
\]
Note that horizontal sequences are exact and vertical homomorphisms are injective.
Therefore,  we have
\[
(M \otimes_{\ZZ} \QQ) \cap (N \otimes_{\ZZ} \RR) = \Ker(\varrho_{\RR} \circ \tau_M) = \Ker(\tau_{M/N} \circ \varrho_{\QQ}) = \Ker(\varrho_{\QQ}) = N \otimes_{\ZZ} \QQ.
\]
\end{proof}

\begin{Lemma}
\label{lem:vol:ratio}
Let $V$ be a finite dimensional vector space over $\RR$ and let $\langle\ ,\ \rangle$ be an inner product on $V$.
Let $\Sigma$ be a non-empty finite subset of $V$ and $x \in \Sigma$.
Let $h$ be the distance between $x$ and $\langle \Sigma \setminus \{ x \}\rangle_{\RR}$
\rom{(}note that $\langle \emptyset \rangle_{\RR} = \{ 0 \}$\rom{)}.
Then we have the following \rom{(}\,for the definition of $\vol(\Sigma)$, 
see Conventions and terminology~\rom{\ref{CV:vol:parallelotope}}\rom{)}:
\begin{enumerate}
\renewcommand{\labelenumi}{(\arabic{enumi})}
\item
$\vol(\Sigma) = \vol(\Sigma \setminus \{ x \})h$.

\item
$\vol(\Sigma) \leq \vol(\Sigma \setminus \{ x \})\sqrt{\langle x, x \rangle}$.
In the case where $\Sigma \setminus \{ x \}$ consists of linearly independent vectors,
the equality holds if and only if $x$ is orthogonal to $\langle \Sigma \setminus \{ x \} \rangle_{\RR}$.

\item
We assume that $\Sigma \setminus \{ x \}$ consists of linearly independent vectors and $x \not= 0$.
If $\theta$ is the angle between $x$ and $\langle \Sigma \setminus \{ x \} \rangle_{\RR}$,
then
\[
\frac{\vol(\Sigma)}
{\sqrt{\langle x, x \rangle}\vol(\Sigma \setminus \{ x \})}
= \sin(\theta).
\]
\end{enumerate}
\end{Lemma}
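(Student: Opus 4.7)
I would prove (1) first, since it is the geometric heart of the statement, and then obtain (2) and (3) as elementary trigonometric corollaries.

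For (1), enumerate $\Sigma \setminus \{x\} = \{x_1,\ldots,x_r\}$, set $W := \langle x_1,\ldots,x_r\rangle_{\RR}$, and assume first that $x_1,\ldots,x_r$ are linearly independent. Then the orthogonal projection $x' \in W$ of $x$ is unique; write $x' = \sum_i c_i x_i$ and $x'' := x - x'$, so that $x'' \perp W$ and $\|x''\| = h$. Arrange the Gram matrix $G(\Sigma)$ with $x$ placed in the last row and column. Subtracting $c_i$ times the $i$-th row from the last row (and doing the matching column operation) is an elementary transformation preserving the determinant; by orthogonality it replaces the last-row entries $\langle x, x_j\rangle$ by $\langle x'', x_j\rangle = 0$ and the last diagonal entry by $\langle x'', x''\rangle = h^2$. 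The resulting matrix is block diagonal with blocks $G(\Sigma \setminus \{x\})$ and $(h^2)$, so $\det G(\Sigma) = h^2 \det G(\Sigma \setminus \{x\})$, and taking square roots gives (1). In the linearly dependent case, any nontrivial relation $\sum_i \lambda_i x_i = 0$ descends to the same relation among the corresponding rows of both $G(\Sigma)$ and $G(\Sigma \setminus \{x\})$; hence the two Gramians vanish simultaneously and (1) still holds.

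For (2), combining (1) with the Pythagorean identity $\langle x,x\rangle = \langle x',x'\rangle + h^2 \geq h^2$ immediately yields $\vol(\Sigma) \leq \vol(\Sigma \setminus \{x\})\sqrt{\langle x,x\rangle}$, with equality precisely when $x' = 0$, i.e., when $x \perp W$ (this characterization being valid once the $x_i$ are linearly independent, so that the projection is unique). For (3), the definition of angle gives $\sin\theta = h/\sqrt{\langle x,x\rangle}$, which substituted into (1) is the claimed identity. The only mildly delicate point is the linearly dependent case of (1), but this is dispatched by the row-dependence observation above; otherwise the argument is simply the standard Gram-determinant reduction to orthogonal form and contains no surprises.
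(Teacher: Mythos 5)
Your proof is correct and follows essentially the same route as the paper: both reduce (1) to the orthogonal decomposition of $x$ relative to $\langle \Sigma \setminus \{x\}\rangle_{\RR}$ together with a determinant factorization (the paper via the coordinate matrix in an adapted orthonormal basis, you via determinant-preserving row and column operations on the Gram matrix), and then (2) and (3) are read off exactly as in the paper. One tiny remark: in the equality statement of (2), the role of the linear independence of $\Sigma \setminus \{x\}$ is to guarantee $\vol(\Sigma \setminus \{x\}) > 0$ so that it can be cancelled (the orthogonal projection onto the subspace is unique in any case), but this does not affect your argument.
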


\begin{proof}
(1) 
If $\#(\Sigma) = 1$, then the assertion is obvious, so that we may set $\Sigma = \{x_1, \ldots, x_n \}$,
where $x_1 = x$ and $n = \#(\Sigma) \geq 2$.
If $x_2, \ldots, x_n$ are linearly dependent, then $\vol(\Sigma) = \vol(\Sigma \setminus \{ x_1 \}) = 0$.
Thus the assertion is also obvious for this case. 
Moreover, if $x_1 \in \langle x_2, \ldots, x_r \rangle_{\RR}$, then $h = \vol(\Sigma) =0$.
Thus we may assume that $x_1, x_2, \ldots, x_n$ are linearly independent.
Let $\{ e_1, e_2, \ldots, e_r \}$ be an orthonormal basis of $\langle x_1, x_2, \ldots, x_r \rangle_{\RR}$ such that
$\{ e_2, \ldots, e_r \}$ yields an orthonormal basis of $\langle x_2, \ldots, x_r \rangle_{\RR}$.
We set $x_i = \sum_{j=1}^r a_{ij} e_j$. Then $h = \vert a_{11} \vert$ and $a_{i1} = 0$ for $i=2, \ldots, r$.
Further, if we set $A = (a_{ij})_{1 \leq i, j \leq r}$ and 
$A' = (a_{ij})_{2 \leq i, j \leq r}$, then $\vol(\Sigma) = \vert \det(A) \vert$ and
$\vol(\Sigma \setminus \{ x_1 \}) = \vert \det(A') \vert$. Thus the assertion follows.

(2) and (3) follow from (1).
\end{proof}

\begin{Lemma}
\label{lem:negative:semidefinite}
Let $V$ be a vector space over $\RR$ and let $\langle\ ,\ \rangle : V \times V \to \RR$
be a negative semi-definite symmetric bi-linear form, that is,
$\langle v, v \rangle \leq 0$ for all $v \in V$.
For $x \in V$, the following are equivalent:
\begin{enumerate}
\renewcommand{\labelenumi}{(\arabic{enumi})}
\item
$\langle x, x \rangle = 0$.

\item
$\langle x, y \rangle = 0$ for all $y \in V$.
\end{enumerate}
\end{Lemma}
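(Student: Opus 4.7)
The plan is to reduce the statement to the standard Cauchy--Schwarz trick for semi-definite bilinear forms. The implication $(2) \Rightarrow (1)$ is immediate by taking $y = x$, so the content is in $(1) \Rightarrow (2)$.

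Assuming $\langle x, x \rangle = 0$, I would fix an arbitrary $y \in V$ and study the quadratic polynomial
\[
P(t) := \langle x + ty, x + ty \rangle = \langle x, x \rangle + 2t \langle x, y \rangle + t^2 \langle y, y \rangle
\]
in $t \in \RR$. By the negative semi-definiteness hypothesis applied to the vector $x + ty$, we have $P(t) \leq 0$ for every $t \in \RR$. Substituting $\langle x, x \rangle = 0$, this becomes
\[
2t \langle x, y \rangle + t^2 \langle y, y \rangle \leq 0 \quad \text{for all } t \in \RR.
\]

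Next I would extract $\langle x, y \rangle = 0$ from this inequality. Dividing by $t > 0$ and letting $t \to 0^+$ gives $2 \langle x, y \rangle \leq 0$; dividing by $t < 0$ and letting $t \to 0^-$ gives $2 \langle x, y \rangle \geq 0$. Therefore $\langle x, y \rangle = 0$, and since $y$ was arbitrary, (2) follows. There is no real obstacle here; the only point to be mindful of is that one cannot appeal to a positive definite Cauchy--Schwarz statement directly (the form is only semi-definite), but the one-variable polynomial argument above bypasses that issue cleanly.
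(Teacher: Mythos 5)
Your proof is correct and follows essentially the same route as the paper: both arguments expand the form on a line through $x$ and $y$ (the paper uses $\langle y+tx,\,y+tx\rangle$, you use $\langle x+ty,\,x+ty\rangle$) and exploit that the resulting polynomial in $t$ is $\leq 0$ for all real $t$ to force $\langle x,y\rangle=0$. The paper phrases this as a contradiction with two explicit choices of $t$, while you take one-sided limits, but this is only a cosmetic difference.
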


\begin{proof}
Clearly (2) implies (1).
We assume $\langle x, x \rangle = 0$ and $\langle x, y \rangle \not= 0$ for some $y \in V$. First of all, 
\[
0 \geq \langle y + tx, y + tx \rangle = \langle y, y \rangle + 2t \langle x, y \rangle
\]
for all $t \in \RR$. Thus, if we set $t = -\langle y, y \rangle/\langle x, y \rangle$,
then the above implies $\langle y, y \rangle \geq 0$, and hence $\langle y, y \rangle = 0$.
Therefore, if we set $t = \langle x, y \rangle/2$, then we have
$\langle x, y \rangle^2 \leq 0$, which is a contradiction because $\langle x, y \rangle \not= 0$.
\end{proof}

\begin{Lemma}[Zariski's lemma for vector spaces]
\label{lem:Zariski:vector:sp}
Let $\KK$ be either $\QQ$ or $\RR$.
Let $V$ be a finite dimensional vector space over $\KK$, and let
$Q : V \times V \to \RR$ be a symmetric bi-linear form. We assume that there are $e \in V$ and
generators $e_1, \ldots, e_n$ of $V$ with the following properties:
\begin{enumerate}
\renewcommand{\labelenumi}{(\roman{enumi})}
\item
$e = a_1 e_1 + \cdots + a_n e_n$ for some
$a_1, \ldots, a_n \in \KK_{>0}$.

\item
$Q(e, e_i) \leq 0$ for all $i$.

\item
$Q(e_i, e_j) \geq 0$ for all $i \not= j$.

\item
If we set $S = \{ (i, j) \mid \text{$i \not= j$ and $Q(e_i, e_j) > 0$} \}$,
then, for any $i \not= j$, there is a sequence $i_1, \ldots, i_l$ such that
$i_1 = i$, $i_l = j$, and $(i_t, i_{t+1}) \in S$ for all $1 \leq t < l$.
\end{enumerate}
Then we have the following:
\begin{enumerate}
\renewcommand{\labelenumi}{(\arabic{enumi})}
\item
If $Q(e, e_i) < 0$ for some $i$, then
$Q$ is negative definite, that is,
$Q(x, x) \leq 0$ for all $x \in V$, and 
$Q(x, x) = 0$ if and only if $x = 0$.

\item
If $Q(e, e_i) = 0$ for all $i$, then
$Q$ is negative semi-definite and its kernel is $\KK e$, that is,
$Q(x, x) \leq 0$ for all $x \in V$, and 
$Q(x, x) = 0$ if and only if $x \in \KK e$.
\end{enumerate}
\end{Lemma}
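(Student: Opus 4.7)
The plan is to prove both parts simultaneously via a single algebraic identity that writes $Q(x,x)$ as a sum of manifestly non-positive terms. Given $x\in V$, since $e_1,\ldots,e_n$ generate $V$, I can fix an expression $x=\sum_{i=1}^n x_i e_i$ (not necessarily unique) and set $\lambda_i:=x_i/a_i$, which is legitimate because $a_i>0$. The \emph{key step} is to establish the identity
\[
Q(x,x)\;=\;\sum_{i=1}^n \frac{x_i^2}{a_i}\,Q(e_i,e)\;-\;\sum_{i<j} a_i a_j\,Q(e_i,e_j)\,(\lambda_i-\lambda_j)^2.
\]
The derivation starts from $Q(x,x)=\sum_{i,j} x_i x_j Q(e_i,e_j)$, rewrites the off-diagonal contributions using $x_i x_j = a_i a_j \lambda_i \lambda_j$ and the symmetric identity $2\lambda_i\lambda_j=\lambda_i^2+\lambda_j^2-(\lambda_i-\lambda_j)^2$, and then collapses the $\lambda_i^2$-terms via hypothesis~(i) in the form $\sum_j a_j Q(e_i,e_j)=Q(e_i,e)$; the diagonal contributions $\sum_i x_i^2 Q(e_i,e_i)$ cancel exactly.

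Once the identity is in hand, the rest is immediate. By hypotheses~(ii), (iii) and $a_i>0$, each of the two sums on the right is $\le 0$, hence $Q(x,x)\le 0$ for every $x\in V$. This settles the negative semi-definiteness part of both (1) and (2). Now suppose $Q(x,x)=0$. Then both sums vanish separately. Vanishing of the second sum forces $\lambda_i=\lambda_j$ for every $(i,j)\in S$, and hypothesis~(iv) propagates this through the equivalence relation generated by $S$ to all pairs of indices, so $\lambda_1=\cdots=\lambda_n=:c$.

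In case~(2), this yields $x=c\sum_i a_i e_i = c\,e\in\KK e$; conversely, $Q(c\,e,c\,e)=c^2\sum_i a_i Q(e,e_i)=0$ by hypothesis, confirming that $\KK e$ is exactly the kernel. In case~(1), the additional information that $Q(e,e_{i_0})<0$ for some $i_0$ together with vanishing of the first sum forces $x_{i_0}=0$, hence $c=\lambda_{i_0}=0$, and then $x_i = c\,a_i = 0$ for every $i$, so $x=0$.

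The main obstacle is the algebraic identity itself: since $e_1,\ldots,e_n$ generate but need not form a basis, the representation $x=\sum x_i e_i$ is not canonical and the right-hand side of the identity is a priori choice-dependent. This is not an actual obstruction because the consequences ($Q(x,x)\le 0$ and the intrinsic conclusions $x=0$ or $x\in\KK e$) are coordinate-free, but it means the identity must be derived and applied for one chosen representation. Hypothesis~(i) enters essentially in the cancellation of the diagonal terms, and hypothesis~(iv) is the only tool that propagates pointwise equalities of the $\lambda_i$ into a global equality; dropping either would break the argument.
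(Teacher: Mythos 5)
Your proof is correct and follows essentially the same route as the paper: the paper first normalizes $a_1=\cdots=a_n=1$ (replacing $e_i$ by $a_ie_i$) and then invokes exactly your identity, $Q(x,x)=\sum_i x_i^2Q(e_i,e)-\sum_{i<j}(x_i-x_j)^2Q(e_i,e_j)$, leaving the sign analysis and the connectivity argument via (iv) as "easy observations," which you have simply written out in full.
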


\begin{proof}
Replacing $e_i$ by $a_i e_i$, we may assume that $a_1 = \cdots = a_n = 1$.
If we set $x = x_1 e_1 + \cdots + x_n e_n$ for some $x_1, \ldots, x_n \in \KK$,
then we can show
\[
Q(x, x) = \sum_i x_i^2 Q(e_i, e) - \sum_{i < j} (x_i - x_j)^2 Q(e_i, e_j).
\]
Thus our assertions follow from easy observations.
\end{proof}

\subsection{Proper currents and admissible continuous functions}
\label{subsec:admissible:cont:function}
\setcounter{Theorem}{0}
Throughout this subsection, we fix 
a $k$-equidimensional complex manifold $M$.
A current of bidegree $(l,l)$ on $M$ is said to be
{\em proper} if, for any $x \in M$,
there are an open neighborhood $U_x$ of $x$ and
$d$-closed positive currents $T_1, T_2$ of bidegree $(l,l)$ 
on $U_x$ such that $T = T_1 - T_2$ over $U_x$.
We denote the space of proper currents of bidegree $(l,l)$ by
$\proCur^{l,l}(M)$.
As a proper current is of order $0$,
for $f \in C^0(M)$ and $T \in \proCur^{l,l}(M)$,  we define the wedge product
$dd^c([f]) \wedge T$ of $dd^c([f])$ and $T$ to be
\[
dd^c([f]) \wedge T := dd^c(fT),
\]
that is, $(dd^c([f]) \wedge T)(\eta) = T(f dd^c(\eta))$ for a
$C^{\infty}$-form $\eta$ of bidegree $(k-l-1, k-l-1)$.
It is easy to see that the map 
\[
C^0(M) \times \proCur^{l,l}(M) \to D^{l+1, l+1}(M)
\]
given by $(f, T) \mapsto dd^c([f]) \wedge T$ is multi-linear.

A continuous function $f : M \to \RR$ is said to be {\em admissible}
if, for any point $x \in M$, there are an open neighborhood $U_x$ of $x$ and
continuous plurisubharmonic functions $\phi_1, \phi_2$ on $U_x$ 
such that $f = \phi_1 - \phi_2$ over $U_x$.
Note that $dd^c([f])$ is a proper current of bidegree $(1,1)$.
The space of admissible continuous functions on $M$
is denoted by $\admCont(M)$. 
It is easy to see that $C^{\infty}(M) \subseteq \admCont(M)$
(cf. the proof of (3) in Lemma~\ref{lem:wedge:B:proper:current}).
Moreover,  let $\admCur^{1,1}(M)$ be the space of currents $T$ of bidegree $(1,1)$
such that $T = dd^c([\varphi])$ locally for some admissible continuous function
$\varphi$ on each local open neighborhood.
As a $d$-closed positive $C^{\infty}$-form of bidegree $(1,1)$ can be locally written as
$dd^c(\text{$C^{\infty}$-function})$ (cf. \cite[Chapter~3, (1.18)]{Dem}),
any $d$-closed real $C^{\infty}$-form of bidegree $(1,1)$ on $M$ belongs to
$\admCur^{1,1}(M)$.

An upper semicontinuous function $f : X \to \RR \cup \{ -\infty \}$ is called a 
{\em quasiplurisubharmonic function} on $X$ if $f$ is locally a sum of a plurisubharmonic function and
a $C^{\infty}$-function.
We denote the space of all continuous quasiplurisubharmonic functions on $M$ by 
$(C^0 \cap\Tqpsh)(M)$.
Clearly $(C^0 \cap\Tqpsh)(M) \subseteq \admCont(M)$.
The subspace generated by $(C^0 \cap\Tqpsh)(M)$ in
$\admCont(M)$ is denoted by $\langle (C^0 \cap\Tqpsh)(M) \rangle_{\RR}$.
For a real continuous form $\alpha$ of
bidegree $(1,1)$, we define $\admCont(M;\alpha)$ to be
\[
\admCont(M;\alpha) := \left\{ f \in \admCont(M) \mid dd^c([f]) + \alpha \geq 0 \right\}.
\]
Note that $\admCont(M;\alpha) \subseteq (C^0 \cap\Tqpsh)(M)$
(cf. the proof of (3) in Lemma~\ref{lem:wedge:B:proper:current}).
Let us begin with the following lemma.

\begin{Lemma}
\label{lem:wedge:B:proper:current}
\begin{enumerate}
\renewcommand{\labelenumi}{(\arabic{enumi})}
\item
If $A \in \admCur^{1,1}(X)$ and $T \in \proCur^{l,l}(X)$, then
$A \wedge T \in \proCur^{l+1,l+1}(X)$.
Moreover, if $A$ and $T$ are positive,
then $A \wedge T$ is also positive.

\item
For $A_1, \ldots, A_r  \in \admCur^{1,1}(M)$ and
$T \in \proCur^{l,l}(M)$,
the wedge product
\[
A_1  \wedge \cdots \wedge A_r  \wedge T
\]
of currents $A_1, \ldots, A_r$ and $T$ is defined inductively as an element of
$\proCur^{r+l,r+l}(M)$ by using \rom{(1)},  that is,
\[
A_1  \wedge \cdots \wedge A_r  \wedge T = A_1  \wedge (A_2 \wedge \cdots \wedge A_r  \wedge T).
\] 
Then
the map $\admCur^{1,1}(M)^r \to \proCur^{r+l,r+l}(M)$ given by
\[
(A_1, \ldots, A_r) \mapsto A_1  \wedge \cdots \wedge A_r \wedge T
\]
is multi-linear and symmetric.

\item
Let $\alpha$ be a real continuous form of bidegree $(1,1)$.
Let $\{ f_{1, n} \}_{n=1}^{\infty}, \ldots, \{ f_{r, n} \}_{n=1}^{\infty}$ be
sequences in $\admCont(M; \alpha)$ such that
$\{ f_{i, n} \}_{n=1}^{\infty}$ converges locally uniformly
to $f_i \in \admCont(M; \alpha)$ for each $i$.
Then, for $T \in \proCur^{l,l}(M)$,
a sequence
\[
\left\{ f_{1,n} dd^c([f_{2,n}]) \wedge \cdots \wedge dd^c([f_{r,n}]) \wedge T \right\}_{n=1}^{\infty}
\]
converges weakly to
\[
f_{1} dd^c([f_{2}]) \wedge \cdots \wedge dd^c([f_{r}]) \wedge T.
\]
\end{enumerate}
\end{Lemma}

\begin{proof}
(1) This is a local question, so that we may assume that
there are continuous plurisubharmonic functions $\phi_1, \phi_2$
and $d$-closed positive currents $T_1, T_2$ such that
$A = dd^c([\phi_1]) - dd^c([\phi_2])$ and $T = T_1 - T_2$.
Therefore,
\[
A \wedge T =
\left( dd^c([\phi_1]) \wedge T_1 + dd^c([\phi_2]) \wedge T_2 \right)
- \left( dd^c([\phi_1]) \wedge T_2  + dd^c([\phi_2]) \wedge T_1 \right),
\]
as required.
The second assertion is obvious.

\medskip
(2)
The multi-linearity of $\admCur^{1,1}(M)^r \to \proCur^{r+l,r+l}(M)$ is obvious. 
For symmetry, it is sufficient to see
that following claim:

\begin{Claim}
Let $f$ and $g$ be continuous plurisubharmonic functions on $M$ and let
$T$ be a proper current on $M$. Then
$dd^c([f]) \wedge dd^c([g]) \wedge T =
dd^c([g]) \wedge dd^c([f]) \wedge T$.
\end{Claim}

\begin{proof}
If $f$ is $C^{\infty}$, then,
for a $C^{\infty}$-form $\eta$,
\begin{multline*}
(dd^c(f) \wedge dd^c([g]) \wedge T)(\eta) =
(dd^c([g]) \wedge T)(dd^c(f) \wedge\eta) =
T(g dd^c(dd^c(f) \wedge\eta)) \\
= T(g dd^c(f) \wedge dd^c(\eta)) =
(dd^c(f) \wedge T)(g dd^c(\eta)) = (dd^c([g]) \wedge dd^c(f) \wedge T)(\eta).
\end{multline*}
Otherwise, as the question is a local problem,
we can find a sequence of $C^{\infty}$ plurisubharmonic functions
$\{ f_n \}$ such that $\{ f_n \}$ converges locally uniformly to $f$.
Then $\{ dd^c(f_n) \wedge dd^c([g]) \wedge T\}$ and
$\{ dd^c([g]) \wedge dd^c(f_n) \wedge T\}$ converge weakly to
$dd^c([f]) \wedge dd^c([g]) \wedge T$ and
$dd^c([g]) \wedge dd^c([f]) \wedge T$ respectively (cf. \cite[Corollary~3.6 in Chapter~3]{Dem}), 
and hence the assertion follows.
\end{proof}

(3) This is also a local question.
For $x \in M$, let us consider a local coordinate $(z_1, \ldots, z_k)$ over an
open neighborhood $U_x$ of $x$.
As $dd^c(\log ( 1 + \vert z_1 \vert^2 + \cdots + \vert z_k \vert^2))$ is a positive form,
shrinking $U_x$ if necessarily, we can find $\lambda > 0$ such that
\[
\lambda dd^c(\log ( 1 + \vert z_1 \vert^2 + \cdots + \vert z_k \vert^2)) \geq \alpha
\]
over $U_x$. Thus,
if we set $\psi = \lambda \log ( 1 + \vert z_1 \vert^2 + \cdots + \vert z_k \vert^2)$, then
$f_i + \psi$, $g_i + \psi$, $f_{i,n} + \psi$ and
$g_{i,n} + \psi$ are continuous and plurisubharmonic over $U_x$ for all $i$ and $n$.
Therefore, (3) is a consequence of the convergence theorem for
plurisubharmonic functions (cf. \cite[Corollary~3.6 in Chapter~3]{Dem}).
\end{proof}

Next we consider the following lemma.

\begin{Lemma}
\label{lem:wedge:B:proper:current:compact}
We assume that $M$ is compact.
\begin{enumerate}
\renewcommand{\labelenumi}{(\arabic{enumi})}
\item
Let $\alpha$ be a positive continuous form of bidegree $(1,1)$.
If $f \in (C^0 \cap \Tqpsh)(M)$,
then there is a positive number $t_0$ such that
$f \in \admCont(M;t\alpha)$ for all $t \geq t_0$.

\item
For $f, g \in \langle (C^0 \cap \Tqpsh)(M) \rangle_{\RR}$ and $T \in \proCur^{l,l}(M)$,
\[
f dd^c([g]) \wedge T \equiv
g dd^c([f]) \wedge T \mod N^{l+1,l+1}(M)
\]
\rom{(}for the definition of $N^{l+1,l+1}(M)$,
see Conventions and terminology~\rom{\ref{CT:currents}}\rom{)}.

\item
Let $T$ be a $d$-closed positive  current of bidegree $(k-1,k-1)$.
Then
\[
\int_M f dd^c([f]) \wedge T  \leq 0
\]
 for $f \in \langle (C^0 \cap \Tqpsh)(M) \rangle_{\RR}$.
\end{enumerate}
\end{Lemma}

\begin{proof}
(1)
For each point $x \in X$, there are an open neighborhood $U_x$ of $x$,
a plurisubharmonic function $p_x$ on $U_x$ and a $C^{\infty}$-function $q_x$ on $U_x$ 
such that  $f = p_x + q_x$ over $U_x$.
If we consider a smaller $U_x$, then
we can write $\alpha$ and $dd^c(q_x)$ as follows:
\[
\alpha = \sqrt{-1}\sum_{ij} \alpha_{ij} dz_i \wedge d\bar{z}_j
\quad\text{and}\quad
dd^c(q_x) = \sqrt{-1} \sum_{ij} \beta_{ij} dz_i \wedge d\bar{z}_j,
\]
where $(z_1, \ldots, z_k)$ is a local coordinate on $U_x$.
As $(\alpha_{ij}(x))$ is a positive definite hermitian matrix,
we can find a positive number $s_x$ such that
$s_x(\beta_{ij}(x)) +  (\alpha_{ij}(x))$ is positive.
Note that $s_x(\beta_{ij}) +  (\alpha_{ij})$ is continuous on $U_x$. Thus,
shrinking $U_x$ if necessarily, 
$s_x(\beta_{ij}) +  (\alpha_{ij})$ is positive on $U_x$, and hence,
for $t \geq t_x := 1/s_x$,
\[
dd^c(q_x) + t \alpha = (t-t_x) \alpha + t_x(s_x dd^c(q_x) + \alpha) \geq 0
\]
on $U_x$.
Because of the compactness of $X$, there are finitely many $x_1, \ldots, x_r \in X$
with $X = U_{x_1} \cup \cdots \cup U_{x_r}$. If we set $t_0 = \max \{t_{x_1}, \ldots, x_{x_r} \}$,
then, for $t \geq t_0$, 
\[
dd^c([f]) + t \alpha = dd^c([p_{x_i}]) + (dd^c(q_{x_i}) + t \alpha)
\]
is positive over $U_{x_i}$, as required.

\medskip
(2) By our assumption, there are $f_1, f_2, g_1, g_2 \in (C^0 \cap \Tqpsh)(M)$
such that $f = f_1 - f_2$ and $g = g_1 - g_2$.
Therefore, we may assume that $f, g \in (C^0 \cap \Tqpsh)(M)$.
If $f$ is $C^{\infty}$, then, 
for a $d$-closed $C^{\infty}$-form $\eta$ of bidegree $(k-l-1, k-l-1)$,
\[
(f dd^c([g]) \wedge T)(\eta) =T(g dd^c(f\eta)) = T(g dd^c(f) \wedge \eta) 
= (g dd^c(f) \wedge T)(\eta).
\]
Otherwise, by (1), we can take a positive $C^{\infty}$-form $\alpha$ of bidegree $(1,1)$ with
$f \in \admCont(X; \alpha)$.
Thus, by \cite{BK} or \cite[Lemma~4.2]{MoArZariski}, 
we can find a sequence of $C^{\infty}$-functions
$\{ f_{n} \}$ in $\admCont(M;\alpha)$ such that $\{ f_{n} \}$ converges uniformly to $f$.
Therefore, by (3) in Lemma~\ref{lem:wedge:B:proper:current}, 
\[
f_{n} dd^c([g]) \wedge T\quad\text{and}\quad
g dd^c(f_{n}) \wedge T
\]
converges weakly to $f dd^c([g]) \wedge T$ and
$g dd^c([f]) \wedge T$ respectively.
Thus (2) follows from the case where $f$ is $C^{\infty}$.

\medskip
(3) First we assume $f$ is $C^{\infty}$.
Then, as
\[
\partial \left( \frac{\sqrt{-1}}{2\pi} f \bar{\partial}(f) \right) = \frac{\sqrt{-1}}{2\pi} \partial(f)\wedge
 \bar{\partial}(f) + f dd^c(f)
\]
and $T$ is $\partial$-closed, we have
\[
0 = -(\partial T) \left( \frac{\sqrt{-1}}{2\pi} f \bar{\partial}(f) \right) = 
T  \left(\partial \left( \frac{\sqrt{-1}}{2\pi} f \bar{\partial}(f) \right) \right) =
T \left(  \frac{\sqrt{-1}}{2\pi} \partial(f)\wedge
 \bar{\partial}(f)\right) + T( f dd^c(f)).
\]
Note that 
\[
T \left(  \frac{\sqrt{-1}}{2\pi} \partial(f)\wedge
 \bar{\partial}(f)\right) \geq 0.
\]
Thus we have the assertion in the case where $f$ is $C^{\infty}$.

In general, by using (1),
we can find continuous functions $g, h$ on $M$  and
a positive $C^{\infty}$-form $\alpha$ such that
$g, h \in \admCont(M; \alpha)$ and $f = g - h$.
Thus, by \cite{BK} or \cite[Lemma~4.2]{MoArZariski}, 
there are sequences $\{ g_n \}_{n=1}^{\infty}$ and
$\{ h_n \}_{n=1}^{\infty}$ of $C^{\infty}$-functions on $M$
such that $g_n, h_n \in \admCont(M; \alpha)$ for all $n \geq 1$ and
\[
\lim_{n\to\infty} \Vert g_n - g \Vert_{\sup} = \lim_{n\to\infty} \Vert h_n - h \Vert_{\sup} = 0.
\]
Then, by (3) in Lemma~\ref{lem:wedge:B:proper:current}, a sequence
$\{ (g_n - h_n) dd^c(g_n - h_n) \wedge T\}$
of currents
converge weakly to
$(g - h) dd^c([g-h])\wedge T = f dd^c([f]) \wedge T$.
Thus, (3) follows from the previous case.
\end{proof}

From now on, we assume that $M$ is compact and K\"{a}hler.
Let $T$ be a $d$-closed positive current of bidegree $(k-1,k-1)$.
For $f, g \in   \admCont(M)$,
we define $I_{T}(f, g)$ to be
\[
I_{T}(f, g) := \int_{M} f dd^{c}([g])\wedge T.
\]
Then we have the following proposition.

\begin{Proposition}
\label{prop:sym:semidefinite:bilinear}
$I_{T}$ is a symmetric and negative semidefinite bi-linear form on
\[
\langle (C^0 \cap \Tqpsh)(M) \rangle_{\RR},
\]
that is,
the following properties are satisfied:
\begin{enumerate}
\renewcommand{\labelenumi}{(\arabic{enumi})}
\item
$I_{T}(af + bf', g) = a I_{T}(f, g) + b I_{T}(f', g)$ and
$I_{T}(f, ag+bg') = a I_{T}(f, g) + b I_{T}(f, g')$
hold for
all $f, f', g, g' \in \admCont(M)$ and $a, b \in \RR$.

\item
$I_{T}(f, g) = I_{T}(g, f)$ for all $f, g \in  \langle (C^0 \cap \Tqpsh)(M) \rangle_{\RR}$.

\item
$I_{T}(f, f) \leq 0$ for all $f \in  \langle (C^0 \cap \Tqpsh)(M) \rangle_{\RR}$. 
\end{enumerate}
Moreover, let $A_1, \ldots, A_{k-1} \in \admCur^{1,1}(M)$ and let $\omega$ be a K\"{a}hler form
of $M$.
We assume that, for each $i=1, \ldots, k-1$, 
there is $\epsilon_i \in \RR_{>0}$ with
$A_i \geq \epsilon_i \omega$  .
If $T = A_1 \wedge \cdots \wedge A_{k-1}$,
then
\[
I_{T}(f, f) = 0\quad\Longleftrightarrow\quad\text{$f$ is a constant}.
\]
\end{Proposition}

\begin{proof}
(1) is obvious.  (2) follows from (2) in Lemma~\ref{lem:wedge:B:proper:current:compact}.
(3) is a consequence of (3) in  Lemma~\ref{lem:wedge:B:proper:current:compact}.
Finally we consider the last assertion.
Clearly if $f$ is a constant, then $I_T(f, f) = 0$.
We set 
\[
T' =  (\epsilon_1^{-1}A_1) \wedge \cdots \wedge  (\epsilon_{k-1}^{-1}A_{k-1}) = (\epsilon_1 \cdots \epsilon_{k-1})^{-1}T.
\]
Then, as $ \epsilon_i^{-1}A_i -\omega$ is positive, 
by (1) in Lemma~\ref{lem:wedge:B:proper:current},
there is a $d$-closed positive current $T''$ of bidegree $(k-1,k-1)$ such that
$T' = \omega^{k-1} + T''$.
In particular, by (3),
\[
I_{T'}(f, f) \leq I_{\omega^{k-1}}(f, f) \leq 0
\]
for $f \in \langle (C^0 \cap \Tqpsh)(M) \rangle_{\RR}$.
Note that we can define a Laplacian $\Lap_{\omega}$
by the equation: 
\[
-dd^c(f) \wedge \omega^{k-1} = \Lap_{\omega}(f) \omega^{k}\quad (f \in C^{\infty}(X)).
\]
Therefore, 
\begin{align*}
I_{T}(f, f) = 0 & \quad\Longrightarrow\quad  I_{T'}(f, f) = 0 
\quad\Longrightarrow\quad  I_{\omega^{k-1}}(f, f) = 0 \\
& \quad\Longrightarrow\quad \text{$I_{\omega^{k-1}}(g, f) = 0$ for all $g \in C^{\infty}(X)$
($\because$ Lemma~\ref{lem:negative:semidefinite})}\\
& \quad\Longrightarrow\quad  \text{$dd^c([f]) \wedge \omega^{k-1} = 0$ as a current} \\
& \quad\Longrightarrow\quad \Lap_{\omega}([f]) = 0 \\
& \quad\Longrightarrow\quad \text{$f$ is harmonic ($\because$ the regularity of elliptic operators)} \\
& \quad\Longrightarrow\quad \text{$f$ is a constant},
\end{align*}
as required.
\end{proof}

\subsection{A variant of Gromov's inequality for $\RR$-Cartier divisors}
\setcounter{Theorem}{0}

In this subsection, we would like to consider a generalization of \cite[Lemma~1.1.4]{MoCont} to 
$\RR$-Cartier divisors.

\begin{Lemma}
\label{lem:comp:norm:X:U}
Let $X$ be a $d$-dimensional compact K\"{a}hler manifold and let
$\omega$ be a K\"{a}hler form on $X$.
Let $D_1, \ldots, D_l$ be $\RR$-Cartier divisors on $X$.
For each $i=1, \ldots, l$, let $g_i$ be a $D_i$-Green function of $C^{\infty}$-type.
Let $U$ be an open set of $X$ such that $U$ is not empty on each connected component of $X$.
Then there are constants $C_1, \ldots, C_l \geq 1$ such that
$C_i$ depends only on $g_i$ and $U$, and that
\[
\sup_{x \in X} \{ \vert s \vert_{m_1g_1 + \cdots +m_lg_l} (x) \} \leq C_1^{m_1} \cdots C_l^{m_l} 
\sup_{x \in U} \{ \vert s \vert_{m_1g_1 + \cdots +m_lg_l}(x) \}.
\]
for all $m_1, \ldots, m_l \in \RR_{\geq 0}$
and all 
$s \in H^0(X, m_1 D_1 + \cdots + m_l D_l)$.
Moreover, if $D_i = 0$ and $g_i$ is a constant function, then
$C_i = 1$.
\end{Lemma}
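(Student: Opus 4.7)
We fix a nonzero $s \in H^0(X, \sum_i m_i D_i)$ and study the upper semicontinuous function $u := \log |s|_g$, where $g := \sum_i m_i g_i$. Since $s$ is a rational function on $X$, the Poincar\'e--Lelong formula combined with $dd^c g_i = c_1(\overline{D}_i) - \delta_{D_i}$ gives, as currents on $X$,
\[
dd^c\bigl[\log |s|_g^2\bigr] \;=\; \delta_{(s)+\sum_i m_i D_i} \;-\; \sum_i m_i\, c_1(\overline{D}_i).
\]
Since $(s) + \sum_i m_i D_i \ge 0$ by the definition of $H^0(X, \sum_i m_i D_i)$, the first term on the right is a nonnegative current, and hence $dd^c[u] + \tfrac{1}{2}\sum_i m_i\, c_1(\overline{D}_i) \ge 0$. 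By the compactness of $X$, for each $i$ one can choose a constant $A_i \ge 0$, depending only on $g_i$ and $\omega$, such that $c_1(\overline{D}_i) \le A_i \omega$ pointwise on $X$; when $D_i = 0$ and $g_i$ is a constant function one has $c_1(\overline{D}_i) = 0$, and we take $A_i = 0$. Setting $\lambda := \tfrac{1}{2}\sum_i m_i A_i$, we obtain $u \in \Tpsh(X; \lambda\omega)$.

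The main analytic input is the following sup-comparison estimate: there exists a constant $K(U) > 0$, depending only on $U$ and $\omega$, such that
\[
\sup_{X} v \;\le\; \sup_{U} v \;+\; K(U) \qquad \text{for every } v \in \Tpsh(X; \omega).
\]
Granting this and applying it to $u/\lambda$ when $\lambda > 0$ (the case $\lambda = 0$ being trivial: then $u$ is plurisubharmonic on the compact manifold $X$, hence constant on each connected component by the maximum principle), we get $\sup_X u \le \sup_U u + \lambda K(U)$. Exponentiating yields
\[
\sup_{x \in X} \vert s \vert_{g}(x) \;\le\; \exp\bigl(\lambda K(U)\bigr) \cdot \sup_{x \in U} \vert s \vert_{g}(x) \;=\; \Bigl(\textstyle\prod_i C_i^{m_i}\Bigr) \cdot \sup_{x \in U} \vert s \vert_{g}(x),
\]
with $C_i := \exp(A_i K(U)/2) \ge 1$. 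Since $A_i$ depends only on $g_i$ and $\omega$, and $K(U)$ only on $U$ and $\omega$, the constant $C_i$ depends only on $g_i$ and $U$, and satisfies $C_i = 1$ whenever $A_i = 0$.

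The principal obstacle is thus the sup-comparison estimate for $\omega$-plurisubharmonic functions displayed above. The standard proof proceeds by covering $X$ with finitely many coordinate polydiscs arranged in a chain beginning with a polydisc contained in $U$ on each connected component, subtracting on each polydisc a fixed smooth local $\omega$-potential to reduce to genuine plurisubharmonic functions, and then propagating the sup bound from $U$ through the chain by the submean inequality. The cumulative additive error depends only on the covering and the local potentials, hence only on $U$ and $\omega$, producing the constant $K(U)$; the linear dependence on $\lambda$ reflects the elementary fact that $\lambda v \in \Tpsh(X; \lambda \omega)$ whenever $v \in \Tpsh(X; \omega)$ and $\lambda \ge 0$.
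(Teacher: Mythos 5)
Your argument is correct, but it takes a genuinely different route from the paper. You convert the statement into a uniform Hartogs-type estimate for $\omega$-plurisubharmonic functions: since $(s)+\sum_i m_iD_i\geq 0$, Poincar\'e--Lelong gives $\log\vert s\vert_{\sum m_ig_i}\in \Tpsh(X;\lambda\omega)$ with $\lambda=\tfrac12\sum_i m_iA_i$, and the bound then follows by homogeneity from the single estimate $\sup_X v\leq \sup_U v+K(U)$ valid for all $v\in\Tpsh(X;\omega)$. The paper instead works directly with $\vert s\vert^2$: it solves a Poisson equation $\Lap_{\omega}(F_i)=a_i-\phi_i$ with $\phi_i$ supported in a small ball $W\subseteq U$, so that $\log\bigl(\vert s\vert^2_{\sum m_ig_i}\exp(-\sum m_iF_i)\bigr)$ is $\Lap_{\omega}$-harmonic on $X\setminus(W\cup Z_s)$, and the maximum principle forces the maximum over $X\setminus W$ onto $\partial W$; the constants $C_i=B_i/b_i$ are then read off from the oscillation of $F_i$, which also makes the normalization $C_i=1$ for $D_i=0$, $g_i$ constant immediate. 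The trade-off: the paper's proof needs only solvability of the Laplace equation on a compact manifold plus the classical maximum principle and yields explicit constants; yours outsources the work to the uniform sup-comparison for the whole family $\Tpsh(X;\omega)$, a true and standard compactness-type fact, but the one place where your sketch is loose is exactly there -- the submean inequality by itself does not propagate a sup bound outward from $U$; the chain argument has to be run after normalizing $v\leq\sup_X v$ (or, equivalently, via the $L^1$-compactness of $\{v\in\Tpsh(X;\omega):\sup_X v=0\}$), with the local $\omega$-potentials contributing the additive constant $K(U)$. With that point made precise (and noting that components where $s$ vanishes identically, or the case $s=0$, are trivial), your proof is complete, and the multiplicative form $C_1^{m_1}\cdots C_l^{m_l}$ comes out of the linearity of $\lambda$ in the $m_i$ just as in the paper.
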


\begin{proof}
Clearly we may assume that $X$ is connected.
Shrinking $U$ if necessarily, we may
identify $U$ with 
$\{ x \in \CC^d \mid \vert x \vert < 1\}$.
We set $W = \{ x \in \CC^d \mid \vert x \vert < 1/2 \}$.
In this proof, we define a Laplacian $\Lap_{\omega}$ by
the formula:
\[
-\frac{\sqrt{-1}}{2 \pi} \partial\bar{\partial}(g) \wedge \omega^{\wedge (d-1)} =
\Lap_{\omega}(g) \omega^{\wedge d}.
\]
Let $\omega_i$ be a $C^{\infty}$-form of $(1,1)$-type given by
$dd^c([g_i]) + \delta_{D_i} = [\omega_i]$.
Let $a_i$ be a $C^{\infty}$-function given by
$\omega_i \wedge \omega^{\wedge (d-1)} = a_i \omega^{\wedge d}$.
We choose a $C^{\infty}$-function $\phi_i$ on $X$ such that
\[
\int_{X} a_i \omega^{\wedge d} = \int_{X} \phi_i \omega^{\wedge d}
\]
and that $\phi_i$ is identically zero on $X \setminus W$.
Thus we can find a $C^{\infty}$-function $F_i$ with
$\Lap_{\omega}(F_i) = a_i - \phi_i$.
Note that $\Lap_{\omega}(F_i) = a_i$ on $X \setminus W$.

Let $s \in H^0(X, m_1 D_1+ \cdots + m_l D_l)$. We set
\[
f = \vert s \vert_{m_1g_1 + \cdots +m_lg_l}^2 \exp(-(m_1F_1 + \cdots + m_l F_l)).
\]
Note that $f$ is continuous over $X$ and $\log(f)$ is $C^{\infty}$ over $X \setminus Z_s$,
where 
\[
Z_s = \Supp((s) + m_1 D_1 + \cdots + m_l D_l).
\]

\begin{Claim}
$\max_{ x \in X \setminus W} \{ f(x) \}
= \max_{ x \in  \partial(W)} \{ f(x) \}$.
\end{Claim}

If $f$ is a constant over $X \setminus W$, then our assertion is obvious,
so that we assume that $f$ is not a constant over $X \setminus W$.
In particular, $s \not= 0$. Since
\[
-\frac{\sqrt{-1}}{2 \pi} \partial\bar{\partial}(\log(\vert s \vert_{m_1g_1 + \cdots +m_lg_l}^2)) =
m_1	\omega_1 + \cdots + m_l \omega_l\quad\text{over $X \setminus Z_s$},
\]
we have $\Lap_{\omega}(\log(f))= 0$
on $X \setminus (W \cup Z_s)$.
Let us choose $x_0 \in X \setminus W$ such that
the continuous function $f$ over $X \setminus W$
takes the maximum value at $x_0$.
Note that 
\[
 x_0  \in X \setminus (W \cup Z_s).
\]
For, if $Z_s = \emptyset$,
then our assertion is obvious.
Otherwise, $f$ is zero
at any point of $Z_s$. 
Since $\log(f)$ is harmonic over
$X \setminus (W \cup Z_s)$,
$\log(f)$ takes the maximum value at $x_0$ and
$\log(f)$ is not a constant, we have $x_0 \in  \partial(W)$
by virtue of the maximum principle of harmonic functions.
Thus the claim follows.
\QED

We set
\[
b_i = \min_{ x \in X \setminus W} \{ \exp(-F_i)\},\quad
B_i = \max_{x \in \partial(W)} \{ \exp(-F_i) \}\quad\text{and}\quad
C_i = B_i/b_i.
\]
Then
\[
b_1^{m_1} \cdots b_l^{m_l} \vert s \vert_{m_1g_1 + \cdots + m_lg_l}^2  \leq f
\]
over $X \setminus W$ and
\[
f \leq B_1^{m_1} \cdots B_l^{m_l} \vert s \vert_{m_1g_1 + \cdots +m_lg_l}^2
\]
over $\partial(W)$. Hence
\[
\max_{x \in X \setminus W} \{ \vert s \vert_{m_1g_1 + \cdots +m_lg_l}^2 \} 
 \leq
C_1^{m_1} \cdots C_l^{m_l} \max_{ x \in  \partial(W)} \{ \vert s \vert_{m_1g_1 + \cdots +m_lg_l}^2 \}
\leq
C_1^{m_1}\cdots C_l^{m_l} \max_{ x \in  \overline{W}} \{ \vert s \vert_{m_1g_1 + \cdots +m_lg_l}^2 \}.
\]
which implies that
\[
\max_{ x \in X} \{ \vert s \vert_{m_1g_1 + \cdots +m_lg_l}^2\} \leq 
C_1^{m_1} \cdots C_l^{m_l} \max_{ x \in  \overline{W}} \{ \vert s \vert_{m_1g_1 + \cdots +m_lg_l}^2 \},
\]
as required. The last assertion is obvious by our construction because $F_i = 0$ in this case.
\end{proof}

\renewcommand{\theTheorem}{\arabic{section}.\arabic{subsection}.\arabic{Theorem}}
\renewcommand{\theClaim}{\arabic{section}.\arabic{subsection}.\arabic{Theorem}.\arabic{Claim}}
\renewcommand{\theequation}{\arabic{section}.\arabic{subsection}.\arabic{Theorem}.\arabic{Claim}}

\section{Hodge index theorem for arithmetic $\RR$-Cartier divisors}
In this section, we would like to observe the Hodge index theorem for arithmetic 
$\RR$-Cartier divisors and
apply it to the pseudo-effectivity of arithmetic divisors.
A negative definite quadric form over $\QQ$ does not necessarily extend to a negative definite
quadric form over $\RR$. For example, the quadric form $q(x,y) = -(x + \sqrt{2}y)^2$ on $\QQ^2$ is negative definite, but it
is not negative definite on $\RR^2$.
In this sense, the equality condition of Hodge index theorem 
for arithmetic $\RR$-Cartier divisors is not an obvious generalization.
In addition, the equality condition is crucial to consider the pseudo-effectivity of 
$\RR$-Cartier divisors.

Throughout this section, $X$ will be a $d$-dimensional, generically smooth, normal projective arithmetic variety.
Moreover, let
\[
X \overset{\pi}{\longrightarrow} \Spec(O_K) \to \Spec(\ZZ)
\]
be
the Stein factorization of $X \to \Spec(\ZZ)$, where $K$ is a number field and
$O_K$ is the ring of integers in $K$.

\subsection{Generalized intersection pairing on arithmetic varieties}
\setcounter{Theorem}{0}
Let $\aDiv_{C^{0}}^{\Nef}(X)_{\RR}$ be the subspace of $\aDiv_{C^{0}}(X)_{\RR}$
generated by $\aNef_{C^{0}}(X)_{\RR}$.
For $\overline{D}_1, \ldots, \overline{D}_d \in
\aDiv_{C^{0}}^{\Nef}(X)_{\RR}$,
we can define the intersection number 
$\adeg(\overline{D}_1\cdots\overline{D}_d)$ as follows:
If $\overline{D}_1, \ldots, \overline{D}_d \in
\aNef_{C^{0}}(X)_{\RR}$, then
it is given by
\[
\adeg(\overline{D}_1 \cdots \overline{D}_d) = \frac{1}{d!} \sum_{\emptyset \not= I \subseteq \{ 1, \ldots, d\}} 
(-1)^{d - \#(I)} \avol\left(\sum_{i \in I} \overline{D}_i \right).
\]
In general, we extend the above by multi-linearity (for details, see \cite[\S~6.4]{MoArZariski}).
Note that if $\overline{D}_1, \ldots, \overline{D}_d \in \aDiv_{C^{\infty}}(X)_{\RR}$,
then
$\adeg(\overline{D}_1 \cdots \overline{D}_d)$ 
coincides with the usual arithmetic intersection number
because the self intersection number of a nef
arithmetic $\RR$-Cartier divisor of 
$C^{\infty}$-type
in the usual sense is equal to its arithmetic volume (cf. \cite[Claim~6.4.2.2]{MoArZariski}).
The following proposition is the main result of this subsection.
Especially, (3) means that
the above intersection number coincides with other definitions \cite[Lemma~6.5]{ZhPos}, \cite[\S1]{ZhSm} and \cite[\S5]{MGA}.
In this sense, this subsection provides a quick introduction to
the generalized intersection pairing on arithmetic varieties.

Here we need to fix a notation.
Let $u_1, \ldots, u_p \in  \langle (C^0 \cap \Tqpsh)(X(\CC)) \rangle_{\RR}$ and
$B_1, \ldots, B_p \in \admCur^{1,1}(X(\CC))$.
Let $I$ be a non-empty subset of $\{ 1, \ldots, p \}$ and $J = \{ 1, \ldots, p \} \setminus I$.
If we set $I = \{ i_1, \ldots, i_k \}$ and $J = \{ j_1, \ldots, j_l \}$,
then, by Lemma~\ref{lem:wedge:B:proper:current}, the class of
\[
u_{i_1} dd^c([u_{i_2}]) \wedge \cdots \wedge dd^c([u_{i_k}]) \wedge
B_{j_1} \wedge \cdots \wedge B_{j_l}
\]
in $D^{p-1,p-1}(X(\CC))/N^{p-1,p-1}(X(\CC))$
does not depend on the choice of $i_1, \ldots, i_k$ and
$j_1, \ldots, j_l$, so that
it is denoted by $u dd^c(u_I) \wedge B_J$.

\begin{Proposition}
\label{prop:intersection:capacity}
\begin{enumerate}
\renewcommand{\labelenumi}{(\arabic{enumi})}
\item
If  $\overline{D}= \overline{D}' + (0, \eta)$
for $\overline{D}, \overline{D}'
\in \aDiv_{C^0}^{\Nef}(X)_{\RR}$ and
$\eta \in C^0(X)$,
then 
$\eta  \in \langle (C^0 \cap \Tqpsh)(X(\CC)) \rangle_{\RR}$.

\item
Let $\overline{D}_1, \ldots, \overline{D}_d \in \aDiv_{C^0}^{\Nef}(X)_{\RR}$,
$\overline{A}_1, \ldots, \overline{A}_d \in \aDiv_{C^{\infty}}(X)_{\RR}$ and
$u_1, \ldots, u_d \in C^0(X)$ such that
$\overline{D}_i = \overline{A}_i + (0, u_i)$ for $i=1, \ldots, d$.
Then the quantity 
\[
\adeg(\overline{A}_1\cdots\overline{A}_d) 
+ \frac{1}{2} \sum_{\emptyset \not= I \subseteq \{ 1, \ldots, d\}} \int_{X(\CC)} udd^c(u_I) \wedge c_1(\overline{A}_J)
\]
does not depend on the choice of
$\overline{A}_1, \ldots, \overline{A}_d$ and
$u_1, \ldots, u_d$.
If we denote the above number by $\adeg'(\overline{D}_1\cdots\overline{D}_d)$,
then the map 
\[
\left(\aDiv_{\admCont}(X)_{\RR}\right)^d \to \RR
\]
given by
$(\overline{D}_1, \ldots, \overline{D}_d) \mapsto \adeg'(\overline{D}_1\cdots\overline{D}_d)$
is symmetric and multi-linear.

\item
$\adeg(\overline{D}_1 \cdots \overline{D}_d) = \adeg'(\overline{D}_1 \cdots \overline{D}_d)$
for $\overline{D}_1, \ldots, \overline{D}_d \in \aDiv^{\Nef}_{C^{0}}(X)_{\RR}$.

\item
Let $\overline{D}_1, \cdots, \overline{D}_d, \overline{D}_1', \cdots, \overline{D}_d' 
\in \aDiv_{C^0}^{\Nef}(X)_{\RR}$ and
$\eta_1, \ldots, \eta_d \in C^0(X)$ such that
$\overline{D}_i = \overline{D}' _i+ (0, \eta_i)$ for $i=1, \ldots, d$.
Then 
\[
\adeg(\overline{D}_1 \cdots \overline{D}_d) =
\adeg(\overline{D}'_1\cdots \overline{D}'_d) +
\frac{1}{2} \sum_{\emptyset \not= I \subseteq \{ 1, \ldots, d\}} \int_{X(\CC)} \eta dd^c(\eta_I) \wedge c_1(\overline{D}'_J).
\]
\end{enumerate}
\end{Proposition}

\begin{proof}
(1) We can find
$\overline{E}, \overline{F}, \overline{E}', \overline{F}' \in \aNef_{C^{0}}(X)_{\RR}$
such that $\overline{D} = \overline{E} - \overline{F}$ and
$\overline{D}' = \overline{E}' - \overline{F}'$.
Then, as $\overline{E} + \overline{F}' = \overline{E}' + \overline{F}  + (0, \eta)$,
the assertion of (1) is obvious if we compare two local equations of the Green functions
in $\overline{E} + \overline{F}'$ and $\overline{E}' + \overline{F}$.

(2) 
In order to proceed with arguments,
we need several notations.
Let $\widehat{Z}^p(X)_{\RR}$ be the set of all pairs $(Z, T)$
such that
$Z$ is a codimension $p$ $\RR$-cycle on $X$ (i.e. $Z = a_1 Z_1 + \cdots + a_r Z_r$ for some
$a_1, \ldots, a_r \in \RR$ and codimension $p$ integral subschemes $Z_1, \ldots, Z_r$ of  $X$)
and $T$ is a real current of bidegree $(p-1, p-1)$ on $X(\CC)$.
Let $\widehat{R}^p(X)'_{\RR}$ be the vector subspace generated by the following elements:
\begin{enumerate}
\renewcommand{\labelenumi}{(\alph{enumi})}
\item 
$((f), - [\log |f|^2])$, 
where $f$ is a rational function on some
integral closed subscheme $Y$ of codimension $p-1$ and $[\log |f|^2]$ 
is the current defined by
\[
[\log |f|^2](\gamma) = 
        \int_{Y(\CC)} (\log |f|^2)\gamma.
\]

\item
$(0, T)$, where $T$ is
a real current in $N^{p-1,p-1}(X(\CC))$.
(for the definition of $N^{p-1,p-1}(X(\CC))$,
see Conventions and terminology~\ref{CT:currents}).
\end{enumerate}
We set 
\[
\aChow^p(X)'_{\RR} := \widehat{Z}^p(X)_{\RR}/\widehat{R}^p(X)'_{\RR}.
\]
Let $\overline{A}$ be an arithmetic $\RR$-Cartier divisor of $C^{\infty}$-type.
Then we can define a homomorphism
\[
\acherncl_1(\overline{A})\cdot :
\aChow^p(X)'_{\RR} \to \aChow^{p+1}(X)'_{\RR}
\]
given by $\acherncl_1(\overline{A})\cdot (Z, T) = \acherncl_1(\overline{A})\cdot (Z, 0) +
(0, c_1(\overline{A}) \wedge T)$.
Note that 
\[
\acherncl_1(\overline{A})\cdot \acherncl_1(\overline{B})\cdot =
\acherncl_1(\overline{B})\cdot \acherncl_1(\overline{A})\cdot
\]
for arithmetic $\RR$-Cartier divisors $\overline{A}$ and $\overline{B}$ of $C^{\infty}$-type.

\begin{Claim}
The class of
\[
Z(\overline{A}_1, \ldots. \overline{A}_p, u_1, \ldots, u_p) 
:= \acherncl_1(\overline{A}_1) \cdots \acherncl_1(\overline{A}_p) +
\sum_{\emptyset \not= I \subseteq \{ 1, \ldots, p\}}
(0, udd^c(u_I) \wedge c_1(\overline{A}_J))
\]
in $\aChow^p(X)'_{\RR}$ does not depend on the choice
of $\overline{A}_1, \ldots, \overline{A}_p$ and $u_1, \ldots, u_p$
for $p=1, \ldots, d$.
\end{Claim}

\begin{proof}
Let $\overline{B}_1, \ldots, \overline{B}_p$ be
arithmetic $\RR$-Cartier divisors of $C^{\infty}$-type and
$v_1, \ldots, v_p \in \admCont(X)$
such that $\overline{D}_i = \overline{B}_i + (0, v_i)$ for $i=1, \ldots, p$.
Then we can find $C^{\infty}$-function $\phi_1, \ldots, \phi_p$ such that
$u_i = v_i + \phi_i$ and $\overline{B}_i = \overline{A}_i + (0, \phi_i)$
for $i=1, \ldots, p$.
We need to see
that
\[
Z(\overline{A}_1, \ldots, \overline{A}_p, u_1, \ldots, u_p) =
Z(\overline{B}_1, \ldots, \overline{B}_p, v_1, \ldots, v_p)
\]
in $\aChow^p(X)'_{\RR}$.
We prove it by induction on $p$.
If $p=1$, then the assertion is obvious, so that we assume $p > 1$.
By the hypothesis of induction,
we have
\[
Z(\overline{A}_2, \ldots, \overline{A}_p, u_2, \ldots, u_p) =
Z(\overline{B}_2, \ldots, \overline{B}_p, v_2, \ldots, v_p)
\]
in $\aChow^{p-1}(X)'_{\RR}$,
which implies
\[
\acherncl_1(\overline{A}_1) \cdot  Z(\overline{A}_2, \ldots, \overline{A}_p, u_2, \ldots, u_p) =
(\acherncl_1(\overline{B}_1) - \acherncl_1(0, \phi_1)) \cdot Z(\overline{B}_2, \ldots, \overline{B}_p, v_2, \ldots, v_p)
\]
in $\aChow^{p-1}(X)'_{\RR}$. The left hand side is equal to
\begin{multline*}
Z(\overline{A}_1, \ldots, \overline{A}_p, u_1, \ldots, u_p) - \sum_{1 \in I \subseteq \{ 1, \ldots, p\} }
(0, udd^c(u_I) \wedge c_1(\overline{A}_J)) \\
= Z(\overline{A}_1, \ldots, \overline{A}_p, u_1, \ldots, u_p) - \sum_{I' \subseteq \{ 2, \ldots, p\}}
(0, u_1 dd^c(u_{I'}) \wedge c_1(\overline{A}_{J'})),
\end{multline*}
where $J' = \{2, \ldots, p\} \setminus I'$.
Moreover, the right hand side is equal to
\begin{multline*}
Z(\overline{B}_1, \ldots, \overline{B}_p, v_1, \ldots, v_p) - \sum_{I' \subseteq \{ 2, \ldots, p\}}
(0, v_1 dd^c(v_{I'}) \wedge c_1(\overline{B}_{J'}))\\
 - \acherncl_1(\overline{B}_2) \cdots \acherncl_1(\overline{B}_p) \cdot \acherncl_1(0, \phi_1) -
\sum_{\emptyset \not= I' \subseteq \{ 2, \ldots, p\} }
 \acherncl_1(0, \phi_1) \cdot (0, vdd^c(v_{I'}) \wedge c_1(\overline{B}_{J'})) \\
 =
Z(\overline{B}_1, \ldots, \overline{B}_p, v_1, \ldots, v_p) 
- \sum_{I' \subseteq \{ 2, \ldots, p\}}
(0, v_1 dd^c(v_{I'}) \wedge c_1(\overline{B}_{J'}))
-\sum_{I' \subseteq \{ 2, \ldots, p\}}
 (0, \phi_1 dd^c(v_{I'}) \wedge c_1(\overline{B}_{J'})) \\
 = Z(\overline{B}_1, \ldots, \overline{B}_p, v_1, \ldots, v_p) - \sum_{I' \subseteq \{ 2, \ldots, p\}}
(0, u_1 dd^c(v_{I'}) \wedge c_1(\overline{B}_{J'})).
\end{multline*}
in  $\aChow^{p-1}(X)'_{\RR}$. Therefore, we can see that
\[
Z(\overline{A}_1, \ldots, \overline{A}_p, u_1, \ldots, u_p)
- Z(\overline{B}_1, \ldots, \overline{B}_p, v_1, \ldots, v_p)
\]
is equal to
\[
\left(0, u_1 \sum_{I' \subseteq \{ 2, \ldots, p\} } \left( dd^c(u_{I'}) \wedge c_1(\overline{A}_{J'}) -  dd^c(v_{I'}) \wedge c_1(\overline{B}_{J'})\right) \right),
\]
which is zero by the following Lemma~\ref{lem:mult:linear:eqn}.
\end{proof}
Applying the above claim to the case where $p = d$,
the first assertion follows. The second assertion can be
easily checked by using its definition.

\medskip
(3)
For this purpose, it is sufficient to show that
$\adeg' (\overline{D}^d) = \avol(\overline{D})$ for 
$\overline{D} = (D, g) \in \aNef_{C^0}(X)_{\RR}$.
Let $\overline{A}$ be an ample arithmetic Cartier divisor of $C^{\infty}$-type.
We assume 
\[
\adeg'((\overline{D} + (1/n) \overline{A} )^{d+1}) = \avol(\overline{D}
+ (1/n)\overline{A})
\]
for all $n > 0$.
Then, using the continuity of $\avol$, we can see
$\adeg'(\overline{D}^{d}) = \avol(\overline{D})$.
Thus we may assume $D$ is ample, so that
there is a $D$-Green function $h$ such that
$\alpha :=c_1(D, h)$ is positive. We set $\overline{D}' = (D, h)$ and
$\phi = g - h$.
Then $\phi$ is continuous and $dd^c([\phi]) + \alpha \geq 0$.
Therefore, by \cite{BK} or \cite[Lemma~4.2]{MoArZariski}, 
we can take a sequence of $C^{\infty}$-functions
$\{ \phi_n \}$ such that $\lim_{n\to\infty} \Vert \phi_n - \phi\Vert_{\sup} = 0$, and that
$\phi \leq \phi_n$ and 
$\phi_n \in \admCont(X; \alpha)$ for all $n$.
We set  $\overline{D}_n = \overline{D}' + (0, \phi_n)$.
Then $\overline{D}_n$ is a nef arithmetic $\RR$-Cartier divisor of $C^{\infty}$-type, and hence
$\adeg' (\overline{D}_n^{d}) = \avol(\overline{D}_n)$ for all $n$ by 
\cite[Claim~6.4.2.2]{MoArZariski}.
As $\lim_{n\to\infty}  \avol(\overline{D}_n) = \avol(\overline{D})$ by the continuity of $\avol$,
it is sufficient to see that
\[
\lim_{n\to\infty} \adeg' (\overline{D}_n^{d}) = \adeg' ( \overline{D}^{d} ).
\]
Note that
\begin{multline*}
\adeg' (\overline{D}_n^{d}) = \adeg' ((\overline{D}' + (0, \phi_n) )^{d})
= \adeg' ({\overline{D}'}^{d}) 
+ \sum_{i=1}^{d} \binom{d}{i} \int_{X(\CC)}
\phi_n dd^c(\phi_n)^{i-1} \wedge \alpha^{d-i}.
\end{multline*}
In addition, by (3) in Lemma~\ref{lem:wedge:B:proper:current},
$\{ \phi_n dd^c(\phi_n)^{i-1} \wedge \alpha^{d-i}\}$ converges weakly
to 
\[
\phi dd^c([\phi])^{i-1} \wedge \alpha^{d-i}
\]
for each $i$. Thus we have the assertion.

\medskip
(3) By using the symmetry and multi-linearity of $\adeg(\overline{D}_1 \cdots \overline{D}_d)$,
it is sufficient to see that
\[
\adeg((0, \eta_1) \cdot \overline{D}_2 \cdots \overline{D}_d) = \frac{1}{2} \sum_{I \subseteq \{ 2, \ldots, d\}} \int_{X(\CC)} \eta_1 dd^c(u_I) \wedge c_1(\overline{D}_J),
\]
which is a straightforward calculation by using the definition in (2).
\end{proof}

\begin{Lemma}
\label{lem:mult:linear:eqn}
Let $V$ and $W$ be vector spaces over $\RR$ and
let $f : V^s \to W$ be a symmetric multi-linear map.
Let $a_1, \ldots, a_s, b_1, \ldots, b_s$ be elements of $V$.
For a subset $I$ of $\{ 1, \ldots, s \}$, we set
$I = \{ i_1, \ldots, i_k \}$ and $J = \{ j_1, \ldots, j_l \}$,
where $J = \{ 1, \ldots, s \} \setminus I$ and $k + l = s$.
Then
\[
f(a_{i_1}, \ldots, a_{i_k}, b_{j_1}, \ldots, b_{j_l})
\]
does not depend on the choice of $i_1, \ldots, i_k$ and
$ j_1, \ldots, j_l$, so that it is denoted by
$f(a_I, b_J)$.
Let $a_1, \ldots, a_s, b_1, \ldots, b_s, c_1, \ldots, c_s, d_1, \ldots, d_s$ be
elements of $V$.
We assume that there are $u_1, \ldots, u_s \in V$ 
such that $a_i = c_i + u_i$ and $b_i = d_i - u_i$ for all $i=1, \ldots, s$.
Then
\[
\sum_{I \subseteq \{ 1, \ldots, s \}} f(a_I, b_J) =
\sum_{I \subseteq \{ 1, \ldots, s \}} f(c_I, d_J).
\]
\end{Lemma}

\begin{proof}
We prove the lemma by induction on $s$.
If $s = 1$, then
\[
\sum_{I \subseteq \{ 1, \ldots, s \}} f(a_I, b_J)  = f(a_1) + f(b_1)
= f(c_1 + u_1) + f(d_1 - u_1) = f(c_1) + f(d_1) = \sum_{I \subseteq \{ 1, \ldots, s \}} f(c_I, d_J).
\]
Thus we assume $s > 1$.
By the hypothesis of induction, we have
\[
\sum_{I' \subseteq \{ 2, \ldots, s \}} f(a_1, a_{I'}, b_{J'}) =
\sum_{I' \subseteq \{ 2, \ldots, s \}} f(a_1, c_{I'}, d_{J'})
\]
and
\[
\sum_{I' \subseteq \{ 2, \ldots, s \}} f(b_1, a_{I'}, b_{J'}) =
\sum_{I' \subseteq \{ 2, \ldots, s \}} f(b_1, c_{I'}, d_{J'}),
\]
where $J' = \{ 2, \ldots, s \} \setminus I'$.
The first equation and the second equation imply that
\[
\sum_{1 \in I \subseteq \{ 1, \ldots, s \}} f(a_{I}, b_{J}) =
\sum_{1 \in I \subseteq \{ 1, \ldots, s \}} f(c_{I}, d_{J}) +
\sum_{I' \subseteq \{ 2, \ldots, s \}} f(u_1, c_{I'}, d_{J'})
\]
and
\[
\sum_{1 \not\in I \subseteq \{ 1, \ldots, s \}} f(a_{I}, b_{J}) =
\sum_{1 \not\in I \subseteq \{ 1, \ldots, s \}} f(c_{I}, d_{J}) -
\sum_{I' \subseteq \{ 2, \ldots, s \}} f(u_1, c_{I'}, d_{J'})
\]
respectively. Thus the lemma follows.
\end{proof}

\subsection{Hodge index theorem for arithmetic $\RR$-Cartier divisors}
\label{subsec:Hodge:index:thm}
\setcounter{Theorem}{0}
First of all, let us fix notation.
Let $\KK$ be either $\QQ$ or $\RR$.
Let $H$ be an ample $\KK$-Cartier divisor on $X$.
Let $D$ be a $\KK$-Cartier divisor on $X$ and let $E$ be a vertical $\KK$-Weil divisor on $X$.
We set $E = \sum_{i=1}^l a_i \Gamma_i$, where
$a_1, \ldots, a_l \in \KK$ and $\Gamma_1, \ldots, \Gamma_l$ are vertical prime divisors.
Then a quantity
\[
\sum_{i=1}^l a_i \deg\left( \left(\rest{H}{\Gamma_i}\right)^{d-2} \cdot \left(\rest{D}{\Gamma_i} \right) \right)
\]
is denoted by $\deg_H(D \cdot E)$.
Note that if $X$ is regular and $D$ and $E$ are vertical, then
$\deg_H(D \cdot E) = \deg_H(E \cdot D)$.
We say $D$ is {\em divisorially $\pi$-nef with respect to $H$} if
$\deg_H(D \cdot \Gamma) \geq 0$ for all vertical prime divisors $\Gamma$ on $X$.
Moreover, $D$ is said to be {\em divisorially $\pi$-numerically trivial with respect to $H$}
if $D$ and $-D$ is divisorially $\pi$-nef with respect to $H$, that is,
$\deg_H(D \cdot \Gamma) = 0$ for all vertical prime divisors $\Gamma$ on $X$.

\begin{Lemma}
\label{lem:Zariski:fiber}
We assume that $X$ is regular.
Let $P \in \Spec(O_K)$ and let $\pi^{-1}(P) = a_1 \Gamma_1 + \cdots + a_n \Gamma_n$
be the irreducible decomposition as a cycle, that is, $a_1, \ldots, a_n \in \ZZ_{>0}$ and
$\Gamma_1, \ldots, \Gamma_n$ are prime divisors.
Let us consider a linear map $T_P : \KK^n \to \KK^n$ given by
\[
\begin{pmatrix} x_1 \\ \vdots \\ x_n \end{pmatrix} \mapsto
\begin{pmatrix}  \deg_{H}(\Gamma_1 \cdot \Gamma_1) & \cdots & \deg_{H}(\Gamma_1 \cdot \Gamma_n) \\
\vdots & \ddots & \vdots \\
\deg_{H}(\Gamma_n \cdot \Gamma_1) & \cdots & \deg_{H}(\Gamma_n \cdot \Gamma_n)
\end{pmatrix}
\begin{pmatrix} x_1 \\ \vdots \\ x_n \end{pmatrix}
\]
Then $\Ker(T_P) = \langle (a_1, \ldots, a_n) \rangle_{\KK}$ and
$T_P(\KK^n) = \{ (y_1, \ldots, y_n) \in \KK^n \mid a_1 y_1 + \cdots + a_n y_n = 0 \}$.
\end{Lemma}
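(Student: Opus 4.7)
The plan is to apply Lemma~\ref{lem:Zariski:vector:sp}(2) to the vector space $V = \KK^{n}$, the standard basis $e_{1},\ldots,e_{n}$, the vector $e = (a_{1},\ldots,a_{n}) = \sum_{i} a_{i} e_{i}$, and the symmetric bilinear form $Q(e_{i},e_{j}) := \deg_{H}(\Gamma_{i}\cdot\Gamma_{j})$ (this is symmetric because $X$ is regular). The kernel of $T_{P}$ is exactly the radical of $Q$, so once hypotheses (i)--(iv) of the lemma are verified together with $Q(e,e_{i}) = 0$ for all $i$, case~(2) of the lemma will yield $\Ker(T_{P}) = \KK e = \langle (a_{1},\ldots,a_{n})\rangle_{\KK}$. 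The description of the image is then automatic: $T_{P}$ is represented by a symmetric matrix with respect to the standard inner product on $\KK^{n}$, so $T_{P}(\KK^{n}) = \Ker(T_{P})^{\perp} = \{(y_{1},\ldots,y_{n}) \mid a_{1}y_{1} + \cdots + a_{n}y_{n} = 0\}$.

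Condition (i) is immediate from the definition of $e$. For (iii), the distinct prime divisors $\Gamma_{i}$ and $\Gamma_{j}$ share no components, so $\Gamma_{i}\cap \Gamma_{j}$ is an effective cycle of pure codimension one in $\Gamma_{j}$, and ampleness of $H|_{\Gamma_{j}}$ gives $\deg_{H}(\Gamma_{i}\cdot \Gamma_{j}) = \deg\bigl((H|_{\Gamma_{j}})^{d-2}\cdot (\Gamma_{i}|_{\Gamma_{j}})\bigr) \geq 0$. For (iv), the Stein factorization gives $\pi_{*}\OO_{X} = \OO_{\Spec(O_{K})}$, so the scheme-theoretic fiber $\pi^{-1}(P) = \bigcup_{i} \Gamma_{i}$ is connected as a topological space; since $H$ is ample, $\deg_{H}(\Gamma_{i}\cdot \Gamma_{j}) > 0$ precisely when $\Gamma_{i}\cap \Gamma_{j} \neq \emptyset$ (for $i\neq j$), and the topological connectedness of $\pi^{-1}(P)$ translates directly into the chain condition on $\{1,\ldots,n\}$.

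The main hurdle is condition (ii): $\deg_{H}(\Gamma_{i}\cdot \pi^{-1}(P)) = 0$ for every $i$. Here the arithmetic input is finiteness of the ideal class group of $O_{K}$, which produces $h \in \ZZ_{>0}$ and $\alpha \in K^{\times}$ with $(\alpha) = h[P]$ on $\Spec(O_{K})$. Since $\alpha$ is a unit at every prime of $O_{K}$ other than the one corresponding to $P$, the rational function $\pi^{*}\alpha$ has no zeros or poles outside $\pi^{-1}(P)$, so $(\pi^{*}\alpha) = h\,\pi^{-1}(P)$ as a Cartier divisor on $X$. Being principal, $(\pi^{*}\alpha)$ has trivial class in $\Pic(X)$, whence $H^{d-2}\cdot \Gamma_{i}\cdot (\pi^{*}\alpha) = 0$; the fact that $\Gamma_{i}$ lies in the support of $(\pi^{*}\alpha)$ poses no obstruction, because projective intersection numbers depend only on the isomorphism classes of the line bundles involved. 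Dividing by $h$ gives (ii), and Lemma~\ref{lem:Zariski:vector:sp}(2) then supplies the kernel computation, completing the proof.
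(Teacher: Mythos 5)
Your proof is correct and takes essentially the same route as the paper: the paper's own proof of this lemma is a one-line reduction to Zariski's lemma for vector spaces (Lemma~\ref{lem:Zariski:vector:sp}), and your verification of hypotheses (i)--(iv) together with $Q(e,e_i)=0$, plus the kernel/image bookkeeping for the Gram matrix, is exactly the content left implicit there. The only cosmetic remark is that condition (ii) does not require finiteness of the ideal class group: $\OO_X(\pi^*(P))$ restricted to any component $\Gamma_i$ of the fibre is already trivial, being pulled back from the restriction of $\OO_{\Spec(O_K)}(P)$ to the point $P$, though your class-group argument is also valid and non-circular.
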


\begin{proof}
This is a consequence of Zariski's lemma (cf. Lemma~\ref{lem:Zariski:vector:sp}).
\end{proof}

\begin{Lemma}
\label{lem:vertical:trivial}
We assume that $X$ is regular.
Let $D$ be a $\KK$-Cartier divisor on $X$ with $\deg(H_{\QQ}^{d-2} \cdot D_{\QQ}) = 0$.
Then there is a vertical effective $\KK$-Cartier divisor $E$ such that
$D + E$ is divisorially $\pi$-numerically trivial with respect to $H$.
\end{Lemma}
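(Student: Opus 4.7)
The plan is to construct $E$ prime-by-prime on the base, applying Lemma~\ref{lem:Zariski:fiber} to each vertical fiber. The key input will be that the hypothesis $\deg(H_{\QQ}^{d-2} \cdot D_{\QQ}) = 0$ forces the solvability condition of Lemma~\ref{lem:Zariski:fiber} at every closed point $P$ of $\Spec(O_K)$.

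First I would show the auxiliary identity $\deg_H(D \cdot \pi^{-1}(P)) = 0$ for every closed point $P$. Since $\pi$ is flat (target Dedekind, $X$ integral), $\pi^{*}(P) = \pi^{-1}(P)$ as cycles, and the projection formula yields
\[
\deg_H(D \cdot \pi^{-1}(P)) = \deg\bigl(\pi_{*}(H^{d-2} \cdot D) \cdot P\bigr).
\]
But $\pi_{*}(H^{d-2} \cdot D) = \deg_K(H_K^{d-2} \cdot D_K) \cdot [\Spec(O_K)]$, and $\deg(H_{\QQ}^{d-2} \cdot D_{\QQ}) = [K:\QQ] \cdot \deg_K(H_K^{d-2} \cdot D_K) = 0$ forces this class to vanish.

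Next, I would fix $P$, write $\pi^{-1}(P) = a_1 \Gamma_1 + \cdots + a_n \Gamma_n$, and set $y_j := -\deg_H(D \cdot \Gamma_j)$. Step~1 yields $\sum_j a_j y_j = 0$, so by Lemma~\ref{lem:Zariski:fiber}, $(y_1, \ldots, y_n)$ lies in $T_P(\KK^n)$; hence there exist $c_1^{(P)}, \ldots, c_n^{(P)} \in \KK$ with $\deg_H\bigl((D + \sum_i c_i^{(P)} \Gamma_i) \cdot \Gamma_j\bigr) = 0$ for every $j$. Because $\Ker T_P = \langle (a_1, \ldots, a_n)\rangle_{\KK}$ with every $a_i > 0$, I can replace $c^{(P)}$ by $c^{(P)} + t_P (a_1, \ldots, a_n)$ for a sufficiently large $t_P \in \KK$ to arrange $c_i^{(P)} \geq 0$ for all $i$.

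Finally, by Stein factorization, $K$ is algebraically closed in $\Rat(X)$, so the generic fiber $X_K$ is geometrically integral; hence all but finitely many fibers $\pi^{-1}(P)$ are irreducible ($n=1$), and for these, Lemma~\ref{lem:Zariski:fiber} gives $T_P = 0$ and $y_1 = 0$ already, so one takes $c^{(P)} = 0$. Setting $E := \sum_P \sum_i c_i^{(P)} \Gamma_i^{(P)}$ yields a well-defined vertical effective $\KK$-divisor, and since prime divisors in distinct fibers have zero mutual intersection degree, $\deg_H((D + E) \cdot \Gamma) = 0$ for every vertical prime $\Gamma$ by construction, so $D + E$ is divisorially $\pi$-numerically trivial with respect to $H$. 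The main obstacle will be the projection-formula identity in Step~1: routine in the smooth case, it requires some care in the present normal setting, with the flatness of $\pi$ as the essential ingredient ensuring $\pi^{*}(P) = \pi^{-1}(P)$.
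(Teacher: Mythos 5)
Your proof is correct and follows essentially the same route as the paper's: for each relevant closed point $P$ of $\Spec(O_K)$ one uses the identity $\deg_H(D\cdot\pi^{-1}(P))=0$ to place $-\deg_H(D\cdot\Gamma_j)$ in the image of $T_P$ (Lemma~\ref{lem:Zariski:fiber}), solves for coefficients on the fiber components, and shifts by a large multiple of the fiber class $(a_1,\ldots,a_n)\in\Ker(T_P)$ to achieve effectivity. The only differences are that you spell out the projection-formula justification of $\deg_H(D\cdot\pi^{-1}(P))=0$ and the finiteness of the support of $E$ (via irreducibility of almost all fibers), both of which the paper leaves implicit.
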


\begin{proof}
We can choose $P_1, \ldots, P_n \in \Spec(O_K)$ such that
$\deg_H(D \cdot \Gamma) = 0$ for all vertical prime divisors $\Gamma$ with
$\pi(\Gamma) \not\in \{ P_1, \ldots, P_n \}$.
We set
$\pi^{-1}(P_k) = \sum_{i=1}^{n_k} a_{ki} \Gamma_{ki}$
for each $k = 1, \ldots, n$, where $a_{ki} \in \ZZ_{>0}$ and $\Gamma_{ki}$ is
a vertical prime divisor over $P_k$.
Since 
\[
\sum_{j=1}^{n_k} a_{kj} \deg_H \left(D \cdot \Gamma_{kj}\right) = \deg_H\left(D \cdot \pi^{-1}(P_k)\right) = 0,
\]
by virtue of Lemma~\ref{lem:Zariski:fiber},
we can find $x_{ki} \in \KK$
\[
\sum_{i=1}^{n_k} x_{ki} \deg_H\left(\Gamma_{ki} \cdot \Gamma_{kj}\right) = -\deg_H(D \cdot \Gamma_{kj})
\]
for all $k$. 
Moreover, 
replacing $x_{ki}$ by $x_{ki} + na_{ki}$ ($n \gg 1$),
we may assume that $x_{ki} > 0$.
Here we set
\[
E = \sum_{k=1}^n \sum_{i=1}^{n_k} x_{ki} \Gamma_{ki}.
\]
Then $D+E$ is divisorially $\pi$-numerically trivial.
\end{proof}

First let us consider the Hodge index theorem for $\RR$-Cartier divisors on an arithmetic surface.
It was actually treated in \cite[Theorem~5.5]{BostP}.
Here we would like to present a slightly different version.

\begin{Theorem}
\label{thm:Hodge:index:surface}
We assume $d=2$. Let $\Div_0(X_{\QQ})_{\RR}$ be a vector subspace of  $\Div(X_{\QQ})_{\RR}$ given by
\[
 \Div_0(X_{\QQ})_{\RR} := \{ \vartheta \in \Div(X_{\QQ})_{\RR} \mid \deg(\vartheta) = 0 \}.
\]
Let $\overline{D} = (D, g)$ be an arithmetic $\RR$-Cartier divisor in 
$\aDiv_{C^0}^{\Nef}(X)_{\RR}$ with
$D_{\QQ} \in \Div_0(X_{\QQ})_{\RR}$. Then
\[
\adeg(\overline{D}^2) \leq -2 [K : \QQ] \langle D_{\QQ}, D_{\QQ} \rangle_{NT},
\]
where $\langle\ ,\ \rangle_{NT}$ is the N\'{e}ron-Tate pairing on $\Div_0(X_{\QQ})_{\RR}$ 
\rom{(}cf. Remark~\rom{\ref{rem:Neron:Tate}}\rom{)}.
Moreover, the equality holds if and only if the following conditions \rom{(a)}, \rom{(b)} and \rom{(c)} hold:
\begin{enumerate}
\renewcommand{\labelenumi}{(\alph{enumi})}
\item
$D$ is divisorially $\pi$-numerically trivial.

\item 
$g$ is of $C^{\infty}$-type.

\item
$c_1(\overline{D}) = 0$.
\end{enumerate}
\end{Theorem}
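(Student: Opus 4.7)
My plan is a three-stage reduction: first, adjust $D$ by a vertical effective $\RR$-divisor to make it divisorially $\pi$-numerically trivial; second, compare the resulting arithmetic divisor with a canonical $C^\infty$ representative whose $c_1$ vanishes; third, invoke the classical Faltings--Hriljac identity, extended to $\RR$-divisors by bilinearity.

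For the first stage, I will use Lemma~\ref{lem:vertical:trivial} to choose a vertical effective $\RR$-divisor $E$ so that $D_1 := D + E$ is divisorially $\pi$-numerically trivial, and set $\overline{D}_1 := \overline{D} + (E, 0)$. Since $E$ is vertical, $(D_1)_\QQ = D_\QQ$, so the right-hand side of the claimed inequality is unchanged. Because $D_1$ is divisorially $\pi$-numerically trivial and $E$ is vertical, the fibral mixed intersection $\adeg(\overline{D}_1 \cdot (E, 0))$ vanishes, and expanding $\overline{D} = \overline{D}_1 - (E, 0)$ bilinearly yields
\[
\adeg(\overline{D}^2) = \adeg(\overline{D}_1^2) + \adeg\bigl((E, 0)^2\bigr).
\]
Zariski's lemma applied to each singular fiber (Lemma~\ref{lem:Zariski:fiber} combined with the negative semidefiniteness of Lemma~\ref{lem:Zariski:vector:sp}) gives $\adeg((E, 0)^2) \leq 0$; in the equality case, each $P_k$-component $E_{P_k}$ will be forced to be an $\RR$-multiple of $\pi^{-1}(P_k)$, and since full fibers are themselves divisorially $\pi$-numerically trivial, this will in turn recover condition~(a).

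For stages two and three, I will exploit the fact that $D_\QQ$ has degree $0$ on each archimedean fiber $X_\sigma$: extending the Arakelov Green function $\RR$-linearly produces an $F_\infty$-invariant $C^\infty$ Green function $g_{can}$ for $D_1$ with $c_1(\overline{D}_{can}) = 0$, where $\overline{D}_{can} := (D_1, g_{can})$. Setting $h := g - g_{can} \in \langle \Tqpsh(X) \cap C^0(X)\rangle_\RR$ and applying Proposition~\ref{prop:intersection:capacity} to $\overline{D}_1 = \overline{D}_{can} + (0, h)$, the terms involving $c_1(\overline{D}_{can})$ drop, leaving
\[
\adeg(\overline{D}_1^2) = \adeg(\overline{D}_{can}^2) + \frac{1}{2}\, I(h, h).
\]
The classical Faltings--Hriljac identity for $\ZZ$-divisors extends by bilinearity in $D_\QQ$ to
\[
\adeg(\overline{D}_{can}^2) = -2\,[K : \QQ]\,\langle D_\QQ, D_\QQ\rangle_{NT}.
\]
On the Riemann surface $X(\CC)$ (complex dimension $d - 1 = 1$), the pairing of Subsection~\ref{subsec:quasi:pluri:subharmonic} collapses to $I(f, g) = \int_{X(\CC)} f\, dd^c([g])$ (independent of any auxiliary K\"ahler form), and by Proposition~\ref{prop:sym:semidefinite:bilinear} it is symmetric and negative semidefinite with $I(f, f) = 0$ iff $f$ is locally constant on $X(\CC)$. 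Combining the three stages then delivers the inequality.

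For the equality discussion, equality in the theorem forces $\adeg((E, 0)^2) = 0$ and $I(h, h) = 0$: the first yields~(a); the second makes $h$ locally constant, so $g = g_{can} + h$ is of $C^\infty$-type (condition~(b)) and $c_1(\overline{D}) = c_1(\overline{D}_{can}) + dd^c([h]) = 0$ (condition~(c)). Conversely, (a), (b), (c) will force $\overline{D}$ to coincide with $\overline{D}_{can}$ up to a locally constant shift of the Green function, which does not alter the self-intersection. The hard part will be transferring the classical ($\ZZ$-coefficient, $C^\infty$-Green-function) Hodge index theorem to the $\RR$-divisor, $C^0$-Green-function setting: the existence of a canonical $\RR$-Green function with vanishing $c_1$ and the continuity plus negative semidefiniteness of $I$ on $\langle \Tqpsh(X) \cap C^0(X)\rangle_\RR$---both supplied by Propositions~\ref{prop:intersection:capacity} and~\ref{prop:sym:semidefinite:bilinear}---are the key inputs that make this passage possible.
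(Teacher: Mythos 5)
Your skeleton is essentially the paper's own: correct $D$ by a vertical divisor via Lemma~\ref{lem:vertical:trivial}, split off a flat $C^\infty$ Green function, control the two correction terms by Zariski's lemma and the negative semidefiniteness of $I$ (Proposition~\ref{prop:sym:semidefinite:bilinear}), and reduce the core identity to Faltings--Hriljac; your equality analysis also matches. But the step you dispose of with ``extends by bilinearity'' is precisely the one the paper singles out as delicate, and as written it is a gap. The classical Faltings--Hriljac identity is available only for integral (or $\QQ$-) arithmetic divisors whose finite part is divisorially $\pi$-numerically trivial, has degree $0$ on $X_{\QQ}$, and carries a flat ($c_1=0$) Green function. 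To transfer it to $\overline{D}_{can}$ you must produce a decomposition $D+E=a_1D_1+\cdots+a_lD_l$ with $D_i\in\Div(X)$ in which \emph{each} $D_i$ satisfies these hypotheses and admits a flat $h_i$ with $g_{can}=a_1h_1+\cdots+a_lh_l$; an arbitrary $\RR$-linear presentation gives no such pieces, so there are no classical cases to ``extend from''. The paper gets this by Lemma~\ref{lem:Z:module:tensor:R}(1): choosing $a_1,\ldots,a_l$ linearly independent over $\QQ$, the integrality of $\deg(\rest{D_i}{C})$ for vertical curves $C$ forces every $D_i$ to be divisorially $\pi$-numerically trivial (and of degree $0$ on the generic fiber), after which Faltings--Hriljac holds for all rational coefficient vectors and both sides, being quadratic hence continuous in $(a_1,\ldots,a_l)$, agree over $\RR$. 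Your proposal never invokes this device, and without it the key identity $\adeg(\overline{D}_{can}^2)=-2[K:\QQ]\langle D_{\QQ},D_{\QQ}\rangle_{NT}$ is unproved.

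A second, smaller omission: Lemma~\ref{lem:vertical:trivial} and the fibral Zariski lemma (Lemma~\ref{lem:Zariski:fiber}) are stated for regular $X$, whereas the theorem only assumes $X$ normal; you must first pass to a resolution of singularities $\mu:X'\to X$ and use that $\adeg(\overline{D}^2)=\adeg(\mu^*(\overline{D})^2)$ (birational invariance of $\avol$), as the paper does, before your stage one makes sense. Likewise, the assertions that $h=g-g_{can}\in\langle\Tqpsh(X)\cap C^0(X)\rangle_{\RR}$ and that Proposition~\ref{prop:intersection:capacity} applies to your pieces need a word: the paper justifies the former by writing $\overline{D}$ as a difference of nef divisors of $C^0$-type, and it arranges its decomposition $\overline{D}=\overline{D}'-(E,0)+(0,\eta)$ with $\overline{D}'$ and $(E,0)$ of $C^{\infty}$-type exactly so that the intersection numbers and vanishing cross terms are available without further membership checks in $\aDiv^{\Nef}_{C^0}(X)_{\RR}$.
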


\begin{proof}
Let $\mu : X' \to X$ be a resolution of singularities of $X$ (cf. \cite{Lip}).
Then, since the arithmetic volume function is invariant under birational morphisms (cf. \cite[Theorem~4.3]{MoCont}),
we can see $\adeg(\overline{D}^2) = \adeg(\mu^*(\overline{D})^2)$.
Thus we may assume that $X$ is regular.

Let $g'$ be an $F_{\infty}$-invariant $D$-Green function of $C^{\infty}$-type with
$c_1(D, g') = 0$.
Let $\eta$ be an $F_{\infty}$-invariant continuous function on $X(\CC)$ with $g = g' + \eta$.
Then, by (1) in Proposition~\ref{prop:intersection:capacity},
$\eta \in \langle (C^0 \cap \Tqpsh)(X(\CC)) \rangle_{\RR}$

By Lemma~\ref{lem:vertical:trivial},
we can find an effective and vertical $\RR$-Cartier divisor $E$ such that
$D + E$ is divisorially $\pi$-numerically trivial.
If we set $\overline{D}' = (D+E, g')$, then
$\overline{D}'$ satisfies the above conditions (a), (b) and (c).
Moreover, as $\overline{D} = \overline{D}' - (E,0) + (0,\eta)$,
\[
\adeg(\overline{D}^2) = \adeg({\overline{D}'}^2) + \adeg((E,0)^2) 
+ \frac{1}{2} \int_{X(\CC)}\eta dd^c(\eta).
\]
Thus, 
by Proposition~\ref{prop:sym:semidefinite:bilinear} and Zariski's lemma (cf. Lemma~\ref{lem:Zariski:vector:sp}),
in order to prove the assertions of the theorem,
it is sufficient to see
\[
\adeg(\overline{D}^2) = -2 [K : \QQ] \langle D_K, D_K \rangle_{NT}.
\]
under the assumptions (a), (b) and (c).

By (1) in Lemma~\ref{lem:Z:module:tensor:R},
we can choose $D_1, \ldots, D_l \in \Div(X)$ and $a_1, \ldots, a_l \in \RR$ such that
$D = a_1 D_1 + \cdots + a_l D_l$ and $a_1, \ldots, a_l$ are linearly independent over $\QQ$.
Let $C$ be a $1$-dimensional vertical closed integral subscheme.
Since
\[
0 = \deg(\rest{D}{C}) = a_1 \deg(\rest{D_1}{C}) + \cdots + a_n \deg(\rest{D_n}{C}),
\]
we have $\deg(\rest{D_i}{C}) = 0$ for all $i$, and hence $D_i$ 
is divisorially $\pi$-numerically trivial for every $i$, so that
we can also choose a $D_i$-Green function $h_i$ of $C^{\infty}$-type such that
$\overline{D} = a_1\overline{D}_1 + \cdots + a_l\overline{D}_l$ and $c_1(\overline{D}_i) = 0$ for all $i$, where
$\overline{D}_i = (D_i, h_i)$ for $i=1,\ldots,l$.
We need to show
\[
\adeg\left((a_1\overline{D}_1 + \cdots + a_l\overline{D}_l)^2\right) = -2 [K : \QQ] 
\langle a_1D_1 + \cdots + a_lD_l, a_1D_1 + \cdots + a_lD_l \rangle_{NT}.
\]
Note that it holds for $a_1, \ldots, a_l \in \QQ$ by Faltings-Hriljac (\cite{FaCAS}, \cite{Hri}). 
Moreover, each hand side is continuous with
respect to $a_1, \ldots, a_l$. Thus the equality follows in general.
\end{proof}

\begin{Remark}
\label{rem:Neron:Tate}
Let $\Div_0(X_{\QQ})$ be the group of divisors $\vartheta$ on $X_{\QQ}$ with $\deg(\vartheta) = 0$.
By using (1) in Lemma~\ref{lem:Z:module:tensor:R}, we can see $\Div_0(X_{\QQ})  \otimes_{\ZZ} \RR = \Div_0(X_{\QQ})_{\RR}$.
Let 
\[
\langle\ ,\ \rangle_{NT} : \Div_0(X_{\QQ}) \times \Div_0(X_{\QQ}) \to \RR
\]
be the N\'{e}ron-Tate height pairing on $\Div_0(X_{\QQ})$,
which extends to 
\[
\Div_0(X_{\QQ})_{\RR} \times \Div_0(X_{\QQ})_{\RR} \to \RR
\]
in the natural way.
By abuse of notation, the above bi-linear map is also denoted by $\langle\ ,\ \rangle_{NT}$.
By virtue of  \cite[Proposition~B.5.3]{HS},
we can see that 
\[
\PDiv(X_{\QQ})_{\RR} = \{ \vartheta \in \Div_0(X_{\QQ})_{\RR} \mid \langle \vartheta, \vartheta \rangle_{NT} = 0 \}.
\]
\end{Remark}

Finally let us consider the Hodge index theorem on a higher dimensional arithmetic variety.
The proof is almost same as \cite{MoHodge}, but we need a careful treatment at the final step.

\begin{Theorem}
\label{thm:Hodge:index:arith:var}
Let $\overline{D} = (D, g)$ be an arithmetic $\RR$-Cartier divisor in
$\aDiv_{C^0}^{\Nef}(X)_{\RR}$
and let
$\overline{H} = (H, h)$ be an ample arithmetic $\QQ$-Cartier divisor on $X$.
If $\deg(D_{\QQ} \cdot H_{\QQ}^{d-2}) = 0$, then
\[
\adeg (\overline{D}^2 \cdot \overline{H}^{d-2}) \leq 0.
\]
Moreover, if the equality holds, then $D_{\QQ} \in \aPDiv(X_{\QQ})_{\RR}$.
\end{Theorem}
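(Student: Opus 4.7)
The plan is to reduce the $\RR$-coefficient theorem to the $\QQ$-case of Moriwaki's arithmetic Hodge index theorem using the $\QQ$-basis decomposition of Lemma~\ref{lem:Z:module:tensor:R}(1), handling the inequality by a density argument and the equality clause by slicing down to arithmetic surfaces. First I would apply Lemma~\ref{lem:Z:module:tensor:R}(1) to the $\ZZ$-module $\Div(X)$ to write $D = a_1 D_1 + \cdots + a_l D_l$ with $D_i \in \Div(X)$ and $a_1, \ldots, a_l \in \RR$ linearly independent over $\QQ$ (in particular all nonzero). Because $H$ is a $\QQ$-divisor, each $\deg(D_{i,\QQ} \cdot H_{\QQ}^{d-2})$ lies in $\QQ$, and the hypothesis $\sum_i a_i \deg(D_{i,\QQ} \cdot H_{\QQ}^{d-2}) = 0$ combined with $\QQ$-independence of the $a_i$'s forces $\deg(D_{i,\QQ} \cdot H_{\QQ}^{d-2}) = 0$ for every $i$. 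Then choose $D_i$-Green functions $g_i$ of $C^{\infty}$-type with $g = \sum_i a_i g_i$ (pick $g_1,\ldots,g_{l-1}$ arbitrarily and set $g_l := (g - \sum_{i<l} a_i g_i)/a_l$), so that $\overline{D} = \sum_i a_i \overline{D}_i$ in $\aDiv_{C^{\infty}}(X)_{\RR}$ with $\overline{D}_i := (D_i, g_i)$, and form the symmetric matrix $M := \bigl(\adeg(\overline{D}_i \cdot \overline{D}_j \cdot \overline{H}^{d-2})\bigr)_{1 \leq i,j \leq l}$.

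For the inequality, any $\pmb{b} \in \QQ^l$ gives an ordinary arithmetic $\QQ$-divisor $\sum_i b_i \overline{D}_i$ whose underlying divisor satisfies the degree-zero hypothesis, so the $\QQ$-case of Moriwaki's arithmetic Hodge index theorem yields $\pmb{b}^T M \pmb{b} \leq 0$. Since the continuous quadratic form $\pmb{x} \mapsto \pmb{x}^T M \pmb{x}$ on $\RR^l$ is non-positive on the dense subset $\QQ^l$, it is non-positive on all of $\RR^l$; in particular, writing $\pmb{a} := (a_1, \ldots, a_l)$,
\[
\adeg(\overline{D}^2 \cdot \overline{H}^{d-2}) = \pmb{a}^T M \pmb{a} \leq 0.
\]

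For the equality clause, the delicate issue (flagged in the opening of this section) is that $M$ can be negative definite on $\QQ^l$ yet degenerate on $\RR^l$ along an irrational direction, so one cannot merely invoke the equality clause of the $\QQ$-case. The plan is to reduce to $d=2$: for $n \gg 0$, take sufficiently general sections $s_1, \ldots, s_{d-2} \in H^0(X, nH)$ so that $Y := (s_1)_0 \cap \cdots \cap (s_{d-2})_0$ is a generically smooth, normal arithmetic surface to which $\overline{D}|_Y$ restricts and on which $\deg(D_{\QQ}|_{Y_{\QQ}}) = 0$. A projection-formula computation shows that $\adeg(\overline{D}^2 \cdot \overline{H}^{d-2})$ equals a positive multiple of $\adeg((\overline{D}|_Y)^2)$, so equality propagates to $Y$; Theorem~\ref{thm:Hodge:index:surface} then yields $\adeg((\overline{D}|_Y)^2) = -2[K:\QQ]\langle D_{\QQ}|_Y, D_{\QQ}|_Y \rangle_{NT} = 0$, whence $D_{\QQ}|_Y \in \PDiv(Y_{\QQ})_{\RR}$ by Remark~\ref{rem:Neron:Tate}. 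The main obstacle is then the globalization: letting $Y$ range over a generic family in $|nH|^{\times(d-2)}$ and invoking a Lefschetz-type principle (principality on sufficiently many arithmetic surface slices implies principality on the ambient variety), combined with Lemma~\ref{lem:Z:module:tensor:R}(3) to align the $\QQ$- and $\RR$-structures, one upgrades the slicewise principality into $D_{\QQ} \in \PDiv(X_{\QQ})_{\RR}$, which is the desired conclusion on the generic fiber.
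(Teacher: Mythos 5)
Your treatment of the inequality is correct and is exactly the paper's argument: decompose $D=\sum a_iD_i$ via Lemma~\ref{lem:Z:module:tensor:R}(1), use $\QQ$-linear independence of the $a_i$ to get $\deg(D_{i,\QQ}\cdot H_{\QQ}^{d-2})=0$ for each $i$, apply the $\QQ$-coefficient Hodge index theorem to all rational combinations, and conclude by continuity of the quadratic form. You also correctly identify why the equality clause is the delicate point.

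The equality clause, however, has a genuine gap at the globalization step. You reduce to an arithmetic surface $Y$ and obtain $\rest{D_{\QQ}}{Y}\in\PDiv(Y_{\QQ})_{\RR}$, and then ``invoke a Lefschetz-type principle'' that principality on slices implies principality on $X_{\QQ}$. No such principle is available off the shelf, and this is precisely where all the work lies. Two things are missing. First, before any slicing one must show that each $L_i=\OO_{X_{\QQ}}(D_i)$ is numerically trivial on $X_{\QQ}$; the hypothesis only gives $\deg(D_{i,\QQ}\cdot H_{\QQ}^{d-2})=0$, and the paper extracts numerical triviality by first proving $c_1(\overline{D})\wedge c_1(\overline{H})^{d-2}=0$ (via Proposition~\ref{prop:sym:semidefinite:bilinear} applied to the Green-function discrepancy $\eta$) and then $c_1(\overline{D})=0$ from the vanishing of the archimedean term in the sliced intersection number; your ``positive multiple of $\adeg((\rest{\overline{D}}{Y})^2)$'' is not correct as stated, since the sliced intersection number also contains vertical and archimedean contributions whose individual vanishing is what yields $c_1(\overline{D})=0$. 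Second, even granting numerical triviality, the passage from $Y$ back to $X$ is obstructed by torsion: the restriction $\Pic^{\tau}(X_{\QQ})\to\Pic^{\tau}(Y_{\QQ})$ need not be injective, so slicewise triviality in $\Pic_{\RR}$ does not formally lift. The paper handles this by realizing each $(L_i)_{\CC}$ as a character $\rho_i$ of $\pi_1(X(\CC))$, using surjectivity of $\pi_1(Y(\CC))\to\pi_1(X(\CC))$ to deduce $\rho_1^{\otimes a_1}\cdots\rho_l^{\otimes a_l}=1$, then applying Lemma~\ref{lem:Z:module:tensor:R}(2) --- not part (3), which you cite --- to conclude each $\rho_i$ has finite image, and finally killing the resulting torsion in $\Pic(X_{\QQ})$ by the $h^0$ computation together with $\deg(L_i\cdot H_{\QQ}^{d-2})=0$. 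Without an argument of this kind your proof of the equality clause does not close.
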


\begin{proof}
By (1) in Lemma~\ref{lem:Z:module:tensor:R},
we can  choose $D_1, \ldots, D_l \in \Div(X)$ and $a_1, \ldots, a_l \in \RR$ such that
$a_1, \ldots, a_l$ are linearly independent over $\QQ$ and
$D = a_1 D_1 + \cdots + a_l D_l$. Since
\[
0 = \deg(D_{\QQ} \cdot H_{\QQ}^{d-2}) = \sum_{i=1}^l a_i \deg({D_i}_{\QQ} \cdot H_{\QQ}^{d-2})
\]
and $\deg({D_i}_{\QQ} \cdot H_{\QQ}^{d-2}) \in \QQ$ for all $i$,
we have $\deg({D_i}_{\QQ} \cdot H_{\QQ}^{d-2}) = 0$ for all $i$.
Let us also choose an $F_{\infty}$-invariant $D_i$-Green function $g_i$ of $C^{\infty}$-type 
such that $c_1(D_i, g_i) \wedge c_1(\overline{H})^{d-2} = 0$.
If we set $g' = a_1 g_1 + \cdots + a_l g_l$,
then, by (1) in Proposition~\ref{prop:intersection:capacity},
there is $\eta \in  \langle (C^0 \cap \Tqpsh)(X(\CC)) \rangle_{\RR}$ such that
$g = g' + \eta$. By (4) in Proposition~\ref{prop:intersection:capacity},
\[
\adeg (\overline{D}^2 \cdot \overline{H}^{d-2}) =
\adeg ((D, g')^2 \cdot \overline{H}^{d-2}) 
+ \frac{1}{2} \int_{X(\CC)} \eta dd^c(\eta) c_1(\overline{H})^{d-2}
\]
because $c_1(D, g') \wedge c_1(\overline{H})^{d-2} = 0$.
Therefore, by Proposition~\ref{prop:sym:semidefinite:bilinear},
\[
\adeg (\overline{D}^2 \cdot \overline{H}^{d-2}) \leq
\adeg ((D, g')^2 \cdot \overline{H}^{d-2}) 
\]
and the equality holds if and only if $\eta$ is a constant.
Thus we may assume that $\eta$ is a constant, that is,
$g = g'$ by replacing $g_l$ by $g_l + \eta/a_l$.

By virtue of \cite[Theorem~1.1]{MoHodge},
\[
\adeg \left( \left(\alpha_1 (D_1, g_1) + \cdots + \alpha_l(D_l, g_l)\right)^2 \cdot \overline{H}^{d-2} \right) \leq 0
\]
for all $\alpha_1, \ldots, \alpha_l \in \QQ$,
and hence $\adeg (\overline{D}^2 \cdot \overline{H}^{d-2}) \leq 0$.

We need to check the equality condition. We prove it by induction on $d$.
If $d=2$, then the assertion follows from Theorem~\ref{thm:Hodge:index:surface} and Remark~\ref{rem:Neron:Tate}.
We assume that $d > 2$ and
$\adeg (\overline{D}^2 \cdot \overline{H}^{d-2}) = 0$. 
By using arithmetic Bertini's theorem (cf. \cite{MoABG}), we can find $m \in \ZZ_{>0}$ and $f \in \Rat(X)^{\times}$ with the following
properties:
\begin{enumerate}
\renewcommand{\labelenumi}{(\roman{enumi})}
\item
If we set $\overline{H}' = (H', h') = m \overline{H} + \widehat{(f)}$,
then $(H', h') \in \aDiv_{C^{\infty}}(X)$, $H'$ is effective, $h' >  0$ and
$H'$ is smooth over $\QQ$. 

\item
If $H' = Y' + c_1 F_1 + \cdots + c_r F_r$ is the irreducible decomposition such that
$Y'$ is horizontal and $F_i$'s are vertical, then
$F_i$'s are connected components of smooth fibers over $\ZZ$.

\item
$D$ and $H'$ have no common irreducible component.
\end{enumerate}
Let $Y$ be the normalization of $Y'$.
Then
\begin{multline*}
0 = m^{d-2} \adeg(\overline{D}^2 \cdot \overline{H}^{d-2}) = \adeg(\overline{D}^2 \cdot {\overline{H}'}^{d-2})
= \adeg(\rest{\overline{D}}{Y}^2 \cdot {\rest{\overline{H}'}{Y}}^{d-3}) \\
+ \sum c_i \deg(\rest{D}{F_i}^2 \cdot \rest{H'}{F_i}^{d-3}) + 
\frac{1}{2}\int_{X(\CC)} h' c_1(\overline{D})^2 c_1(\overline{H}')^{d-3}.
\end{multline*}
Therefore, by using \cite[Lemma~1.1.2]{MoHodge},
we can see that $\adeg(\rest{\overline{D}}{Y}^2 \cdot {\rest{\overline{H}'}{Y}}^{d-3}) = 0$ and
$c_1(\overline{D}) = 0$. In particular, by hypothesis of induction,
$\rest{D_{\QQ}}{Y} \in \aPDiv(Y_{\QQ})_{\RR}$.
Let $C$ be a closed and integral curve of $X_{\QQ}$. 
Then, since
\[
0 = \int_{C(\CC)} c_1(\overline{D}) = \deg(D_{\QQ} \cdot C) =
\sum a_i \deg({D_i}_{\QQ} \cdot C)
\]
and $a_{1},\ldots,a_{l}$ are linearly independent over $\QQ$,
we have $\deg({D_i}_{\QQ} \cdot C) = 0$ for all $i$. Therefore, if we set $L_i = \OO_{X_{\QQ}}(D_i)$,
then $L_i$ is numerically trivial, and hence $(L_i)_{\CC}$ is also numerically trivial on $X(\CC)$.
This means that $(L_i)_{\CC}$ comes from a representation $\rho_i : \pi_1(X(\CC)) \to \CC^{\times}$.
Let $\iota$ be the natural homomorphism $\iota : \pi_1(Y(\CC)) \to \pi_1(X(\CC))$ and let
\[
\rho'_i = \rho_i \circ \iota : \pi_1(Y(\CC)) \overset{\iota}{\longrightarrow} \pi_1(X(\CC))  \overset{\rho_i}{\longrightarrow}\CC^{\times}.
\]
Then $\rho'_i$ yields $\rest{(L_i)_{\CC}}{Y(\CC)}$.
Let 
\[
\rho : \pi_1(X(\CC)) \to \CC^{\times} \otimes_{\ZZ} \RR\quad\text{and}\quad
\rho' : \pi_1(Y(\CC)) \to \CC^{\times} \otimes_{\ZZ} \RR
\]
be homomorphisms 
given by $\rho = \rho_1^{\otimes a_1} \cdots \rho_l^{\otimes a_l}$ and
$\rho' = {\rho'_1}^{\otimes a_1} \cdots {\rho'_l}^{\otimes a_l}$.
Since 
\[
\left(\rest{(L_1)_{\CC}}{Y(\CC)}\right)^{\otimes a_1} \otimes \cdots \otimes \left(\rest{(L_l)_{\CC}}{Y(\CC)}\right)^{\otimes a_l} = 1
\]
in $\Pic(Y_{\QQ}) \otimes {\RR}$,
we have $\rho' = 1$.
Note that $\iota$ is surjective (cf. \cite[Theorem~7.4]{Milnor} and the homotopy exact sequence).
Thus $\rho = 1$ because $\rho' = \rho \circ \iota$. Therefore, by (2) in Lemma~\ref{lem:Z:module:tensor:R}, the image of $\rho_i$ is finite for all $i$.
This means that there is a positive integer $n$ such that $(L_i)_{\CC}^{\otimes n} \simeq \OO_{X(\CC)}$ for 
all $i$.
If we fix $\sigma \in K(\CC)$, 
then
\[
\dim_{K} H^0(X_{\QQ}, L_i^{\otimes n}) = \dim_{\CC} H^0(X_{\QQ} \times_{\Spec(K)}^{\sigma} \Spec(\CC), L_i^{\otimes n} \otimes^{\sigma}_K \CC) = 1,
\]
and hence $L_i^{\otimes n} \simeq \OO_{X_{\QQ}}$ because $\deg(L_i \cdot H_{\QQ}^{d-2}) = 0$.
Therefore,
\[
L_1^{\otimes a_1} \otimes \cdots \otimes L_l^{\otimes a_l} = 
(L_1^{\otimes n})^{\otimes a_1/n} \otimes \cdots \otimes (L_l^{\otimes n})^{\otimes a_l/n} = 1
\]
in $\Pic(X_{\QQ})_{\RR}$. Thus $D_{\QQ} \in \PDiv(X_{\QQ})_{\RR}$.
\end{proof}

\begin{Remark}
\label{rem:typo:MoHodge}
There is a typo in \cite[Lemma~1.1.2]{MoHodge}.
The form $\omega$ should be real, that is,
$\bar{\omega} = \omega$.
\end{Remark}

\subsection{Hodge index theorem and pseudo-effectivity}
\setcounter{Theorem}{0}
In this subsection, let us observe the pseudo-effectivity of arithmetic 
$\RR$-Cartier divisors as an application of Hodge index theorem.
Let us begin with the following lemma:

\begin{Lemma}
\label{lem:ample:R:div}
We assume that $X$ is regular.
Let $\overline{D} =(D,g)$ be an arithmetic $\RR$-Cartier divisor of $C^0$-type.
If $D$ is semi-ample on $X_{\QQ}$ \rom{(}that is, there are semi-ample divisors $A_1, \ldots, A_r$ on $X_{\QQ}$ and
$a_1, \ldots, a_r \in \RR_{>0}$ such that $D_{\QQ} = a_1 A_1 + \cdots + a_r A_r$\rom{)}, then
there are $\varphi_1, \ldots, \varphi_l \in \Rat(X)^{\times}_{\RR}$ and $c \in \RR$ such that
$\overline{D} + \widehat{(\varphi_i)}_{\RR} + (0, c) \geq 0$ for all $i$ and
\[
\bigcap_{i=1}^l \Supp(D + (\varphi_i)_{\RR}) = \emptyset
\]
on $X_{\QQ}$ \rom{(}\,for the definition of $\Rat(X)^{\times}_{\RR}$ and arithmetic principal 
$\RR$-Cartier divisors, 
see Conventions and terminology~\rom{\ref{CV:R:principal:div}} and \rom{\ref{CV:arithmetic:divisors:2}}\rom{)}.
\end{Lemma}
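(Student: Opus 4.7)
My plan has three stages: build the rational $\RR$-functions $\varphi_i$ over the generic fiber using semi-ampleness; extend effectivity from $X_{\QQ}$ to $X$ by a single multiplicative correction from $K^{\times}_{\RR}$ (here finiteness of the class group is the key); then absorb the resulting Green-function discrepancy into the constant $c$.

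Writing $D_{\QQ} = \sum_{j=1}^{r} a_{j} A_{j}$ with each $A_{j}$ semi-ample and $a_{j} \in \RR_{>0}$, pick $n \in \ZZ_{>0}$ large enough that $nA_{j}$ is a basepoint-free Cartier divisor for every $j$, and pick generating sections $f_{j,1}, \ldots, f_{j,K_{j}} \in \Rat(X)^{\times}$ of $\OO_{X_{\QQ}}(nA_{j})$, so that $nA_{j} + (f_{j,k}) \geq 0$ and $\bigcap_{k} \Supp(nA_{j} + (f_{j,k})) = \emptyset$ on $X_{\QQ}$. For each multi-index $\mathbf{k} = (k_{1}, \ldots, k_{r})$ set
\[
\varphi_{\mathbf{k}} := f_{1,k_{1}}^{a_{1}/n} \cdots f_{r,k_{r}}^{a_{r}/n} \in \Rat(X)^{\times}_{\RR},
\]
so that $D_{\QQ} + (\varphi_{\mathbf{k}})_{\RR} = \sum_{j} (a_{j}/n)(nA_{j} + (f_{j,k_{j}}))$ is effective on $X_{\QQ}$; the common base locus on $X_{\QQ}$ is empty because at any point $x$ I may independently pick, for each $j$, an index $k_{j}(x)$ with $x \notin \Supp(nA_{j} + (f_{j,k_{j}(x)}))$.

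To promote this to effectivity on all of $X$, only the vertical components need attention: the horizontal part of $D + (\varphi_{\mathbf{k}})_{\RR}$ is determined by the generic fiber and hence already effective, while bad vertical components lie over the finitely many primes $P \in \Spec(O_{K})$ meeting $\Supp(D) \cup \bigcup_{j,k} \Supp((f_{j,k}))$. Because $\mathrm{Cl}(O_{K})$ is finite, tensoring the sequence $K^{\times} \to \Div(\Spec(O_{K})) \to \mathrm{Cl}(O_{K}) \to 0$ with $\RR$ gives that $K^{\times}_{\RR} \to \Div(\Spec(O_{K}))_{\RR}$ is surjective, so any prescribed finite combination $\sum_{P} s_{P}[P]$ with $s_{P} \in \RR$ is realized by some $\psi \in K^{\times}_{\RR}$; pulled back, $(\psi)_{\RR} = \sum_{P} s_{P} \pi^{-1}(P)$ on $X$. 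Since every component of $\pi^{-1}(P)$ has strictly positive multiplicity in $\pi^{-1}(P)$, choosing each $s_{P}$ large enough kills every negative vertical multiplicity simultaneously across the finite family $\{\varphi_{\mathbf{k}}\}$. Replacing $\varphi_{\mathbf{k}}$ by $\varphi_{\mathbf{k}} \psi$ then yields $D + (\varphi_{\mathbf{k}} \psi)_{\RR} \geq 0$ on all of $X$ while preserving the base-locus condition on $X_{\QQ}$ (since $(\psi)_{\RR}$ is purely vertical).

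Relabelling the finite family as $\varphi_{1}, \ldots, \varphi_{l}$, the Green function $g - \log|\varphi_{i}|^{2}$ of the effective divisor $D + (\varphi_{i})_{\RR}$ is of $C^{0}$-type (cf.\ Lemma~\ref{lem:rational:tensor:real:number}), hence bounded below on the compact space $X(\CC)$; setting $c := \max_{i}\bigl(-\inf_{X(\CC)}(g - \log|\varphi_{i}|^{2})\bigr)$ delivers $\overline{D} + \widehat{(\varphi_{i})}_{\RR} + (0, c) \geq 0$ for every $i$, as required. The main obstacle is the vertical correction step: one has to marry the finiteness of $\mathrm{Cl}(O_{K})$ (to realize the required adjustment by a single $\psi \in K^{\times}_{\RR}$) with the strict positivity of the fiber multiplicities $m_{P,i}$ (so that a sufficiently large uniform shift works across all $\mathbf{k}$ and all components of the offending fibers); regularity of $X$ enters precisely to give the clean Weil-divisor description of each $\pi^{-1}(P)$.
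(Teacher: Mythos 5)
Your proof is correct, and it is close in spirit to the paper's argument but organized in a genuinely different way. The paper first decomposes $\overline{D} = a_1\overline{D}_1 + \cdots + a_r\overline{D}_r + (E,\varphi)$, where each $\overline{D}_i$ is an arithmetic $\QQ$-divisor of $C^0$-type whose finite part is semi-ample on $X_{\QQ}$, $E$ is vertical and $\varphi$ is continuous (this extension of the $A_j$ across $X$ is where regularity is actually used); it then verifies the assertion for each piece separately --- for the $\QQ$-pieces by choosing \emph{integral} sections $\phi\in H^0(X,nD_i)$ with no common zero on $X_{\QQ}$, so that $D_i+(\phi^{1/n})_{\RR}\geq 0$ on all of $X$ and no vertical repair is needed, and for the vertical piece by an element of $O_K$ --- and finally combines the pieces through formal closure under positive scaling and sums; that last step produces exactly the multi-index products $\varphi_{\mathbf{k}}$ you write down directly. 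Your variant works only with generic-fibre sections and postpones integrality to a single vertical correction $\psi$, which is legitimate, but two small remarks: finiteness of the class group is more than you need, since you only have to dominate finitely many negative vertical multiplicities, and an element of $O_K$ (for instance a rational integer highly divisible at the relevant primes) already gives an effective vertical principal divisor doing the job --- this is precisely the paper's treatment of the vertical piece; and your closing remark about regularity is slightly off, because the Weil-divisor description of $\pi^{-1}(P)$ and of $(\psi)_{\RR}$ needs only normality of $X$, whereas in the paper regularity enters to build the decomposition above. What your route buys is that one never has to extend the $A_j$ to arithmetic divisors on $X$; what the paper's route buys is that each verification stays at the level of honest integral sections, with the bookkeeping isolated in the four formal closure properties.
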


\begin{proof}
Let us consider the assertion of the lemma for $\overline{D} = (D,g)$:

\renewcommand{\theequation}{$\ast$}
\begin{equation}
\label{eqn:lem:ample:R:div:1}
\begin{array}{l}
\text{There exist $\varphi_1, \ldots, \varphi_l \in \Rat(X)^{\times}_{\RR}$ and $c \in \RR$ such that} \\
\text{$\overline{D} + \widehat{(\varphi_i)}_{\RR} + (0, c) \geq 0$
for all $i$ and $\bigcap_{i=1}^l \Supp(D + (\varphi_i)_{\RR}) = \emptyset$ on $X_{\QQ}$.}
\end{array}
\end{equation}
\renewcommand{\theequation}{\arabic{section}.\arabic{subsection}.\arabic{Theorem}.\arabic{Claim}}

\begin{Claim}
\begin{enumerate}
\renewcommand{\labelenumi}{(\arabic{enumi})}
\item
If $D$ is a $\QQ$-Cartier divisor and $D$ is semi-ample on $X_{\QQ}$
\rom{(}i.e. $nD$ is base-point free on $X_{\QQ}$ for some $n > 0$\rom{)}, then
\eqref{eqn:lem:ample:R:div:1} holds for $\overline{D}$.

\item
If $D$ is vertical, then \eqref{eqn:lem:ample:R:div:1} holds for $\overline{D}$.

\item
If $a \in \RR_{>0}$ and \eqref{eqn:lem:ample:R:div:1} holds for $\overline{D}$, then
so does for $a\overline{D}$.

\item
If \eqref{eqn:lem:ample:R:div:1} holds for $\overline{D}$ and $\overline{D}'$,
so does for $\overline{D} + \overline{D}'$.
\end{enumerate}
\end{Claim}

\begin{proof}
(1) Since $D$ is a semi-ample $\QQ$-Cartier divisor on $X_{\QQ}$, 
there are a positive integer $n$ and 
$\phi_1, \ldots, \phi_l \in H^0(X, nD) \setminus \{ 0 \}$ such that
$\bigcap_{i=1}^l \Supp(nD + (\phi_i)) = \emptyset$ on $X_{\QQ}$.
Since $D + (\phi_i^{1/n})_{\RR}$ is effective, we can find $c \in \RR$ such that
$\overline{D} + \widehat{(\phi_i^{1/n})}_{\RR} +(0,c) \geq 0$ for all $i$.

(2) We choose $x \in O_K \setminus \{ 0 \}$ such that $D + (x) \geq 0$, and hence
there is $c \in \RR$ such that $\overline{D} + \widehat{(x)} + (0,c) \geq 0$.

(3) Let $\varphi_1, \ldots, \varphi_l \in \Rat(X)^{\times}_{\RR}$ and $c \in \RR$ such that
$\overline{D} + \widehat{(\varphi_i)}_{\RR} + (0, c) \geq 0$
for all $i$ and $\bigcap_{i=1}^l \Supp(D + (\varphi_i)_{\RR}) = \emptyset$ on $X_{\QQ}$.
Then $a\overline{D} + \widehat{(\varphi_i^a)}_{\RR} + (0, ac) \geq 0$
for all $i$ and $\bigcap_{i=1}^l \Supp(aD + (\varphi_i^a)_{\RR}) = \emptyset$ on $X_{\QQ}$.

(4) By our assumption,
there exist $\varphi_1, \ldots, \varphi_l, \varphi'_1, \ldots, \varphi'_{l'} \in \Rat(X)^{\times}_{\RR}$ and 
$c, c' \in \RR$ such that
\[
\begin{cases}
\text{$\overline{D} + \widehat{(\varphi_i)}_{\RR} + (0, c) \geq 0$ for all $i$}.\\
\text{$\bigcap_{i=1}^l \Supp(D + (\varphi_i)_{\RR}) = \emptyset$ on $X_{\QQ}$},\\
\text{$\overline{D}' + \widehat{(\varphi'_j)}_{\RR} + (0, c') \geq 0$
for all $j$}, \\
\text{$\bigcap_{j=1}^{l'} \Supp(D' + (\varphi'_j)_{\RR}) = \emptyset$
on $X_{\QQ}$}.
\end{cases}
\]
Then $\overline{D} + \overline{D}' + \widehat{(\varphi_i  \varphi'_j)}_{\RR} + (0, c + c') \geq 0$ for all $i,j$ and
\[
\bigcap_{i,j} \Supp(D +  D' + (\varphi_i \varphi'_j)_{\RR}) = \emptyset
\]
on $X_{\QQ}$ because
\[
\bigcap_{i,j} \Supp(D + D' + (\varphi_i \varphi'_j)_{\RR}) \subseteq \bigcap_{i,j} \left( \Supp(D +  (\varphi_i)_{\RR}) \cup \Supp(D' + (\varphi'_{j})_{\RR})\right).
\]
\end{proof}

Let us go back to the proof of the lemma.
Since $X$ is regular and $D$ is semi-ample on $X_{\QQ}$, there are arithmetic
$\QQ$-Cartier divisors 
$\overline{D}_1, \ldots, \overline{D}_r$ of $C^0$-type, $a_1,\ldots, a_r \in \RR_{>0}$,
a vertical $\RR$-Cartier divisor $E$ and 
an $F_{\infty}$-invariant continuous function $\varphi$ on $X(\CC)$ such that
$D_i$'s are semi-ample on $X_{\QQ}$ and $\overline{D} = a_1\overline{D}_1 + \cdots + a_r\overline{D}_r + (E,\varphi)$.
Thus the assertion follows from the above claim.
\end{proof}

Let us fix an ample arithmetic $\QQ$-Cartier divisor $\overline{H}$ on $X$.
For arithmetic $\RR$-Cartier divisors $\overline{D}_1$ and $\overline{D}_2$ of 
$C^{\infty}$-type on $X$,
we denote $\adeg(\overline{H}^{d-2} \cdot \overline{D}_1 \cdot \overline{D}_2)$ by
$\adeg_{\overline{H}}(\overline{D}_1 \cdot \overline{D}_2)$.
Let us consider the following lemma, which is a useful criterion of pseudo-effectivity.

\begin{Lemma}
\label{lem:self:negative:not:peff}
We assume that $X$ is regular.
Let $\overline{D} = (D, g)$ be an arithmetic $\RR$-Cartier divisor of $C^{\infty}$-type on $X$ with the following properties:
\begin{enumerate}
\renewcommand{\labelenumi}{(\arabic{enumi})}
\item
$D$ is nef on $X_{\QQ}$ and $\deg(D_{\QQ} \cdot H_{\QQ}^{d-2}) = 0$.

\item
$c_1(\overline{D})$ is semipositive.

\item
$D$ is divisorially $\pi$-nef with respect to $H$.

\item
$\adeg_{\overline{H}}(\overline{D}^2) < 0$.
\end{enumerate}
Then $\overline{D}$ is not pseudo-effective.
\end{Lemma}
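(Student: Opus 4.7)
I argue by contradiction: suppose $\overline{D}$ is pseudo-effective, and aim to derive $\adeg_{\overline{H}}(\overline{D}^{2})\ge 0$, contradicting (4). Put $\overline{D}_{\epsilon}:=\overline{D}+\epsilon\overline{H}$, which is big for every $\epsilon>0$ since $\overline{D}$ is pseudo-effective and $\epsilon\overline{H}$ is ample. Hence for all sufficiently large $n$ we may choose $\phi_{n}\in\Rat(X)^{\times}$ with $\overline{M}_{n}:=n\overline{D}_{\epsilon}+\widehat{(\phi_{n})}\ge 0$ as an arithmetic $\RR$-divisor of $C^{\infty}$-type.

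The plan is to evaluate $\adeg(\overline{H}^{d-2}\cdot\overline{D}_{\epsilon}\cdot\overline{M}_{n})$ in two ways. Since an arithmetic principal divisor is numerically trivial in top-degree intersection against $C^{\infty}$-divisors, we get
\[
\adeg(\overline{H}^{d-2}\cdot\overline{D}_{\epsilon}\cdot\overline{M}_{n})=n\,\adeg_{\overline{H}}(\overline{D}_{\epsilon}^{2}).
\]
On the other hand, writing $\overline{M}_{n}=(M_{n},m_{n})$ with $M_{n}=\sum_{j}a_{j}F_{j}$ ($a_{j}>0$, $F_{j}$ prime) and $m_{n}\ge 0$, the archimedean piece $\tfrac12\int_{X(\CC)}m_{n}\,c_{1}(\overline{D}_{\epsilon})\wedge c_{1}(\overline{H})^{d-2}$ is nonnegative by (2) and positivity of $c_{1}(\overline{H})$; each vertical component $F_{j}$ over $P\in\Spec(O_{K})$ contributes a positive multiple of $\deg_{H}(H^{d-2}|_{F_{j}}\cdot D_{\epsilon}|_{F_{j}})\ge 0$ by (3) combined with ampleness of $H$ on fibers; and for each horizontal component $F_{j}$ one argues $\adeg(\overline{H}^{d-2}|_{\widetilde F_{j}}\cdot\overline{D}_{\epsilon}|_{\widetilde F_{j}})\ge 0$ by induction on $d$, reducing to the arithmetic surface case (Theorem~\ref{thm:Hodge:index:surface}) after using arithmetic Bertini (as in the proof of Theorem~\ref{thm:Hodge:index:arith:var}) to slice $X$ along smooth horizontal hyperplane sections which preserve the analogues of (1)--(3) for the restriction.

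Collecting these estimates yields $\adeg_{\overline{H}}(\overline{D}_{\epsilon}^{2})\ge 0$ for every $\epsilon>0$; since
\[
\adeg_{\overline{H}}(\overline{D}_{\epsilon}^{2})=\adeg_{\overline{H}}(\overline{D}^{2})+2\epsilon\,\adeg(\overline{H}^{d-1}\cdot\overline{D})+\epsilon^{2}\adeg(\overline{H}^{d})
\]
is continuous in $\epsilon$, letting $\epsilon\to 0^{+}$ gives $\adeg_{\overline{H}}(\overline{D}^{2})\ge 0$, contradicting (4). The main obstacle is the nonnegativity of the horizontal contribution $\adeg(\overline{H}^{d-2}|_{\widetilde F_{j}}\cdot\overline{D}_{\epsilon}|_{\widetilde F_{j}})$: the restriction of $\overline{D}_{\epsilon}$ to $\widetilde F_{j}$ is in general not arithmetically nef, so controlling the heights of horizontal subvarieties requires a careful inductive descent that exploits conditions (1)--(3) in tandem with the arithmetic Hodge index theorem.
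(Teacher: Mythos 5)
Your plan breaks down exactly at the point you flag as ``the main obstacle,'' and the proposed remedy does not repair it. To make the second evaluation of $\adeg(\overline{H}^{d-2}\cdot\overline{D}_{\epsilon}\cdot\overline{M}_{n})$ nonnegative you need that \emph{every} horizontal component $F_{j}$ of an effective divisor in $|nD_{\epsilon}|$ satisfies $\adeg(\rest{\overline{H}}{\widetilde F_{j}}^{d-2}\cdot\rest{\overline{D}_{\epsilon}}{\widetilde F_{j}})\geq 0$, i.e.\ that the $\overline{D}_{\epsilon}$-height of every such horizontal cycle is nonnegative. Neither induction on $d$ nor the arithmetic Hodge index theorem yields this: the inductive hypothesis is the lemma itself in lower dimension (``negative self-intersection implies not pseudo-effective''), which says nothing about heights of individual subvarieties, and $\overline{D}_{\epsilon}$ is not arithmetically nef (its Green function is only constrained by $c_1(\overline{D})\geq 0$), so heights with respect to it can perfectly well be negative. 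Already for $d=2$ the claim would be that $\adeg(\rest{\overline{D}_{\epsilon}}{\widetilde F_{j}})\geq 0$ for every horizontal point $F_j$ occurring in some $|nD_\epsilon|$, which bigness of $\overline{D}_{\epsilon}$ does not give. There are also smaller technical issues (restricting $\overline{D}_{\epsilon}$ to $\widetilde F_{j}$ requires $F_{j}\not\subseteq\Supp(D_{\epsilon})$, and the multilinear intersection in the paper is only defined with the $C^0$-Weil divisor in the last slot), but the horizontal-height step is the essential gap.

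The paper circumvents precisely this difficulty by \emph{not} pairing the effective representative against $\overline{D}_{\epsilon}$. Instead it constructs an auxiliary test divisor $\overline{L}$ (a convex combination of $\overline{D}$ with an ample $\overline{B}$) that is ample on $X_{\QQ}$, has positive first Chern form, is divisorially $\pi$-nef, and still satisfies $\adeg_{\overline{H}}(\overline{L}\cdot\overline{D})<0$ thanks to hypothesis (4). The nonnegativity of $\adeg_{\overline{H}}((\overline{L}+(0,c))\cdot\overline{G})$ for \emph{every} effective $\overline{G}$ is then obtained from Lemma~\ref{lem:ample:R:div}: since $L$ is ample on $X_{\QQ}$ one can move $\overline{L}$ by principal $\RR$-divisors $\widehat{(\varphi_i)}_{\RR}$ so that for any given horizontal prime $\Gamma$ some $\overline{L}+\widehat{(\varphi_i)}_{\RR}+(0,c)$ is effective and avoids $\Gamma$, making the relevant restricted degree manifestly $\geq 0$; vertical primes are handled by divisorial $\pi$-nefness and the archimedean term by positivity of $c_1(\overline{L})$. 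Pseudo-effectivity of $\overline{D}$ then forces $\adeg_{\overline{H}}((\overline{L}+(0,c))\cdot\overline{D})\geq 0$, contradicting $\adeg_{\overline{H}}(\overline{L}\cdot\overline{D})<0$. This moving-lemma step is exactly the ingredient your outline is missing.
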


\begin{proof}
Let $\overline{B}$ be an ample arithmetic $\RR$-Cartier divisor on $X$.
We set $\overline{A} = \overline{D} + \overline{B}$.
Since 
\[
\adeg_{\overline{H}}(\overline{D}^2) < 0,
\]
we have
\[
\adeg_{\overline{H}}(\overline{A} \cdot \overline{D}) < \adeg_{\overline{H}}(\overline{B}\cdot \overline{D}).
\]
Here we claim the following:

\begin{Claim}
There is an arithmetic $\RR$-Cartier divisor $\overline{L} = (L, h)$ of $C^{\infty}$-type with the following properties:
\begin{enumerate}
\renewcommand{\labelenumi}{(\alph{enumi})}
\item
$L$ is ample on $X_{\QQ}$.

\item
$c_1(\overline{L})$ is positive.

\item
$L$ is divisorially $\pi$-nef with respect to $H$.

\item
$\adeg_{\overline{H}}(\overline{L}\cdot \overline{D}) < 0$.
\end{enumerate}
\end{Claim}

\begin{proof}
If $\adeg_{\overline{H}}( \overline{B}\cdot \overline{D}) \leq 0$, then $\adeg_{\overline{H}}(\overline{A}\cdot \overline{D}) < 0$.
Thus the assertion is obvious if we set $\overline{L} = \overline{A}$.

Next we assume that $\adeg_{\overline{H}}(\overline{B}\cdot \overline{D}) >0$.
Since
$\adeg_{\overline{H}}( \overline{A}\cdot \overline{D})/\adeg_{\overline{H}}( \overline{B}\cdot \overline{D}) < 1$,
we choose a positive number $\alpha$ such that
\[
\frac{\adeg_{\overline{H}}( \overline{A}\cdot \overline{D})}{\adeg_{\overline{H}}( \overline{B}\cdot \overline{D})} < \alpha < 1.
\]
We set $\overline{L} = \overline{D} + (1-\alpha)\overline{B}$.
The conditions (a), (b) and (c) are obvious.
Moreover, since $\overline{L} = \overline{A} - \alpha \overline{B}$,
\begin{align*}
\adeg_{\overline{H}}(\overline{L}\cdot \overline{D}) & = \adeg_{\overline{H}}( \overline{A}\cdot \overline{D}) - \alpha \adeg_{\overline{H}}(\overline{B}\cdot \overline{D}) \\
& < \adeg_{\overline{H}}( \overline{A}\cdot \overline{D}) - \frac{\adeg_{\overline{H}}(\overline{D})}{\adeg_{\overline{H}}( \overline{B}\cdot \overline{D})} \adeg_{\overline{H}}( \overline{B}\cdot \overline{D}) = 0.
\end{align*}
\end{proof}

Let us go back to the proof of the lemma.
Since $L$ is ample on $X_{\QQ}$, by Lemma~\ref{lem:ample:R:div},
there are  $\varphi_1, \ldots, \varphi_l \in \Rat(X)^{\times}_{\RR}$ and $c \in \RR$ such that
$\overline{L} + \widehat{(\varphi_i)}_{\RR} +(0,c) \geq 0$ for all $i$ and $\bigcap_{i=1}^l \Supp(L + (\varphi_i)_{\RR}) = \emptyset$ on $X_{\QQ}$.
Let $\Gamma$ be a horizontal prime divisor. Then we can find $i$ such that 
$\Gamma \not\subseteq \Supp(L + (\varphi_i)_{\RR})$.
Thus
\begin{multline*}
\adeg_{\overline{H}}((\overline{L} + (0,c)) \cdot (\Gamma, 0))
= \adeg_{\overline{H}}((\overline{L} + \widehat{(\varphi_i)}_{\RR} + (0,c)) \cdot (\Gamma, 0)) \\
= \adeg\left(\rest{\overline{H}}{\Gamma}^{d-2} \cdot \rest{(\overline{L} + \widehat{(\varphi_i)}_{\RR} + (0,c))}{\Gamma}\right) \geq 0.
\end{multline*}
Furthermore,
the above inequality also holds for a vertical prime divisor $\Gamma$
because $L$ is divisorially $\pi$-nef with respect to $H$.
Therefore, if $\overline{G} = (G,k)$ is an effective arithmetic $\RR$-Cartier divisor of $C^0$-type,
then 
\[
\adeg_{\overline{H}}( (\overline{L} + (0,c))\cdot \overline{G}) = \adeg_{\overline{H}}( (\overline{L} + (0,c))\cdot (G,0)) +
\frac{1}{2} \int_{X(\CC)} k c_1(\overline{H})^{d-2} c_1(\overline{L}) \geq 0.
\]
In particular,
if $\overline{D}$ is pseudo-effective, then
\[
\adeg_{\overline{H}}( (\overline{L} + (0,c))\cdot \overline{D}) \geq 0.
\]
On the other hand, as $\deg(D_{\QQ} \cdot H_{\QQ}^{d-2}) = 0$,
\begin{align*}
\adeg_{\overline{H}}( (\overline{L} + (0,c))\cdot \overline{D}) & = \adeg_{\overline{H}}( \overline{L}\cdot \overline{D}) + \frac{c}{2}\deg(D_{\QQ} \cdot H_{\QQ}^{d-2}) \\
&=
\adeg_{\overline{H}}(\overline{L}\cdot \overline{D}) < 0.
\end{align*}
This is a contradiction.
\end{proof}

As consequence of Hodge index theorem and the above lemma, we have the following theorem
on pseudo-effectivity:

\begin{Theorem}
\label{thm:pseudo:principal}
We assume that $X$ is regular and $d \geq 2$.
Let $\overline{D} = (D,g)$ be an arithmetic $\RR$-Cartier divisor of $C^{0}$-type.
If $\overline{D}$ is 
pseudo-effective and $D$ is numerically trivial on $X_{\QQ}$,
then $D_{\QQ} \in \PDiv(X_{\QQ})_{\RR}$. 
\end{Theorem}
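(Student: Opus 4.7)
The plan is to combine the arithmetic Hodge index theorem (Theorem~\ref{thm:Hodge:index:arith:var}) with the pseudo-effectivity obstruction in Lemma~\ref{lem:self:negative:not:peff}, reducing the given $C^0$-type problem to a $C^\infty$-type one where both tools apply. First I would replace $g$ by a smooth Green function with vanishing curvature. Using Lemma~\ref{lem:Z:module:tensor:R}(1), write $D = a_1 D_1 + \cdots + a_l D_l$ with $D_i \in \Div(X)$ and $a_1, \ldots, a_l$ linearly independent over $\QQ$. For any curve $C \subset X_\QQ$ we have $\sum a_i \deg({D_i}_\QQ \cdot C) = 0$ with each $\deg({D_i}_\QQ \cdot C) \in \ZZ$, so $\QQ$-linear independence forces $\deg({D_i}_\QQ \cdot C) = 0$; hence every ${D_i}_\QQ$ is numerically trivial on $X_\QQ$. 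Consequently the line bundle $\OO(D_i)_\CC$ is numerically trivial on $X(\CC)$, so it admits a flat $F_\infty$-invariant $C^\infty$ Hermitian metric, i.e.\ an $F_\infty$-invariant $D_i$-Green function $g_i^0$ of $C^\infty$-type with $c_1(D_i, g_i^0) = 0$. Setting $g^0 := \sum a_i g_i^0$, $\overline{D}^0 := (D, g^0)$, $\eta := g - g^0 \in C^0(X(\CC))$, and $c := \max_{X(\CC)} \eta$, one has $\overline{D}^0 + (0, c) \geq \overline{D}$, so $\overline{D}^0 + (0, c)$ is pseudo-effective.

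Fix an ample arithmetic $\QQ$-divisor $\overline{H}$. To enforce divisorial $\pi$-nefness, Lemma~\ref{lem:vertical:trivial} yields a vertical effective $\RR$-divisor $E$ such that $D + E$ is divisorially $\pi$-numerically trivial with respect to $H$. Set $\overline{D}_1 := (D + E, g^0)$; since $E$ is vertical, $g^0$ is still a Green function for $D+E$, $\overline{D}_1$ is of $C^\infty$-type, and $(D+E)_\QQ = D_\QQ$ is numerically trivial on $X_\QQ$, so $\deg((D+E)_\QQ \cdot H_\QQ^{d-2}) = 0$. The arithmetic Hodge index theorem then gives
\[
\adeg_{\overline{H}}(\overline{D}_1^2) \leq 0,
\]
with equality implying $D_\QQ = (D+E)_\QQ \in \PDiv(X_\QQ)_\RR$, which is exactly the desired conclusion.

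The core of the argument is to rule out strict inequality. Assume $\adeg_{\overline{H}}(\overline{D}_1^2) < 0$ and apply Lemma~\ref{lem:self:negative:not:peff} to $\overline{D}_1 + (0, c)$: condition~(1) holds because $D+E$ is numerically trivial on $X_\QQ$, condition~(2) because $c_1(\overline{D}_1) = 0$ (as $E$ is vertical and $c_1(D, g^0) = 0$), and condition~(3) by the choice of $E$. For condition~(4), since $c_1(\overline{D}_1) = 0$ both cross terms $\adeg_{\overline{H}}(\overline{D}_1 \cdot (0, c))$ and $\adeg_{\overline{H}}((0, c)^2)$ vanish, so $\adeg_{\overline{H}}((\overline{D}_1 + (0, c))^2) = \adeg_{\overline{H}}(\overline{D}_1^2) < 0$. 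Lemma~\ref{lem:self:negative:not:peff} then says $\overline{D}_1 + (0, c)$ is not pseudo-effective. On the other hand, $\overline{D}_1 + (0, c) = \overline{D}^0 + (E, 0) + (0, c) \geq \overline{D}^0 + (0, c) \geq \overline{D}$, using that $(E, 0)$ is effective since $E$ is effective and vertical, and that $c \geq \eta$. Hence $\overline{D}_1 + (0, c)$ is pseudo-effective, a contradiction.

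The main technical care is required in the construction of $\overline{D}^0$: one must pass numerical triviality from the $\RR$-combination $D$ down to each integer component $D_i$ via the $\QQ$-linear independence of the coefficients, and ensure $F_\infty$-invariance of the flat metrics by averaging. Once $\overline{D}^0$ is in hand, the vertical correction $(E, 0)$ and the constant shift $(0, c)$ are chosen precisely so that pseudo-effectivity is preserved upward from $\overline{D}$ while the quantity $\adeg_{\overline{H}}(\overline{D}_1^2)$ controlled by the Hodge index theorem is unchanged, closing the contradiction.
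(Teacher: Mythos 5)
Your proof is correct and follows essentially the same route as the paper: replace $g$ by a flat $C^{\infty}$ Green function, add the vertical correction $E$ from Lemma~\ref{lem:vertical:trivial}, and play the equality case of the arithmetic Hodge index theorem (Theorem~\ref{thm:Hodge:index:arith:var}) against the non-pseudo-effectivity criterion of Lemma~\ref{lem:self:negative:not:peff}, using that adding the effective divisor $(E,0)+(0,c-\eta)$ preserves pseudo-effectivity. The only differences are cosmetic: you spell out the existence of the flat $F_{\infty}$-invariant Green function via the decomposition $D=\sum a_iD_i$, and you phrase the contradiction as ruling out strict inequality rather than assuming $D_{\QQ}\notin\PDiv(X_{\QQ})_{\RR}$ at the outset.
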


\begin{proof}
We assume that $D_{\QQ} \not\in \PDiv(X_{\QQ})_{\RR}$.
Since $D$ is numerically trivial on $X_{\QQ}$, by Lemma~\ref{lem:vertical:trivial},
we can find an effective vertical $\RR$-Cartier divisor $E$ such that
$D + E$ is divisorially  $\pi$-numerically trivial with respect to $H$.
Moreover, we can find an
$F_{\infty}$-invariant  $D$-Green function $g_0$ of $C^{\infty}$-type with $c_1(D,g_0) = 0$.
Then there is an $F_{\infty}$-invariant continuous function $\eta$ on $X(\CC)$ such that $g + \eta = g_0$.
Replacing $g_0$ by $g_0 + c$ ($c \in \RR$), we may assume that $\eta \geq 0$.
By the Hodge index theorem,
\[
\adeg_{\overline{H}}((D + E, g_0)^2) < 0.
\]
Thus $(D+ E, g_0)$ is not pseudo-effective by Lemma~\ref{lem:self:negative:not:peff}, and hence
\[
\overline{D} = (D+E, g_0) - (E, \eta)
\]
is also not pseudo-effective. This is a contradiction.
\end{proof}

Finally let us consider the following lemmas on pseudo-effectivity.

\begin{Lemma}
\label{lem:pseudo:plus:principal}
For $\overline{D} \in \aDiv_{C^0}(X)_{\RR}$ and $z \in \aPDiv(X)_{\RR}$,
if $\overline{D}$ is pseudo-effective, then $\overline{D} +z$ is also pseudo-effective.
\end{Lemma}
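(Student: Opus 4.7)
The plan is to reduce the claim directly to the defining property of pseudo-effectivity, invoking the invariance of the arithmetic volume $\avol$ under addition of an arithmetic principal $\RR$-divisor.

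First I would write $z = \widehat{(\varphi)}_{\RR}$ for some $\varphi \in \Rat(X)^{\times}_{\RR}$. To verify pseudo-effectivity of $\overline{D}+z$, let $\overline{B}$ be an arbitrary big arithmetic $\RR$-divisor of $C^{0}$-type on $X$; it then suffices to show that $\overline{D}+z+\overline{B}$ is big. The natural move is to set $\overline{A}:=\overline{B}+z$ and deduce bigness of $\overline{A}$; once this is in hand, pseudo-effectivity of $\overline{D}$ applied to $\overline{A}$ immediately gives that $\overline{D}+\overline{A}=\overline{D}+z+\overline{B}$ is big, and as $\overline{B}$ was arbitrary the lemma follows.

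Thus the main step is $\avol(\overline{B}+\widehat{(\varphi)}_{\RR})=\avol(\overline{B})$. For $\varphi\in\Rat(X)^{\times}$ this is the classical norm-preserving isomorphism $\hat H^{0}(X,n\overline{B})\to \hat H^{0}(X,n(\overline{B}+\widehat{(\varphi)}))$ given by $s\mapsto s\varphi^{-n}$: if $\overline{B}=(B,g)$ then $\overline{B}+\widehat{(\varphi)}$ has Green function $g-2\log|\varphi|$, so $|s\varphi^{-n}|\exp(-n(g-2\log|\varphi|)/2)=|s|\exp(-ng/2)$, preserving the sup norm. To extend to $\varphi \in \Rat(X)^{\times}_{\RR}$, I would first handle rational exponents by clearing denominators and using the homogeneity $\avol(q\overline{M})=q^{d}\avol(\overline{M})$, and then handle real exponents by invoking the continuity of $\avol$ on $\aDiv_{C^{0}}(X)_{\RR}$ established in \cite{MoArZariski}.

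The hard part, then, is really the invariance statement for $\widehat{(\ )}_{\RR}$, and it is entirely a statement inside the framework of arithmetic $\RR$-divisors already developed in \cite{MoArZariski}; once that is granted, the present lemma is a one-line formal consequence of the definition of pseudo-effectivity. I expect no further technical obstruction: in particular, since $\overline{B}$ is of $C^{0}$-type and $z$ of $C^{\infty}$-type, $\overline{A}=\overline{B}+z$ again lies in $\aDiv_{C^{0}}(X)_{\RR}$, so it is a legitimate test divisor in the definition of pseudo-effectivity of $\overline{D}$.
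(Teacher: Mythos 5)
Your argument is correct, but it takes a genuinely different route from the paper. The paper's proof never touches the volume function: it uses that $z \in \aPDiv(X)_{\RR}$ is nef, writes $(\overline{D}+z)+\overline{A} = \left(\overline{D}+\frac{1}{2}\overline{A}\right)+\left(z+\frac{1}{2}\overline{A}\right)$ for an ample $\overline{A}$, and concludes by ``nef $+$ ample $=$ ample'' and ``big $+$ ample $=$ big'', i.e.\ it tests pseudo-effectivity only against ample perturbations (implicitly using the standard equivalence that this suffices). You instead verify the definition literally against an arbitrary big $\overline{B}$ of $C^0$-type, by proving the invariance $\avol(\overline{B}+\widehat{(\varphi)}_{\RR})=\avol(\overline{B})$: the norm-preserving twist $s\mapsto s\varphi^{-n}$ for $\varphi\in\Rat(X)^{\times}$, then homogeneity of $\avol$ to clear rational denominators, then the continuity of $\avol$ on $\aDiv_{C^0}(X)_{\RR}$ (the same \cite[Theorem~5.2.2]{MoArZariski} the paper invokes elsewhere) together with density of rational exponents. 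Both reductions lean on standard facts from \cite{MoArZariski}; the paper's is shorter once the cone calculus (nef, ample, big) for arithmetic $\RR$-divisors is taken as given, while yours avoids the ``ample test suffices'' step, matches the definition of pseudo-effectivity as stated in the Introduction, and yields the slightly stronger byproduct $\avol(\overline{D}+z)=\avol(\overline{D})$, hence also that bigness itself is preserved under adding an arithmetic principal $\RR$-divisor. The only point you leave tacit is the identification of bigness with $\avol>0$ for $\RR$-divisors of $C^0$-type, which is exactly how bigness is set up in \cite{MoArZariski}, so there is no gap.
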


\begin{proof}
Let $\overline{A}$ be an ample arithmetic $\RR$-Cartier divisor on $X$.
Since $\overline{D}$ is pseudo-effective,
$\overline{D} + (1 /2) \overline{A}$ is big.
Moreover, $z + (1 /2) \overline{A}$ is ample because $z$ is nef.
Therefore,
\[
(\overline{D} + z) + \overline{A} = (\overline{D} + (1 /2) \overline{A}) + (z + (1 /2) \overline{A})
\]
is big, as required.
\end{proof}

\begin{Lemma}
\label{lem:pseudo:effective:cont:func}
Let $D$ be a vertical $\RR$-Cartier divisor on $X$ and let 
$\eta$ be an $F_{\infty}$-invariant continuous function on $X(\CC)$. 
Let $\lambda$ be an element of  $\RR^{K(\CC)}$ given by
$\lambda_{\sigma} = \inf_{x \in X_{\sigma}} \eta(x)$ for all $\sigma \in K(\CC)$.
We can view $\lambda$ as a locally constant function on $X(\CC)$, that is,
$\rest{\lambda}{X_{\sigma}} = \lambda_{\sigma}$.
If $(D,\eta)$ is pseudo-effective, then $(D, \lambda)$ is also pseudo-effective.
\end{Lemma}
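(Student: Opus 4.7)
Write $\mu := \eta - \lambda \in C^{0}(X(\CC))$; by hypothesis $\mu \geq 0$ and $\inf_{X_{\sigma}} \mu = 0$ for every $\sigma \in K(\CC)$, so the infimum is attained at some $x_{\sigma} \in X_{\sigma}$. Fix an ample arithmetic $\RR$-divisor $\overline{H} = (H, g_{H})$ of $C^{\infty}$-type on $X$. I use the standard equivalence that $(D, \lambda)$ is pseudo-effective if and only if $(D, \lambda) + t\overline{H}$ is big for every $t > 0$ (the forward implication is immediate; the converse follows from continuity of $\avol$ by writing an arbitrary big $\overline{A}$ as $t\overline{H} + (\overline{A} - t\overline{H})$ for small $t > 0$). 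Fix $t > 0$; the goal is to prove $(D, \lambda) + t\overline{H}$ is big.

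Given $\epsilon > 0$, choose open neighborhoods $U_{\sigma} \ni x_{\sigma}$ with $\eta \leq \lambda_{\sigma} + \epsilon$ on $U_{\sigma}$, and set $U := \bigcup_{\sigma} U_{\sigma}$. For each $n \in \ZZ_{>0}$, I apply Lemma~\ref{lem:comp:norm:X:U} to the divisors $D, H$ with Green functions $\lambda, g_{H}$ and weights $m, mt/n$ respectively. The crucial observation is that $D$ is vertical and $\lambda$ is locally constant, so $c_{1}(D, \lambda) = dd^{c}[\lambda] + \delta_{D} = 0$ on $X(\CC)$; inspecting the proof of Lemma~\ref{lem:comp:norm:X:U}, the function $F$ associated to $(D, \lambda)$ may be taken identically zero, giving Gromov constant equal to $1$ for this factor. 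Hence for every $s \in H^{0}(X, mD + (mt/n)H)$,
\[
\sup_{X(\CC)} |s|\, e^{-m(\lambda + (t/n)g_{H})/2} \leq C^{mt/n} \sup_{U} |s|\, e^{-m(\lambda + (t/n)g_{H})/2},
\]
with $C = C(g_{H}, U)$ independent of $m$ and $n$.

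By pseudo-effectivity of $(D, \eta)$ together with bigness of $(t/n)\overline{H}$, the arithmetic divisor $(D, \eta) + (t/n)\overline{H}$ is big. For any section $s$ satisfying $\Vert s \Vert_{m(\eta + (t/n)g_{H}), \sup} \leq 1$, the bound $\eta - \lambda \leq \epsilon$ on $U$ combined with the displayed estimate yields $\sup_{X(\CC)} |s|\, e^{-m(\lambda + (t/n)g_{H})/2} \leq (C^{t/n} e^{\epsilon/2})^{m}$. Setting $\delta_{n} := (2t/n)\log C + \epsilon$, such $s$ becomes a unit-norm section of $m[(D, \lambda) + (t/n)\overline{H} + (0, \delta_{n})]$, and passing to $m \to \infty$,
\[
\avol\bigl((D, \lambda) + (t/n)\overline{H} + (0, \delta_{n})\bigr) \geq \avol\bigl((D, \eta) + (t/n)\overline{H}\bigr) > 0,
\]
so $(D, \lambda) + (t/n)\overline{H} + (0, \delta_{n})$ is big for every $n$. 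Now decompose
\[
(D, \lambda) + t\overline{H} = \bigl[(D, \lambda) + (t/n)\overline{H} + (0, \delta_{n})\bigr] + \bigl[(1 - 1/n)\, t\overline{H} - (0, \delta_{n})\bigr].
\]
Since $\delta_{n} \to \epsilon$ as $n \to \infty$, continuity of $\avol$ makes the second bracket big for all sufficiently large $n$, provided $\epsilon$ is first chosen small enough that $t\overline{H} - (0, \epsilon)$ is big. With such $\epsilon$ and $n$, both brackets are big, and the sum of two big arithmetic divisors is big; hence $(D, \lambda) + t\overline{H}$ is big.

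The principal obstacle is absorbing the overhead $(0, \delta_{n})$ to obtain genuine bigness. This rests on two points: the triviality of the Gromov constant for $(D, \lambda)$, which is precisely where the verticality of $D$ and the local constancy of $\lambda$ enter and which eliminates any $n$-independent penalty; and the scaling $C^{t/n} \to 1$ as $n \to \infty$, which forces $\delta_{n} \to \epsilon$ and thus can be made arbitrarily small. Without verticality one would incur a fixed multiplicative loss and the splitting argument would fail.
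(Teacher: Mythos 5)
Your argument is correct, and its engine is the same as the paper's: apply the variant of Gromov's inequality (Lemma~\ref{lem:comp:norm:X:U}) on a neighborhood $U$ where $\eta$ is within $\epsilon$ of its infimum, use bigness of $(D,\eta)$ plus a small ample to produce sections that are small for the $\eta$-metric, and transfer their smallness to the $\lambda$-metric at the cost of a factor $C^{mt/n}e^{m\epsilon/2}$ that dies as $n\to\infty$ and $\epsilon\to 0$. Where you genuinely deviate is in two places. First, the paper begins by reducing to the case where $\lambda$ is a \emph{globally} constant function, invoking Dirichlet's unit theorem (Corollary~\ref{cor:Dirichlet:unit:theorem}) together with Lemma~\ref{lem:pseudo:plus:principal} to absorb the discrepancy between the $\lambda_{\sigma}$'s into an arithmetic principal $\RR$-divisor; this is exactly what lets it quote the ``moreover'' clause of Lemma~\ref{lem:comp:norm:X:U} verbatim (constant Green function on the zero divisor gives $C_i=1$). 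You skip this reduction and instead argue that for the vertical $D$ and locally constant $\lambda$ one has $c_1(D,\lambda)=0$, so the associated $F$ in the proof of Lemma~\ref{lem:comp:norm:X:U} can be taken to be $0$ and the constant is $1$; this is correct (either by inspecting that proof, as you say, or simply by applying the stated lemma component by component, since on each $X_{\sigma}$ a locally constant function is constant), but note it uses slightly more than the literal statement of the lemma, whereas the paper's unit-theorem detour keeps everything at the level of stated results. Second, your endgame differs: the paper extracts a single nonzero small section of $m[(D,\eta)+(1/n)\overline{A}]$, upgrades it to a small section for $(D,\lambda+\epsilon)+(1/n)\overline{A}$ once $n\geq 2\log C/\epsilon$, and concludes that $(D,\lambda+\epsilon)$ is pseudo-effective for every $\epsilon$; you instead compare $\aH$ groups for all $m$ to get an $\avol$ inequality, and then absorb the overhead $(0,\delta_n)$ into $(1-1/n)t\overline{H}$ via continuity of $\avol$, proving $(D,\lambda)+t\overline{H}$ big for each fixed $t>0$. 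Both bookkeeping schemes are sound; yours buys independence from Dirichlet's unit theorem at this point, while the paper's buys a proof that only ever cites its lemmas as stated.
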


\begin{proof}
Let us begin with the following claim:

\begin{Claim}
We may assume that $\lambda$ is a constant function.
\end{Claim}

\begin{proof}
We set $\lambda' = (1/[K:\QQ])\sum_{\sigma \in K(\CC)} \lambda_{\sigma}$ and
$\xi_{\sigma} = \lambda' - \lambda_{\sigma}$ for each $\sigma \in K(\CC)$.
Then $\sum_{\sigma \in K(\CC)} \xi_{\sigma} = 0$ and
$\xi_{\sigma} = \xi_{\bar{\sigma}}$ for all $\sigma \in K(\CC)$.
Thus, by Dirichlet's unit theorem (cf. Corollary~\ref{cor:Dirichlet:unit:theorem}), 
there are $a_{1}, \ldots, a_{s} \in \RR$ and
$u_{1}, \ldots, u_{s} \in O_{K}^{\times}$ such that
\[
\xi_{\sigma} = a_{1}\log\vert \sigma(u_{1})\vert + \cdots + a_{s}\log\vert \sigma(u_{s})\vert 
\]
for all $\sigma \in K(\CC)$. If we set
\[
(D, \eta') = (D, \eta) - \pi^{*}\left((a_{1}/2)\widehat{(u_{1})} + \cdots + (a_{s}/2)\widehat{(u_{s})}\right), 
\]
then $\inf_{x \in X_{\sigma}} \eta'(x) = \lambda'$ for all
$\sigma \in K(\CC)$. Moreover, by Lemma~\ref{lem:pseudo:plus:principal},
$(D, \eta')$ is pseudo-effective.
If the lemma holds for $\eta'$,
then $(D, \lambda')$ is pseudo-effective, and hence
\[
(D, \lambda) = (D, \lambda') + \pi^{*}\left((a_{1}/2)\widehat{(u_{1})} + \cdots + (a_{s}/2)\widehat{(u_{s})}\right)
\]
is also pseudo-effective by Lemma~\ref{lem:pseudo:plus:principal}.
\end{proof}

For a given positive number $\epsilon$, we set
\[
U_{\sigma} = \{ x \in X_{\sigma} \mid \eta(x) < \lambda_{\sigma} + (\epsilon/2) \}
\]
and $U = \coprod_{\sigma \in K(\CC)} U_{\sigma}$.
Let $\overline{A} = (A, h)$ be an ample arithmetic Cartier divisor on $X$.
Then, by Lemma~\ref{lem:comp:norm:X:U},
there is a constant $C \geq 1$ depending only on $\epsilon$ and $h$
such that
\addtocounter{Claim}{1}
\begin{equation}
\label{eqn:lem:pseudo:effective:cont:func:1}
\sup_{x \in X(\CC)} \left\{ \vert s \vert^2_{t + bh}(x) \right\}
\leq C^b \sup_{x \in U} \left\{ \vert s \vert^2_{t + bh}(x) \right\}
\end{equation}
for all $s \in H^0(X(\CC), bA)$, $b \in \RR_{\geq 0}$ and all constant functions $t$ on $X(\CC)$.
Let $n$ be an arbitrary positive integer with $n \geq (2\log(C))/\epsilon$.
Since $(D, \eta) + (1/n) \overline{A}$ is big,
there are a positive integer $m$ and $s \in H^0(X, mD + (m/n)A) \setminus \{ 0 \}$ such that
$\vert s \vert_{m\eta + (m/n)h} \leq 1$,
which implies that
\[
\vert s \vert^2_{(m/n)h} \leq \exp(m\eta).
\]
Therefore, $\vert s \vert^2_{(m/n)h} \leq \exp(m(\lambda + (\epsilon/2)))$
over $U$, that is,
\[
\sup_{x \in U} \left\{ \vert s \vert^2_{m(\lambda + (\epsilon/2)) + (m/n)h} \right\} \leq 1.
\]
Thus, by the estimation~\eqref{eqn:lem:pseudo:effective:cont:func:1}, we have
\[
C^{-(m/n)} \sup_{x \in X(\CC)} \left\{ \vert s \vert^2_{m(\lambda + (\epsilon/2)) + (m/n)h} \right\} \leq 1.
\]
Since $\log(C)/n \leq \epsilon/2$,
\begin{align*}
\sup_{x \in X(\CC)} \left\{ \vert s \vert^2_{m(\lambda + \epsilon) + (m/n)h} \right\} & \leq
\sup_{x \in X(\CC)} \left\{ \vert s \vert^2_{(m/n) \log(C) + m(\lambda + (\epsilon/2)) + (m/n)h} \right\} \\
& = C^{-(m/n)} \sup_{x \in X(\CC)} \left\{ \vert s \vert^2_{m(\lambda + (\epsilon/2)) + (m/n)h} \right\} \leq 1
\end{align*}
which yields
$\aH(X, m ( (1/n) \overline{A} + (D, \lambda + \epsilon))) \not= \{ 0 \}$.
Thus $(D, \lambda + \epsilon) + (1/n)\overline{A}$ is big if $n \gg 1$.
As a consequence, $(D, \lambda + \epsilon)$ is pseudo-effective for any positive number $\epsilon$,
and hence
$(D, \lambda)$ is also pseudo-effective.
\end{proof}

\section{Dirichlet's unit theorem on arithmetic varieties}
Let us fix notation throughout this section.
Let $X$ be a $d$-dimensional, generically smooth, normal and projective arithmetic variety.
Let 
\[
X \overset{\pi}{\longrightarrow} \Spec(O_K) \to \Spec(\ZZ)
\]
be the Stein factorization of $X \to \Spec(\ZZ)$, where $K$ is a number field and $O_K$ is the ring of
integers in $K$.

\subsection{Fundamental question}
\label{subsec:fundamental:question}
\setcounter{Theorem}{0}

Let $\KK$ be either $\QQ$ or $\RR$.
As in Conventions and terminology~\ref{CV:R:principal:div},
we set 
\[
\Rat(X)^{\times}_{\KK} := \Rat(X)^{\times} \otimes_{\ZZ} \KK,
\]
whose element is called a {\em $\KK$-rational function} on $X$.
Note that the zero function is not a $\KK$-rational function.
Let
\[
(\ )_{\KK} : \Rat(X)^{\times}_{\KK} \to \Div(X)_{\KK}\quad\text{and}\quad
\widehat{(\ )}_{\KK} : \Rat(X)^{\times}_{\KK} \to \aDiv_{C^{\infty}}(X)_{\KK}
\]
be the natural extensions of the homomorphisms 
\[
\Rat(X)^{\times} \to \Div(X)\quad\text{and}\quad
\Rat(X)^{\times} \to \aDiv_{C^{\infty}}(X)
\]
given by $\phi \mapsto (\phi)$
and $\phi \mapsto \widehat{(\phi)}$ respectively.
Note that
\[
\PDiv(X)_{\KK} = \left\{ (\varphi)_{\KK} \mid \varphi \in \Rat(X)^{\times}_{\KK}\right\}\quad\text{and}\quad
\aPDiv(X)_{\KK} = \left\{ \widehat{(\varphi)}_{\KK} \mid \varphi \in \Rat(X)^{\times}_{\KK}\right\}
\]
(cf. Conventions and terminology~\ref{CV:R:principal:div} and \ref{CV:arithmetic:divisors:2}).
Let $\overline{D} = (D, g)$ be an arithmetic $\RR$-Cartier divisor of $C^0$-type.
We define 
$\Gamma^{\times}(X,D)$, $\widehat{\Gamma}^{\times}(X,\overline{D})$,
$\Gamma^{\times}_{\KK}(X, D)$ and $\widehat{\Gamma}^{\times}_{\KK}(X, \overline{D})$ to be
\[
\begin{cases}
\Gamma^{\times}(X, D) := \big\{ \phi \in \Rat(X)^{\times} \mid D + (\phi) \geq 0 \big\} = H^0(X,D) \setminus \{ 0\}, \\ 
\widehat{\Gamma}^{\times}(X, \overline{D}) := \left\{ \phi \in \Rat(X)^{\times} \mid \overline{D} + \widehat{(\phi)} \geq 0 \right\} = \aH(X, \overline{D}) \setminus \{ 0 \}, \\
\Gamma^{\times}_{\KK}(X, D) := \left\{ \varphi \in \Rat(X)^{\times}_{\KK} \mid D + (\varphi)_{\KK} \geq 0 \right\}, \\ 
\widehat{\Gamma}^{\times}_{\KK}(X, \overline{D}) := \left\{ \varphi \in \Rat(X)^{\times}_{\KK} \mid \overline{D} + \widehat{(\varphi)}_{\KK} \geq 0 \right\}.
\end{cases}
\]
Let us consider a homomorphism
\[
\ell : \Rat(X)^{\times} \to L^1_{loc}(X(\CC))
\]
given by $\phi \mapsto \log \vert \phi \vert$.
It extends to a linear map
\[
\ell_{\KK} : \Rat(X)^{\times}_{\KK} \to L^1_{loc}(X(\CC)).
\]
For $\varphi \in \Rat(X)^{\times}_{\KK}$, we denote $\exp(\ell_{\KK}(\varphi))$ by $\vert \varphi \vert$.
First let us consider the following lemma.

\begin{Lemma}
\label{lem:rational:tensor:real:number}
\begin{enumerate}
\renewcommand{\labelenumi}{(\arabic{enumi})}
\item
If $\varphi \in \Gamma^{\times}_{\KK}(X, D)$, then
$\vert \varphi \vert \exp(-g/2)$ is continuous on $X(\CC)$, so that
we define $\Vert \varphi \Vert_{g, \sup}$ to be
\[
\Vert \varphi \Vert_{g, \sup} := \max \left\{ \left(\vert \varphi \vert \exp(-g/2)\right)(x) \mid x \in X(\CC) \right\}.
\]

\item
$\widehat{\Gamma}^{\times}_{\KK}(X, \overline{D}) = \left\{ \varphi \in \Gamma^{\times}_{\KK}(X, D) \mid \Vert \varphi \Vert_{g, \sup} \leq 1 \right\}$.

\item We have the following formulae in $\Rat(X)^{\times}_{\QQ}$ or $\Rat(X)^{\times}_{\RR}$:
\[
\hspace{3em}
\begin{cases}
\Gamma^{\times}_{\QQ}(X, D) = \bigcup_{n > 0}  \Gamma^{\times}(X, nD)^{1/n}, &
\widehat{\Gamma}^{\times}_{\QQ}(X, \overline{D}) = \bigcup_{n > 0} \widehat{\Gamma}^{\times}(X, n\overline{D})^{1/n}, \\
\Gamma^{\times}_{\QQ}(X, \alpha D) = \Gamma^{\times}_{\QQ}(X,D)^{\alpha}, & 
\widehat{\Gamma}^{\times}_{\QQ}(X, \alpha D) = \widehat{\Gamma}^{\times}_{\QQ}(X,D)^{\alpha} \hspace{2em} (\alpha \in \QQ_{>0}), \\
\Gamma^{\times}_{\RR}(X, aD) = \Gamma^{\times}_{\RR}(X,D)^a, & 
\widehat{\Gamma}^{\times}_{\RR}(X, aD) = \widehat{\Gamma}^{\times}_{\RR}(X,D)^a \hspace{2.3em} (a \in \RR_{>0}). \\
\end{cases}
\]
\end{enumerate}
\end{Lemma}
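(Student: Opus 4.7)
The plan for part (1) is to reduce the continuity of $|\varphi|\exp(-g/2)$ to the standard fact that $\exp(-g'/2)$ is continuous whenever $g'$ is a $C^{0}$-Green function of an effective $\KK$-divisor. Writing $\varphi = \phi_{1}^{\otimes a_{1}}\cdots\phi_{l}^{\otimes a_{l}}$ with $\phi_{i} \in \Rat(X)^{\times}$ and $a_{i} \in \KK$, set
\[
D' := D + (\varphi)_{\KK}, \qquad g' := g - \log|\varphi|^{2} = g - \sum_{i} a_{i} \log|\phi_{i}|^{2}.
\]
A direct current computation, using Poincar\'e--Lelong for each $\log|\phi_{i}|^{2}$ together with the defining relation $dd^{c}([g]) + \delta_{D} = c_{1}(\overline{D})$, gives $dd^{c}([g']) + \delta_{D'} = c_{1}(\overline{D})$, so $g'$ is a $D'$-Green function of $C^{0}$-type. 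Since $D' \geq 0$, the function $\exp(-g'/2)$ extends continuously across $\Supp(D')$ with value $0$ there, and by construction this is $|\varphi|\exp(-g/2)$. The maximum is then attained by compactness of $X(\CC)$.

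For part (2), the same identity
\[
\overline{D} + \widehat{(\varphi)}_{\KK} = \bigl(D + (\varphi)_{\KK}, \, g - \log|\varphi|^{2}\bigr)
\]
reduces effectivity of the left-hand side to (i) $D + (\varphi)_{\KK} \geq 0$, which is the defining condition of $\Gamma^{\times}_{\KK}(X, D)$, and (ii) $g' \geq 0$ as a function on $X(\CC)$. By part (1), (ii) is equivalent to $\exp(-g'/2) = |\varphi|\exp(-g/2) \leq 1$ pointwise, i.e.\ $\Vert\varphi\Vert_{g,\sup} \leq 1$.

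For part (3), the scaling identities are immediate from linearity: for $\alpha > 0$ one has $\alpha D + (\varphi^{\alpha})_{\RR} = \alpha(D + (\varphi)_{\RR})$, which is effective iff $\varphi \in \Gamma^{\times}_{\RR}(X, D)$. For the union formulas, given $\varphi \in \Gamma^{\times}_{\QQ}(X, D)$, clear a common denominator to write the exponents as $b_{i}/n$, and set $\phi := \phi_{1}^{b_{1}}\cdots\phi_{l}^{b_{l}} \in \Rat(X)^{\times}$; then $\varphi = \phi^{1/n}$ and $D + (\varphi)_{\QQ} \geq 0 \iff nD + (\phi) \geq 0$, so $\phi \in \Gamma^{\times}(X, nD)$. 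The arithmetic analogue is identical, using $\widehat{(\phi^{1/n})}_{\QQ} = \widehat{(\phi)}/n$ together with part (2) to transport the norm condition $\Vert \phi^{1/n} \Vert_{g,\sup} \leq 1$ to $\Vert \phi \Vert_{ng,\sup} \leq 1$.

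The only delicate point is the well-definedness of $\log|\varphi|^{2}$ (equivalently, of $\ell_{\KK}$) used in part (1): two presentations of the same $\varphi$ as a $\KK$-tensor product differ by relations coming from the kernel of $\Rat(X)^{\times} \otimes_{\ZZ} \KK \to \Rat(X)^{\times}_{\KK}$, i.e.\ ultimately by torsion in $\Rat(X)^{\times}$. But torsion elements of $\Rat(X)^{\times}$ are roots of unity, which satisfy $\log|\phi| = 0$. This is precisely where Lemma~\ref{lem:Z:module:tensor:R}(2) enters, ensuring that $\ell_{\KK}$, and hence $g'$, is independent of the chosen representation; once this is secured, the remainder of the proof is routine bookkeeping.
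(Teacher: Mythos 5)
Your proof is correct and takes essentially the same route as the paper: both rest on the identity $\overline{D}+\widehat{(\varphi)}_{\KK} = \bigl(D + (\varphi)_{\KK},\, g - \log\vert\varphi\vert^{2}\bigr)$, the fact that this is an arithmetic $\KK$-divisor of $C^{0}$-type with effective underlying divisor so that $\vert\varphi\vert e^{-g/2}=e^{-g'/2}$ extends continuously (the paper merely carries out the local verification explicitly, factoring into prime elements of $\OO_{X(\CC),P}$ and checking the exponents are nonnegative), and the same elementary scaling/common-denominator argument for (3). The only quibble is your closing paragraph: since $\Rat(X)^{\times}_{\KK}$ is by definition $\Rat(X)^{\times}\otimes_{\ZZ}\KK$, the map you invoke has no kernel, and $\ell_{\KK}$ is well defined simply because $(\phi,a)\mapsto a\log\vert\phi\vert$ is $\ZZ$-bilinear, so neither torsion considerations nor Lemma~\ref{lem:Z:module:tensor:R}(2) are needed there --- a harmless but spurious "delicate point."
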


\begin{proof}
(1) We set $D = a_1 D_1 + \cdots + a_n D_n$ and
$\varphi = \varphi_1^{x_1} \cdots \varphi_l^{x_l}$, where $D_1, \ldots, D_n$ are prime divisors,
$\varphi_1, \ldots, \varphi_l \in \Rat(X)^{\times}$
and $a_1, \ldots, a_n, x_1, \ldots, x_l \in \KK$.
Let $f_1, \ldots, f_n$ be local equations of $D_1, \ldots, D_n$ around $P \in X(\CC)$.
Then there is a local continuous function $h$ such that
$g = -\sum_{i=1}^n a_i \log \vert f_i \vert^2 + h$ around $P$.
Here let us see that $\vert \varphi_1 \vert^{x_1} \cdots \vert \varphi_l \vert^{x_l}
\vert f_1 \vert^{a_1} \cdots \vert f_n \vert^{a_n}$ is continuous around $P$.
We set $f_i = u_i t_1^{\alpha_{i1}} \cdots t_r^{\alpha_{ir}}$ and
$\varphi_j = v_j t_1^{\beta_{j1}} \cdots t_r^{\beta_{jr}}$,
where $\alpha_{ik}, \beta_{jk} \in \ZZ$, $u_1, \ldots, u_n, v_1, \ldots, v_l$ are units of $\OO_{X(\CC), P}$ and $t_1, \ldots, t_r$ are prime elements of $\OO_{X(\CC), P}$.
Then
\begin{multline*}
\vert \varphi_1 \vert^{x_1} \cdots \vert \varphi_l \vert^{x_l}
\vert f_1 \vert^{a_1} \cdots \vert f_n \vert^{a_n} \\
= \vert u_1 \vert^{a_1} \cdots \vert u_n\vert^{a_n} \vert v_1 \vert^{x_1} \cdots \vert v_l\vert^{x_l} \vert t_1 \vert^{\sum_i a_i \alpha_{i1} + \sum_{j} x_j \beta_{j1}}
\cdots \vert t_r \vert^{\sum_i a_i \alpha_{ir} + \sum_{j} x_j \beta_{jr}}.
\end{multline*}
On the other hand, as
\[
D + (\varphi)_{\KK} = \left( \sum_i a_i \alpha_{i1} + \sum_{j} x_j \beta_{j1} \right)(t_1) +
\cdots + \left( \sum_i a_i \alpha_{ir} + \sum_{j} x_j \beta_{jr} \right)(t_r) \geq 0
\]
around $P$, we have
\addtocounter{Claim}{1}
\begin{equation}
\label{eqn:lem:rational:tensor:real:number:1}
\sum_i a_i \alpha_{i1} + \sum_{j} x_j \beta_{j1} \geq 0, \ \ldots\ ,
\sum_i a_i \alpha_{ir} + \sum_{j} x_j \beta_{jr} \geq 0.
\end{equation}
Thus the assertion follows. Therefore,
\[
\vert \varphi \vert \exp(-g/2) = \vert \varphi_1 \vert^{x_1} \cdots \vert \varphi_l \vert^{x_l}
\vert f_1 \vert^{a_1} \cdots \vert f_n \vert^{a_n} \exp(-h/2)
\]
is also continuous around $P$.

(2) We use the same notation as in (1). Note that
\[
\overline{D} + \widehat{(\varphi)}_{\KK} = \left(D + (\varphi)_{\KK}, g + \sum_{i=1}^n x_i (-\log \vert \varphi_i \vert^2)\right).
\]
Moreover,
\[
g + \sum_{i=1}^n x_i (-\log \vert \varphi_i \vert^2) = - \log (\vert \varphi_1 \vert^{2x_1} \cdots \vert \varphi_l \vert^{2x_l} 
\vert f_1 \vert^{2a_1} \cdots \vert f_n \vert^{2a_n} \exp(-h))
\]
locally. Thus $\Vert \varphi \Vert_{g, \sup} \leq 1$ if and only if
$g + \sum_{i=1}^n x_i (-\log \vert \varphi_i \vert^2) \geq 0$, and hence (2) follows.

(3) For $\varphi \in \Rat(X)^{\times}_{\RR}$ and $a \in \RR_{>0}$, $D + (\varphi)_{\RR} \geq 0$ (resp. $\overline{D} + \widehat{(\varphi)}_{\RR} \geq 0$)
if and only if $aD + (\varphi^a)_{\RR} \geq 0$ (resp. $a\overline{D} + \widehat{(\varphi^a)}_{\RR} \geq 0$).
Thus the assertions in (3) are obvious.
\end{proof}

\begin{Remark}
\label{rem:product:K:R}
We assume $d =1$, that is, $X = \Spec(O_K)$.
For $P \in \Spec(O_K) \setminus \{ 0 \}$ and 
$\sigma \in K(\CC)$, the homomorphisms $\ord_P : K^{\times} \to \ZZ$ and $\vert \cdot \vert_{\sigma} : K^{\times} \to \RR^{\times}$
given by $\phi \mapsto \ord_P(\phi)$ and $\phi \mapsto \vert \sigma(\phi) \vert$ naturally extend to homomorphisms 
$K^{\times} \otimes_{\ZZ} \RR \to \RR$ and
$K^{\times} \otimes_{\ZZ} \RR \to \RR^{\times}$ respectively.
By abuse of notation, we denote them by $\ord_P$ and $\vert \cdot \vert_{\sigma}$ respectively.
Clearly, for $\varphi \in K^{\times} \otimes_{\ZZ} \RR$, $\vert \varphi \vert_{\sigma}$ is the value of $\vert \varphi \vert$ at $\sigma$.
Moreover, by using the product formula on $K^{\times}$, we can see
\addtocounter{Claim}{1}
\begin{equation}
\label{eqn:rem:product:K:R:1}
\prod_{\sigma \in K(\CC)} \vert \varphi \vert_{\sigma} = \prod_{P \in \Spec(O_K) \setminus \{ 0 \}} \#(O_K/P)^{\ord_{P}(\varphi)}
\end{equation}
for $\varphi \in K^{\times} \otimes_{\ZZ} \RR$
\end{Remark}

Finally we would like to propose the fundamental question as in ``Introduction''.

\renewcommand{\theTheorem}{\!\!}
\begin{FQuestion}
\label{prob:fundamental:question}
Let $\overline{D}$ be an arithmetic $\RR$-Cartier divisor of $C^0$-type.
Are the following equivalent ?
\begin{enumerate}
\renewcommand{\labelenumi}{(\arabic{enumi})}
\item
$\overline{D}$ is pseudo-effective.

\item
$\widehat{\Gamma}^{\times}_{\RR}(X, \overline{D}) \not= \emptyset$.
\end{enumerate}
\end{FQuestion}
\renewcommand{\theTheorem}{\arabic{section}.\arabic{subsection}.\arabic{Theorem}}

Clearly (2) implies (1). Indeed, let $\varphi$ be an element of $\widehat{\Gamma}^{\times}_{\RR}(X, \overline{D})$.
Let $\overline{A}$ be an ample $\RR$-Cartier divisor on $X$.
Since $-\widehat{(\varphi)}_{\RR}$ is a nef $\RR$-Cartier divisor of $C^{\infty}$-type,
$\overline{A}-\widehat{(\varphi)}_{\RR}$ is ample, and hence $\overline{D} + \overline{A}$ is big because
$\overline{D} + \overline{A} \geq \overline{A}-\widehat{(\varphi)}_{\RR}$.
The observations in Subsection~\ref{subsec:Dirichlet:unit:theorem} show that the fundamental question
is nothing more than a generalization of Dirichlet's unit theorem.
Moreover,
the above question does not hold in the geometric case as indicated in the following remark.

\begin{Remark}
\label{rem:pseudo:alg:curve}
Let $C$ be a smooth algebraic curve over an algebraically closed field.
For $\vartheta \in \Div(C)_{\QQ}$ with $\deg(\vartheta) = 0$, the following are equivalent:
\begin{enumerate}
\renewcommand{\labelenumi}{(\arabic{enumi})}
\item
$\vartheta \in \PDiv(C)_{\QQ}$.

\item
There is $\varphi \in \Rat(C)^{\times}_{\RR}$ such that $\vartheta + (\varphi)_{\RR} \geq 0$.
\end{enumerate}
Indeed, ``(1) $\Longrightarrow$ (2)'' is obvious.
Conversely we assume (2). Then if we set $\theta = \vartheta + (\varphi)_{\RR}$, then $\theta$ is effective and $\deg(\theta) =0$, and hence
$\theta = 0$. Thus $\vartheta = (\varphi^{-1})_{\RR}$.
Therefore, by (3) in Lemma~\ref{lem:Z:module:tensor:R}, $\vartheta \in \PDiv(C)_{\QQ}$.

The above observation shows that if $\vartheta$ is a divisor on $C$ such that
$\deg(\vartheta) = 0$ and $\vartheta$ is not a torsion element in $\Pic(C)$,
then there is no $\varphi \in \Rat(C)^{\times}_{\RR}$ with $\vartheta + (\varphi)_{\RR} \geq 0$.
\end{Remark}

\subsection{Continuity of norms}
\setcounter{Theorem}{0}
Let us fix $p \in \RR_{\geq 1}$ and an $F_{\infty}$-invariant continuous volume form $\Omega$ on $X$ with
$\int_{X(\CC)} \Omega = 1$.
For $\varphi \in \Gamma^{\times}_{\RR}(X, D)$,
we define the {\em $L^p$-norm of $\varphi$ with respect to $g$} to be
\[
\Vert \varphi \Vert_{g, L^p} := \left( \int_{X(\CC)} \left( \vert \varphi \vert \exp(-g/2) \right)^p \Omega \right)^{1/p}.
\]
In this subsection, we consider the following proposition.

\begin{Proposition}
\label{prop:uniform:cont:L:p}
Let $\varphi_1, \ldots, \varphi_l \in \Rat(X)_{\RR}^{\times}$.
If we set 
\[
\Phi = \{ (x_1, \ldots, x_l) \in \RR^l \mid \varphi_1^{x_1} \cdots \varphi_l^{x_l} \in
\Gamma^{\times}_{\RR}(X, D) \},
\]
then the map $\upsilon_p : \Phi \to \RR$ given by $(x_1, \ldots, x_l) \mapsto \Vert \varphi_1^{x_1} \cdots \varphi_l^{x_l} \Vert_{g, L^p}$ is uniformly continuous on $K \cap \Phi$ for any compact set $K$ of $\RR^l$.
Moreover, the map $\upsilon_{\sup} : \Phi \to \RR$ given by $(x_1, \ldots, x_l) \mapsto \Vert \varphi_1^{x_1} \cdots \varphi_l^{x_l} \Vert_{g, \sup}$ is also uniformly continuous on $K \cap \Phi$ for any compact set $K$ of $\RR^l$.
\end{Proposition}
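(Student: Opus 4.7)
The plan is to deduce uniform continuity on $K \cap \Phi$ from pointwise continuity on $\Phi$. Since $\Phi$ is closed in $\RR^{l}$ --- it is cut out by the affine inequalities $h_{k}(\mathbf{x}) := a_{k} + \sum_{j} x_{j} \alpha_{jk} \geq 0$, where $D = \sum_{k} a_{k}\Gamma_{k}$ and $(\varphi_{j})_{\RR} = \sum_{k} \alpha_{jk}\Gamma_{k}$ --- the set $K \cap \Phi$ is compact, so pointwise continuity suffices. Set $F(\mathbf{x}, P) := \vert \varphi_{1}^{x_{1}} \cdots \varphi_{l}^{x_{l}} \vert(P) \exp(-g(P)/2)$ for $\mathbf{x} = (x_{1}, \ldots, x_{l}) \in \Phi$, so that $\upsilon_{\sup}(\mathbf{x}) = \sup_{P \in X(\CC)} F(\mathbf{x}, P)$ and $\upsilon_{p}(\mathbf{x})^{p} = \int_{X(\CC)} F(\mathbf{x}, P)^{p} \Omega$. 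Following the proof of Lemma~\ref{lem:rational:tensor:real:number}, in each chart of a finite open cover $\{U_{\alpha}\}$ of $X(\CC)$ there is a factorization
\[
F(\mathbf{x}, P) = A_{\alpha}(\mathbf{x}, P) \prod_{k} \vert t_{k}(P) \vert^{h_{k}(\mathbf{x})},
\]
with $A_{\alpha}$ jointly continuous, $t_{k}$ local prime elements, and $h_{k}(\mathbf{x}) \geq 0$ on $\Phi$ by \eqref{eqn:lem:rational:tensor:real:number:1}. Boundedness of the affine $h_{k}$ on $K$ together with boundedness of $\vert t_{k} \vert$ on $\bar{U}_{\alpha}$ then yields a uniform bound for $F$ on $(K \cap \Phi) \times X(\CC)$.

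For $\upsilon_{p}$, I would apply dominated convergence: if $\mathbf{x}_{n} \to \mathbf{x}_{0}$ in $\Phi$ and $P$ lies off the analytic (hence $\Omega$-null) locus $\bigcup_{k} \{\vert t_{k}(P) \vert = 0\}$, the local expression gives $F(\mathbf{x}_{n}, P) \to F(\mathbf{x}_{0}, P)$, and the uniform bound provides the dominating function; hence $\upsilon_{p}^{p}$, and then $\upsilon_{p}$, is continuous on $\Phi$. For $\upsilon_{\sup}$, I would argue upper and lower semicontinuity separately. Lower semicontinuity: on the open dense locus where every $\vert t_{k}(P) \vert > 0$, the map $\mathbf{x} \mapsto F(\mathbf{x}, P)$ is continuous; combined with the continuity of $F(\mathbf{x}_{0}, \cdot)$ on $X(\CC)$ provided by Lemma~\ref{lem:rational:tensor:real:number}(1), the supremum over $X(\CC)$ coincides with the supremum over this dense locus, so $\upsilon_{\sup}$ is lsc as a pointwise supremum of continuous functions. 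Upper semicontinuity: the map $(t, s) \mapsto t^{s}$ is upper semicontinuous on $[0, \infty)^{2}$ under the convention $0^{0} = 1$, so the local factorization shows $F$ is jointly upper semicontinuous on $\Phi \times X(\CC)$; then choosing $P_{n}$ with $F(\mathbf{x}_{n}, P_{n}) = \upsilon_{\sup}(\mathbf{x}_{n})$ and extracting a convergent subsequence yields $\limsup_{n} \upsilon_{\sup}(\mathbf{x}_{n}) \leq \upsilon_{\sup}(\mathbf{x}_{0})$.

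The key obstacle is that $F$ is \emph{not} jointly continuous on $\Phi \times X(\CC)$: at a point $(\mathbf{x}_{0}, P_{0})$ with $h_{k}(\mathbf{x}_{0}) = 0$ and $\vert t_{k}(P_{0}) \vert = 0$, the factor $\vert t_{k}(P) \vert^{h_{k}(\mathbf{x})}$ takes the value $1$ at $(\mathbf{x}_{0}, P_{0})$ but drops to $0$ along approaches with $h_{k}(\mathbf{x}) > 0$ and $\vert t_{k}(P)\vert \to 0$. This precludes a single-shot continuity argument for $\upsilon_{\sup}$, and is why lsc and usc have to be established by genuinely different mechanisms: density of the generic locus (where $F$ is jointly continuous) for lsc, and upper semicontinuity of $t^{s}$ at the origin for usc. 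The $L^{p}$-case escapes this obstruction because the discontinuity set lies over a proper analytic subset, hence is $\Omega$-null and invisible to dominated convergence.
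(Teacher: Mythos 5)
Your proof is correct, but it takes a genuinely different route from the paper's. The paper shares your first step (the uniform bound $M$ on $(K\cap\Phi)\times X(\CC)$, obtained from the same local factorization and the nonnegativity \eqref{eqn:lem:rational:tensor:real:number:1}), but then it proves uniform continuity of $\upsilon_p$ directly from the estimate $\bigl\vert \upsilon_p(x)^p-\upsilon_p(y)^p\bigr\vert \le M^p\int_{X(\CC)}\bigl\vert 1-\vert\varphi_1\vert^{p(y_1-x_1)}\cdots\vert\varphi_l\vert^{p(y_l-x_l)}\bigr\vert\,\Omega$ combined with Lemma~\ref{lem:limit:rational:functions}, whose proof passes to a modification on which the divisors $(\varphi_i)$ are normal crossing and computes in polar coordinates; the sup-norm case is then deduced from $\lim_{p\to\infty}\Vert\cdot\Vert_{g,L^p}=\Vert\cdot\Vert_{g,\sup}$. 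You instead observe that $\Phi$ is a closed polyhedron, so Heine--Cantor reduces everything to plain continuity on the compact set $K\cap\Phi$, settle the $L^p$ case by dominated convergence off the divisorial (hence $\Omega$-null) locus, thereby bypassing Lemma~\ref{lem:limit:rational:functions} and the normal-crossing model altogether, and treat the sup case independently via the lower/upper semicontinuity split. The paper's route buys an explicit modulus of continuity on $K\cap\Phi$ that does not use closedness of $\Phi$; your route buys economy and, more importantly, a self-contained sup-norm argument: the paper's one-line deduction from the pointwise convergence $\upsilon_p\to\upsilon_{\sup}$ leaves implicit the uniformity in $x$ that is actually needed (a pointwise limit of continuous functions need not be continuous), whereas your semicontinuity analysis confronts exactly the degenerate points with $h_k(x_0)=0$ and $t_k(P_0)=0$ where joint continuity of $F$ fails, using that $(t,s)\mapsto t^s$ with $0^0=1$ is upper semicontinuous on the closed quadrant. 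When you write it up, only cosmetic care is needed: the exponents in your local factorization are the local multiplicities of $D+(\varphi_1^{x_1}\cdots\varphi_l^{x_l})_{\RR}$ along the local prime elements $t_k$, i.e.\ the quantities appearing in \eqref{eqn:lem:rational:tensor:real:number:1}, rather than the global coefficients $a_k+\sum_j x_j\alpha_{jk}$, and the unit factor $A_{\alpha}$ should be bounded by shrinking each chart to a relatively compact one before invoking compactness of $X(\CC)$.
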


\begin{proof}
In order to obtain the first assertion,
we may clearly assume that $\varphi_1, \ldots, \varphi_l \in \Rat(X)^{\times}$.
Let us begin with the following claim:

\begin{Claim}
There is a constant $M$ such that
\[
\vert \varphi_1\vert^{x_1} \cdots \vert \varphi_l\vert ^{x_l} \exp(-g/2) \leq M
\]
for all $(x_1, \ldots, x_l) \in K \cap \Phi$.
\end{Claim}

\begin{proof}
Since $X(\CC)$ is compact, it is sufficient to see that the above assertion holds locally.
We set $D = a_1 D_1 + \cdots + a_n D_n$, where $a_1, \ldots, a_n \in \RR$ and
$D_1, \ldots, D_n$ are prime divisors.
Let us fix $P \in X(\CC)$ and let $f_1, \ldots, f_n$ be local equations of
$D_1, \ldots, D_n$ around $P$ respectively.
Let $g = \sum_i (-a_i) \log \vert f_i \vert^2 + h$ be the local expression of $g$ with respect to
$f_1, \ldots, f_r$, where $h$ is a continuous function around $P$.
We set $f_i = u_i t_1^{\alpha_{i1}} \cdots t_r^{\alpha_{ir}}$ and
$\phi_j = v_j t_1^{\beta_{j1}} \cdots t_r^{\beta_{jr}}$,
where $\alpha_{ik}, \beta_{jk} \in \ZZ$, $u_1, \ldots, u_n, v_1, \ldots, v_l$ are units of $\OO_{X(\CC), P}$ and $t_1, \ldots, t_r$ are prime elements of $\OO_{X(\CC), P}$.
Then
\begin{multline*}
\vert \phi_1 \vert^{x_1} \cdots \vert \phi_l \vert^{x_l} \exp(-g/2) \\
= \vert u_1 \vert^{a_1} \cdots \vert u_n\vert^{a_n} \vert v_1 \vert^{x_1} \cdots \vert v_l\vert^{x_l} \vert t_1 \vert^{\sum_i a_i \alpha_{i1} + \sum_{j} x_j \beta_{j1}}
\cdots \vert t_r \vert^{\sum_i a_i \alpha_{ir} + \sum_{j} x_j \beta_{jr}} \exp(-h/2).
\end{multline*}
Note that  $\sum_i a_i \alpha_{ik} + \sum_{j} x_j \beta_{jk}$ ($k=1, \ldots, r$) are bounded non-negative numbers
(cf. \eqref{eqn:lem:rational:tensor:real:number:1} in the proof of Lemma~\ref{lem:rational:tensor:real:number}).
Thus the claim follows.
\end{proof}

By the above claim, we obtain
\begin{multline*}
\left\vert \Vert \varphi_1^{x_1} \cdots \varphi_l^{x_l} \Vert_{g, L^p}^p - \Vert \varphi_1^{y_1} \cdots \varphi_l^{y_l} \Vert_{g, L^p}^p \right\vert \\
\leq \int_{X(\CC)} \left\vert 1 - \vert \varphi_1\vert^{p(y_1-x_1)} \cdots \vert \varphi_l\vert ^{p(y_l - x_l)} \right\vert \left( \vert \varphi_1\vert^{x_1} \cdots \vert \varphi_l\vert ^{x_l} \exp(-g/2) \right)^p \Omega \\
\leq \int_{X(\CC)} \left\vert 1 - \vert \varphi_1\vert^{p(y_1-x_1)} \cdots \vert \varphi_l\vert ^{p(y_l - x_l)} \right\vert M^p \Omega
\end{multline*}
for $(x_1, \ldots, x_l), (y_1, \ldots, y_l) \in \Phi$.
Thus the first assertion follows from the following Lemma~\ref{lem:limit:rational:functions}.

For the second assertion,
note that $\lim_{p\to\infty} \Vert \varphi_1^{x_1} \cdots \varphi_l^{x_l} \Vert_{g, L^p} =
\Vert \varphi_1^{x_1} \cdots \varphi_l^{x_l} \Vert_{g, \sup}$ for $(x_1, \ldots, x_l) \in \Phi$
(cf. \cite[the proof of Corollary~19.9]{Jost}).
Thus it follows from the first assertion.
\end{proof}

\begin{Lemma}
\label{lem:limit:rational:functions}
Let $M$ be a $d$-equidimensional complex manifold and let $\omega$ be
a continuous $(d, d)$-form on $M$ such that $\omega = \nu \Omega$,
where $\Omega$ is a volume form on $M$ and $\nu$ is a non-negative real valued
continuous function on $M$.
Let $\varphi_1, \ldots, \varphi_d$ be meromorphic functions such that
$\varphi_i$'s are non-zero on each connected component of $M$.
Then
\[
\lim_{(x_1, \ldots, x_l) \to (0, \ldots, 0)}
\int_{M} \left\vert 1 - \vert \varphi_1 \vert^{x_1} \cdots \vert \varphi_l \vert^{x_l} \right\vert 
\omega
= 0.
\]
\end{Lemma}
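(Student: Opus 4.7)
The plan is to prove the limit by Lebesgue's dominated convergence theorem, after restricting to a small cube $\{|x_i|\le\epsilon\}$. Let $Z\subset M$ be the union of the zero and polar loci of $\varphi_1,\dots,\varphi_l$; this is a proper analytic subset, hence has Lebesgue measure zero. Outside $Z$ each $\log|\varphi_i|$ is continuous and finite, so the integrand
\[
\Bigl|1-|\varphi_1|^{x_1}\cdots|\varphi_l|^{x_l}\Bigr|
=\Bigl|1-\exp\bigl(\textstyle\sum_i x_i\log|\varphi_i|\bigr)\Bigr|
\]
converges to $0$ pointwise $\omega$-a.e.\ as $(x_1,\dots,x_l)\to(0,\dots,0)$.

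To dominate the integrand uniformly in $(x_1,\dots,x_l)$, set $s=\sum_i x_i\log|\varphi_i|$ and use the elementary bound $|1-e^s|\le 1+e^{|s|}$ together with
\[
e^{|s|}\le\prod_i\max\bigl(|\varphi_i|^{|x_i|},|\varphi_i|^{-|x_i|}\bigr)\le\prod_i\bigl(|\varphi_i|^{\epsilon}+|\varphi_i|^{-\epsilon}\bigr),
\]
valid whenever $|x_i|\le\epsilon$. Expanding the right-hand side produces a finite sum of monomials $\prod_i|\varphi_i|^{\alpha_i\epsilon}$ with $\alpha_i\in\{\pm 1\}$, so by H\"older's inequality it suffices to check that every single factor $|\varphi_i|^{\pm l\epsilon}$ is $\omega$-integrable. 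Thus everything reduces to the local claim that $|\varphi|^{\pm\delta}\in L^1_{\mathrm{loc}}(\omega)$ for $\delta>0$ sufficiently small, for any meromorphic $\varphi$ that is not identically zero on any component of $M$.

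The main obstacle is this last local integrability claim, since meromorphic functions on a complex manifold can have both zeros and indeterminacies. Since $\omega=\nu\Omega$ with $\nu$ continuous, the question is purely local and independent of $\nu$; writing $\varphi=f/h$ with $f,h$ holomorphic and coprime, it reduces to showing that $|f|^{-\delta}$ is locally integrable for any nonzero holomorphic $f$ and sufficiently small $\delta>0$. I would handle this either by a log resolution, after which $f$ becomes a monomial $t_1^{n_1}\cdots t_d^{n_d}$ in local coordinates so that the integrability of $\prod_k|t_k|^{-\delta n_k}$ over a polydisk is elementary for $\delta\cdot\max_k n_k<2$; or, more invariantly, by noting that $\log|f|$ is plurisubharmonic and hence in $L^p_{\mathrm{loc}}$ for every finite $p$, so the Taylor expansion $|f|^{-\delta}=\sum_k(-\delta\log|f|)^k/k!$ converges in $L^1_{\mathrm{loc}}$ for $\delta$ small. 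With the $L^1(\omega)$-majorant thereby produced, dominated convergence yields the stated limit.
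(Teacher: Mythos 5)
Your proof is correct, but it takes a genuinely different route from the paper's. The paper first passes to a proper bimeromorphic modification on which the divisors $(\varphi_1),\ldots,(\varphi_l)$ are simple normal crossing, covers the resolved manifold by polydiscs where each $\varphi_i$ is a monomial times a unit, and then checks directly in polar coordinates that $r_1^{1+y_1}\cdots r_d^{1+y_d}\vert u_1\vert^{x_1}\cdots\vert u_l\vert^{x_l}\to r_1\cdots r_d$ \emph{uniformly} as the exponents tend to $0$, concluding via a partition of unity with $\rho_j\omega$ dominated by the Euclidean volume form; no dominated convergence or integrability of negative powers is needed. You instead argue by pointwise convergence off the zero/pole locus plus Lebesgue domination, building the majorant $1+\prod_i(\vert\varphi_i\vert^{\epsilon}+\vert\varphi_i\vert^{-\epsilon})$ and reducing, via H\"older, to local integrability of $\vert f\vert^{-\delta}$ for holomorphic $f\not\equiv 0$ and small $\delta$, which you settle by monomialization (or a Skoda-type argument). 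Your approach is a standard measure-theoretic alternative that localizes the use of resolution (or avoids it altogether in the plurisubharmonic variant), at the cost of needing the integrability input; the paper's computation is more hands-on but entirely self-contained and yields uniform convergence of the integrand after a single global modification. Two small caveats on your write-up: both arguments implicitly use compactness of $M$ (your finite cover and finite total mass of $\omega$; the paper's resolution of compact manifolds), which is the situation in which the lemma is applied; and your Taylor-series justification of $e^{-\delta\log\vert f\vert}\in L^1_{\mathrm{loc}}$ is too quick as stated, since membership of $\log\vert f\vert$ in every $L^p_{\mathrm{loc}}$ alone does not make $\sum_k\delta^k\Vert\log\vert f\vert\Vert_{L^k}^k/k!$ converge --- one needs the quantitative growth bound $\Vert\log\vert f\vert\Vert_{L^k}=O(k)$ (true for plurisubharmonic functions, and automatic in your log-resolution route, which is the one to keep as primary).
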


\begin{proof}
Clearly we may assume that $M$ is connected.
Let $\mu : M' \to M$ be a proper bimeromorphic morphism of compact complex manifolds such that
the principal divisors $(\mu^*(\varphi_1)), \ldots, (\mu^*(\varphi_l))$ are normal crossing.
Note that there are a volume form $\Omega'$ on $M'$ and a non-negative real valued
continuous function $\nu'$ on $M'$ such that $\mu^*(\omega) = \nu' \Omega'$.
Moreover,
\[
\int_{M'} \left\vert 1 - \vert \mu^*(\varphi_1) \vert^{x_1} \cdots \vert \mu^*(\varphi_l) \vert^{x_l} \right\vert 
\mu^*(\omega) = 
\int_{M} \left\vert 1 - \vert \varphi_1 \vert^{x_1} \cdots \vert \varphi_l \vert^{x_l} \right\vert 
\omega.
\]
Thus we may assume that the principal divisors $(\varphi_1), \ldots, (\varphi_l)$ are normal crossing.
Here let us consider the following claim:

\begin{Claim}
Let $\varphi_1, \ldots, \varphi_l$ be meromorphic functions on 
\[
\Delta^d = \{ (z_1, \ldots, z_d) \in \CC^d \mid
\vert z_1 \vert < 1, \ldots, \vert z_d \vert < 1 \} 
\]
such that
$\varphi_i = z_1^{c_{1i}} \cdots z_d^{c_{di}} \cdot u_i$ \rom{(}$i=1, \ldots, l$\rom{)}, where
$c_{ji} \in \ZZ$ and $u_i$'s are nowhere vanishing holomorphic functions on
$\{ (z_1, \ldots, z_d) \in \CC^d \mid
\vert z_1 \vert < 1 + \delta, \ldots, \vert z_d \vert < 1 + \delta\}$
for some $\delta \in \RR_{>0}$.
Then
\[
\lim_{(x_1, \ldots, x_l) \to (0, \ldots, 0)}
\int_{\Delta^d} \left\vert 1 - \vert \varphi_1 \vert^{x_1} \cdots \vert \varphi_l \vert^{x_l} \right\vert 
\left(\frac{\sqrt{-1}}{2}\right)^d dz_1 \wedge d\bar{z}_1 \wedge \cdots \wedge dz_d \wedge d \bar{z}_d
= 0.
\]
\end{Claim}

\begin{proof}
If we set $y_j = \sum_{i=1}^l c_{ji} x_i$, then
\[
\vert \varphi_1 \vert^{x_1} \cdots \vert \varphi_l \vert^{x_l} =
\vert z_1 \vert^{y_1} \cdots \vert z_d \vert^{y_d} \vert u_1 \vert^{x_1} \cdots \vert u_l \vert^{x_l}.
\]
Thus, if we put $z_i = r_i \exp(\sqrt{-1} \theta_i)$, then
\begin{multline*}
\int_{\Delta^d} \left\vert 1 - \vert \varphi_1 \vert^{x_1} \cdots \vert \varphi_l \vert^{x_l} \right\vert 
\left(\frac{\sqrt{-1}}{2}\right)^d dz_1 \wedge d\bar{z}_1 \wedge \cdots \wedge dz_d \wedge d \bar{z}_d \\
=
\int_{\left([0,1] \times [0, 2\pi]\right)^d} \left\vert r_1 \cdots r_d - r_1^{1+y_1} \cdots r_d^{1+y_d} \vert u_1 \vert^{x_1} \cdots \vert u_l \vert^{x_l} \right\vert
d r_1 \wedge d \theta_1 \wedge \cdots \wedge d r_d \wedge d \theta_d.
\end{multline*}
Note that
$r_1^{1+y_1} \cdots r_d^{1+y_d} \vert u_1 \vert^{x_1} \cdots \vert u_l \vert^{x_l} \to r_1 \cdots r_d$ uniformly, 
as $(x_1, \ldots, x_l) \to (0,\ldots, 0)$,
on $\left([0,1] \times [0, 2\pi]\right)^d$.
Thus the claim follows.
\end{proof}

Let us choose a covering $\{ U_j \}_{j=1}^N$ of $M$ with the following properties:
\begin{enumerate}
\renewcommand{\labelenumi}{(\alph{enumi})}
\item
For each $j$, there is a local parameter $(w_1, \ldots, w_d)$ of $U_j$ such that
$U_j$ can be identified with $\Delta^d$ in terms of $(w_1, \ldots, w_d)$.

\item
$\Supp((\phi_i)) \cap U_j \subseteq \{ w_1 \cdots w_d = 0 \}$ for all $i$ and $j$.
\end{enumerate}
Let $\{ \rho_j \}_{j=1}^N$ be a partition of unity subordinate to the covering $\{ U_j \}_{j=1}^N$.
Then
\[
\int_{M} \left\vert 1 - \vert \varphi_1 \vert^{x_1} \cdots \vert \varphi_l \vert^{x_l} \right\vert 
\omega = \sum_{j=1}^N
\int_{M} \left\vert 1 - \vert \varphi_1 \vert^{x_1} \cdots \vert \varphi_l \vert^{x_l} \right\vert 
\rho_j \omega.
\]
Note that there is a positive constant $C_j$ such that
\[
\rho_j \omega \leq C_j \left(\frac{\sqrt{-1}}{2}\right)^d dw_1 \wedge d\bar{w}_1 \wedge \cdots \wedge dw_d \wedge d \bar{w}_d.
\]
Thus the lemma follows from the above claim.
\end{proof}

\subsection{Compactness theorem}
\setcounter{Theorem}{0}
Let
$\overline{H}$ be an ample arithmetic $\RR$-Cartier divisor on $X$.
Let $\Gamma$ be a prime divisor on $X$ and let
$g_{\Gamma}$ be an  
$F_{\infty}$-invariant $\Gamma$-Green function of $C^{0}$-type such that
\[
\int_{X_{\sigma}} g_{\Gamma} c_1(\overline{H})^{d-1} = -\frac{2\adeg(\overline{H}^{d-1} \cdot (\Gamma, 0))}{[K :\QQ]}
\]
for each $\sigma \in K(\CC)$. 
We set $\overline{\Gamma} = (\Gamma, g_{\Gamma})$.
Note that 
\[
\overline{\Gamma} \in \aWDiv_{C^{0}}(X)_{\RR}\quad\text{and}\quad
\adeg(\overline{H}^{d-1} \cdot \overline{\Gamma}) = 0
\]
(see, Conventions and terminology~\ref{CV:arithmetic:divisors:3}).
Moreover, let $C^0_0(X)$ be the space of $F_{\infty}$-invariant real valued continuous functions $\eta$ on $X(\CC)$ with
$\int_{X(\CC)} \eta c_1(\overline{H})^{d-1} = 0$.

The following theorem will provide a useful tool to find an element of
$\widehat{\Gamma}^{\times}_{\RR}(X, \overline{D})$.

\begin{Theorem}
\label{thm:linsys:bounded:convex}
Let $X^{(1)}$ be the set of all prime divisors on $X$.
For an arithmetic $\RR$-Weil divisor $\overline{D}$ of $C^0$-type
\rom{(}cf. Conventions and terminology~\rom{\ref{CV:arithmetic:divisors:3}}\rom{)},
we set
\[
\Upsilon(\overline{D}) =
\left\{ (\pmb{a}, \eta) \in \RR(X^{(1)}) \oplus C^0_0(X)  \ \left| \  \overline{D} + \sum_{\Gamma} \pmb{a}_{\Gamma} \overline{\Gamma}  + (0, \eta)   \geq 0 \right\}\right.,
\]
where $\RR(X^{(1)})$ is the vector space generated by $X^{(1)}$ over $\RR$ 
\rom{(}cf. Conventions and terminology~\rom{\ref{CV:vector:space:generated:by:set}}\rom{)}.
Then $\Upsilon(\overline{D})$ has the following boundedness:
\begin{enumerate}
\renewcommand{\labelenumi}{(\arabic{enumi})}
\item
For each $\Gamma \in X^{(1)}$, $\{ \pmb{a}_{\Gamma} \}_{(\pmb{a},\eta) \in \Upsilon(\overline{D})}$ is bounded.

\item
For each $\sigma \in K(\CC)$,
\[
\left\{ \int_{X_{\sigma}} \eta c_1(\overline{H})^{d-1} \right\}_{(\pmb{a},\eta) \in \Upsilon(\overline{D})}
\]
is bounded.
\end{enumerate}
\end{Theorem}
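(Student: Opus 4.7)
The plan is to extract all information from two sources: the divisorial effectivity $D + \sum_\Gamma \pmb{a}_\Gamma \Gamma \geq 0$ on the one hand, and the pointwise Green-function inequality $g + \sum_\Gamma \pmb{a}_\Gamma g_\Gamma + \eta \geq 0$ on the other. Setting $c_\Gamma := \adeg(\overline{H}^{d-1} \cdot (\Gamma, 0))$, the pivot of the argument is to first establish that $c_\Gamma > 0$ for every prime divisor $\Gamma$. For horizontal $\Gamma$ this follows from ampleness of $\overline{H}$, since $c_\Gamma = \adeg(\rest{\overline{H}}{\widetilde{\Gamma}}^{d-1})$ is the top arithmetic self-intersection of the ample restriction $\rest{\overline{H}}{\widetilde{\Gamma}}$ on the positive-dimensional arithmetic variety $\widetilde{\Gamma}$; for vertical $\Gamma$ lying over a closed point $P$, $c_\Gamma = \log \#(O_K/P) \cdot \deg(\rest{H}{\Gamma}^{d-1}) > 0$ by the geometric ampleness of $H$ on the fiber.

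For part (1), the effectivity of the divisorial part gives the lower bound $\pmb{a}_{\Gamma_0} \geq -\mult_{\Gamma_0}(D)$ for every prime $\Gamma_0$. For the upper bound I would integrate the Green-function inequality against $c_1(\overline{H})^{d-1}$ globally on $X(\CC)$; using the normalization $\int_{X_\sigma} g_\Gamma c_1(\overline{H})^{d-1} = -2c_\Gamma/[K:\QQ]$ (so that $\int_{X(\CC)} g_\Gamma c_1(\overline{H})^{d-1} = -2c_\Gamma$) and the defining condition $\int_{X(\CC)} \eta\, c_1(\overline{H})^{d-1} = 0$ of $C^0_0(X)$, this yields
\[
\sum_\Gamma \pmb{a}_\Gamma c_\Gamma \leq \tfrac{1}{2}\int_{X(\CC)} g\, c_1(\overline{H})^{d-1} =: M.
\]
Since each summand $(\pmb{a}_\Gamma + \mult_\Gamma(D))c_\Gamma$ is non-negative by the lower bound and $c_\Gamma > 0$, the inequality reads
\[
\sum_\Gamma (\pmb{a}_\Gamma + \mult_\Gamma(D))\, c_\Gamma \leq M + \sum_{\Gamma \in \Supp(D)} \mult_\Gamma(D)\, c_\Gamma,
\]
and each single non-negative term is then bounded by the right-hand side, giving an upper bound on $\pmb{a}_{\Gamma_0}$ that depends only on $\Gamma_0$ and $\overline{D}$.

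For part (2), the same reasoning shows that $\sum_\Gamma \pmb{a}_\Gamma c_\Gamma$ is two-sided bounded (above by $M$, below by $-\sum_{\Gamma \in \Supp(D)} \mult_\Gamma(D) c_\Gamma$). Now integrating the Green-function inequality over a single component $X_\sigma$ yields
\[
\int_{X_\sigma} \eta\, c_1(\overline{H})^{d-1} \geq -\int_{X_\sigma} g\, c_1(\overline{H})^{d-1} + \tfrac{2}{[K:\QQ]} \sum_\Gamma \pmb{a}_\Gamma c_\Gamma,
\]
which is bounded below by a constant depending only on $\sigma$ and $\overline{D}$. The complementary upper bound is obtained for free from $\int_{X(\CC)} \eta\, c_1(\overline{H})^{d-1} = 0$, which expresses $\int_{X_\sigma} \eta\, c_1(\overline{H})^{d-1}$ as $-\sum_{\sigma' \neq \sigma} \int_{X_{\sigma'}} \eta\, c_1(\overline{H})^{d-1}$ and converts the lower bounds for the other components into an upper bound for $\sigma$.

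The main obstacle is the uniform positivity $c_\Gamma > 0$ over \emph{all} prime divisors of $X$, horizontal and vertical alike; without this the pivotal non-negativity $(\pmb{a}_\Gamma + \mult_\Gamma(D))c_\Gamma \geq 0$ collapses and the global intersection identity would give no individual control on the $\pmb{a}_\Gamma$. A minor but essential technicality is to justify integrating the pointwise inequality $g + \sum \pmb{a}_\Gamma g_\Gamma + \eta \geq 0$ against $c_1(\overline{H})^{d-1}$, which is harmless because the smooth positive form $c_1(\overline{H})^{d-1}$ gives measure zero to the analytic polar locus on which the Green functions blow up.
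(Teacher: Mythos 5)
Your proposal is correct and follows essentially the same route as the paper's proof: the lower bound $\pmb{a}_\Gamma \geq -\mult_\Gamma(D)$ from divisorial effectivity, the global integration of the Green-function inequality against $c_1(\overline{H})^{d-1}$ combined with the normalization $\adeg(\overline{H}^{d-1}\cdot\overline{\Gamma})=0$ and $\int_{X(\CC)}\eta\,c_1(\overline{H})^{d-1}=0$ to bound $\sum_\Gamma \pmb{a}_\Gamma\,\adeg(\overline{H}^{d-1}\cdot(\Gamma,0))$ and hence each $\pmb{a}_\Gamma$, and then the componentwise integration plus the zero-mean condition on $\eta$ for part (2). The only cosmetic difference is that you spell out the positivity $\adeg(\overline{H}^{d-1}\cdot(\Gamma,0))>0$, which the paper uses implicitly.
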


\begin{proof}
We set $\overline{D} = \left(\sum_{\Gamma} d_{\Gamma} \Gamma, g\right)$.
Here we claim the following:

\begin{Claim}
\begin{enumerate}
\renewcommand{\labelenumi}{(\arabic{enumi})}
\item
For all $(\pmb{a},\eta) \in \Upsilon(\overline{D})$ and $\Gamma \in X^{(1)}$,
\[
- d_{\Gamma} \leq \pmb{a}_{\Gamma}  \leq  \frac{\displaystyle \frac{1}{2} \int_{X(\CC)} g c_1(\overline{H})^{\wedge d-1} + \sum_{\Gamma' \in X^{(1)} \setminus \{\Gamma\}} 
d_{\Gamma'} \adeg(\overline{H}^{d-1} \cdot (\Gamma',0))}{\adeg(\overline{H}^{d-1} \cdot (\Gamma,0))}.
\]

\item
For all $(\pmb{a},\eta) \in \Upsilon(\overline{D})$ and  $\sigma \in K(\CC)$,
\[
-\frac{2 \adeg(\overline{H}^{d-1} \cdot (D,0))}{[K : \QQ]}-\int_{X_{\sigma}} g c_1(\overline{H})^{d-1} \leq \int_{X_{\sigma}} \eta c_1(\overline{H})^{d-1}.
\]
\end{enumerate}
\end{Claim}

\begin{proof}
(1) Clearly if $(\pmb{a},\eta) \in \Upsilon(\overline{D})$, then
$-d_{\Gamma} \leq \pmb{a}_{\Gamma}$ for $\Gamma \in X^{(1)}$.
Moreover, for $\Gamma \in X^{(1)}$,
\[
0 = \adeg(\overline{H}^{d-1} \cdot \overline{\Gamma}) = \adeg(\overline{H}^{d-1} \cdot (\Gamma, 0)) + \frac{1}{2} \int_{X(\CC)} g_{\Gamma} c_1(\overline{H})^{\wedge d-1}.
\]
As $\sum_{\Gamma} \pmb{a}_{\Gamma} g_{\Gamma} + \eta + g \geq 0$, we have
\begin{multline*}
\sum_{\Gamma} \pmb{a}_{\Gamma}  \adeg(\overline{H}^{d-1} \cdot (\Gamma,0)) 
\leq \sum_{\Gamma}\pmb{a}_{\Gamma}  \adeg(\overline{H}^{d-1} \cdot (\Gamma,0)) \\
\hspace{13em}+ \frac{1}{2} \int_{X(\CC)} \left( \sum_{\Gamma} \pmb{a}_{\Gamma} g_{\Gamma} +
\eta + g \right)c_1(\overline{H})^{\wedge d-1} \\
=
\frac{1}{2}\int_{X(\CC)} g c_1(\overline{H})^{\wedge d-1},
\end{multline*}
and hence
\begin{multline*}
\pmb{a}_{\Gamma}  \adeg(\overline{H}^{d-1} \cdot (\Gamma,0)) = \sum_{\Gamma'  \in X^{(1)}} \pmb{a}_{\Gamma'}  \adeg(\overline{H}^{d-1} \cdot (\Gamma',0)) +
\sum_{\Gamma' \in X^{(1)} \setminus \{\Gamma\}} (-\pmb{a}_{\Gamma'})  \adeg(\overline{H}^{d-1} \cdot (\Gamma',0))\\
\qquad\qquad  \leq
\frac{1}{2} \int_{X(\CC)} g c_1(\overline{H})^{\wedge d-1} + \sum_{\Gamma' \not= \Gamma} d_{\Gamma'} \adeg(\overline{H}^{d-1} \cdot (\Gamma',0))\\
\end{multline*}
for all $\Gamma$.

(2)
Since $\sum_{\Gamma} \pmb{a}_{\Gamma} \Gamma + D \geq 0$, we obtain
\[
0 \leq \adeg \left(\overline{H}^{d-1} \cdot \left(\sum_{\Gamma} \pmb{a}_{\Gamma} \Gamma + D, 0\right)\right)
= \sum_{\Gamma}  \pmb{a}_{\Gamma}  \adeg(\overline{H}^{d-1} \cdot (\Gamma,0)) + \adeg(\overline{H}^{d-1} \cdot (D,0)).
\]
Therefore,
\begin{multline*}
0 \leq \int_{X_{\sigma}} \left( \sum_{\Gamma} \pmb{a}_{\Gamma} g_{\Gamma} + \eta + g\right)
c_1(\overline{H})^{d-1} \\
=  -\sum_{\Gamma} \frac{2 \ \pmb{a}_{\Gamma} \deg(\overline{H}^{d-1}(\Gamma, 0))}{[K:\QQ]} + \int_{X_{\sigma}} \eta c_1(\overline{H})^{d-1} + \int_{X_{\sigma}} g c_1(\overline{H})^{d-1} \\
\leq
\frac{2 \adeg(\overline{H}^{d-1} \cdot (D,0))}{[K : \QQ]} + \int_{X_{\sigma}} \eta c_1(\overline{H})^{d-1}
+\int_{X_{\sigma}} g c_1(\overline{H})^{d-1},
\end{multline*}
as required.
\end{proof}

By (1) in the above claim,
$\{ \pmb{a}_{\Gamma}\}_{(\pmb{a},\eta) \in \Upsilon(\overline{D})}$ is bounded for each $\Gamma$.
Further, by (2), there is a constant $M$ such that
\[
\int_{X_{\sigma}} \eta c_1(\overline{H})^{d-1} \geq M
\]
for all $(\pmb{a},\eta) \in \Upsilon(\overline{D})$ and $\sigma \in K(\CC)$, and hence
\[
M \leq \int_{X_{\sigma}} \eta c_1(\overline{H})^{d-1} = \sum_{\sigma' \in K(\CC) \setminus \{\sigma\}} 
- \int_{X_{\sigma'}} \eta c_1(\overline{H})^{d-1} \leq (\#(K(\CC)) - 1)(-M),
\]
as desired.
\end{proof}

\begin{Corollary}
\label{cor:linsys:bounded:convex}
Let $\Lambda$ be a finite set and let $\{ \overline{D}_{\lambda} \}_{\lambda \in \Lambda}$ be a family of arithmetic $\RR$-Weil divisors 
of $C^{\infty}$-type with the following properties:
\begin{enumerate}
\renewcommand{\labelenumi}{(\alph{enumi})}
\item
$\adeg(\overline{H}^{d-1} \cdot \overline{D}_{\lambda}) = 0$ for $\lambda \in \Lambda$.

\item
For each $\lambda \in \Lambda$, there is an $F_{\infty}$-invariant locally constant  function $\rho_{\lambda}$ such that
\[
c_1(\overline{D}_{\lambda}) \wedge c_1(\overline{H})^{\wedge d-2} = \rho_{\lambda} c_1(\overline{H})^{\wedge d-1}.
\]

\item
$\{ \overline{D}_{\lambda} \}_{\lambda \in \Lambda}$ is linearly independent in $\aWDiv_{C^{\infty}}(X)_{\RR}$.
\end{enumerate}
Then, for $\overline{D} \in \aWDiv_{C^{0}}(X)_{\RR}$,
the set
\[
\left\{ \pmb{a} \in \RR(\Lambda) \ \left| \ \overline{D} + \sum_{\lambda \in \Lambda} \pmb{a}_{\lambda} \overline{D}_{\lambda}  \geq 0 \right\}\right.
\]
is convex and compact.
\end{Corollary}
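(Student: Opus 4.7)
Convexity and closedness of the set are immediate: the condition $\overline{D}+\sum_{\lambda}\pmb a_{\lambda}\overline{D}_{\lambda}\geq 0$ amounts to the linear inequalities ``coefficient of $\Gamma$ in $D+\sum_\lambda \pmb a_\lambda D_\lambda$ is $\geq 0$'' for each prime divisor $\Gamma$ on $X$ and ``$g+\sum_\lambda \pmb a_\lambda g_\lambda\geq 0$'' at each point of $X(\CC)$, where $\overline{D}=(D,g)$ and $\overline{D}_\lambda=(D_\lambda,g_\lambda)$. Since $\Lambda$ is finite, $\RR(\Lambda)=\RR^{|\Lambda|}$ and it suffices to prove boundedness, so the whole plan will be to reduce this to the boundedness theorem (Theorem~\ref{thm:linsys:bounded:convex}).

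Writing $D_\lambda=\sum_\Gamma d_{\lambda,\Gamma}\Gamma$ and setting $\eta_\lambda:=g_\lambda-\sum_\Gamma d_{\lambda,\Gamma}g_\Gamma$, one obtains the decomposition $\overline{D}_\lambda=\sum_\Gamma d_{\lambda,\Gamma}\overline{\Gamma}+(0,\eta_\lambda)$. A direct calculation combining condition (a) with the defining normalization $\int_{X_\sigma}g_\Gamma\,c_1(\overline H)^{\wedge d-1}=-2\adeg(\overline H^{d-1}\cdot(\Gamma,0))/[K:\QQ]$ shows $\int_{X(\CC)}\eta_\lambda\, c_1(\overline H)^{\wedge d-1}=0$, so $\eta_\lambda\in C^0_0(X)$. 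For $\pmb a$ in the set of interest, put $\pmb b_\Gamma:=\sum_\lambda \pmb a_\lambda d_{\lambda,\Gamma}$ and $\eta:=\sum_\lambda \pmb a_\lambda\eta_\lambda$; then $(\pmb b,\eta)\in\Upsilon(\overline D)$, and Theorem~\ref{thm:linsys:bounded:convex} supplies uniform bounds on each $\pmb b_\Gamma$ and on $\int_{X_\sigma}\eta\, c_1(\overline H)^{\wedge d-1}$ for every $\sigma\in K(\CC)$. Consequently the linear map
\[
\Psi\colon\RR(\Lambda)\longrightarrow\RR(X^{(1)})\oplus\RR^{K(\CC)},\qquad \pmb a\longmapsto\Bigl(\pmb b,\,\Bigl(\textstyle\int_{X_\sigma}\eta\, c_1(\overline H)^{\wedge d-1}\Bigr)_{\sigma\in K(\CC)}\Bigr),
\]
sends our set into a bounded set; it remains only to prove that $\Psi$ is injective.

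Suppose $\Psi(\pmb a)=0$. Vanishing of $\pmb b$ gives $\sum_\lambda \pmb a_\lambda D_\lambda=0$, hence $\sum_\lambda \pmb a_\lambda\overline{D}_\lambda=(0,h)$ with $h:=\sum_\lambda \pmb a_\lambda g_\lambda$ (which is $C^\infty$, since the log singularities cancel) and $\eta=h$. Summing condition (b) against the coefficients $\pmb a_\lambda$ yields
\[
dd^c(h)\wedge c_1(\overline H)^{\wedge d-2}=\rho\,c_1(\overline H)^{\wedge d-1},\qquad \rho:=\sum_\lambda \pmb a_\lambda\rho_\lambda,
\]
where $\rho$ is $F_\infty$-invariant and locally constant. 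Integration over each $X_\sigma$ kills the left-hand side by Stokes, while the right-hand side becomes $\rho_\sigma\int_{X_\sigma}c_1(\overline H)^{\wedge d-1}$; ampleness of $\overline H$ makes this integral positive, so $\rho_\sigma=0$ for every $\sigma$, i.e., $\rho\equiv 0$. Therefore $dd^c(h)\wedge c_1(\overline H)^{\wedge d-2}=0$, which (since $c_1(\overline H)$ is a K\"ahler form on $X(\CC)$) forces $h$ to be harmonic and hence locally constant. Then $0=\int_{X_\sigma}\eta\, c_1(\overline H)^{\wedge d-1}=h_\sigma\int_{X_\sigma}c_1(\overline H)^{\wedge d-1}$ yields $h=0$, so $\sum_\lambda \pmb a_\lambda\overline{D}_\lambda=0$, and condition (c) gives $\pmb a=0$.

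The heart of the argument is the injectivity of $\Psi$: one must upgrade the a priori weak information ``$\pmb b=0$ and $\int_{X_\sigma}\eta\, c_1(\overline H)^{\wedge d-1}=0$'' into the strong conclusion $\sum_\lambda \pmb a_\lambda\overline{D}_\lambda=0$. Condition (b) provides the decisive rigidity, via the identity relating $dd^c(h)$ to a locally constant function, which, together with Stokes and the positivity of $c_1(\overline H)$, produces harmonicity and thus local constancy of $h$; condition (c) then closes the argument. This is the step that genuinely uses all three of the hypotheses of the corollary.
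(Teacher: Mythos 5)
Your proof is correct, and it reaches the conclusion by a route that differs from the paper's in an instructive way, although both arguments reduce the statement to Theorem~\ref{thm:linsys:bounded:convex} through an injective linear map into a finite-dimensional space. The paper first replaces the Green functions $g_{\Gamma}$ by ones of $C^{\infty}$-type satisfying $c_1(\overline{\Gamma})\wedge c_1(\overline{H})^{\wedge d-2}=\nu_{\Gamma}\,c_1(\overline{H})^{\wedge d-1}$ with $\nu_{\Gamma}$ locally constant; with this refined choice the function parts $\xi_{\lambda}$ in the decomposition $\overline{D}_{\lambda}=\sum_{\Gamma}\alpha_{\lambda\Gamma}\overline{\Gamma}+(0,\xi_{\lambda})$ are automatically locally constant with $\sum_{\sigma}(\xi_{\lambda})_{\sigma}=0$, so injectivity of the resulting map $T$ is immediate from hypothesis (c), and the set in question is exhibited as the preimage of the compact set $\Upsilon(\overline{D})\cap(\RR(\Lambda')\oplus\Xi_X)$ for a suitable finite $\Lambda'\subseteq X^{(1)}$. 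You instead keep the original $C^{0}$ normalization of $g_{\Gamma}$, so your function parts $\eta_{\lambda}$ are merely continuous elements of $C^0_0(X)$, and you extract from Theorem~\ref{thm:linsys:bounded:convex} only the finitely many numbers $\pmb{b}_{\Gamma}$ and $\int_{X_{\sigma}}\eta\, c_1(\overline{H})^{\wedge d-1}$; the price is that injectivity of $\Psi$ is no longer formal, and you pay it with the Stokes-plus-positivity argument showing that $h$ is harmonic for the K\"ahler form $c_1(\overline{H})$, hence locally constant, hence zero. That rigidity argument is precisely what the paper leaves implicit when it asserts that the refined $g_{\Gamma}$ exist and that the $\xi_{\lambda}$ lie in $\Xi_X$, so your version makes the role of hypothesis (b) explicit where the paper buries it in the set-up. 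Two points you pass over quickly are harmless but worth stating: boundedness of $\Psi$ of the set yields boundedness of the set itself only because the image of $\Psi$ lies in the finite-dimensional coordinate subspace determined by $\bigcup_{\lambda}\Supp(D_{\lambda})$ and $K(\CC)$ (the paper needs the analogous remark for $T$ via $\Lambda'$), and closedness of the set should be justified by evaluating the Green functions at points off the supports, where the conditions are affine in $\pmb{a}$; both verifications are immediate.
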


\begin{proof}
The convexity of the above set is obvious, so that we need to show compactness.
We pose more conditions to the $\Gamma$-Green function $g_{\Gamma}$, that is, we further assume that
$g_{\Gamma}$ is of $C^{\infty}$-type and 
$c_1(\overline{\Gamma}) \wedge c_1(\overline{H})^{\wedge d-2} = \nu_{\Gamma} c_1(\overline{H})^{\wedge d-1}$
for some locally constant function $\nu_{\Gamma}$ on $X(\CC)$.
Note that this is actually possible.
We set
\[
\Xi_X := \left\{ \xi : X(\CC) \to \RR \ \left| \ \text{$\xi$ is locally constant, $F_{\infty}$-invariant and $\sum_{\sigma \in K(\CC)} \xi_{\sigma} = 0$} \right\}\right..
\]
Then there are $\alpha_{\lambda\Gamma} \in \RR$ and $\xi_{\lambda} \in \Xi_X$ such that
\[
\overline{D}_{\lambda} = \sum_{\Gamma} \alpha_{\lambda \Gamma} \overline{\Gamma} + (0, \xi_{\lambda})
\]
for each $\lambda$.
Therefore,
\[
\sum_{\lambda} \pmb{a}_{\lambda} \overline{D}_{\lambda} =
 \sum_{\Gamma} \left( \sum_{\lambda} \pmb{a}_{\lambda} \alpha_{\lambda\Gamma} \right)\overline{\Gamma} + 
 \sum_{\lambda} \pmb{a}_{\lambda} \xi_{\lambda}.
 \]
Let us consider a linear map 
\[
T : \RR(\Lambda) \to \RR(X^{(1)}) \oplus \Xi_X
\]
given by $T(\pmb{a}) = (T_1(\pmb{a}), T_2(\pmb{a}))$, where
\[
T_1(\pmb{a})_{\Gamma} =  \sum_{\lambda} \pmb{a}_{\lambda} \alpha_{\lambda\Gamma}\ \text{($\Gamma \in X^{(1)}$)}\quad\text{and}\quad
T_2(\pmb{a})  =  \sum_{\lambda} \pmb{a}_{\lambda} \xi_{\lambda}.
\]
Then $T$ is injective. Indeed, if $T(\pmb{a}) = 0$, then
\[
\sum_{\lambda} \pmb{a}_{\lambda} \alpha_{\lambda\Gamma} = 0 \ \text{($\forall \Gamma$)}\quad\text{and}\quad
\sum_{\lambda} \pmb{a}_{\lambda} \xi_{\lambda} = 0.
\]
Thus $\sum_{\lambda}\pmb{a}_{\lambda} \overline{D}_{\lambda} = 0$, and hence $\pmb{a} = 0$.
Since $\Lambda$ is finite, we can find a finite subset $\Lambda'$ of $X^{(1)}$
such that the image of $T$ is contained in $\RR(\Lambda') \oplus \Xi_X$.
Moreover, by the previous theorem, $\Upsilon(\overline{D}) \cap (\RR(\Lambda') \oplus \Xi_X)$ is compact.
Thus
\[
\left\{ \pmb{a} \in \RR(\Lambda) \ \left| \ \overline{D} + \sum_{\lambda \in \Lambda} \pmb{a}_{\lambda} \overline{D}_{\lambda}  \geq 0\right\}\right. = T^{-1}\left(\Upsilon(\overline{D}) \cap (\RR(\Lambda') \oplus \Xi_X) \right)
\]
is also compact.
\end{proof}

\begin{Corollary}
\label{cor:existence:inf:R:rational:function}
Let $\varphi_1,\ldots, \varphi_l$ be $\RR$-rational functions on $X$ \rom{(}i.e. 
$\varphi_1, \ldots, \varphi_l \in \Rat(X)^{\times}_{\RR}$\rom{)} and
let $\overline{D} = (D, g)$ be an arithmetic $\RR$-Cartier divisor of $C^0$-type on $X$.
If 
\[
\Phi = \left\{ (a_1, \ldots, a_l) \in \RR^l \mid \varphi_1^{a_1} \cdots \varphi_l^{a_l} \in \Gamma^{\times}_{\RR}(X, D)
\right\} \not= \emptyset,
\]
then there exists $(b_1, \ldots, b_l) \in \Phi$ such that
\[
\Vert \varphi_1^{b_1} \cdots \varphi_l^{b_l} \Vert_{g, \sup} =
\inf_{(a_1, \ldots, a_l) \in \Phi} \left\{ \Vert \varphi_1^{a_1} \cdots \varphi_l^{a_l} \Vert_{g, \sup} \right\}.
\]
\end{Corollary}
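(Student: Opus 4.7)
The plan is to combine Corollary~\ref{cor:linsys:bounded:convex} (compactness) with Proposition~\ref{prop:uniform:cont:L:p} (continuity of $\upsilon_{\sup}$). I would first reduce to the case where $\widehat{(\varphi_1)}_{\RR}, \ldots, \widehat{(\varphi_l)}_{\RR}$ are linearly independent in $\aDiv_{C^{\infty}}(X)_{\RR}$. Let $V \subseteq \RR^l$ be the kernel of the linear map $\pmb{a} \mapsto \sum_i a_i \widehat{(\varphi_i)}_{\RR}$. For $\pmb{v} \in V$, the $\RR$-rational function $\psi := \varphi_1^{v_1} \cdots \varphi_l^{v_l}$ satisfies $\widehat{(\psi)}_{\RR} = 0$; since the arithmetic principal divisor packages both $(\psi)_{\RR} \in \Div(X)_{\RR}$ and the Green function $-\log \vert \psi \vert^2$, this forces $(\psi)_{\RR} = 0$ and $\vert \psi \vert \equiv 1$ on $X(\CC)$, so both $\Phi$ and $\upsilon_{\sup}$ are invariant under translation by $V$. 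After relabeling, I may assume $\widehat{(\varphi_1)}_{\RR}, \ldots, \widehat{(\varphi_k)}_{\RR}$ form a basis of the image; every $\pmb{a} \in \Phi$ is then $V$-equivalent to some $(a'_1, \ldots, a'_k, 0, \ldots, 0) \in \Phi$ with the same sup-norm, so it suffices to minimize over $\Phi' := \Phi \cap (\RR^k \times \{0\}^{l-k})$ and then pad a minimizer with zeros.

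Next, for $C > 0$ define
\[
S_C := \left\{ \pmb{b} \in \RR^k \ \Big| \ \overline{D} + (0, 2\log C) + \sum_{i=1}^k b_i \widehat{(\varphi_i)}_{\RR} \geq 0 \right\}.
\]
A direct computation with the definitions (cf.\ Lemma~\ref{lem:rational:tensor:real:number}(2)) shows $S_C$ coincides with $\{\pmb{b} \in \Phi' \mid \upsilon_{\sup}(\pmb{b}) \leq C\}$. Each $\widehat{(\varphi_i)}_{\RR}$ is an arithmetic principal $\RR$-divisor of $C^{\infty}$-type, so $\adeg(\overline{H}^{d-1} \cdot \widehat{(\varphi_i)}_{\RR}) = 0$ by the arithmetic product formula, and $c_1(\widehat{(\varphi_i)}_{\RR}) = 0$ by Poincar\'{e}--Lelong; together with the linear independence secured above, all three hypotheses of Corollary~\ref{cor:linsys:bounded:convex} hold (taking $\rho_{\lambda} \equiv 0$), so $S_C$ is compact and convex. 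Now fix any $(a_1^0, \ldots, a_l^0) \in \Phi$, set $M_0 := \upsilon_{\sup}(a_1^0, \ldots, a_l^0)$, and note that its $\Phi'$-equivalent lies in $S_{M_0}$, so $S_{M_0}$ is non-empty. By Proposition~\ref{prop:uniform:cont:L:p}, $\upsilon_{\sup}$ is continuous on the compact set $S_{M_0}$, hence attains a minimum there at some $(b_1, \ldots, b_k)$. Since every $\pmb{b} \in \Phi'$ with $\upsilon_{\sup}(\pmb{b}) \leq M_0$ already lies in $S_{M_0}$, this minimum equals $\inf_{\Phi'} \upsilon_{\sup} = \inf_{\Phi} \upsilon_{\sup}$, and $(b_1, \ldots, b_k, 0, \ldots, 0)$ is the required minimizer.

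The chief obstacle is the reduction step: one must verify that the kernel $V$ genuinely acts trivially on both the effectivity condition and the sup-norm. The crucial point is that $\widehat{(\,)}_{\RR}$ encodes divisor and analytic data simultaneously, so $\widehat{(\psi)}_{\RR} = 0$ forces $\vert \psi \vert \equiv 1$ rather than merely $(\psi)_{\RR} = 0$. Without this, the na\"{\i}ve sublevel set in $\RR^l$ would fail to be compact whenever the $\widehat{(\varphi_i)}_{\RR}$ satisfy a nontrivial linear relation, and Corollary~\ref{cor:linsys:bounded:convex} could not be applied directly.
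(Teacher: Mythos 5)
Your proposal is correct and follows essentially the same route as the paper: identify a sup-norm sublevel set of $\Phi$ with an effectivity set for $\overline{D}$ shifted by a constant Green function, show it is non-empty and compact via Corollary~\ref{cor:linsys:bounded:convex}, and conclude with the continuity of $\upsilon_{\sup}$ from Proposition~\ref{prop:uniform:cont:L:p}. Your reduction to linear independence of the $\widehat{(\varphi_i)}_{\RR}$ (via the kernel $V$, using that $\widehat{(\psi)}_{\RR}=0$ forces $(\psi)_{\RR}=0$ and $\vert\psi\vert\equiv 1$) is just a more explicit form of the paper's opening reduction ``we may assume $\varphi_1,\ldots,\varphi_l$ are linearly independent in $\Rat(X)^{\times}_{\RR}$,'' and the shift $(0,2\log C)$ plays the role of the paper's replacement of $g$ by $g+\lambda$.
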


\begin{proof}
Clearly we may assume that $\varphi_1, \ldots, \varphi_l$ are linearly independent in $\Rat(X)^{\times}_{\RR}$.
Replacing $g$ by $g + \lambda$ ($\lambda \in \RR$) if necessarily,
we may further assume that 
\[
\left\{ (a_1, \ldots, a_l) \in \RR^l \mid \varphi_1^{a_1} \cdots \varphi_l^{a_l} \in \widehat{\Gamma}^{\times}_{\RR}(X, \overline{D})
\right\} \not= \emptyset.
\]
We denote the above set by $\widehat{\Phi}$. As
\[
\widehat{\Phi} = \left\{ (a_1, \ldots, a_l) \in \Phi \mid 
\Vert \varphi_1^{a_1} \cdots \varphi_l^{a_l} \Vert_{g, \sup} \leq 1 \right\},
\]
we have 
\[
\inf_{(a_1, \ldots, a_l) \in \Phi} \left\{ \Vert \varphi_1^{a_1} \cdots \varphi_l^{a_l} \Vert_{g, \sup} \right\}
= \inf_{(a_1, \ldots, a_l) \in \widehat{\Phi}} \left\{ \Vert \varphi_1^{a_1} \cdots \varphi_l^{a_l} \Vert_{g, \sup} \right\}.
\]
On the other hand, $\widehat{\Phi}$ is compact by Corollary~\ref{cor:linsys:bounded:convex}.
Thus the assertion of the corollary follows from Proposition~\ref{prop:uniform:cont:L:p}.
\end{proof}

\subsection{Dirichlet's unit theorem on arithmetic curves}
\label{subsec:Dirichlet:unit:theorem}
\setcounter{Theorem}{0}
We assume $d=1$, that is, $X = \Spec(O_K)$.
In this subsection, we would like to give a proof of Dirichlet's unit theorem in flavor of Arakelov theory
(cf. \cite{Sz}).
Of course, the contents of this subsection are nothing new, but it provides the background of this paper
and a usage of the compactness theorem (cf. Corollary~\ref{cor:linsys:bounded:convex}).
The referee points out that Chambert-Loir give a similar proof based on a certain kind of compactness in
\cite[\S1.4, D]{Champ}
Let us begin with the following weak version of Dirichlet's unit theorem, which is much easier than Dirichlet's unit theorem.

\begin{Lemma}
\label{lem:weak:Dirichlet:unit:theorem}
$O_K^{\times}$ is a finitely generated abelian group.
\end{Lemma}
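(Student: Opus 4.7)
The plan is to embed $O_K^{\times}$ modulo its torsion into a finite-dimensional real vector space as a discrete subgroup, whence it becomes finitely generated free abelian. Define the logarithmic map
\[
\ell : O_K^{\times} \to \RR^{K(\CC)}, \qquad u \mapsto \bigl(\log|\sigma(u)|\bigr)_{\sigma \in K(\CC)}.
\]
Because $u$ is a unit, $\ord_P(u) = 0$ for every closed point $P$ of $\Spec(O_K)$, so the product formula (Remark~\ref{rem:product:K:R}, equation~\eqref{eqn:rem:product:K:R:1}) yields $\sum_\sigma \log|\sigma(u)| = 0$; combined with the identity $|\bar\sigma(u)| = |\sigma(u)|$ this shows $\ell(O_K^{\times}) \subseteq V_0$, where
\[
V_0 := \bigl\{\xi \in \RR^{K(\CC)} \ \big|\ \textstyle\sum_\sigma \xi_\sigma = 0,\ \xi_\sigma = \xi_{\bar\sigma}\ (\forall \sigma)\bigr\}
\]
is a finite-dimensional subspace (of dimension $r_1 + r_2 - 1$, where $r_1$, $r_2$ are the numbers of real embeddings and pairs of complex embeddings of $K$). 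The kernel of $\ell$ is $\bigl\{u \in O_K^{\times} \ \big|\ |\sigma(u)| = 1\ \forall \sigma\bigr\}$.

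I would then establish two finiteness statements. First, $\ker(\ell)$ is finite: any $u$ in the kernel has all Galois conjugates of absolute value $1$, so the elementary symmetric functions of the conjugates are rational integers bounded in terms of $[K:\QQ]$, whence the minimal polynomial of $u$ lies in a finite set and there are only finitely many such $u$. Second, $\ell(O_K^{\times})$ is discrete in $V_0$: for any $c > 0$, the set of units $u$ with $|\log|\sigma(u)|| \leq c$ for all $\sigma$ is again a set of algebraic integers with uniformly bounded conjugates, hence finite by the same minimal-polynomial argument. This discreteness can alternatively be extracted from the compactness theorem (Corollary~\ref{cor:linsys:bounded:convex}): for any $\RR$-linearly independent collection $u_1,\ldots,u_l \in O_K^{\times}$, the arithmetic divisors $\widehat{(u_i)}$ satisfy $\adeg(\widehat{(u_i)}) = 0$ by the product formula (with condition (ii) vacuous at $d=1$), so the set of $\pmb{a} \in \RR^l$ with $(0,2c) + \sum_i \pmb{a}_i \widehat{(u_i)} \geq 0$ is compact, and hence its intersection with $\ZZ^l$ is finite.

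Combining these, the short exact sequence
\[
1 \to \ker(\ell) \to O_K^{\times} \to \ell(O_K^{\times}) \to 1
\]
has finite kernel, and $\ell(O_K^{\times})$ is a discrete subgroup of a finite-dimensional $\RR$-vector space and therefore a finitely generated free abelian group of rank at most $r_1 + r_2 - 1$; consequently $O_K^{\times}$ is finitely generated. The main obstacle in this argument is just the Northcott-type finiteness underlying both steps, namely that there are only finitely many algebraic integers in $K$ all of whose Galois conjugates are bounded in absolute value; everything else is linear algebra together with the product formula (or, if one prefers, the compactness theorem).
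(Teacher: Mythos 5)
Your proof is correct and follows essentially the same route as the paper: the paper also considers the logarithmic map $L : O_K^{\times} \to \RR^{K(\CC)}$, $L(x)_{\sigma} = \log\vert\sigma(x)\vert$, and deduces finite generation from the finiteness of $\{x \in O_K^{\times} \mid L(x) \in B\}$ for bounded $B$, which is exactly the Northcott-type finiteness you spell out (the paper leaves it as ``easy to see''). Your extra details on the kernel, discreteness, and the product formula are just an expanded version of the same argument.
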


\begin{proof}
This is a standard fact. Indeed,
let us consider a homomorphism $L : O^{\times}_K \to \RR^{K(\CC)}$ 
given by $L(x)_{\sigma} = \log \vert \sigma(x) \vert$ for $\sigma \in K(\CC)$.
It is easy to see that, for any bounded set $B$ in $\RR^{K(\CC)}$, the set $\{ x \in O^{\times}_K \mid L(x) \in B \}$ is a finite set.
Thus the assertion of the lemma is obvious.
\end{proof}

We denote the set of all maximal ideals of $O_K$ by $M_K$.
For an $\RR$-Cartier divisor $E = \sum_{P \in M_K} e_P P$ on $X$, we define $\deg(E)$ and $\Supp(E)$ to be
\[
\deg(E) = \sum_{P \in M_K} e_P \log(\#(O_K/P))\quad\text{and}\quad
\Supp(E) := \{ P \in M_K \mid e_P \not= 0 \}.
\]

\begin{Lemma}
\label{lem:eff:deg:bound:finite}
For a constant $C$, the set $\{ E \in \Div(X) \mid \text{$E \geq 0$ and $\deg(E) \leq C$} \}$ is finite.
\end{Lemma}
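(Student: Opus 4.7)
The plan is to exploit the positivity of $\log(\#(O_K/P))$ for each maximal ideal $P$, together with the standard finiteness of primes of bounded norm. Since $E \in \Div(X)$ is a Cartier (equivalently Weil) divisor with integer coefficients, we can write $E = \sum_{P \in M_K} e_P P$ with $e_P \in \ZZ_{\geq 0}$ (because $E \geq 0$), and only finitely many $e_P$ being nonzero. For each $P \in M_K$, the residue field $O_K/P$ is finite of cardinality at least $2$, so $\log(\#(O_K/P)) \geq \log 2 > 0$.

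First I would bound the support. If $P \in \Supp(E)$, then $e_P \geq 1$, so
\[
\log(\#(O_K/P)) \leq e_P \log(\#(O_K/P)) \leq \deg(E) \leq C,
\]
the first two inequalities using that all other terms in $\deg(E)$ are non-negative. Hence $\Supp(E) \subseteq \{ P \in M_K \mid \#(O_K/P) \leq e^C \}$. This latter set is finite by the standard argument: writing $P \cap \ZZ = p\ZZ$, one has $p \leq p^{f(P/p)} = \#(O_K/P) \leq e^C$, so $p$ ranges over a finite set of rational primes, and above each such $p$ there are at most $[K : \QQ]$ maximal ideals of $O_K$.

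Next I would bound the coefficients. For each $P$ in this finite set of candidate primes, the inequality $e_P \log(\#(O_K/P)) \leq \deg(E) \leq C$ forces
\[
e_P \leq \frac{C}{\log(\#(O_K/P))},
\]
so $e_P$ ranges over a finite subset of $\ZZ_{\geq 0}$. Combining the two finiteness statements yields that only finitely many tuples $(e_P)_{P \in M_K}$ can arise, and hence only finitely many such divisors $E$ exist. There is no real obstacle here; the lemma is purely bookkeeping built on the finiteness of primes of bounded absolute norm in $O_K$.
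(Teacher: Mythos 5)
Your argument is correct and complete; the paper itself dismisses this lemma with ``This is obvious,'' and the bookkeeping you supply (bounding the support via the finiteness of primes of norm at most $e^{C}$, then bounding each coefficient $e_P$ by $C/\log(\#(O_K/P))$) is exactly the standard argument being taken for granted.
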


\begin{proof}
This is obvious.
\end{proof}

\begin{Lemma}
\label{lem:fin:gen:K:times:Sigma}
If we set
$K^{\times}_{\Sigma} = \{ x \in K^{\times} \mid \Supp((x)) \subseteq \Sigma \}$
for a finite subset  $\Sigma$ of $M_K$, then
$K^{\times}_{\Sigma}$ is a finitely generated subgroup of $K^{\times}$.
\end{Lemma}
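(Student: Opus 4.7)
The plan is to display $K^{\times}_{\Sigma}$ as an extension of two finitely generated abelian groups, leveraging the weak Dirichlet unit theorem (Lemma~\ref{lem:weak:Dirichlet:unit:theorem}) that was just established. First I would consider the homomorphism
\[
\varphi : K^{\times}_{\Sigma} \to \bigoplus_{P \in \Sigma} \ZZ
\]
defined by $x \mapsto (\ord_P(x))_{P \in \Sigma}$. The key observation is that an element $x \in K^{\times}_{\Sigma}$ lies in $\Ker(\varphi)$ precisely when $(x) = 0$: indeed $\ord_P(x)$ already vanishes for all $P \notin \Sigma$ by the defining condition on $K^{\times}_{\Sigma}$, so $\varphi(x) = 0$ forces $\ord_P(x) = 0$ for every maximal ideal $P$ of $O_K$. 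This is equivalent to $x \in O_K^{\times}$.

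Next I would observe that $\varphi(K^{\times}_{\Sigma})$ is a subgroup of the finitely generated free abelian group $\bigoplus_{P \in \Sigma} \ZZ$, hence is itself finitely generated (in fact free of finite rank). On the other hand, $\Ker(\varphi) = O_K^{\times}$ is finitely generated by Lemma~\ref{lem:weak:Dirichlet:unit:theorem}. Thus $K^{\times}_{\Sigma}$ sits in the short exact sequence
\[
1 \to O_K^{\times} \longrightarrow K^{\times}_{\Sigma} \overset{\varphi}{\longrightarrow} \varphi(K^{\times}_{\Sigma}) \to 1
\]
of abelian groups whose outer terms are finitely generated; consequently the middle term is finitely generated, which is the conclusion.

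There is no real obstacle here: the only ingredient beyond elementary structure theory of finitely generated abelian groups is the weak Dirichlet unit theorem, which has already been proved. The statement itself is a standard preparatory fact (essentially the finite generation of $S$-units) that will be combined with the compactness theorem (Corollary~\ref{cor:linsys:bounded:convex}) in the subsequent derivation of the full Dirichlet unit theorem.
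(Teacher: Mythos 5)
Your proof is correct and follows exactly the paper's argument: the paper also considers the homomorphism $x \mapsto (\ord_P(x))_{P \in \Sigma}$ into $\ZZ^{\Sigma}$, identifies the kernel as $O_K^{\times}$ (finitely generated by the weak Dirichlet unit theorem), and concludes via the finitely generated image.
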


\begin{proof}
Let us consider a homomorphism $\alpha : K^{\times}_{\Sigma} \to \ZZ^{\Sigma}$
given by $\alpha(x)_P = \ord_P(x)$ for $P \in \Sigma$.
Then $\Ker(\alpha) = O^{\times}_K$ and the image of $\alpha$ is a finitely generated.
Thus the lemma follows from the above weak version of Dirichlet's unit theorem.
\end{proof}

\begin{Lemma}
\label{lem:non:vanishing:H:0}
We set $C_K = \log\left((2/\pi)^{r_2} \sqrt{\vert d_{K/\QQ} \vert}\right)$, where
$r_2$ is the number of complex embeddings of $K$ into $\CC$ and
$d_{K/\QQ}$ is the discriminant of $K$ over $\QQ$.
If $\adeg(\overline{D}) \geq C_K$ for $\overline{D} \in \aDiv(X)$, then
there is $x \in K^{\times}$ such that $\overline{D} + \widehat{(x)} \geq 0$.
\end{Lemma}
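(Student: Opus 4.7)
The plan is to reduce the claim to Minkowski's theorem from the geometry of numbers, exploiting the usual dictionary between arithmetic divisors on $\Spec(O_K)$ and symmetric convex bodies in $K_{\RR} := K \otimes_{\QQ} \RR$.

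First I would translate the desired conclusion into a lattice-point problem. Writing $\overline{D} = (D, g)$ with $D = \sum_{P} n_P P$ and, since $X(\CC)$ is finite, $g$ encoded by real numbers $g_\sigma$ for $\sigma \in K(\CC)$ (with $g_\sigma = g_{\bar\sigma}$ by $F_{\infty}$-invariance), the condition $\overline{D} + \widehat{(x)} \geq 0$ for $x \in K^{\times}$ is equivalent to the pair of inequalities (i) $x$ lies in the fractional ideal $\mathfrak{a} := \{y \in K \mid \ord_P(y) \geq -n_P \text{ for all } P\}$, and (ii) $|\sigma(x)| \leq e^{g_\sigma/2}$ for every $\sigma \in K(\CC)$. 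Thus the task becomes: produce a nonzero $x \in \mathfrak{a}$ lying in the symmetric closed convex body $\Omega := \{(y_\sigma) \in K_{\RR} : |y_\sigma| \leq e^{g_\sigma/2}\}$.

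Next I would compute the two relevant volumes. Using the Minkowski embedding $K_{\RR} \simeq \RR^{r_1} \times \CC^{r_2}$ with standard Lebesgue measure, the covolume of $\mathfrak{a}$ equals
\[
2^{-r_2}\sqrt{|d_{K/\QQ}|} \cdot N(\mathfrak{a}) \;=\; 2^{-r_2}\sqrt{|d_{K/\QQ}|}\, e^{-\deg(D)},
\]
since $N(\mathfrak{a}) = \prod_P \#(O_K/P)^{-n_P} = e^{-\deg(D)}$. The volume of $\Omega$ factors over the real embeddings (each contributing an interval of length $2e^{g_\sigma/2}$) and over the complex-conjugate pairs (each contributing a disc of area $\pi e^{g_\tau}$), yielding
\[
\vol(\Omega) \;=\; 2^{r_1}\pi^{r_2}\exp\!\Bigl(\tfrac{1}{2}\sum_{\sigma \in K(\CC)} g_\sigma\Bigr).
\]

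Finally I would invoke Minkowski's theorem for symmetric compact convex bodies: since $[K:\QQ] = r_1 + 2r_2$, it is enough to check $\vol(\Omega) \geq 2^{[K:\QQ]}$ times the covolume of $\mathfrak{a}$. A short manipulation shows this inequality is equivalent to
\[
\deg(D) + \tfrac{1}{2}\sum_{\sigma \in K(\CC)} g_\sigma \;\geq\; \log\bigl((2/\pi)^{r_2}\sqrt{|d_{K/\QQ}|}\bigr),
\]
i.e.\ $\adeg(\overline{D}) \geq C_K$, which is exactly the hypothesis. Minkowski then supplies a nonzero $x \in \mathfrak{a}$ inside $\Omega$, and by construction this $x$ satisfies $\overline{D} + \widehat{(x)} \geq 0$. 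There is no substantive obstacle in the argument; the only real care needed is bookkeeping of normalizations, namely verifying that the factor $2^{-r_2}$ in the covolume of $O_K$ and the factor $\pi^{r_2}$ coming from the complex discs combine so that the Minkowski threshold matches the constant $C_K$ on the nose.
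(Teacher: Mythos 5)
Your argument is correct and is exactly the route the paper intends: its one-line proof cites Minkowski's theorem together with the arithmetic Riemann--Roch theorem on arithmetic curves, and your covolume computation $\mathrm{covol}(\mathfrak{a}) = 2^{-r_2}\sqrt{\vert d_{K/\QQ}\vert}\,e^{-\deg(D)}$ together with the volume of the box $\Omega$ is precisely the content of that citation. The normalizations do match, so the Minkowski threshold reproduces $C_K$ exactly as you claim.
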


\begin{proof}
This is a consequence of Minkowski's theorem and
the arithmetic Riemann-Roch theorem on arithmetic curves.
\end{proof}

The following proposition is a core part of
Dirichlet's unit theorem in terms of Arakelov theory, and can be proved by using arithmetic Riemann-Roch theorem and the compactness theorem (cf. Corollary~\ref{cor:linsys:bounded:convex} and 
Corollary~\ref{cor:existence:inf:R:rational:function}).
As a corollary, it actually implies Dirichlet's unit theorem itself (cf. Corollary~\ref{cor:Dirichlet:unit:theorem}).

\begin{Proposition}
\label{prop:Dirichlet:unit:theorem:general}
Let $\overline{D} = (D, g)$ be an arithmetic $\RR$-Cartier divisor on $X$.
Then the following are equivalent:
\begin{enumerate}
\renewcommand{\labelenumi}{(\roman{enumi})}
\item
$\adeg(\overline{D}) = 0$.

\item
$\overline{D} \in \aPDiv(X)_{\RR}$.

\item
$\adeg(\overline{D}) = 0$ and $\widehat{\Gamma}^{\times}_{\RR}(X, \overline{D}) \not= \emptyset$.
\end{enumerate}
\end{Proposition}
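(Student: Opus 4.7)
The plan is to handle the easy implications $(\mathrm{ii}) \Rightarrow (\mathrm{i})$, $(\mathrm{ii}) \Rightarrow (\mathrm{iii})$, and $(\mathrm{iii}) \Rightarrow (\mathrm{ii})$ by direct use of the product formula and the effectivity constraint, and to concentrate the real work in $(\mathrm{i}) \Rightarrow (\mathrm{iii})$. By the product formula \eqref{eqn:rem:product:K:R:1}, $\adeg(\widehat{(\varphi)}_{\RR}) = 0$ for every $\varphi \in \Rat(X)^{\times}_{\RR}$, giving $(\mathrm{ii}) \Rightarrow (\mathrm{i})$, while $\varphi^{-1} \in \widehat{\Gamma}^{\times}_{\RR}(X,\overline{D})$ when $\overline{D} = \widehat{(\varphi)}_{\RR}$, giving $(\mathrm{ii}) \Rightarrow (\mathrm{iii})$. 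Conversely, if $\varphi \in \widehat{\Gamma}^{\times}_{\RR}(X,\overline{D})$ and $\adeg(\overline{D}) = 0$, then $\overline{D}' := \overline{D} + \widehat{(\varphi)}_{\RR} = \bigl(\sum e_P P,\, g'\bigr)$ is effective with $\adeg(\overline{D}') = 0$; both $\sum_{P} e_P \log\#(O_K/P)$ and $\tfrac{1}{2}\sum_{\sigma} g'_{\sigma}$ are non-negative summands of $0$, hence every $e_P$ and every $g'_{\sigma}$ vanishes, so $\overline{D}' = 0$ and $\overline{D} = \widehat{(\varphi^{-1})}_{\RR} \in \aPDiv(X)_{\RR}$, yielding $(\mathrm{iii}) \Rightarrow (\mathrm{ii})$.

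For $(\mathrm{i}) \Rightarrow (\mathrm{iii})$, I would start by applying Lemma~\ref{lem:non:vanishing:H:0} with $c_0 := 2C_K/[K:\QQ]$: the divisor $n\overline{D} + (0, c_0)$ has arithmetic degree $C_K$ for every $n \in \ZZ_{>0}$, so there exists $x_n \in K^{\times}$ with
\[
\overline{E}_n := \overline{D} + \widehat{(x_n^{1/n})}_{\RR} + (0, c_0/n) \geq 0, \qquad \adeg(\overline{E}_n) = \frac{[K:\QQ]\, c_0}{2n}.
\]
The crucial next step is to confine all $x_n$ to a single finitely generated subgroup of $K^{\times}$. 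Writing the underlying Weil divisor $E_n = \sum_{P} e^{(n)}_P P$, integrality forces $e^{(n)}_P \in (1/n)\ZZ_{\geq 0}$, so any prime $P \notin \Supp(D)$ appearing in $E_n$ satisfies $e^{(n)}_P \geq 1/n$ and hence $\log\#(O_K/P) \leq [K:\QQ]\, c_0/2$. Combined with Lemma~\ref{lem:eff:deg:bound:finite}, this shows that the set $\Sigma := \Supp(D) \cup \bigl\{P \in M_K : \log\#(O_K/P) \leq [K:\QQ]c_0/2 \bigr\}$ is finite and $x_n \in K^{\times}_{\Sigma}$ for every $n$.

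With $x_n$ confined to the finitely generated group $K^{\times}_{\Sigma}$ (Lemma~\ref{lem:fin:gen:K:times:Sigma}), I would pick $\phi_1, \ldots, \phi_r \in K^{\times}_{\Sigma}$ whose classes form a $\ZZ$-basis of $K^{\times}_{\Sigma}$ modulo torsion; since torsion vanishes in $K^{\times} \otimes_{\ZZ} \RR$, one has $x_n^{1/n} = \prod_i \phi_i^{m_i^{(n)}/n}$ in $\Rat(X)^{\times}_{\RR}$. Setting
\[
\Xi_c := \bigl\{ \pmb{a} \in \RR^r \;\big|\; \overline{D} + \textstyle\sum_i \pmb{a}_i \widehat{(\phi_i)} + (0, c) \geq 0 \bigr\},
\]
$\Xi_c$ is convex, closed, monotone in $c$, and contains $(m_i^{(n)}/n)_i$ whenever $c \geq c_0/n$, so $\Xi_c \neq \emptyset$ for every $c > 0$. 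The $\widehat{(\phi_i)}$ are $\RR$-linearly independent in $\aDiv_{C^{\infty}}(X)_{\RR}$ because the embedding $\phi \mapsto \bigl((\ord_P \phi)_P,(\log|\sigma\phi|)_{\sigma}\bigr) \in \RR^{\Sigma} \oplus \RR^{K(\CC)}$ has torsion kernel and sends a $\ZZ$-basis of a free quotient of rank $r$ to $\RR$-linearly independent vectors. Corollary~\ref{cor:linsys:bounded:convex} therefore makes each $\Xi_c$ compact, and a finite-intersection argument gives $\Xi_0 = \bigcap_{c > 0} \Xi_c \neq \emptyset$ (the pointwise bound $g + \sum \pmb{a}_i \log|\phi_i|^{-2} \geq -c$ for every $c>0$ passes to the limit). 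Any $\pmb{a} \in \Xi_0$ produces $\varphi := \prod \phi_i^{\pmb{a}_i} \in \widehat{\Gamma}^{\times}_{\RR}(X, \overline{D})$. The main obstacle is the uniform support control in the second step: Lemma~\ref{lem:non:vanishing:H:0} furnishes $x_n$ with no a priori information on which primes appear, and one must leverage the quantitative decay $\adeg(\overline{E}_n) = O(1/n)$ together with the integrality $\ord_P(x_n) \in \ZZ$ to force containment in a common $K^{\times}_{\Sigma}$. Once this reduction is in place, the compactness packaged in Corollary~\ref{cor:linsys:bounded:convex} delivers the remainder.
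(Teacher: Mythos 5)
Your overall architecture for (i)$\Rightarrow$(iii) is essentially the paper's: produce nearly-small sections $x_n$ from Lemma~\ref{lem:non:vanishing:H:0}, confine them to a single finitely generated group $K^{\times}_{\Sigma}$ (Lemma~\ref{lem:fin:gen:K:times:Sigma}), and extract a limit by the compactness of Corollary~\ref{cor:linsys:bounded:convex}. Your ending (nested compact convex sets $\Xi_c$ plus the finite intersection property) is a clean substitute for the paper's passage through Corollary~\ref{cor:existence:inf:R:rational:function} and the norm-continuity Proposition~\ref{prop:uniform:cont:L:p}, and your support control via the decay $\adeg(\overline{E}_n)=O(1/n)$ together with integrality of $\ord_P(x_n)$ off $\Supp(D)$ is a legitimate variant of the paper's use of Lemma~\ref{lem:eff:deg:bound:finite}. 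The easy implications are fine. However, two steps are not justified as written.

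First, Lemma~\ref{lem:non:vanishing:H:0} is stated only for $\overline{D}\in\aDiv(X)$, i.e.\ with integral finite part, whereas your $n\overline{D}+(0,c_0)$ has finite part $nD$, an $\RR$-divisor, and the statement you actually invoke is false for $\RR$-divisors: on $X=\Spec(\ZZ)$ take $\overline{D}=\widehat{(2^{\epsilon})}_{\RR}=(\epsilon(2),-2\epsilon\log 2)$ with $\epsilon\in(0,1)$ irrational, so $C_K=0$, $c_0=0$ and $\adeg(n\overline{D}+(0,c_0))=C_K$; yet no $x_n\in\QQ^{\times}$ satisfies $n\overline{D}+(0,c_0)+\widehat{(x_n)}\geq 0$, since the finite part forces $\ord_2(x_n)\geq-\lfloor n\epsilon\rfloor$ and $\ord_p(x_n)\geq 0$ otherwise, hence $\vert x_n\vert\geq 2^{-\lfloor n\epsilon\rfloor}$, while the Green part forces $\vert x_n\vert\leq 2^{-n\epsilon}$. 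The paper avoids this by first reducing to $\overline{D}\in\aDiv(X)$; alternatively, apply the lemma to $(\lfloor nD\rfloor,ng+c)$ and enlarge $c_0$ by the $n$-independent amount $2\sum_{P\in\Supp(D)}\log\#(O_K/P)/[K:\QQ]$, which is harmless for the rest of your argument (only $c_0/n\to 0$ matters). Second, your justification of the $\RR$-linear independence of $\widehat{(\phi_1)},\ldots,\widehat{(\phi_r)}$ --- which is exactly hypothesis (c) of Corollary~\ref{cor:linsys:bounded:convex}, and without which $\Xi_c$ would be invariant under translation by any real relation and hence non-compact --- is a non sequitur: a homomorphism with torsion kernel from a finitely generated group into a real vector space need not send a $\ZZ$-basis modulo torsion to $\RR$-linearly independent vectors (e.g.\ $\ZZ^2\to\RR$, $(a,b)\mapsto a+b\sqrt{2}$, is injective but the images of the basis are $\RR$-dependent). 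What you need, and what is true, is that the image of $K^{\times}_{\Sigma}$ in $\RR^{\Sigma}\oplus\RR^{K(\CC)}$ is \emph{discrete} (the finiteness argument behind Lemmas~\ref{lem:weak:Dirichlet:unit:theorem} and \ref{lem:fin:gen:K:times:Sigma}), and a discrete subgroup of a finite-dimensional real vector space is generated by $\RR$-linearly independent vectors. With these two repairs your proof goes through and is a genuine, slightly more elementary, alternative to the paper's finish.
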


\begin{proof}
``(iii) $\Longrightarrow$ (ii)'' : By our assumption, $\overline{D} + z \geq 0$ for some $z \in \aPDiv(X)_{\RR}$.
If we set $\overline{E} = \overline{D} + z$, then $\overline{E}$ is effective and $\adeg(\overline{E}) = \adeg(\overline{D}) + \adeg(z) = 0$.
Thus $\overline{E} = 0$, and hence $\overline{D} = -z \in \aPDiv(X)_{\RR}$.

\medskip
``(ii) $\Longrightarrow$ (i)''  is obvious.

\medskip
``(i) $\Longrightarrow$ (iii)'' :
First of all, we can find $\alpha_1, \ldots, \alpha_l \in \RR_{>0}$ and
$\overline{D}_1, \ldots, \overline{D}_l \in \aDiv(X)$ such that
$\overline{D} = \alpha_1 \overline{D}_1 + \cdots + \alpha_l \overline{D}_l$ and
$\deg(\overline{D}_i) = 0$ for all $i$.
If we can choose $\psi_i \in \widehat{\Gamma}^{\times}_{\RR}(X, \overline{D}_i)$ for all $i$,
then $\psi_1^{\alpha_1} \cdots \psi_l^{\alpha_l} \in \widehat{\Gamma}^{\times}_{\RR}(X, \overline{D})$. 
Thus
we may assume that $\overline{D} \in \aDiv(X)$ in order to show ``(i) $\Longrightarrow$ (iii)''.
For a positive integer $n$, we set 
\[
\overline{D}_n = \overline{D} + \left(0, \frac{2C_K}{n[K:\QQ]}\right).
\]
Since $\adeg(n\overline{D}_n) = C_K$, by Lemma~\ref{lem:non:vanishing:H:0},
there is $x_n \in K^{\times}$ such that
$n\overline{D}_n + \widehat{(x_n)} \geq 0$. 
In particular, $nD + (x_n) \geq 0$ and
\[
\deg(nD + (x_n)) \leq \adeg(n\overline{D}_n + \widehat{(x_n)}) = C_K.
\]
Thus, by Lemma~\ref{lem:eff:deg:bound:finite}, there is a finite subset $\Sigma'$ of $M_K$
such that 
\[
\Supp(nD + (x_n)) \subseteq \Sigma'
\]
for all $n \geq 1$.
Note that $\Supp((x_n)) \subseteq \Supp((x_n) + nD) \cup \Supp(D)$.
Therefore, we can find a finite subset $\Sigma$ of $M_K$ such that
$x_n \in K^{\times}_{\Sigma}$ for all $n \geq 1$.
By Lemma~\ref{lem:fin:gen:K:times:Sigma}, we can take a basis
$\varphi_1, \ldots, \varphi_s$ of $K^{\times}_{\Sigma} \otimes_{\ZZ} \RR$ over $\RR$.
Then, by Corollary~\ref{cor:existence:inf:R:rational:function},
if we set 
\[
\Phi = \{ (a_1, \ldots, a_s) \in \RR^s \mid \varphi_1^{a_1} \cdots \varphi_s^{a_s} \in
\Gamma^{\times}_{\RR}(X, D) \},
\]
then
there exists $(c_1, \ldots, c_s) \in \Phi$ such that
\[
\Vert \varphi_1^{c_1} \cdots \varphi_s^{c_s} \Vert_{g,\sup} =
\inf_{(a_1, \ldots, a_s) \in \Phi} \{ \Vert \varphi_1^{a_1} \cdots \varphi_s^{a_s} \Vert_{g,\sup} \},
\]
that is,
if we set $\psi = \varphi_1^{c_1} \cdots \varphi_s^{c_s}$, then
$\Vert \psi \Vert_{g,\sup} =
\inf_{\varphi \in \Gamma^{\times}_{\RR}(X,D) \cap (K^{\times}_{\Sigma} \otimes_{\ZZ} \RR)} \{ \Vert \varphi \Vert_{g,\sup} \}$.
On the other hand, as $\overline{D}_n + \widehat{(x_n^{1/n})}_{\RR} \geq 0$,
we have $x_n^{1/n} \in \Gamma^{\times}_{\RR}(X, D) \cap (K^{\times}_{\Sigma} \otimes_{\ZZ} \RR)$ and
$\Vert x_n^{1/n} \Vert_{g, \sup} \leq \exp(C_K/n[K : \QQ])$, so that
$\Vert \psi \Vert_{g,\sup} \leq \exp(C_K/n[K : \QQ])$
for all $n > 0$, and hence $\Vert \psi \Vert_{g,\sup} \leq 1$,
as required.
\end{proof}

As corollaries,
we have the following.
The second one is nothing more than of Dirichlet's unit theorem.

\begin{Corollary}
\label{cor:smallest:section:ArCurve}
Let $\overline{D} = (D,g)$ be an arithmetic $\RR$-Cartier divisor on $X$.
Then there exists $\psi \in  \Gamma^{\times}_{\RR}(X, D)$ such that
\[
\Vert \psi \Vert_{g, \sup}
= \inf \left\{ \Vert \phi \Vert_{g,\sup} \mid \phi \in \Gamma^{\times}_{\RR}(X,D) \right\}.
\]
\end{Corollary}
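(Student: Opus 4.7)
The plan is to reduce the corollary to the principality half of Proposition~\ref{prop:Dirichlet:unit:theorem:general} by a single constant shift of the Green function, so that no further minimisation argument is needed beyond what is already contained in that proposition. First I would normalise by setting
\[
c := \frac{2\adeg(\overline{D})}{[K : \QQ]} \in \RR, \qquad \overline{D}_0 := \overline{D} - (0, c) = (D, g - c).
\]
Linearity of $\adeg$ gives $\adeg(\overline{D}_0) = \adeg(\overline{D}) - c\,[K : \QQ]/2 = 0$, so Proposition~\ref{prop:Dirichlet:unit:theorem:general} supplies a $\varphi \in \Rat(X)^{\times}_{\RR}$ with $\overline{D}_0 = \widehat{(\varphi)}_{\RR}$. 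Setting $\psi := \varphi^{-1}$ yields $\overline{D}_0 + \widehat{(\psi)}_{\RR} = 0$, which I unpack into its divisor and Green components: $D + (\psi)_{\RR} = 0$ (so $\psi \in \Gamma^{\times}_{\RR}(X, D)$) and $g - c - 2\ell_{\RR}(\psi) = 0$. The second equation gives $|\psi|_{\sigma}\, e^{-g_{\sigma}/2} = e^{-c/2}$ for every $\sigma \in K(\CC)$, hence $\Vert \psi \Vert_{g, \sup} = e^{-c/2}$.

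The next step is to check that $e^{-c/2}$ is the infimum of $\Vert \cdot \Vert_{g, \sup}$ over $\Gamma^{\times}_{\RR}(X, D)$. Take any $\phi \in \Gamma^{\times}_{\RR}(X, D)$. Since the product formula $\adeg(\widehat{(\cdot)}) = 0$ on $\Rat(X)^{\times}$ extends $\RR$-linearly to $\Rat(X)^{\times}_{\RR}$, we have $\adeg(\widehat{(\phi)}_{\RR}) = 0$, hence $\adeg(\overline{D}_0 + \widehat{(\phi)}_{\RR}) = 0$. The divisor part $D + (\phi)_{\RR}$ is effective and so contributes non-negatively to the arithmetic degree. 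If $\Vert \phi \Vert_{g, \sup} < e^{-c/2}$, then $g_{\sigma} - c - 2\log|\phi|_{\sigma} > 0$ at every $\sigma \in K(\CC)$, making the archimedean contribution strictly positive; combined with the non-negative finite part this forces $\adeg(\overline{D}_0 + \widehat{(\phi)}_{\RR}) > 0$, contradicting $\adeg = 0$. Therefore $\Vert \phi \Vert_{g, \sup} \geq e^{-c/2} = \Vert \psi \Vert_{g, \sup}$, so the infimum is attained by $\psi$.

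The main difficulty has already been resolved inside Proposition~\ref{prop:Dirichlet:unit:theorem:general}, whose implication (i)$\Rightarrow$(ii) rests on Minkowski's theorem for arithmetic curves (Lemma~\ref{lem:non:vanishing:H:0}) together with the compactness theorem Corollary~\ref{cor:linsys:bounded:convex} and the attained-infimum Corollary~\ref{cor:existence:inf:R:rational:function}. Given that proposition in hand, the present corollary is essentially a bookkeeping computation with the arithmetic degree identity and the sign of the Green component; the only minor point to verify is that a constant shift of $g$ is admissible as an arithmetic $\RR$-divisor on the arithmetic curve, which is immediate since $X(\CC)$ is discrete.
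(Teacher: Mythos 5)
Your proof is correct and follows essentially the same route as the paper: normalize $\adeg(\overline{D})=0$ by a constant shift of $g$, invoke Proposition~\ref{prop:Dirichlet:unit:theorem:general} to produce the minimizer $\psi$, and obtain the lower bound $\Vert\phi\Vert_{g,\sup}\geq e^{-c/2}$ for all $\phi\in\Gamma^{\times}_{\RR}(X,D)$ from the product formula --- your $\adeg$-positivity argument is just the logarithmic form of the paper's inequality $\prod_{\sigma}\vert\phi\vert_{\sigma}e^{-g_{\sigma}/2}\geq 1$. The only cosmetic difference is that you use the principality statement (ii) of the proposition, which gives the exact value $\Vert\psi\Vert_{g,\sup}=e^{-c/2}$, where the paper uses (iii) to get $\Vert\psi\Vert_{g,\sup}\leq 1$ after normalization.
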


\begin{proof}
Clearly if the assertion holds for $\overline{D}$, then so does for $\overline{D} + (0,c)$ for all
$c \in \RR$. Thus we may assume that $\adeg(\overline{D}) = 0$.
We set $D = \sum_{P \in M_K} d_P P$.
Then, for $\phi \in \Gamma^{\times}_{\RR}(X, D)$, 
by using the product formula \eqref{eqn:rem:product:K:R:1} in Remark~\ref{rem:product:K:R},
\[
\prod_{\sigma \in K(\CC)} \vert \phi \vert_{\sigma} \exp(-g_{\sigma}/2) = 
\prod_{P \in X^{(1)}} \#(O_K/P)^{\ord_P(\phi) + d_P} \geq 1,
\]
and hence $\Vert \phi \Vert_{g, \sup} \geq 1$.
On the other hand, by Proposition~\ref{prop:Dirichlet:unit:theorem:general},
there is $\psi \in \Gamma^{\times}_{\RR}(X, D)$ with $\Vert \psi \Vert_{g,\sup} \leq 1$, as
required.
\end{proof}

\begin{Corollary}[Dirichlet's unit theorem]
\label{cor:Dirichlet:unit:theorem}
Let $\xi$ be an element of  $\RR^{K(\CC)}$ such that 
\[
\sum_{\sigma \in K(\CC)} \xi_{\sigma} = 0\quad\text{and}\quad
\xi_{\sigma} = \xi_{\bar{\sigma}} \ (\forall  \sigma \in K(\CC)).
\]
Then there are $u_1, \ldots, u_s \in O_K^{\times}$ and $a_1, \ldots, a_s \in \RR$ such that
\[
\xi_{\sigma} = a_1 \log \vert u_1 \vert_{\sigma} + \cdots +  a_s \log \vert u_s \vert_{\sigma}
\]
for all $\sigma \in K(\CC)$, that is,
$(0, \xi) + (a_1/2) \widehat{(u_1)} + \cdots + (a_s/2) \widehat{(u_s)} = 0$.
\end{Corollary}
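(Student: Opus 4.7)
The plan is to deduce the corollary directly from Proposition~\ref{prop:Dirichlet:unit:theorem:general} applied to the arithmetic $\RR$-divisor $\overline{D} := (0, \xi)$ on the arithmetic curve $X = \Spec(O_K)$. The two hypotheses $\sum_{\sigma \in K(\CC)} \xi_\sigma = 0$ and $\xi_{\bar{\sigma}} = \xi_\sigma$ ensure respectively that $\adeg(0, \xi) = \tfrac{1}{2}\sum_\sigma \xi_\sigma = 0$ and that $\xi$ is $F_\infty$-invariant, so $(0, \xi)$ is a well-defined arithmetic $\RR$-divisor of arithmetic degree zero. Condition (i) of the proposition thus holds, and the equivalence with (ii) furnishes an element $\varphi \in \Rat(X)^\times_\RR = K^\times \otimes_\ZZ \RR$ with $\widehat{(\varphi)}_\RR = (0, \xi)$.

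The next step is to upgrade $\varphi$ to lie in the subspace $O_K^\times \otimes_\ZZ \RR$. Because the finite part of $(0, \xi)$ vanishes, $(\varphi)_\RR = 0$ in $\Div(X)_\RR$. Writing $\varphi$ as a product of real powers of finitely many $\phi_j \in K^\times$, I take $\Sigma \subset M_K$ to be the finite union of the $\Supp((\phi_j))$, so that $\varphi \in K^\times_\Sigma \otimes_\ZZ \RR$. The left-exact sequence
\[
0 \longrightarrow O_K^\times \longrightarrow K^\times_\Sigma \overset{\alpha}{\longrightarrow} \ZZ^\Sigma, \qquad \alpha(x) = (\ord_P(x))_{P \in \Sigma},
\]
whose kernel is $O_K^\times$ as in the proof of Lemma~\ref{lem:fin:gen:K:times:Sigma}, remains exact after tensoring with $\RR$ by flatness of $\RR$ over $\ZZ$. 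Since the image of $\varphi$ in $\RR^\Sigma$ records $(\varphi)_\RR$ and is therefore zero, I conclude $\varphi \in O_K^\times \otimes_\ZZ \RR$.

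Finally I unwind the conclusion. Choose any representation $\varphi = u_1^{b_1} \cdots u_s^{b_s}$ with $u_i \in O_K^\times$ and $b_i \in \RR$. Since each $u_i$ is a unit, $\widehat{(u_i)} = (0, -\log|u_i|^2)$, so
\[
(0, \xi) \;=\; \widehat{(\varphi)}_\RR \;=\; \Bigl(0,\ -2\sum_{i=1}^s b_i \log|u_i|\Bigr).
\]
Setting $a_i := -2 b_i$ gives $\xi_\sigma = \sum_i a_i \log|u_i|_\sigma$ for every $\sigma \in K(\CC)$, equivalently $(0, \xi) + \sum_{i=1}^s (a_i/2)\widehat{(u_i)} = 0$, as required. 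The substantive content of the argument lives entirely in Proposition~\ref{prop:Dirichlet:unit:theorem:general} (whose main step (i) $\Rightarrow$ (iii) combines the arithmetic Riemann--Roch inequality of Lemma~\ref{lem:non:vanishing:H:0} with the compactness package of Corollary~\ref{cor:linsys:bounded:convex} and Corollary~\ref{cor:existence:inf:R:rational:function}); once that input is granted, the only remaining issue is the splitting into finite and archimedean parts carried out above, so I do not expect any new obstacle in the present corollary.
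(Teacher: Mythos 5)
Your proposal is correct and follows essentially the same route as the paper: both deduce the statement from Proposition~\ref{prop:Dirichlet:unit:theorem:general} applied to $(0,\xi)$, and then show that the resulting element of $\Rat(X)^{\times}_{\RR}$ with trivial finite part can be taken in $O_K^{\times}\otimes_{\ZZ}\RR$. The only difference is in that last elementary step: the paper chooses a representation with coefficients linearly independent over $\QQ$ (via Lemma~\ref{lem:Z:module:tensor:R}(1)) and concludes termwise that each $u_j$ is a unit, whereas you tensor the $S$-unit sequence $0 \to O_K^{\times} \to K^{\times}_{\Sigma} \to \ZZ^{\Sigma}$ with $\RR$ and use flatness; both are valid.
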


\begin{proof}
Since $\adeg((0,\xi)) = 0$, by virtue of Proposition~\ref{prop:Dirichlet:unit:theorem:general} and
(1) in Lemma~\ref{lem:Z:module:tensor:R},
there are $a'_1, \ldots, a'_{s} \in \RR$ and $u_1, \ldots, u_{s} \in K^{\times}$ such that
$a'_1, \ldots, a'_s$ are linearly independent over $\QQ$ and
$(0,\xi) = a'_1 \widehat{(u_1)} + \cdots + a'_{s} \widehat{(u_{s})}$.
We set $(u_j) = \sum_{k=1}^l \alpha_{jk}P_k$ for each $j$, where $\alpha_{jk} \in \ZZ$ and $P_1, \ldots, P_l$ are distinct maximal ideals of $O_K$. Then
\[
0 = a'_1 (u_1) + \cdots + a'_s (u_s) = \left( \sum_{j=1}^sa'_j \alpha_{j1} \right)P_1 + \cdots + \left( \sum_{j=1}^s a'_j \alpha_{jl} \right)P_l.
\]
Thus $\sum_{j=1}^s a'_j \alpha_{jk} = 0$ for all $k$, and hence $\alpha_{jk} = 0$ for all $j,k$, which means that $u_1, \ldots, u_s \in O_K^{\times}$.
Therefore, if we set $a_j = -2a'_j$, then the corollary follows.
\end{proof}

\begin{Remark}
\label{rem:finiteness:ideal:class}
Similarly, the finiteness of $\Div(X)/\PDiv(X)$ is also a consequence of Lemma~\ref{lem:eff:deg:bound:finite} and 
Lemma~\ref{lem:non:vanishing:H:0} (cf. \cite{Sz}).
Indeed, if we set
\[
\Theta = \{ E \in \Div(X) \mid \text{$E \geq 0$ and $\deg(E) \leq C_K$} \},
\]
then
$\Theta$ is a finite set by Lemma~\ref{lem:eff:deg:bound:finite}.
Thus it is sufficient to show that, for $D \in \Div(X)$, there is $x \in K^{\times}$ such that
$D + (x) \in \Theta$.
Since
\[
\adeg \left( D, \frac{2(C_K - \deg(D))}{[K: \QQ]} \right) = C_K,
\]
by Lemma~\ref{lem:non:vanishing:H:0}, 
there is $x \in K^{\times}$ such that $\left(D, \frac{2(C_K - \deg(D))}{[K: \QQ]}\right) + \widehat{(x)} \geq 0$,
that is, 
$D + (x)  \geq 0$ and $\log \vert x \vert_{\sigma} \leq  \frac{C_K - \deg(D)}{[K: \QQ]}$ for all $\sigma \in K(\CC)$.
By using the product formula,
\[
\deg(D + (x)) = \deg(D) + \sum_{\sigma} \log \vert x \vert_{\sigma}  \leq \deg(D) + \sum_{\sigma}\frac{C_K - \deg(D)}{[K: \QQ]} = C_K.
\]
Therefore, $D + (x) \in \Theta$, as required.
\end{Remark}

\subsection{Dirichlet's unit theorem on higher dimensional arithmetic varieties}
\setcounter{Theorem}{0}
In this subsection, we will give a partial answer to the fundamental question as an application of Hodge index theorem.
First we consider the case where $d = 1$. 

\begin{Proposition}
\label{prop:arith:curve:case}
We assume $d = 1$, that is, $X = \Spec(O_K)$.
For an arithmetic $\RR$-Cartier divisor $\overline{D}$ on $X$,
the following are equivalent:
\begin{enumerate}
\renewcommand{\labelenumi}{(\roman{enumi})}
\item
$\overline{D}$ is pseudo-effective.

\item
$\deg(\overline{D}) \geq 0$.

\item
$\widehat{\Gamma}^{\times}_{\RR}(X, \overline{D}) \not= \emptyset$.
\end{enumerate}
\end{Proposition}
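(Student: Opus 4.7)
The strategy is to establish the cycle $(\mathrm{iii})\Rightarrow(\mathrm{i})\Rightarrow(\mathrm{ii})\Rightarrow(\mathrm{iii})$. The first implication is available without the assumption $d=1$ and has already been recorded just after the Fundamental Question: given $\varphi\in\widehat{\Gamma}^{\times}_{\RR}(X,\overline{D})$ and any ample arithmetic $\RR$-divisor $\overline{A}$, the divisor $\overline{A}-\widehat{(\varphi)}_{\RR}$ is still ample (since $-\widehat{(\varphi)}_{\RR}$ is nef of $C^{\infty}$-type), so $\overline{D}+\overline{A}\ge \overline{A}-\widehat{(\varphi)}_{\RR}$ is big.

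For $(\mathrm{i})\Rightarrow(\mathrm{ii})$, I will use the classical fact that on the arithmetic curve $\Spec(O_{K})$,
\[
\overline{E}\ \text{is big}\;\Longleftrightarrow\;\adeg(\overline{E})>0,
\]
the forward direction being what is needed. This is standard: if $\adeg(\overline{E})\le 0$, then for any $\varphi\in\Rat(X)^{\times}$ with $n\overline{E}+\widehat{(\varphi)}\ge 0$, the arithmetic divisor $n\overline{E}+\widehat{(\varphi)}$ is effective with $\adeg\le 0$, and such an object on $\Spec(O_{K})$ is forced to be $0$; hence $\varphi$ is pinned down up to the finite group $\mu_{K}$, so $\ah(X,n\overline{E})$ stays bounded and $\avol(\overline{E})=0$. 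The reverse direction follows from Lemma~\ref{lem:non:vanishing:H:0} together with Minkowski's lattice-point count. Now fix any ample arithmetic $\QQ$-divisor $\overline{A}_{0}$ on $X$ and set $\alpha:=\adeg(\overline{A}_{0})>0$. For every $\epsilon>0$ the divisor $\epsilon\overline{A}_{0}$ is big, hence by pseudo-effectivity of $\overline{D}$ the sum $\overline{D}+\epsilon\overline{A}_{0}$ is big; the bigness criterion therefore gives $\adeg(\overline{D})+\epsilon\alpha>0$. Letting $\epsilon\to 0^{+}$ yields $\adeg(\overline{D})\ge 0$.

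For $(\mathrm{ii})\Rightarrow(\mathrm{iii})$, set
\[
c:=\frac{2\adeg(\overline{D})}{[K:\QQ]}\in\RR_{\ge 0},
\]
and interpret $c$ as the constant $F_{\infty}$-invariant function on $X(\CC)$ taking the value $c$ on every connected component. Then $(0,c)\ge 0$ and $\adeg((0,c))=\adeg(\overline{D})$, so the arithmetic $\RR$-divisor $\overline{D}':=\overline{D}-(0,c)$ satisfies $\adeg(\overline{D}')=0$. By the implication $(\mathrm{i})\Rightarrow(\mathrm{iii})$ of Proposition~\ref{prop:Dirichlet:unit:theorem:general}, which is exactly the arithmetic form of Dirichlet's unit theorem and has already been proved via the compactness theorem (Corollary~\ref{cor:existence:inf:R:rational:function}) and arithmetic Riemann--Roch (Lemma~\ref{lem:non:vanishing:H:0}), there exists $\varphi\in\Rat(X)^{\times}_{\RR}$ with $\overline{D}'+\widehat{(\varphi)}_{\RR}\ge 0$. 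Adding $(0,c)\ge 0$ back in,
\[
\overline{D}+\widehat{(\varphi)}_{\RR}\;=\;\overline{D}'+\widehat{(\varphi)}_{\RR}+(0,c)\;\ge\; 0,
\]
so $\varphi\in\widehat{\Gamma}^{\times}_{\RR}(X,\overline{D})$, as required.

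Conceptually the heart of the argument is $(\mathrm{ii})\Rightarrow(\mathrm{iii})$, which is simply a repackaging of Dirichlet's theorem once the degree has been normalised to zero by subtracting a nonnegative constant Green function; the only nontrivial input beyond Proposition~\ref{prop:Dirichlet:unit:theorem:general} is the ``big $\Rightarrow \adeg>0$'' fact on an arithmetic curve used in $(\mathrm{i})\Rightarrow(\mathrm{ii})$, and this is a standard consequence of Lemma~\ref{lem:non:vanishing:H:0}.
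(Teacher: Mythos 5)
Your proposal is correct and follows essentially the same route as the paper: the cycle of implications with Proposition~\ref{prop:Dirichlet:unit:theorem:general} as the key input for (ii)$\Rightarrow$(iii) and the obvious ampleness argument for (iii)$\Rightarrow$(i). The only cosmetic difference is that the paper splits (ii)$\Rightarrow$(iii) into the cases $\adeg(\overline{D})>0$ (where $\aH(X,n\overline{D})\neq\{0\}$ for $n\gg 1$) and $\adeg(\overline{D})=0$, whereas you reduce uniformly to the degree-zero case by subtracting the nonnegative constant Green function $(0,c)$; both arguments rest on the same ingredients and are equally valid.
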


\begin{proof}
``(i) $\Longrightarrow$ (ii)'' :
Let $\overline{A}$ is an ample arithmetic Cartier divisor on $X$.
Then $\overline{D} + \epsilon \overline{A}$ is big for any $\epsilon > 0$, that is,
$\adeg(\overline{D} + \epsilon \overline{A}) > 0$. Therefore, $\adeg(\overline{D}) \geq 0$.

\medskip
``(ii) $\Longrightarrow$ (iii)'' :
If $\adeg(\overline{D}) > 0$, then the assertion is obvious because $\aH(X, n\overline{D}) \not= \{ 0 \}$ for $n \gg 1$,
so that we assume $\adeg(\overline{D}) = 0$.
Then $\overline{D} \in \aPDiv(X)_{\RR}$ by Proposition~\ref{prop:Dirichlet:unit:theorem:general}.

\medskip
``(iii) $\Longrightarrow$ (i)'' is obvious.
\end{proof}

To proceed with further arguments, we need the following lemma.

\begin{Lemma}
\label{existence:good:nef:div}
We assume that $X$ is regular. Let us fix an ample
$\QQ$-Cartier divisor $H$ on $X$.
Let $P_1, \ldots, P_l \in \Spec(O_K)$ and let $F_{P_1}, \ldots, F_{P_l}$ be prime divisors on $X$ such that
$F_{P_i} \subseteq \pi^{-1}(P_i)$ for all $i$.
If $A$ is an ample $\QQ$-Cartier divisor on $X$, then there is an effective $\QQ$-Cartier divisor $M$ on $X$ with the following properties:
\begin{enumerate}
\renewcommand{\labelenumi}{(\alph{enumi})}
\item
$\Supp(M) \subseteq \pi^{-1}(P_1) \cup \cdots \cup \pi^{-1}(P_l)$.

\item
$A - M$ is divisorially $\pi$-nef with respect to $H$, that is,
$\deg_H(A-M \cdot \Gamma) \geq 0$ for all vertical prime divisors $\Gamma$ on $X$
\rom{(}cf. Subsection~\rom{\ref{subsec:Hodge:index:thm}}\rom{)}.

\item
$\deg_H(A - M \cdot F) = 0$ for all closed integral integral curve $F$ on $X$
with $F \subseteq \pi^{-1}(P_1) \cup \cdots \cup \pi^{-1}(P_l)$ and
$F \not = F_{P_i}\ (\forall i)$.
\end{enumerate}
\end{Lemma}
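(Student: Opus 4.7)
The plan is to construct $M$ fiber-by-fiber. Since vertical prime divisors lying over distinct closed points of $\Spec(O_K)$ have zero intersection, it suffices, for each $i=1,\ldots,l$, to produce an effective $\QQ$-divisor $M_i$ supported on $\pi^{-1}(P_i)$ satisfying $\deg_H((A-M_i)\cdot\Gamma)=0$ for every vertical prime $\Gamma\subseteq\pi^{-1}(P_i)$ with $\Gamma\neq F_{P_i}$, together with $\deg_H((A-M_i)\cdot F_{P_i})\geq 0$; then $M:=M_1+\cdots+M_l$ will do the job. Indeed, (a) and (c) are built in, while (b) reduces to these local conditions plus the observation that any vertical prime $\Gamma$ not lying over any $P_i$ satisfies $M\cdot\Gamma=0$ while $\deg_H(A\cdot\Gamma)>0$ by ampleness of $A$.

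Fix $i$ and write the cycle decomposition $\pi^{-1}(P_i)=a_{i,1}\Gamma_{i,1}+\cdots+a_{i,n_i}\Gamma_{i,n_i}$ with $a_{i,k}\in\ZZ_{>0}$, relabelled so that $\Gamma_{i,1}=F_{P_i}$. I seek $M_i=\sum_k x_{i,k}\Gamma_{i,k}$ with $x_{i,k}\in\QQ_{\geq 0}$. Setting $t:=\deg_H((A-M_i)\cdot F_{P_i})$ and $v_k:=\deg_H(A\cdot\Gamma_{i,k})\in\QQ$, the desired conditions on $M_i$ translate into the linear system $T_{P_i}(x)=(v_1-t,\,v_2,\,\ldots,\,v_{n_i})^{T}$, where $T_{P_i}$ is the intersection-matrix map of Lemma~\ref{lem:Zariski:fiber}. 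By that lemma, the image of $T_{P_i}$ is the hyperplane $\{y:\sum_k a_{i,k}y_k=0\}$, which forces
\[
a_{i,1}\,t \;=\; \sum_{k=1}^{n_i}a_{i,k}v_k \;=\; \deg_H\!\left(A\cdot\pi^{-1}(P_i)\right).
\]
The right-hand side is strictly positive because $A|_{\Gamma_{i,k}}$ and $H|_{\Gamma_{i,k}}$ are ample on each component $\Gamma_{i,k}$, so every summand is positive. Hence $t\in\QQ_{>0}$ is uniquely determined, the compatibility condition holds, and a rational solution $x$ exists.

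To promote this $x$ to a nonnegative one, I invoke Lemma~\ref{lem:Zariski:fiber} once more: $\Ker(T_{P_i})=\QQ\cdot(a_{i,1},\ldots,a_{i,n_i})$ has strictly positive entries, so replacing $x$ by $x+s(a_{i,1},\ldots,a_{i,n_i})$ for sufficiently large $s\in\QQ_{>0}$ yields a solution with $x_{i,k}\geq 0$ for every $k$ without disturbing the system. Assembling $M=\sum_i\sum_k x_{i,k}\Gamma_{i,k}$ produces the required divisor. The only step that genuinely uses the ampleness of $A$ (as opposed to mere nefness) is the strict positivity of $t$ via $\deg_H(A\cdot\pi^{-1}(P_i))>0$; I expect this to be the sole delicate point of the argument, while everything else is routine bookkeeping around Zariski's lemma applied fiber by fiber.
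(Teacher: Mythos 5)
Your proof is correct and follows essentially the same route as the paper's: both work fiber by fiber, rest on Lemma~\ref{lem:Zariski:fiber}, and add a large multiple of the whole fiber to force the coefficients of $M_i$ to be nonnegative. The only (harmless) difference is the last step: the paper establishes $\deg_H(A-M_i \cdot F_{P_i})>0$ by contradiction via the negative semi-definiteness of the fiber intersection form, whereas you read it off directly from the image condition $\sum_k a_{i,k}y_k=0$, which pins down the explicit positive value $t=\deg_H(A\cdot \pi^{-1}(P_i))/a_{i,1}$.
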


\begin{proof}
Let us begin with the following claim:

\begin{Claim}
Let $\pi^{-1}(P_k) = a_1 F_1 + \cdots + a_n F_n$ be the irreducible decomposition as a cycle,
where $a_i \in \ZZ_{>0}$.
Renumbering $F_1, \ldots, F_n$, we may assume $F_{P_k} = F_1$. Then 
there are $x_1, \ldots, x_n \in \QQ_{>0}$ such that if we set $M_k = x_1 F_1 + \cdots + x_n F_n$,
then $\deg_H(A - M_k \cdot F_1 ) > 0$ and $\deg_H(A - M_k \cdot F_i ) = 0$ for $i=2, \ldots, n$.
\end{Claim}

\begin{proof}
By Lemma~\ref{lem:Zariski:fiber},
there are $x_1, \ldots, x_n \in \QQ$ such that
\[
\begin{pmatrix}
\deg_H(F_2 \cdot F_1) & \deg_H(F_2 \cdot F_2 ) & \cdots & \deg_H(F_2 \cdot F_n ) \\
\vdots                  &  \vdots                 & \ddots & \vdots                 \\
\deg_H(F_n \cdot F_1 ) & \deg_H(F_n \cdot F_2 ) & \cdots & \deg_H(F_n \cdot F_n )
\end{pmatrix}
\begin{pmatrix}
x_1 \\ \vdots \\ x_n
\end{pmatrix} 
=
\begin{pmatrix}
\deg_H(A \cdot F_2 ) \\ \vdots \\ \deg_H(A \cdot F_n )
\end{pmatrix}
\]
Replacing $x_i$ by $x_i + t a_i$, we may assume that $x_i > 0$ for all $i$.
We set $M_k = x_1 F_1 + \cdots + x_n F_n$.
Then $\deg_H(A - M_k \cdot F_i) = 0$ for all $i = 2, \ldots, n$.
Here we assume that $\deg_H(A - M_k \cdot F_1) \leq 0$.
Then
\[
0 < \deg_H(A \cdot F_1) \leq \deg_H(M_k \cdot F_1),
\]
and hence
\begin{align*}
\deg_H(M_k \cdot M_k) & = \sum_{i=1}^n x_i \deg_H(M_k \cdot F_i) \\
& =
x_1 \deg_H(M_k \cdot F_1) + \sum_{i=2}^n x_i \deg_H(A \cdot F_i) > 0.
\end{align*}
This contradicts to Zariski's lemma (cf. Lemma~\ref{lem:Zariski:vector:sp}).
\end{proof}

Let $M_1, \ldots, M_n$ be effective $\QQ$-Cartier divisors as the above claim.
If we set 
\[
M = M_1 + \cdots + M_l,
\]
then
$M$ is our desired $\QQ$-Cartier divisor.
\end{proof}

The following theorem is a partial answer to the fundamental question.

\begin{Theorem}
Let $\overline{D}$ be a pseudo-effective arithmetic $\RR$-Cartier divisor of $C^{0}$-type. 
If $d \geq 2$ and $D$ is numerically trivial on $X_{\QQ}$, then
$\widehat{\Gamma}^{\times}_{\RR}(X, \overline{D}) \not= \emptyset$.
\end{Theorem}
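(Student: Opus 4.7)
The plan is to carry out a sequence of reductions and then translate the existence problem into Dirichlet's unit theorem on the arithmetic curve $\Spec(O_{K})$. First, by taking a resolution of singularities, I may assume $X$ is regular: if $\mu : X' \to X$ is a resolution and $\mu^{*}\overline{D} + \widehat{(\varphi)}_{\RR} \geq 0$ on $X'$, then pushing down yields $\overline{D} + \widehat{(\varphi)}_{\RR} \geq 0$ on $X$, while both pseudo-effectivity and numerical triviality of $D_{\QQ}$ are preserved under $\mu$. Second, Theorem~\ref{thm:pseudo:principal} forces $D_{\QQ} \in \PDiv(X_{\QQ})_{\RR}$, so there exists $\phi \in \Rat(X)^{\times}_{\RR}$ making $D + (\phi)_{\RR}$ vertical; replacing $\overline{D}$ by $\overline{D} + \widehat{(\phi)}_{\RR}$ (pseudo-effectivity is preserved by Lemma~\ref{lem:pseudo:plus:principal}) reduces to the case $D$ is vertical. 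Third, Lemma~\ref{lem:pseudo:effective:cont:func} allows replacing $g$ by the locally constant function $\lambda \in \RR^{K(\CC)}$ with $\lambda_{\sigma} = \inf_{X_{\sigma}} g$; since $g \geq \lambda$ pointwise, any $\psi$ producing $(D,\lambda) + \widehat{(\psi)}_{\RR} \geq 0$ works for the original $\overline{D}$ as well.

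Next I look for $\psi$ of the restricted form $\psi = \pi^{*}\chi$ with $\chi \in K^{\times}_{\RR}$. Writing $D = \sum_{k,i} a_{ki} F_{ki}$ where $\pi^{-1}(P_{k}) = \sum_{i} n_{ki} F_{ki}$ are the fiber decompositions over the finitely many primes $P_k$ supporting $D$, and setting $\beta_{k} := \max_{i}(-a_{ki}/n_{ki})$, a direct check shows that $(D,\lambda) + \widehat{(\pi^{*}\chi)}_{\RR} \geq 0$ on $X$ is equivalent to $\overline{F} + \widehat{(\chi)}_{\RR} \geq 0$ on $\Spec(O_{K})$, where
\[
\overline{F} := \Bigl(-\sum_{k}\beta_{k}P_{k},\ \lambda\Bigr) \in \aDiv(\Spec(O_{K}))_{\RR}
\]
(with $\lambda$ naturally viewed as an element of $\RR^{K(\CC)}$). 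By Proposition~\ref{prop:Dirichlet:unit:theorem:general} it therefore suffices to show $\adeg(\overline{F}) \geq 0$, or equivalently
\[
\tfrac{1}{2}\sum_{\sigma\in K(\CC)}\lambda_{\sigma} \;\geq\; \sum_{k}\beta_{k}\log\#(O_{K}/P_{k}).
\]

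To establish this inequality, fix an ample arithmetic $\QQ$-divisor $\overline{H}$ on $X$. For each $k$ select an index $i_{0}(k)$ attaining $\beta_{k} = -a_{k,i_{0}(k)}/n_{k,i_{0}(k)}$ and, applying Lemma~\ref{existence:good:nef:div} with $F_{P_{k}} := F_{k,i_{0}(k)}$, obtain an effective $\QQ$-divisor $M$ supported on $\bigcup_{k}\pi^{-1}(P_{k})$ such that $H - M$ is divisorially $\pi$-nef and $\deg_{H}((H - M) \cdot F_{ki}) = 0$ for every $i \neq i_{0}(k)$. Since $(H - M)_{\QQ} = H_{\QQ}$ is ample and $c_{1}((M,0)) = 0$ on $X(\CC)$ (because $M$ is vertical), the arithmetic $\RR$-divisor $\overline{H} - (M,0)$ is nef, and $(\overline{H}-(M,0))\cdot \overline{H}^{d-2}$ defines a class pairing non-negatively with the pseudo-effective $(D,\lambda)$. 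Setting $\delta_{ki} := \deg_{H}((H-M)\cdot F_{ki})$ and $v := \int_{X_{\sigma}} c_{1}(\overline{H})^{d-1}$, the projection formula combined with Zariski's lemma (Lemma~\ref{lem:Zariski:vector:sp}) yields $M\cdot H^{d-2}\cdot\pi^{-1}(P_{k}) = 0$ and hence $\sum_{i}n_{ki}\delta_{ki} = v$; combined with $\delta_{ki} = 0$ for $i \neq i_{0}(k)$ this forces $\delta_{k,i_{0}(k)} = v/n_{k,i_{0}(k)}$. Substituting $a_{k,i_{0}(k)} = -\beta_{k}n_{k,i_{0}(k)}$ gives
\begin{align*}
0 \;\leq\; \adeg\bigl((D,\lambda)\cdot(\overline{H}-(M,0))\cdot\overline{H}^{d-2}\bigr)
 &= -v\sum_{k}\beta_{k}\log\#(O_{K}/P_{k}) + \tfrac{v}{2}\sum_{\sigma}\lambda_{\sigma} \\
 &= v\cdot\adeg(\overline{F}),
\end{align*}
which proves the required inequality after dividing by $v > 0$.

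The main obstacle is the last step: rigorously justifying that $(\overline{H} - (M,0))\cdot\overline{H}^{d-2}$ pairs non-negatively with the pseudo-effective class $(D,\lambda)$. This amounts to verifying the arithmetic nefness of $\overline{H} - (M,0)$ (geometric nefness is provided by Lemma~\ref{existence:good:nef:div}, and the first Chern form equals the positive $c_{1}(\overline{H})$) and then invoking the standard positivity between pseudo-effective arithmetic divisors and products of nef arithmetic classes with ample powers.
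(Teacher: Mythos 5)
Your overall strategy coincides with the paper's: reduce to $X$ regular, apply Theorem~\ref{thm:pseudo:principal} to make $D$ vertical, flatten the Green function via Lemma~\ref{lem:pseudo:effective:cont:func}, pair $(D,\lambda)$ with a test class built from Lemma~\ref{existence:good:nef:div}, and finish with Dirichlet's unit theorem on $\Spec(O_K)$. Your variation --- keeping $D$ an arbitrary vertical $\RR$-divisor, encoding it by $\beta_k=\max_i(-a_{ki}/n_{ki})$, and proving the sharper inequality $\adeg(\overline{F})\geq 0$ by choosing $F_{P_k}$ to be the extremal component --- is a legitimate alternative to the paper's preliminary normalization, which instead uses the finiteness of the class group to make the vertical part effective and missing one component of each relevant fiber, so that only $\sum_{\sigma}\lambda_{\sigma}\geq 0$ is needed. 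Your computation $\delta_{k,i_0(k)}=v/n_{k,i_0(k)}$ is correct.

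However, the step you yourself flag as the ``main obstacle'' is a genuine gap, and the justification you sketch would fail. The arithmetic $\RR$-divisor $\overline{H}-(M,0)$ is in general \emph{not} nef: for a horizontal $1$-dimensional integral subscheme $C$ meeting $\Supp(M)$ one has $\adeg\bigl((\overline{H}-(M,0))\vert_C\bigr)=\adeg(\overline{H}\vert_C)-\adeg((M,0)\vert_C)$, and the subtracted term --- a sum of local intersection multiplicities times $\log\#(O_K/P_k)$ --- can exceed $\adeg(\overline{H}\vert_C)$; geometric nefness on the fibers and positivity of $c_1$ say nothing about the finite places along horizontal curves. The repair (and this is what the paper does) is to replace $(H-M,h)$ by $(H-M,h+c)$ with $c$ large enough that $\adeg\bigl((H-M,h+c)\cdot\overline{H}^{d-2}\cdot(\Gamma,0)\bigr)\geq 0$ for every horizontal prime divisor $\Gamma$ (e.g.\ $c[K:\QQ]\geq\sum_i\alpha_i\log\#(O_K/Q_i)$ when $M\leq\sum_i\alpha_i\pi^{-1}(Q_i)$); one then checks that the pairing is nonnegative against every effective arithmetic $\RR$-divisor of $C^0$-type, and hence against the pseudo-effective $(D,\lambda)$ by writing $(D,\lambda)+\epsilon\overline{A}$ as an effective divisor plus a principal one. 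The added constant is harmless for your computation: $(0,c)$ has vanishing first Chern form and contributes nothing when restricted to the vertical components of $D$, so the identity $\adeg\bigl((D,\lambda)\cdot(H-M,h+c)\cdot\overline{H}^{d-2}\bigr)=v\,\adeg(\overline{F})$ survives. Note also that there is no off-the-shelf ``nef times ample powers pairs nonnegatively with pseudo-effective'' statement available in this framework for $d\geq 3$; the nonnegativity must be derived through effectivity as above.
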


\begin{proof}
Let us begin with the following claim:

\begin{Claim}
We may assume that $X$ is regular.
\end{Claim}

\begin{proof}
By \cite[Theorem~8.2]{deJong}, there is a generically finite morphism $\mu : Y \to X$ of projective arithmetic varieties
such that $Y$ is regular.
Clearly we have the following:
\[
\begin{cases}
\text{$\overline{D}$ is pseudo-effective}\ \Longrightarrow\ \text{$\mu^*(\overline{D})$ is pseudo-effective}, \\
\text{$D$ is numerically trivial on $X_{\QQ}$}\ \Longrightarrow\ \text{$\mu^*(D)$ is numerically trivial on $Y_{\QQ}$}. \\
\end{cases}
\]
Let $\aDiv_{\Cur}(X)_{\RR}$ be  the vector space over $\RR$ consisting of pairs
$(D, T)$, where $D$ is an $\RR$-Cartier divisor $D$ and $T$ is an $F_{\infty}$-invariant $(1,1)$-current of real type.
We can assign an ordering $\geq$ to $\aDiv_{\Cur}(X)_{\RR}$ in  following way:
\[
(D_1, T_1) \geq (D_2, T_2)\quad\Longleftrightarrow\quad
\text{$D_1 \geq D_1$ and $T_1 \geq T_2$.}
\]
In the same way, we can define $\aDiv_{\Cur}(Y)_{\RR}$ and the ordering on $\aDiv_{\Cur}(Y)_{\RR}$.
Let 
\[
\mu_* : \aDiv_{\Cur}(Y)_{\RR} \to \aDiv_{\Cur}(X)_{\RR}
\]
be a homomorphism given by
$\mu_*(D, T) = (\mu_*(D), \mu_*(T))$.
Let 
\[
N : \Rat(Y)^{\times} \to \Rat(X)^{\times}
\]
be the norm map.
Then it is easy to see the following:
\[
\begin{cases}
\text{$\mu_*(\widehat{\psi}) = \widehat{(N(\psi))}$ for $\psi \in \Rat(Y)^{\times}$}, \\
\text{$\mu_*(\mu^*(\overline{D})) = \deg(Y \to X)\overline{D}$ for $\overline{D} \in \aDiv_{C^0}(X)_{\RR}$},\\
(D_1, T_1) \geq (D_2, T_2) \Longrightarrow\ \mu_*(D_1, T_1) \geq \mu_*(D_2, T_2). \\
\end{cases}
\]
The first equation yields a homomorphism
\[
\mu_* : \aPDiv(Y)_{\RR} \to \aPDiv(X)_{\RR}.
\]
Thus the claim follows from the above formulae.
\end{proof}

First of all,
by Theorem~\ref{thm:pseudo:principal}, $D_{\QQ} \in \PDiv(X_{\QQ})_{\RR}$.
Thus there are $z \in \aPDiv(X)_{\RR}$, a vertical $\RR$-Cartier divisor $E$ and
an $F_{\infty}$-invariant continuous function $\eta$ on $X(\CC)$ such that $\overline{D} = z + (E, \eta)$.

\begin{Claim}
We may assume the following:
\begin{enumerate}
\renewcommand{\labelenumi}{(\alph{enumi})}
\item
$E$ is effective.

\item
There are $P_1, \ldots, P_l \in \Spec(O_K)$ such that
$\Supp(E) \subseteq \pi^{-1}(P_1) \cup \cdots \cup \pi^{-1}(P_l)$.

\item
For each $i=1, \ldots, l$, there is a closed integral curve $F_{P_i}$ on $X$ such that
$F_{P_i} \subseteq \pi^{-1}(P_i)$ and $F_{P_i} \not\subseteq \Supp(E)$.
\end{enumerate}
\end{Claim}

\begin{proof}
Clearly we can choose $P_1, \ldots, P_l \in \Spec(O_K)$ and
$\beta_1, \ldots, \beta_l \in \RR$ such that if we set $E' = E + \beta_1\pi^{-1}(P_1) + \cdots + \beta_l \pi^{-1}(P_l)$,
then $E'$ satisfy the above (a), (b) and (c).
Moreover, since the class group of $O_K$ is finite (cf. Remark~\ref{rem:finiteness:ideal:class}),
there are $n_i \in \ZZ_{>0}$ and $f_i \in O_K$ such that $n_i P_i = f_i O_K$. Thus
$\beta_1\pi^{-1}(P_1) + \cdots + \beta_l \pi^{-1}(P_l) \in \PDiv(X)_{\RR}$, and hence the claim follows.
\end{proof}

Note that $(E, \eta)$ is pseudo-effective by Lemma~\ref{lem:pseudo:plus:principal}.
By Lemma~\ref{lem:pseudo:effective:cont:func},
there is a locally constant function $\lambda$ on $X(\CC)$ such that
$(E, \eta) \geq (E, \lambda)$ and $(E, \lambda)$ is pseudo-effective.
Let us fix an ample arithmetic Cartier divisor $\overline{H} = (H, h)$ on $X$.
Then, by Lemma~\ref{existence:good:nef:div}, there is an effective vertical $\QQ$-Cartier divisor $M$ such that
\[
\deg_H(H - M \cdot E) = 0\quad\text{and}\quad
\deg_{H}(H - M \cdot \Gamma) \geq 0
\]
for all vertical prime divisors $\Gamma$. 

\begin{Claim}
There is a constant $c$ such that if we set $h' = h+c$, then
\[
\adeg\left((H - M, h') \cdot \overline{H}^{d-2} \cdot (\Gamma, 0)\right) \geq 0
\]
for all horizontal prime divisors $\Gamma$ on $X$.
\end{Claim}

\begin{proof}
Note that $\adeg((H, h) \cdot \overline{H}^{d-2} \cdot (\Gamma, 0)) \geq 0$.
Thus it is sufficient to find a constant $c$ such that
\[
\adeg((M, -c) \cdot \overline{H}^{d-2} \cdot (\Gamma, 0)) \leq 0
\]
for all horizontal prime divisors $\Gamma$ on $X$.
We choose $Q_1, \ldots, Q_m \in \Spec(O_K)$ and $\alpha_1, \ldots, \alpha_m \in \RR_{>0}$ such that
$M \leq \sum_{i=1}^m \alpha_i \pi^{-1}(Q_i)$. We also choose a constant $c$ such that
\[
c [K : \QQ] \geq \sum_{i=1}^m \alpha_i \log \#(O_K/Q_i).
\]
Then
\begin{multline*}
\adeg((M, -c) \cdot \overline{H}^{d-2} \cdot (\Gamma, 0)) \\
\leq 
\adeg\left(\left(\sum_{i=1}^m \alpha_i \pi^{-1}(Q_i), -c\right) \cdot \overline{H}^{d-2} \cdot (\Gamma, 0)\right) \\
\leq \sum_{i=1}^m \alpha_i \frac{\deg(H_{\QQ}^{d-2}\cdot \Gamma_{\QQ})}{[K : \QQ]} \log \#(O_K/Q_i) - c\deg(H_{\QQ}^{d-2} \cdot \Gamma_{\QQ})
\leq 0.
\end{multline*}
\end{proof}

Let $\overline{L} = (L, k)$ be an effective $\RR$-Cartier divisor of $C^0$-type.
Then, since 
\[
\adeg\left((H - M, h') \cdot \overline{H}^{d-2} \cdot (L, 0)\right) \geq 0
\]
by the above claim,
we have
\begin{multline*}
\adeg\left((H - M, h') \cdot \overline{H}^{d-2} \cdot (L, k)\right) \\
\geq
\adeg\left((H - M, h') \cdot \overline{H}^{d-2} \cdot (0, k)\right) 
=
\frac{1}{2} \int_{X(\CC)} k c_1(\overline{H})^{d-1} \geq 0.
\end{multline*}
In particular, 
\[
\adeg((H-M, h') \cdot \overline{H}^{d-2} \cdot (E, \lambda)) \geq 0
\]
because $(E, \lambda)$ is pseudo-effective.
Note that
\[
\adeg((H-M, h') \cdot \overline{H}^{d-2} \cdot (E, \lambda)) = \frac{1}{2}\left( \sum_{\sigma \in K(\CC)} \lambda_{\sigma} \right)
\int_{X(\CC)} c_1(\overline{H})^{d-1}.
\]
Therefore,
$\sum_{\sigma \in K(\CC)} \lambda_{\sigma} \geq 0$, and hence,
by Proposition~\ref{prop:arith:curve:case},
there are $u_1, \ldots, u_s \in K^{\times}$ and $\gamma_1, \ldots, \gamma_s \in \RR$
such that $\gamma_1 \widehat{(u_1)} + \cdots + \gamma_s \widehat{(u_s)} \leq (0, \lambda)$.
Thus
\[
\overline{D} = z + (E, \eta) \geq z + (0, \lambda) \geq z + \gamma_1 \widehat{(u_1)} + \cdots + \gamma_s \widehat{(u_s)}.
\]
\end{proof}

\begin{Corollary}
If $d=2$, $\overline{D}$ is pseudo-effective and $\deg(D_{\QQ}) = 0$, then
the Zariski decomposition of $\overline{D}$ exists.
\end{Corollary}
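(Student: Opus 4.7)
The strategy is to combine the preceding theorem with the existence of the arithmetic Zariski decomposition on arithmetic surfaces as developed in \cite{MoArZariski}. Since $d=2$, the generic fibre $X_{\QQ}$ is a smooth projective curve over $\QQ$, so an $\RR$-divisor on $X_{\QQ}$ is numerically trivial if and only if it has degree zero. The hypothesis $\deg(D_{\QQ})=0$ therefore guarantees that $D_{\QQ}$ is numerically trivial on $X_{\QQ}$, which is precisely the input needed to invoke the preceding theorem: one obtains some $\varphi \in \Rat(X)^{\times}_{\RR}$ with
\[
\overline{D}' := \overline{D} + \widehat{(\varphi)}_{\RR} \geq 0.
\]
Moreover $D'_{\QQ} = D_{\QQ} + (\varphi)_{\RR,\QQ}$ is effective of degree $0$ on the curve $X_{\QQ}$, hence identically zero, so in fact $\overline{D}'$ is an effective arithmetic $\RR$-divisor whose underlying divisor is vertical.

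Next I would apply the arithmetic Zariski decomposition theorem on arithmetic surfaces to $\overline{D}'$, writing $\overline{D}' = \overline{P}' + \overline{N}'$ with $\overline{P}'$ the nef positive part and $\overline{N}'$ the effective negative part. Because any arithmetic principal $\RR$-divisor is numerically trivial in the arithmetic sense (both nef and anti-nef), subtracting $\widehat{(\varphi)}_{\RR}$ preserves nefness. Setting
\[
\overline{P} := \overline{P}' - \widehat{(\varphi)}_{\RR}, \qquad \overline{N} := \overline{N}',
\]
one thus obtains a decomposition $\overline{D} = \overline{P} + \overline{N}$ in which $\overline{P}$ is nef and $\overline{N}$ is effective, which is the candidate Zariski decomposition of $\overline{D}$.

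The principal obstacle I anticipate is verifying that the translated pair $(\overline{P}, \overline{N})$ actually satisfies the defining properties of the arithmetic Zariski decomposition adopted in \cite{MoArZariski}—in particular the maximality characterization of the positive part among nef sub-arithmetic divisors, and the independence of the construction from the particular choice of $\varphi$. Since the linear systems $\widehat{\Gamma}^{\times}_{\RR}(X, n\overline{D})$ and $\widehat{\Gamma}^{\times}_{\RR}(X, n\overline{D}')$ are canonically identified by multiplication by $\varphi^{n}$, and every intersection-theoretic invariant used to characterize $\overline{P}'$ is invariant under addition of elements of $\aPDiv(X)_{\RR}$, these checks should reduce to formal bookkeeping; nevertheless, this translation step is where the precise formulation of arithmetic Zariski decomposition needs to be honored in order to deduce that the existence statement for $\overline{D}'$ transports to $\overline{D}$.
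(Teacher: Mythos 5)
Your proposal is correct and is essentially the intended argument: the paper states this corollary without proof, but it is meant to follow by combining the preceding theorem (applicable because on the curve $X_{\QQ}$ the condition $\deg(D_{\QQ})=0$ is equivalent to numerical triviality) with the existence theorem for Zariski decompositions on arithmetic surfaces in \cite{MoArZariski}, whose hypothesis is that some nef arithmetic $\RR$-divisor lies below $\overline{D}$. The detour through $\overline{D}'=\overline{D}+\widehat{(\varphi)}_{\RR}$ and the translation back is unnecessary, since $-\widehat{(\varphi)}_{\RR}$ is itself a nef arithmetic $\RR$-divisor with $-\widehat{(\varphi)}_{\RR}\leq\overline{D}$, which verifies that hypothesis directly and dissolves the ``principal obstacle'' you anticipate.
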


\subsection{Multiplicative generators of approximately smallest sections}
\setcounter{Theorem}{0}
In this subsection, 
we define a notion of multiplicative generators of approximately smallest sections and observe its properties.
It is a sufficient condition to guarantee the fundamental question (cf. Corollary~\ref{cor:ans:fund:question}).
Let $\overline{D}$ be an arithmetic $\RR$-Cartier divisor of $C^0$-type on $X$.
Let us begin with its definition.

\begin{Definition}
\label{def:gen:app:small:sec}
We assume that $\Gamma^{\times}_{\QQ}(X, D) \not= \emptyset$.
Let $\varphi_1, \ldots, \varphi_l$ be 
$\RR$-rational functions  on $X$ (i.e. $\varphi_1, \ldots, \varphi_l \in \Rat(X)^{\times}_{\RR}$).
We say $\varphi_1, \ldots, \varphi_l$ 
are  {\em multiplicative generators of approximately smallest sections for $\overline{D}$}
if, for  a given $\epsilon > 0$, there is $n_0 \in \ZZ_{>0}$ such that, for any integer $n$ with
$n \geq n_0$ and $\Gamma^{\times}(X, nD) \not= \emptyset$, 
we can find $a_1, \ldots, a_l \in \RR$ satisfying 
$\varphi_1^{a_1} \cdots \varphi_l^{a_l} \in \Gamma_{\RR}^{\times}(X, nD)$ and
\[
\Vert \varphi_1^{a_1} \cdots \varphi_l^{a_l} \Vert_{ng,\sup} \leq e^{\epsilon n}
\min \left\{ \Vert \phi \Vert_{ng,\sup} \mid \phi \in \Gamma^{\times}(X, nD) \right\}.
\]
\end{Definition}

First let us see the following proposition.

\begin{Proposition}
\label{prop:multiplicative:generators:def:equiv}
We assume that $\Gamma^{\times}_{\QQ}(X, D) \not= \emptyset$.
Let $\varphi_1, \ldots, \varphi_l$ be 
$\RR$-rational functions  on $X$.
Then the following are equivalent:
\begin{enumerate}
\renewcommand{\labelenumi}{(\arabic{enumi})}
\item
$\varphi_1, \ldots, \varphi_l$ 
are multiplicative generators of approximately smallest sections for $\overline{D}$.

\item
There are
$x_1, \ldots, x_l \in \RR$ such that
$\varphi_1^{x_1} \cdots \varphi_l^{x_l} \in \Gamma_{\RR}^{\times}(X, D)$ and
\[
\Vert \varphi_1^{x_1} \cdots \varphi_l^{x_l} \Vert_{g,\sup} \leq
\inf \left\{ \Vert f \Vert_{g,\sup} \mid f \in \Gamma^{\times}_{\QQ}(X, D) \right\}.
\]
\end{enumerate}
Note that if we set $\psi = \varphi_1^{x_1} \cdots \varphi_l^{x_l}$ in \rom{(2)}, then
$\psi$ forms a multiplicative generator of approximately smallest sections for $\overline{D}$.
\end{Proposition}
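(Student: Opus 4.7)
The implication $(2) \Rightarrow (1)$ is immediate with no error term. Indeed, if $\psi := \varphi_1^{x_1}\cdots\varphi_l^{x_l}$ realizes $(2)$, then for any $n$ with $\Gamma^{\times}(X,nD) \neq \emptyset$ and any $\phi \in \Gamma^{\times}(X,nD)$, the element $\phi^{1/n}$ belongs to $\Gamma^{\times}_{\QQ}(X,D)$ by Lemma~\ref{lem:rational:tensor:real:number}(3), whence $\Vert\phi\Vert_{ng,\sup}^{1/n} \geq \Vert\psi\Vert_{g,\sup}$. Thus $\psi^n = \varphi_1^{nx_1}\cdots\varphi_l^{nx_l}$ witnesses $(1)$ with $a_i = nx_i$, and with no $\epsilon$ required.

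For $(1) \Rightarrow (2)$ the strategy is to extract a limit of approximate solutions. Set $M := \inf\{\Vert f\Vert_{g,\sup} \mid f \in \Gamma^{\times}_{\QQ}(X,D)\}$. By Lemma~\ref{lem:rational:tensor:real:number}(3),
\[
M = \inf_{n \geq 1,\ \Gamma^{\times}(X,nD) \neq \emptyset}\ \min_{\phi \in \Gamma^{\times}(X,nD)} \Vert\phi\Vert_{ng,\sup}^{1/n}.
\]
For each $k \in \ZZ_{>0}$ I would choose an integer $n_k$ (necessarily a large multiple of some $n$ for which $\Gamma^{\times}(X,nD) \neq \emptyset$) large enough both to exceed the threshold of $(1)$ for $\epsilon = 1/k$ and to afford some $\phi_k \in \Gamma^{\times}(X, n_k D)$ with $\Vert\phi_k\Vert_{n_k g,\sup}^{1/n_k} \leq M + 1/k$. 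Hypothesis $(1)$ then supplies $a_1^{(k)},\ldots, a_l^{(k)} \in \RR$, and setting $x_i^{(k)} := a_i^{(k)}/n_k$ gives
\[
\psi_k := \varphi_1^{x_1^{(k)}}\cdots\varphi_l^{x_l^{(k)}} \in \Gamma^{\times}_{\RR}(X,D),\qquad
\Vert\psi_k\Vert_{g,\sup} \leq e^{1/k}(M+1/k).
\]

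To extract a limit I invoke Corollary~\ref{cor:linsys:bounded:convex} with $\overline{D}_i := \widehat{(\varphi_i)}_{\RR}$: conditions (i) and (ii) hold trivially because arithmetic principal divisors have vanishing arithmetic degree against $\overline{H}^{d-1}$ and vanishing $c_1$, while (iii) is arranged by first replacing $\{\varphi_i\}$ by a sub-family whose arithmetic principal divisors form an $\RR$-basis of $\langle \widehat{(\varphi_i)}_{\RR}\rangle_{\RR}$. This reduction is harmless because $\Vert\varphi\Vert_{g,\sup}$ depends only on $|\varphi|$ and hence only on $\widehat{(\varphi)}_{\RR}$, so the multiplicative-generator property transfers. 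After translating $g$ by a constant to normalize the norm bound, Corollary~\ref{cor:linsys:bounded:convex} yields compactness of the relevant parameter set in $\RR^l$, so $\{(x_i^{(k)})\}_k$ admits a convergent subsequence with limit $(x_1,\ldots,x_l)$. The closedness of $\Gamma^{\times}_{\RR}(X,D)$ (cut out by finitely many linear inequalities on coefficients along prime divisors in $\Supp(D) \cup \bigcup\Supp((\varphi_i)_{\RR})$) gives $\varphi_1^{x_1}\cdots\varphi_l^{x_l} \in \Gamma^{\times}_{\RR}(X,D)$, and the uniform continuity of the sup-norm map on compact parameter sets (Proposition~\ref{prop:uniform:cont:L:p}) gives $\Vert\varphi_1^{x_1}\cdots\varphi_l^{x_l}\Vert_{g,\sup} \leq M$, establishing $(2)$.

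The main technical obstacle will be the independence reduction in the third paragraph: the kernel of $\widehat{(\cdot)}_{\RR} : \Rat(X)^{\times}_{\RR} \to \aDiv_{C^{\infty}}(X)_{\RR}$ need not be trivial, since it contains $\RR$-linear relations among logarithms of absolute values of units in $O_K$ in addition to relations visible in $\Rat(X)^{\times}_{\RR}$. One must verify carefully that passing to an $\widehat{(\cdot)}_{\RR}$-independent subfamily preserves the approximately-smallest-section condition. Once this is in place, the compactness/continuity arguments closely parallel the proof of Corollary~\ref{cor:existence:inf:R:rational:function}.
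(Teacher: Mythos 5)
Your proposal is correct and follows essentially the same route as the paper: both rest on Fekete's subadditivity to identify $\inf\{\Vert f\Vert_{g,\sup} \mid f\in\Gamma^{\times}_{\QQ}(X,D)\}$ with the limit of the $n$-th root minima, and on the compactness theorem (Corollary~\ref{cor:linsys:bounded:convex}) combined with the continuity of norms (Proposition~\ref{prop:uniform:cont:L:p}). The only organizational difference is that the paper invokes Corollary~\ref{cor:existence:inf:R:rational:function} to produce a global minimizer of the sup-norm over the parameter set $\Phi$ and then bounds its norm by the $e^{\epsilon}$-approximate solutions, whereas you extract a subsequential limit of those approximate solutions directly; the independence reduction you flag as the main obstacle is in fact harmless, since $\widehat{(\ )}_{\RR}$ is injective on $\Rat(X)^{\times}_{\RR}$ (by Lemma~\ref{lem:Z:module:tensor:R} and the discreteness of the logarithmic embedding of $O_K^{\times}$), and your fallback of passing to a subfamily works in any case.
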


\begin{proof}
It is obvious that (2) implies (1), so that we assume (1).
Let $m$ be a positive integer with $\Gamma^{\times}(X, mD) \not= \emptyset$.
Here, let us check the following claim:

\begin{Claim}
\label{claim:prop:multiplicative:generators:def:equiv:1}
$\lim_{n\to\infty}
\left( \min \{ \Vert h \Vert_{nmg,\sup} \mid h \in \Gamma^{\times}(X, nmD) \} \right)^{1/nm}$ exists and
\[
 \lim_{n\to\infty}
\left( \min \{ \Vert h \Vert_{nmg,\sup} \mid h \in \Gamma^{\times}(X, nmD) \} \right)^{1/nm} 
= \inf \left\{ \Vert f \Vert_{g,\sup} \mid f \in \Gamma^{\times}_{\QQ}(X,D) \right\}.
\]
\end{Claim}

\begin{proof}
If we set 
\[
a_n =  \min \{ \Vert h \Vert_{nmg,\sup} \mid h \in \Gamma^{\times}(X, nmD) \},
\] 
then
$a_{n+n'} \leq a_n a_{n'}$ for all $n, n' > 0$.
Thus it is easy to see that $\lim_{n\to\infty} a_n^{1/n} = \inf_{n > 0} \{ a_n^{1/n} \}$, which means
\begin{multline*}
 \lim_{n\to\infty}
\left( \min \{ \Vert h \Vert_{nmg,\sup} \mid h \in \Gamma^{\times}(X, nmD) \} \right)^{1/nm} \\
=
\inf_{n > 0} \left\{ \min \{ \Vert h^{1/nm} \Vert_{g,\sup} \mid h \in \Gamma^{\times}(X, nmD) \} \right\}.
\end{multline*}
On the other hand, by (3)  in Lemma~\ref{lem:rational:tensor:real:number},
\[
\Gamma^{\times}_{\QQ}(X, D) =  \Gamma^{\times}_{\QQ}(X, mD)^{1/m} =  \bigcup_{n > 0}  \Gamma^{\times}(X, nmD)^{1/nm},
\]
and hence the claim follows.
\end{proof}

By Corollary~\ref{cor:existence:inf:R:rational:function},
there exist $x_1, \ldots, x_l \in \RR$ such that if we set
\[
\Phi = \left\{ (a_1, \ldots, a_l) \in \RR^l \mid \varphi_1^{a_1} \cdots \varphi_l^{a_l} \in \Gamma^{\times}_{\RR}(X, D)
\right\},
\]
then $(x_1, \ldots, x_l) \in \Phi$ and
\[
\Vert \varphi_1^{x_1} \cdots \varphi_l^{x_l} \Vert_{g, \sup} =
\inf_{(a_1, \ldots, a_l) \in \Phi} \left\{ \Vert \varphi_1^{a_1} \cdots \varphi_l^{a_l} \Vert_{g, \sup} \right\}.
\]
On the other hand,
by definition,
for  a given $\epsilon > 0$, there is $n_0 \in \ZZ_{>0}$ such that, for any integer $ n \geq n_0$, 
we can find $c_1, \ldots, c_l \in \RR$ satisfying 
$\varphi_1^{c_1} \cdots \varphi_l^{c_l} \in \Gamma_{\RR}^{\times}(X, nmD)$ and
\[
\Vert \varphi_1^{c_1} \cdots \varphi_l^{c_l} \Vert_{nmg,\sup} \leq e^{\epsilon nm}
\min \{ \Vert h \Vert_{nmg,\sup} \mid h \in \Gamma^{\times}(X, nmD) \}.
\]
Thus, as $(c_1/nm, \ldots, c_l/nm) \in \Phi$,
\begin{align*}
\Vert \varphi_1^{x_1} \cdots \varphi_l^{x_l} \Vert_{g, \sup}  & \leq
\Vert \varphi_1^{c_1/nm} \cdots \varphi_l^{c_l/nm} \Vert_{g,\sup} \\
& \leq e^{\epsilon}
\left( \min \{ \Vert h \Vert_{nmg,\sup} \mid h \in \Gamma^{\times}(X, nmD) \}\right)^{1/nm}
\end{align*}
for $n \geq n_0$. Therefore, by Claim~\ref{claim:prop:multiplicative:generators:def:equiv:1}, 
\begin{align*}
\Vert \varphi_1^{x_1} \cdots \varphi_l^{x_l} \Vert_{g, \sup}  & \leq
 e^{\epsilon} \lim_{n\to\infty}
\left( \min \{ \Vert h \Vert_{nmg,\sup} \mid h \in \Gamma^{\times}(X, nmD) \}\right)^{1/nm} \\
& =
 e^{\epsilon}\inf \left\{ \Vert f \Vert_{g,\sup} \mid f \in \Gamma^{\times}_{\QQ}(X,D) \right\}.
\end{align*}
Thus (2) follows because $\epsilon$ is arbitrary.
\end{proof}

By Corollary~\ref{cor:smallest:section:ArCurve}, if $d=1$, then
we can find $\psi \in \Gamma^{\times}_{\RR}(X,D)$ such that
\[
\Vert \psi \Vert_{g, \sup}
= \inf \left\{ \Vert \phi \Vert_{g,\sup} \mid \phi \in \Gamma^{\times}_{\RR}(X,D) \right\}.
\]
Note that the above $\psi$ yields a multiplicative generator of approximately smallest sections.
The same assertion holds if we assume the existence of
multiplicative generators of approximately smallest sections.

\begin{Theorem}
\label{thm:inf:R}
We assume that $\Gamma^{\times}_{\QQ}(X, D) \not= \emptyset$.
If $\overline{D}$ has multiplicative generators of approximately smallest sections,
then there exists $\psi \in  \Gamma^{\times}_{\RR}(X, D)$ such that
\[
\Vert \psi \Vert_{g, \sup}
= \inf \left\{ \Vert \phi \Vert_{g,\sup} \mid \phi \in \Gamma^{\times}_{\RR}(X,D) \right\}.
\]
\end{Theorem}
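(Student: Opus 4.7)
The plan is to deduce the theorem from Proposition~\ref{prop:multiplicative:generators:def:equiv} combined with a density argument showing $\inf_{\QQ}=\inf_{\RR}$, where $\inf_{\RR}:=\inf\{\Vert\phi\Vert_{g,\sup}\mid\phi\in\Gamma^{\times}_{\RR}(X,D)\}$ and $\inf_{\QQ}:=\inf\{\Vert f\Vert_{g,\sup}\mid f\in\Gamma^{\times}_{\QQ}(X,D)\}$. Applying part (2) of that proposition to the given multiplicative generators $\varphi_1,\ldots,\varphi_l$ produces $x_1,\ldots,x_l\in\RR$ such that $\psi:=\varphi_1^{x_1}\cdots\varphi_l^{x_l}\in\Gamma^{\times}_{\RR}(X,D)$ satisfies $\Vert\psi\Vert_{g,\sup}\leq\inf_{\QQ}$. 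Since $\Gamma^{\times}_{\QQ}(X,D)\subseteq\Gamma^{\times}_{\RR}(X,D)$ forces $\inf_{\RR}\leq\inf_{\QQ}$, while $\psi\in\Gamma^{\times}_{\RR}(X,D)$ forces $\Vert\psi\Vert_{g,\sup}\geq\inf_{\RR}$, the theorem will follow from the reverse inequality $\inf_{\QQ}\leq\Vert\phi\Vert_{g,\sup}$ for every $\phi\in\Gamma^{\times}_{\RR}(X,D)$.

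To prove this, fix such a $\phi$ together with any $\phi_0\in\Gamma^{\times}_{\QQ}(X,D)$ (which exists by hypothesis). Write both in a common list $\phi_1,\ldots,\phi_r\in\Rat(X)^{\times}$ as $\phi=\phi_1^{b_1}\cdots\phi_r^{b_r}$ and $\phi_0=\phi_1^{c_1}\cdots\phi_r^{c_r}$, with $b_j\in\RR$ and $c_j\in\QQ$. Form the homotopy
\[
\chi_s:=\phi^{1-s}\phi_0^s=\prod_j\phi_j^{(1-s)b_j+sc_j},\qquad s\in [0,1];
\]
since the condition $D+(\cdot)_{\RR}\geq 0$ is preserved by convex combinations, $\chi_s\in\Gamma^{\times}_{\RR}(X,D)$, and Proposition~\ref{prop:uniform:cont:L:p} gives $\Vert\chi_s\Vert_{g,\sup}\to\Vert\phi\Vert_{g,\sup}$ as $s\to 0^+$.

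Next, for each small $s>0$ we approximate $\chi_s$ in $\Gamma^{\times}_{\QQ}(X,D)$. The feasibility polyhedron $\Phi=\{(a_j)\in\RR^r:\prod\phi_j^{a_j}\in\Gamma^{\times}_{\RR}(X,D)\}$ is defined by the finitely many linear inequalities $\ord_P(D)+\sum_j a_j\ord_P(\phi_j)\geq 0$, where $P$ ranges over a finite set of primes. A direct check shows that this inequality is an equality at $\chi_s$ (for $s\in(0,1)$) if and only if it is an equality at both $\phi$ and $\phi_0$; subtracting the two equalities at such a ``double-active'' prime $P$ yields $\sum_j(b_j-c_j)\ord_P(\phi_j)=0$, so $(1-s)(b-c)$ lies in the \emph{rational} subspace
\[
V_{\mathrm{eq}}:=\bigcap_{P\text{ double-active}}\bigl\{x\in\RR^r:\textstyle\sum_j x_j\ord_P(\phi_j)=0\bigr\}
\]
cut out by integer-coefficient equations. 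By density of the rationals in a rational subspace, pick $e\in V_{\mathrm{eq}}\cap\QQ^r$ arbitrarily close to $(1-s)(b-c)$ and set $d:=c+e\in\QQ^r$. At each double-active $P$ equality is then preserved exactly (since $e\in V_{\mathrm{eq}}$), while at strictly feasible primes and at primes where only one of $\phi,\phi_0$ is active (these are strict at $\chi_s$ for $s\in(0,1)$), small perturbations preserve the strict inequality. Hence $\chi'_s:=\prod\phi_j^{d_j}\in\Gamma^{\times}_{\QQ}(X,D)$, and Proposition~\ref{prop:uniform:cont:L:p} makes $\Vert\chi'_s\Vert_{g,\sup}$ as close to $\Vert\chi_s\Vert_{g,\sup}$ as we wish. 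Letting $s\to 0^+$ gives $\inf_{\QQ}\leq\Vert\phi\Vert_{g,\sup}$, as required.

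The main delicate point is the rational approximation step: although $D$ may have irrational coefficients, so that $\Phi$ need not be a rational polyhedron, the double-active equality constraints involve only the integer valuations $\ord_P(\phi_j)$ and thus cut out a genuinely rational subspace of $\RR^r$ in which rational points are dense. The role of the homotopy through $\phi_0$ is precisely to render the single-active constraints non-binding at $\chi_s$ for $s\in(0,1)$, leaving just enough room to round to a nearby rational exponent without breaking feasibility.
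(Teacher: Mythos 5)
Your proof is correct, and its skeleton is the paper's: reduce via Proposition~\ref{prop:multiplicative:generators:def:equiv}~(2) to the single inequality $\inf\{\Vert f\Vert_{g,\sup}\mid f\in\Gamma^{\times}_{\QQ}(X,D)\}\leq\Vert\phi\Vert_{g,\sup}$ for every $\phi\in\Gamma^{\times}_{\RR}(X,D)$, and prove that by producing nearby rational feasible exponents and invoking the uniform continuity of the sup-norm from Proposition~\ref{prop:uniform:cont:L:p}. Where you genuinely diverge is in the density step. The paper first divides by a fixed $\eta\in\Gamma^{\times}_{\QQ}(X,D)$ to reduce to the case where $D$ is effective, writes $\phi=f_1^{a_1}\cdots f_r^{a_r}$ with $a_1,\ldots,a_r$ linearly independent over $\QQ$, and shows that this independence forces $\ord_{\Gamma}(\phi^{1/(1+\epsilon)})+d_{\Gamma}>0$ at every relevant prime; thus the contracted exponent vector is \emph{strictly} feasible, an entire $\ell^{1}$-ball of rational exponents around it lies in $\Phi$, and one lets $\epsilon\to 0$ at the end. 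Your convex homotopy $\chi_s=\phi^{1-s}\phi_0^{s}$ toward a rational feasible point, followed by rounding inside the rational subspace cut out by the doubly active integer-coefficient constraints, achieves the same conclusion without normalizing $D$ to be effective and without arranging $\QQ$-linear independence of the exponents; it is the standard argument for approximating points of a polyhedron with integral constraint matrix but irrational right-hand sides. The paper's version is shorter because the $\QQ$-independence kills all potentially binding constraints at once after the contraction, while yours is more transparent about exactly which constraints can remain active and why they define a rational subspace in which rational points are dense. Both routes rest on the same two external inputs (Propositions~\ref{prop:multiplicative:generators:def:equiv} and~\ref{prop:uniform:cont:L:p}), so I see no gap.
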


\begin{proof}
By Proposition~\ref{prop:multiplicative:generators:def:equiv}, it is sufficient to see the following inequality:
\addtocounter{Claim}{1}
\begin{equation}
\label{eqn:thm:inf:R:1}
 \inf \left\{ \Vert f \Vert_{g,\sup} \mid f \in \Gamma^{\times}_{\QQ}(X,D) \right\} \leq \inf \left\{ \Vert \phi \Vert_{g,\sup} \mid \phi \in \Gamma^{\times}_{\RR}(X,D) \right\}.
\end{equation}

Let $\eta \in \Gamma^{\times}_{\QQ}(X, D)$, $D' = D + (\eta)$ and $g' = g - \log \vert \eta \vert^2$.
Then 
\[
\begin{cases}
\Gamma^{\times}_{\QQ}(X, D') = \left\{ f/\eta \mid f \in \Gamma^{\times}_{\QQ}(X, D)\right\},\\
\Gamma^{\times}_{\RR}(X, D') = \left\{ \phi/\eta \mid \phi \in \Gamma^{\times}_{\RR}(X, D)\right\},\\
\text{$\Vert \phi/\eta \Vert_{g', \sup} = \Vert \phi \Vert_{g, \sup}$ for $\phi \in \Gamma^{\times}_{\RR}(X, D)$},
\end{cases}
\]
and hence
\[
\begin{cases}
\inf \left\{ \Vert f' \Vert_{g',\sup} \mid f' \in \Gamma^{\times}_{\QQ}(X, D') \right\}  = \inf \left\{ \Vert f \Vert_{g,\sup} \mid f \in \Gamma^{\times}_{\QQ}(X, D) \right\},\\
\inf \left\{ \Vert \phi' \Vert_{g',\sup} \mid \phi' \in \Gamma^{\times}_{\RR}(X, D') \right\}  = \inf \left\{ \Vert \phi \Vert_{g,\sup} \mid \phi \in \Gamma^{\times}_{\RR}(X, D)\right\}.
\end{cases}
\]
Therefore, in order to see \eqref{eqn:thm:inf:R:1}, we may assume that $D$ is effective,
that is, if we set $D = \sum d_{\Gamma} \Gamma$, then
$d_{\Gamma} \geq 0$ for all $\Gamma$.

Let $\phi$ be an arbitrary element of $\Gamma^{\times}_{\RR}(X,D)$. Then we can find $f_1, \ldots, f_r \in \Rat(X)^{\times}_{\QQ}$ and
$a_1, \ldots, a_r \in \RR$ such that $\phi = f_1^{a_1} \cdots f_r^{a_r}$ and
$a_1, \ldots, a_r$ are linearly independent over $\QQ$.
Let $S$ be the set of codimension one points of $\bigcup_{i=1}^r \Supp((f_i))$.

\begin{Claim}
\label{claim:thm:inf:R:1}
If $\epsilon$ is a positive number, then $\ord_{\Gamma}(\phi^{1/(1 + \epsilon)}) + d_{\Gamma} > 0$
for all $\Gamma \in S$.
\end{Claim}

\begin{proof}
It is sufficient to show that $\ord_{\Gamma}(\phi) + (1 + \epsilon) d_{\Gamma} > 0$
for all $\Gamma \in S$.
First of all, note that $\ord_{\Gamma}(\phi) + d_{\Gamma} \geq 0$. 
If either $\ord_{\Gamma}(\phi) > 0$ or $d_{\Gamma} > 0$, then the assertion is obvious, so that we assume 
$\ord_{\Gamma}(\phi) \leq 0$ and $d_{\Gamma} = 0$.
Then 
\[
\ord_{\Gamma}(\phi) = a_1 \ord_{\Gamma}(f_1) + \cdots + a_r \ord_{\Gamma}(f_r) = 0,
\]
which yields $\ord_{\Gamma}(f_1) = \cdots = \ord_{\Gamma}(f_r) = 0$.
This is a contradiction because $\Gamma \in S$.
\end{proof}

As $\phi^{1/(1 + \epsilon)} = f_1^{a_1/(1+\epsilon)} \cdots f_1^{a_r/(1+\epsilon)}$, by Claim~\ref{claim:thm:inf:R:1}, we can find $\delta > 0$ such that $f_1^{x_1} \cdots f_r^{x_r} \in \Gamma^{\times}_{\RR}(X, D)$
for all $(x_1, \ldots, x_r) \in \RR^r$ with 
\[
\vert x_1 - a_1/(1 + \epsilon) \vert + \cdots + \vert x_r - a_r/(1 + \epsilon) \vert \leq \delta.
\]
We choose a sequence $\left\{ \pmb{t}_n = (t_{n1}, \ldots, t_{nr}) \right\}_{n=1}^{\infty}$ of $\QQ^r$ such that
\[
\vert t_{n1} - a_1/(1 + \epsilon) \vert + \cdots + \vert t_{nr} - a_r/(1 + \epsilon) \vert \leq \delta
\]
and $\lim_{n\to\infty} \pmb{t}_n = (a_1/(1+\epsilon), \ldots, a_r/(1+\epsilon))$.
Then
\[
\inf \left\{ \Vert f \Vert_{g,\sup} \mid f \in \Gamma^{\times}_{\QQ}(X,D) \right\} \leq 
\Vert f_1^{t_{n1}} \cdots f_r^{t_{nr}} \Vert_{g, \sup}
\]
because $f_1^{t_{n1}} \cdots f_r^{t_{nr}} \in \Gamma^{\times}_{\QQ}(X, D)$.
Thus, by using Proposition~\ref{prop:uniform:cont:L:p},
we obtain
\[
\inf \left\{ \Vert f \Vert_{g,\sup} \mid f \in \Gamma^{\times}_{\QQ}(X,D) \right\} \leq \Vert \phi^{1/(1+\epsilon)} \Vert_{g, \sup},
\]
which implies $\inf \left\{ \Vert f \Vert_{g,\sup} \mid f \in \Gamma^{\times}_{\QQ}(X,D) \right\} \leq \Vert \phi \Vert_{g, \sup}$ 
by Proposition~\ref{prop:uniform:cont:L:p} again. Therefore, we have \eqref{eqn:thm:inf:R:1}.
\end{proof}

As a corollary, we have the following:

\begin{Corollary}
\label{cor:ans:fund:question}
We assume the following:
\begin{enumerate}
\renewcommand{\labelenumi}{(\arabic{enumi})}
\item
$\widehat{\Gamma}^{\times}_{\QQ}(X, \overline{D} + (0, \epsilon)) \not= \emptyset$ for any $\epsilon > 0$.

\item
$\overline{D}$ has multiplicative generators of approximately smallest sections.
\end{enumerate}
Then $\widehat{\Gamma}^{\times}_{\RR}(X, \overline{D}) \not= \emptyset$.
\end{Corollary}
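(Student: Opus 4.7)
The plan is to bound the infimum of sup-norms over $\Gamma^\times_\QQ(X,D)$ from above by $1$ using hypothesis (1), transfer this bound to $\Gamma^\times_\RR(X,D)$ via Theorem~\ref{thm:inf:R}, and then identify the minimizer as an element of $\widehat{\Gamma}^\times_\RR(X,\overline{D})$ using the characterization in part~(2) of Lemma~\ref{lem:rational:tensor:real:number}.

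First I would verify that $\Gamma^\times_\QQ(X,D)\neq\emptyset$ (needed to invoke Theorem~\ref{thm:inf:R}): any $\varphi \in \widehat{\Gamma}^\times_\QQ(X,\overline{D}+(0,\epsilon))$ supplied by hypothesis~(1) automatically satisfies $D+(\varphi)_\QQ\geq 0$ and hence lies in $\Gamma^\times_\QQ(X,D)$. Next, for each $\epsilon>0$, pick such a $\varphi_\epsilon$. Writing $\overline{D}+(0,\epsilon)=(D,g+\epsilon)$ and applying Lemma~\ref{lem:rational:tensor:real:number}(2) gives $\Vert \varphi_\epsilon\Vert_{g+\epsilon,\sup}\leq 1$. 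The pointwise identity $\vert \varphi_\epsilon\vert\exp(-(g+\epsilon)/2)=e^{-\epsilon/2}\vert \varphi_\epsilon\vert\exp(-g/2)$ rescales this to $\Vert \varphi_\epsilon\Vert_{g,\sup}\leq e^{\epsilon/2}$, so letting $\epsilon\to 0^+$ yields
\[
\inf\{\Vert f\Vert_{g,\sup}\mid f\in\Gamma^\times_\QQ(X,D)\}\leq 1.
\]

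Finally, hypothesis~(2) together with the nonemptiness just established lets me invoke Theorem~\ref{thm:inf:R} to obtain $\psi\in\Gamma^\times_\RR(X,D)$ attaining $\inf\{\Vert \phi\Vert_{g,\sup}\mid \phi\in\Gamma^\times_\RR(X,D)\}$. The trivial inclusion $\Gamma^\times_\QQ(X,D)\subseteq\Gamma^\times_\RR(X,D)$ forces
\[
\Vert \psi\Vert_{g,\sup}\leq \inf\{\Vert f\Vert_{g,\sup}\mid f\in\Gamma^\times_\QQ(X,D)\}\leq 1,
\]
and then Lemma~\ref{lem:rational:tensor:real:number}(2) identifies $\psi$ as an element of $\widehat{\Gamma}^\times_\RR(X,\overline{D})$. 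There is no genuine obstacle here: the corollary is a two-line assembly of Theorem~\ref{thm:inf:R} with the sup-norm characterization of $\widehat{\Gamma}^\times_\RR$, the only thing to watch being the elementary rescaling of $\Vert\cdot\Vert_{g,\sup}$ under additive shifts of the Green function.
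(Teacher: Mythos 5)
Your proof is correct and is essentially the same argument as the paper's: the paper likewise uses hypothesis (1) to produce $\phi \in \Gamma^{\times}_{\QQ}(X,D)$ with $\Vert \phi \Vert_{g,\sup} \leq e^{\epsilon/2}$, applies Theorem~\ref{thm:inf:R} to get a minimizer $\psi$ over $\Gamma^{\times}_{\RR}(X,D)$, and lets $\epsilon \to 0$ to conclude $\Vert \psi \Vert_{g,\sup} \leq 1$. Your extra remarks (checking $\Gamma^{\times}_{\QQ}(X,D)\neq\emptyset$ and citing Lemma~\ref{lem:rational:tensor:real:number}(2) at the end) only make explicit what the paper leaves implicit.
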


\begin{proof}
By the above theorem,
there
exists $\psi \in  \Gamma^{\times}_{\RR}(X, D)$ such that
\[
\Vert \psi \Vert_{g, \sup}
= \inf \left\{ \Vert \phi \Vert_{g,\sup} \mid \phi \in \Gamma^{\times}_{\RR}(X,D) \right\}.
\]
Since $\widehat{\Gamma}^{\times}_{\QQ}(X, \overline{D} + (0, \epsilon)) \not= \emptyset$,
we can find $\phi \in \Gamma^{\times}_{\QQ}(X, D)$ with $\Vert \phi \Vert_{g,\sup} \leq e^{\epsilon/2}$, and hence
$\Vert \psi \Vert_{g, \sup} \leq e^{\epsilon/2}$. Therefore, $\Vert \psi \Vert_{g, \sup} \leq 1$, as required.
\end{proof}

\begin{Remark}
(1) We assume that $D \in \Div(X)_{\QQ}$. Then
$\Gamma^{\times}_{\QQ}(X,D)$ is dense in $\Gamma^{\times}_{\RR}(X,D)$, that is,
for $f_1^{a_1} \cdots f_r^{a_r} \in \Gamma^{\times}_{\RR}(X,D)$ with $a_1, \ldots, a_r \in \RR$ and
$f_1, \ldots, f_r \in \Rat(X)^{\times}_{\QQ}$, there is a sequence $\{ (a_{1n}, \ldots, a_{rn}) \}_{n=1}^{\infty}$ in $\QQ^r$
such that $f_1^{a_{1n}} \cdots f_r^{a_{rn}} \in \Gamma^{\times}_{\QQ}(X,D)$ and
$\lim_{n\to\infty} (a_{1n}, \ldots, a_{rn}) = (a_{1}, \ldots, a_{r})$.
In particular, $\Gamma^{\times}_{\QQ}(X, D) \not= \emptyset$ if and only if $\Gamma^{\times}_{\RR}(X, D) \not= \emptyset$.
This fact can be checked as follows.
Clearly we may assume that 
$a_1, \ldots, a_r$ are linearly independent over $\QQ$.
Let $S$ be the set of codimension one points of $\bigcup_{i} \Supp((f_i))$ and
$D = \sum_{\Gamma} d_{\Gamma} \Gamma$ ($d_{\Gamma} \in \QQ$).
If $(\QQ a_1 + \cdots + \QQ a_r) \cap \QQ = \{0 \}$, then it is easy to see that $\ord_{\Gamma}(f_1^{a_1} \cdots f_r^{a_r}) + d_{\Gamma} > 0$ for
all $\Gamma \in S$. Thus the assertion follows.
If $(\QQ a_1 + \cdots + \QQ a_r) \cap \QQ = \QQ$, then we may assume that $a_1 \in \QQ$ and
$(\QQ a_2 + \cdots + \QQ a_r) \cap \QQ = \{0 \}$.
Thus, as before, 
we can find a sequence $\{ (a_{2n}, \ldots, a_{rn}) \}_{n=1}^{\infty}$ in $\QQ^{r-1}$
such that $f_2^{a_{2n}} \cdots f_r^{a_{rn}} \in \Gamma^{\times}_{\QQ}(X,(f_1^{a_1}) + D)$ and
$\lim_{n\to\infty} (a_{2n}, \ldots, a_{rn}) = (a_{2}, \ldots, a_{r})$, as required.

(2) The assertion of (1) does not hold in general.
For example, let $\PP^1_{\ZZ} = \Proj(\ZZ[T_0,T_1])$ and $a \in \RR_{>0} \setminus \QQ$.
Then $\Gamma_{\RR}^{\times}(X, a(T_1/T_0)) \not= \emptyset$ and
$\Gamma_{\QQ}^{\times}(X, a(T_1/T_0)) = \emptyset$.
Indeed, $z^a \in \Gamma_{\RR}^{\times}(X, a(T_1/T_0))$, where $z = T_0/T_1$.
Moreover, if $\Gamma_{\QQ}^{\times}(X, a(T_1/T_0)) 
\not= \emptyset$, then there are $n \in \ZZ_{>0}$ and $f \in \QQ(z)$ such that
$(f) \geq na(z)$.
In particular, $f \in \QQ[z]$, so that we can set $f(z) = \sum_{i=s}^t c_i z^i$, where $0 \leq s \leq t$, $c_s \not= 0$ and
$c_t \not= 0$. Note that $\ord_0(f) = s$ and $\ord_{\infty}(f) = -t$. Thus $na \leq s \leq t \leq na$, and hence $na = s = t$.
This is a contraction because $a \in \RR_{>0} \setminus \QQ$.
\end{Remark}

Finally let us consider a sufficient condition for multiplicative generators of approximately smallest sections.
Let us fix an $F_{\infty}$-invariant continuous volume form $\Omega$ on $X$ with
$\int_{X(\CC)} \Omega = 1$.
We assume that $\Gamma^{\times}_{\QQ}(X, D) \not= \emptyset$.
The natural inner product $\langle\ ,\ \rangle_{n\overline{D}}$ on $H^{0}(X, nD) \otimes {\RR}$
is given by
\[
\langle \varphi, \psi \rangle_{n\overline{D}} := \int_{X(\CC)} \varphi \bar{\psi} \exp(-ng) \Omega
\qquad(\varphi, \psi \in H^{0}(X, nD)).
\]
For $\varphi_{1}, \ldots, \varphi_{l} \in H^0(X, D)$ and $A=(a_{1}, \ldots, a_{l}) \in \ZZ^{l}_{\geq 0}$,
$\varphi_{1}^{a_{1}} \cdots \varphi_{l}^{a_{l}}$ is denoted by
$\pmb{\varphi}^{A}$ for simplicity.
Note that $\pmb{\varphi}^{A} \in H^0(X, \vert A \vert D)$, where
$\vert A \vert = a_1 + \cdots + a_l$.

\begin{Definition}
We say $\varphi_{1}, \ldots, \varphi_{l} \in H^{0}(X, D) \setminus \{ 0 \}$ is a {\em well-posed generators for
$\overline{D}$} 
if, for $n \gg 1$,
there is a subset $\Sigma_{n}$ of $\{ A = (a_{1}, \ldots, a_{l}) \in \ZZ_{\geq 0}^{l} \mid a_{1} + \cdots + a_{l} = n \}$ with the following properties:
\begin{enumerate}
\renewcommand{\labelenumi}{(\arabic{enumi})}
\item
$\{ \pmb{\varphi}^{A} \mid A \in \Sigma_{n} \}$ forms a basis of $H^{0}(X, nD) \otimes {\QQ}$ over $\QQ$.

\item
Let $\langle \pmb{\varphi}^A \mid A \in \Sigma_{n} \rangle_{\ZZ}$ be the $\ZZ$-submodule generated by
$\{ \pmb{\varphi}^A \mid A \in \Sigma_{n} \}$ in $H^0(X, nD)$, that is,
$\langle \pmb{\varphi}^A \mid A \in \Sigma_{n} \rangle_{\ZZ} = \sum\nolimits_{A \in \Sigma_{n}} \ZZ \ \pmb{\varphi}^{A}$.
Then
\[ 
\limsup_{n\to\infty} \left(\#\left( H^{0}(X, nD)/\langle \pmb{\varphi}^A \mid A \in \Sigma_{n} \rangle_{\ZZ} \right)\right)^{1/n} = 1.
\]

\item
For a finite subset $\{ \psi_1, \ldots, \psi_r \}$ of $H^0(X, nD)_{\RR}$,
the square root of the Gramian of $\psi_1, \ldots, \psi_r$ with respect to $\langle\ ,\ \rangle_{n\overline{D}}$
is denoted by $\vol(\{ \psi_1, \ldots, \psi_r \})$ (for details, see Conventions and terminology~\ref{CV:vol:parallelotope}).
Then
\[
\liminf_{n\to\infty} \min \left\{ \left. \left(\frac{\vol(\{ \pmb{\varphi}^{B} \mid B \in \Sigma_n\})}
{\sqrt{\langle \pmb{\varphi}^{A}, \pmb{\varphi}^{A}\rangle_{n\overline{D}}}\vol(\{ \pmb{\varphi}^{B} \mid B \in \Sigma_n \setminus \{ A \}\})}\right)^{1/n}
\ \right| \ A \in \Sigma_n \right\}
 = 1.
\]
\end{enumerate}
\end{Definition}

\begin{Proposition}
If $\varphi_{1}, \ldots, \varphi_{l} \in H^{0}(X, D) \setminus \{ 0 \}$ are well-posed generators for
$\overline{D}$, then $\varphi_{1}, \ldots, \varphi_{l}$ are multiplicative generators of approximately smallest sections for $\overline{D}$
\end{Proposition}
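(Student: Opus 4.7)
The plan is to exhibit, for each sufficiently large $n$ with $\Gamma^{\times}(X, nD)\neq\emptyset$, an integer exponent vector $A^{*}\in\Sigma_{n}$ whose associated monomial $\pmb{\varphi}^{A^{*}}\in H^{0}(X, nD)$ approximates the smallest section in sup-norm up to a factor $e^{n\epsilon}$. Since $\pmb{\varphi}^{A^{*}}\in\Gamma^{\times}(X, nD)\subset\Gamma_{\RR}^{\times}(X, nD)$, the components $A^{*}=(a_{1}^{*},\ldots, a_{l}^{*})$ will then serve as the required real exponents in Definition~3.5.1.

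First I would work in the $L^{2}$-setting. Let $s\in H^{0}(X, nD)\setminus\{0\}$ minimize $\Vert\,\cdot\,\Vert_{n\overline{D}, L^{2}}$ and expand it in the $\QQ$-basis provided by well-posedness condition (1): $s=\sum_{A\in\Sigma_{n}}c_{A}\pmb{\varphi}^{A}$ with $c_{A}\in\QQ$. Well-posedness condition (2) forces $I_{n}c_{A}\in\ZZ$, where $I_{n}=\#(H^{0}(X, nD)/\langle\pmb{\varphi}^{A}\mid A\in\Sigma_{n}\rangle_{\ZZ})$, so any $A^{*}$ with $c_{A^{*}}\neq 0$ satisfies $|c_{A^{*}}|\geq 1/I_{n}$. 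Now Lemma~1.1.2(3), applied with $\Sigma=\{\pmb{\varphi}^{A}\}_{A\in\Sigma_{n}}$ and $x=\pmb{\varphi}^{A^{*}}$, combined with the orthogonal decomposition of $s$ relative to the hyperplane spanned by the remaining basis elements, yields
\[
|c_{A^{*}}|\,\Vert\pmb{\varphi}^{A^{*}}\Vert_{n\overline{D}, L^{2}}\,\sin(\theta_{A^{*}})\;\leq\;\Vert s\Vert_{n\overline{D}, L^{2}}.
\]
Conditions (2) and (3) of well-posedness are precisely the statements $I_{n}^{1/n}\to 1$ and $\min_{A\in\Sigma_{n}}\sin(\theta_{A})^{1/n}\to 1$, so for $n\geq n_{0}(\epsilon)$ one obtains $\Vert\pmb{\varphi}^{A^{*}}\Vert_{n\overline{D}, L^{2}}\leq e^{n\epsilon/2}\,\Vert s\Vert_{n\overline{D}, L^{2}}$.

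Next I would convert this into a sup-norm estimate. Write $\tilde{M}_{n}:=\min\{\Vert\phi\Vert_{ng,\sup}\mid\phi\in H^{0}(X, nD)\setminus\{0\}\}$. Since $\int_{X(\CC)}\Omega=1$, the trivial bound $\Vert\cdot\Vert_{n\overline{D}, L^{2}}\leq\Vert\cdot\Vert_{ng,\sup}$ gives $\Vert s\Vert_{n\overline{D}, L^{2}}\leq\tilde{M}_{n}$. For the reverse comparison, a sub-mean-value argument applied to local trivializations of $\OO(nD)$, exploiting that the local weights satisfy $h_{n}=h_{1}^{n}$ so that $h_{n}(x)/\inf_{B_{r}(x)}h_{n}$ can be controlled by a uniform choice of small $r$, produces constants $C_{n}$ with $C_{n}^{1/n}\to 1$ and
\[
\Vert\phi\Vert_{ng,\sup}\;\leq\;C_{n}\,\Vert\phi\Vert_{n\overline{D}, L^{2}}
\]
for all $\phi\in H^{0}(X, nD)$. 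Feeding this in,
\[
\Vert\pmb{\varphi}^{A^{*}}\Vert_{ng,\sup}\;\leq\;C_{n}\,e^{n\epsilon/2}\,\tilde{M}_{n}\;\leq\;e^{n\epsilon}\,\tilde{M}_{n}
\]
for $n$ large enough, which is the estimate required by Definition~3.5.1.

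The main obstacle I expect is this sup-to-$L^{2}$ comparison with a sub-exponential constant: well-posedness is phrased entirely in the $L^{2}$ inner product (via Gramians and angles), whereas the definition of multiplicative generators of approximately smallest sections is phrased in sup-norm; bridging them requires a careful mean-value inequality for holomorphic trivializations together with the multiplicative structure of the local weights and compactness of $X(\CC)$. Once this standard but non-trivial analytic input is in place, the core of the proof reduces to a single application of the sine-angle formula in Lemma~1.1.2(3), matched term-for-term against well-posedness conditions (2) and (3).
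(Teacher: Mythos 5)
Your proposal is correct and follows essentially the same route as the paper: expand a (near-)smallest section in the basis $\{\pmb{\varphi}^A\}_{A\in\Sigma_n}$, use the index bound in condition (2) to bound a nonzero coefficient from below by $e^{-n\epsilon'}$, use the Gramian/sine formula of Lemma~1.1.2 together with condition (3) to bound $\Vert\pmb{\varphi}^{A}\Vert_{n\overline{D},L^2}$ by $e^{O(n\epsilon')}$ times the section's $L^2$-norm, and bridge the sup- and $L^2$-norms by a sub-exponential comparison. The only cosmetic differences are that you take the $L^2$-minimizer and the distance-to-hyperplane inequality where the paper expands an arbitrary section and estimates $\cos\theta_{A_0}$ directly, and that you sketch a proof of the sup/$L^2$ comparison that the paper simply quotes as Gromov's inequality (\cite[Proposition~3.1.1]{MoArZariski}).
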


\begin{proof}
For a given $\epsilon > 0$, we set $\epsilon' = \epsilon/6$. First of all,
there is a positive integer $n_0$ such that
\[
r_n = \#\left( H^{0}(X, nD)/\langle \pmb{\varphi}^A \mid A \in \Sigma_{n} \rangle_{\ZZ} \right) \leq e^{n\epsilon'}
\]
and
\[
\frac{\vol(\{ \pmb{\varphi}^{B} \mid B \in \Sigma_n\})}
{\sqrt{\langle \pmb{\varphi}^{A}, \pmb{\varphi}^{A}\rangle}\vol(\{ \pmb{\varphi}^{B} \mid B \in \Sigma_n \setminus \{ A \}\})}
\geq e^{-n\epsilon'}
\]
for all $n \geq n_0$ and $A \in \Sigma_n$.
Let $W_A$ be the subspace generated by 
$\{ \pmb{\varphi}^{B} \}_{B \in \Sigma_n \setminus \{ A \}}$ over $\RR$.
If $\theta_A$ is
the angle between $\pmb{\varphi}^{A}$ and $W_A$, then, by Lemma~\ref{lem:vol:ratio}
\[
\sin(\theta_A) = \frac{\vol(\{ \pmb{\varphi}^{B} \mid B \in \Sigma_n\})}
{\sqrt{\langle \pmb{\varphi}^{A}, \pmb{\varphi}^{A}\rangle}\vol(\{ \pmb{\varphi}^{B} \mid B \in \Sigma_n \setminus \{ A \}\})},
\]
and hence
\begin{align*}
\cos(\theta_A) & = \sqrt{1 - \sin^2(\theta_A)} \\
& \leq \sqrt{1 - e^{-2n\epsilon'}} \leq 1 - (1/2)e^{-2n\epsilon'}
\end{align*}
for all $A \in \Sigma_n$. Therefore, for $y \in W_A$,
\begin{align*}
\vert \langle \pmb{\varphi}^A, y \rangle \vert & \leq \cos(\theta_A) \sqrt{\langle \pmb{\varphi}^A, \pmb{\varphi}^A\rangle}\sqrt{\langle y, y\rangle} \\
&\leq (1 - (1/2)e^{-2n\epsilon'})\sqrt{\langle \pmb{\varphi}^A, \pmb{\varphi}^A\rangle}\sqrt{\langle y, y\rangle}.
\end{align*}

Let $\phi \in \Gamma^{\times}(X, n\overline{D})$.
Then we can find $a_{A} \in \QQ$ ($A \in \Sigma_n$) such that $\phi = \sum_{A \in \Sigma_n} a_A \pmb{\varphi}^A$.
Note that $r_n a_{A} \in \ZZ$ for all $A \in \Sigma_n$.
Let us choose $A_0 \in \Sigma_n$ such that $a_{A_0} \not= 0$.
We set $y = \sum_{A \in \Sigma_n \setminus \{ A_0 \}} a_{A} \pmb{\varphi}^A$.
Then $\phi = a_{A_0} \pmb{\varphi}^{A_0} + y$. Since $e^{n\epsilon'} \vert a_{A_0} \vert \geq \vert r_n a_{A_0} \vert \geq 1$,
\begin{align*}
\langle \phi, \phi \rangle & = a_{A_0}^2 \langle \pmb{\varphi}^{A_0}, \pmb{\varphi}^{A_0} \rangle + 2 a_{A_0} \langle \pmb{\varphi}^{A_0}, y \rangle + \langle y, y \rangle \\
& \geq a_{A_0}^2 \langle \pmb{\varphi}^{A_0}, \pmb{\varphi}^{A_0} \rangle + \langle y, y \rangle - 2 \vert a_{A_0} \vert \vert \langle \pmb{\varphi}^{A_0}, y \rangle \vert \\
& \geq  a_{A_0}^2 \langle \pmb{\varphi}^{A_0}, \pmb{\varphi}^{A_0} \rangle + \langle y, y \rangle - 2 \vert a_{A_0} \vert 
\sqrt{\langle \pmb{\varphi}^{A_0}, \pmb{\varphi}^{A_0} \rangle} \sqrt{ \langle y, y \rangle} (1 - (1/2)e^{-2n\epsilon'}) \\
& = (1 - (1/2)e^{-2n\epsilon'})\left(\vert a_{A_0} \vert \sqrt{\langle \pmb{\varphi}^{A_0}, \pmb{\varphi}^{A_0} \rangle} - \sqrt{ \langle y, y \rangle}\right)^2 \\
& \qquad\qquad\qquad\qquad\qquad
+ (1/2)e^{-2n\epsilon'}\left(  a_{A_0}^2 \langle \pmb{\varphi}^{A_0}, \pmb{\varphi}^{A_0} \rangle + \langle y, y \rangle \right) \\
& \geq (1/2)e^{-4 n \epsilon'} (e^{n\epsilon'} a_{A_0})^2  \langle \pmb{\varphi}^{A_0}, \pmb{\varphi}^{A_0} \rangle \geq   
(1/2)e^{-4 n \epsilon'} \langle \pmb{\varphi}^{A_0}, \pmb{\varphi}^{A_0} \rangle.
\end{align*}
On the other hand, by Gromov's inequality (cf. \cite[Proposition~3.1.1]{MoArZariski}),
choosing a larger $n_0$ if necessarily,
$\Vert \psi \Vert^2_{\sup} \leq e^{n\epsilon'} \langle \psi, \psi \rangle$ for all $n \geq n_0$ and $\psi \in H^0(X, nD)$.
Moreover, we may also assume that
$2 \leq e^{n\epsilon'}$ for all $n \geq n_0$.
Thus
\begin{align*}
e^{n\epsilon} \Vert \phi \Vert_{\sup}^2 & = e^{6n\epsilon'} \Vert \phi \Vert_{\sup}^2 
\geq 2e^{5n \epsilon'} \Vert \phi \Vert_{\sup}^2 \\
& \geq 2e^{5n \epsilon'} \langle \phi, \phi \rangle \geq e^{n\epsilon'} \langle \pmb{\varphi}^{A_0}, \pmb{\varphi}^{A_0} \rangle \\
& \geq \Vert \pmb{\varphi}^{A_0} \Vert_{\sup}^2,
\end{align*}
as required.
\end{proof}

\begin{Example}
Let $\PP^d_{\ZZ} = \Proj(\ZZ[T_0, T_1, \ldots, T_d])$, $H_i = \{ T_i = 0 \}$ and $z_i = T_i/T_0$ for $i=0, 1, \ldots, d$.
Let $\overline{D} = (H_0, g)$ be an arithmetic Cartier divisor of $C^0$-type on $\PP^d_{\ZZ}$.
Moreover, let $\Omega$ be an $F_{\infty}$-invariant continuous volume form on $\PP^d(\CC)$.
We assume that there are continuous functions $a$ and $b$ on $\RR_{\geq 0}^d$ such that
$g(z_1, \ldots, z_d) = a(\vert z_1 \vert, \ldots, \vert z_d \vert)$ and
\[
\Omega = \left( \frac{\sqrt{-1}}{2\pi} \right)^d b(\vert z_1 \vert, \ldots, \vert z_d \vert) dz_1 \wedge d\bar{z}_1 \wedge \cdots dz_d \wedge d\bar{z}_d.
\]
Arithmetic Cartier divisors considered in \cite{MoArBig} satisfy the above condition.

Here let us see that $1, z_1, \ldots, z_d$ are well-posed generator for $\overline{D}$.
We set 
\[
\Sigma_n = \{ (a_1, \ldots, a_d) \in \ZZ^d_{\geq 0} \mid a_1 + \cdots + a_d  \leq n\}.
\]
Then $\{ \pmb{z}^{A} \}_{A \in \Sigma_n}$  forms a free basis of $H^0(\PP^d_{\ZZ}, nH_0)$.
Moreover, if we set 
\[
z_i = r_i \exp(2\pi\sqrt{-1}\theta_i) \quad (i=1, \ldots, d),
\]
then
\begin{multline*}
\langle \pmb{z}^{A}, \pmb{z}^{A'} \rangle_{ng}
=
\int_{\RR_{\geq 0}^d \times [0,1]^d} \left( \prod_{i=1}^d 2 r_i^{A_i + A'_i + 1} \exp(2\pi\sqrt{-1}(A_i -A'_i)) \right)  \\
\times \exp(-na(r_1, \ldots, r_d)) b(r_1, \ldots, r_d)
dr_1 \cdots d r_d d \theta_1 \cdots d \theta_d,
\end{multline*}
which implies $\langle \pmb{z}^{A}, \pmb{z}^{A'} \rangle_{ng} = 0$ for
$A, A' \in \Sigma_n$ with $A \not= A'$, and hence
\[
\vol(\{ \pmb{z}^B \mid B \in \Sigma_n\}) = \sqrt{\langle \pmb{z}^{A}, \pmb{z}^{A} \rangle} \vol(\{ \pmb{z}^B \mid B \in \Sigma_n \setminus \{ A \}\})
\]
for all $A \in \Sigma_n$.
\end{Example}

\bigskip


\begin{thebibliography}{99}
\bibitem{BK}
Z. Blocki and S. Kolodziej,
On regularization of plurisubharmonic functions on manifolds,
Proc. of the A.M.S., {\bf 135} (2007), 2089-2093.

\bibitem{BostP}
J. -B. Bost,
Potential Theory and Lefschetz theorems for arithmetic surfaces,
Ann, scient. \'{E}c. Norm. Sup.
{\bf 32} (1999), 241 -- 312.

\bibitem{Champ}
A. Chambert-Loir,
Arakelov Geometry; Heights and Bogomolov conjecture,
(http://perso.univ-rennes1.fr/antoine.chambert-loir/2008-09/cga/cga.pdf).

\bibitem{deJong}
A. J. de Jong, 
Smoothness, semi-stability and alterations, 
Publications Mathematiques I.H.E.S., {\bf 83} (1996), 51-93.

\bibitem{Dem}
J.-P. Demailly,
Complex Analytic and Differential Geometry.

\bibitem{FaCAS}
G. Faltings, 
Calculus on arithmetic surfaces, 
Ann. of Math.  {\bf 119} (1984), 387--424.

\bibitem{HS}
M. Hindry and  J. H. Silverman,
Diophantine Geometry: an Introduction,
Graduate texts in mathematics {\bf 201} (2000), Springer-Verlag.


\bibitem{Hri}
P. Hriljac,
Height and Arakerov's intersection theory,
Amer. J. Math., {\bf 107} (1985), 23--38.

\bibitem{Jost}
J. Jost, Postmodern Analysis (3rd Edition),
Universitext (2005), Springer-Verlag.

\bibitem{Lip}
J. Lipman,
Desingularization of two-dimensional schemes,
Ann. of  Math., {\bf 107} (1978), 151-207.

\bibitem{MGA}
V. Maillot,
G\'{e}om\'{e}trie d'Arakelov des vari\'{e}t\'{e}s toriques et fibr\'{e}s en droites int\'{e}grables,
M\'{e}moires de la Soci\'{e}t\'{e} math\'{e}matique de France (nouvelle s\'{e}rie) 83 (2000).

\bibitem{Milnor}
J. Milnor,
Morse Theory,
Annals of Mathematical Studies, vol. 51, Princeton.

\bibitem{MoABG}
A. Moriwaki,
Arithmetic Bogomolov-Gieseker's inequality, 
Amer. J. Math. {\bf 117} (1995), 1325-1347. 

\bibitem{MoHodge}
A. Moriwaki,
Hodge index theorem for arithmetic cycles of codimension one,
Mathematical Research Letter {\bf 3}, 173--183 (1996).

\bibitem{MoCont}
A. Moriwaki,
Continuity of volumes on arithmetic varieties,
J. of Algebraic Geom. {\bf 18}  (2009), 407-457.

\bibitem{MoArLin}
A.  Moriwaki,
Estimation of arithmetic linear series,
to appear in Kyoto J. of Math. (Memorial issue of Professor Nagata), see also (arXiv:0902.1357 [math.AG]).

\bibitem{MoArZariski}
A. Moriwaki,
Zariski decompositions on arithmetic surfaces, preprint (arXiv:0911.2951v3 [math.AG]).


\bibitem{MoArBig}
A. Moriwaki,
Big arithmetic divisors on the projective spaces over $\ZZ$, to appear in Kyoto J. of Math.,
see also (arXiv:1003.1782v3 [math.AG]).

\bibitem{Sz}
L. Szpiro,
Degr\'{e}s, intersections, hauteurs,
 Ast\'{e}risque, 127, (1985), 11-28. 

\bibitem{ZhPos}
S. Zhang, Positive line bundles on arithmetic varieties,
J. of AMS, {\bf 8} (1995), 187-221.

\bibitem{ZhSm}
S. Zhang, Small points and adelic metrics, J. of Algebraic Geom. {\bf 4} (1995), 281-300.
\end{thebibliography}
\end{document}